\newcommand{\bbC}{{\mathbb{C}}}
\newcommand{\bbN}{{\mathbb{N}}}
\newcommand{\bbR}{{\mathbb{R}}}
\newcommand{\cB}{{\mathcal B}}
\newcommand{\cC}{{\mathcal C}}
\newcommand{\cH}{{\mathcal H}}
\newcommand{\cN}{{\mathcal N}}
\newcommand{\cU}{{\mathcal U}}
\newcommand{\cX}{{\mathcal X}}
\newcommand{\cY}{{\mathcal Y}}
\newcommand{\tr}{\operatorname{tr}}
\newcommand{\dom}{\operatorname{dom }}
\newcommand{\rank}{\operatorname{rank}}
\newcommand{\ran}{\operatorname{ran}}
\newcommand{\no}{\notag}
\newcommand{\lb}{\label}
\newcommand{\f}{\frac}
\newcommand{\ol}{\overline}
\newcommand{\wti}{\widetilde}
\newcommand{\oh}{o}
\newcommand{\bi}{\bibitem}
\newcommand{\spn}{{\text{\rm span}}}
\newcommand{\Sect}{\text{\rm Sect}}
\renewcommand{\Im}{\text{\rm Im}}
\renewcommand{\ln}{\text{\rm ln}}
\renewcommand{\max}{\text{\rm max}}
\renewcommand{\min}{\text{\rm min}}
\renewcommand{\le}{\leqslant}
\newcommand{\si}{\sigma}
\newcommand{\la}{\lambda}
\newcommand{\La}{\Lambda}
\newcommand{\ga}{\gamma}
\newcommand{\de}{\delta}
\newcommand{\te}{\theta}
\newcommand{\bs}{\backslash}
\newcommand{\AB}{{A,B}}
\newcommand{\ABp}{{A'\!,B'}}
\newcommand{\bz}{\ensuremath{\bar z}}
\newcommand{\Lazz}{\ensuremath{\La_{0,0}}}
\newcommand{\Late}{\ensuremath{\La_{\te_a,\te_b}}}
\newcommand{\Lateq}{\ensuremath{\La_{\te_a,\te_b}^{(\te_a+\frac{\pi}{2})
\, \text{\rm mod} (2 \pi),(\te_b+\frac{\pi}{2}) \, \text{\rm mod} (2 \pi)}}}
\newcommand{\Lazzqq}{\ensuremath{\La_{0,0}^{\frac{\pi}{2},\frac{\pi}{2}}}}
\newcommand{\Hte}{\ensuremath{H_{\te_a,\te_b}}}
\DeclareMathOperator{\Col}{Col}
\DeclareMathOperator{\SL}{SL}
\DeclareMathOperator{\AC}{AC}
\newtheorem{theorem}{Theorem}[section]
\newtheorem{lemma}[theorem]{Lemma}
\newtheorem{corollary}[theorem]{Corollary}
\newtheorem{hypothesis}[theorem]{Hypothesis}
\theoremstyle{definition}
\newtheorem{remark}[theorem]{Remark}
\newtheorem{example}[theorem]{Example}
\numberwithin{equation}{section}
\begin{document}

\title[Boundary Data Maps and Krein's Resolvent Formula]{Boundary Data Maps
and Krein's Resolvent Formula for Sturm--Liouville Operators \\ on a Finite Interval}

\author[S.\ Clark]{Stephen Clark}
\address{Department of Mathematics \& Statistics,
Missouri University of Science and Technology, Rolla, MO 65409, USA}
\email{sclark@mst.edu}
\urladdr{http://web.mst.edu/\~{}sclark/}

\author[F.\ Gesztesy]{Fritz Gesztesy}
\address{Department of Mathematics,
University of Missouri, Columbia, MO 65211, USA}
\email{gesztesyf@missouri.edu}
\urladdr{http://www.math.missouri.edu/personnel/faculty/gesztesyf.html}

\author[R.\ Nichols]{Roger Nichols}
\address{Department of Mathematics,
University of Missouri, Columbia, MO 65211, USA}
\email{nicholsrog@missouri.edu}
\urladdr{http://www.math.missouri.edu/personnel/other/nicholsr.html}

\author[M.\ Zinchenko]{Maxim Zinchenko}
\address{Department of Mathematics,
University of Central Florida, Orlando, FL 32816, USA}
\email{maxim@math.ucf.edu}
\urladdr{http://www.math.ucf.edu/~maxim/}

\thanks{Based upon work partially supported by the US National Science
Foundation under Grant No.\ DMS 0965411.}
\date{\today}
\subjclass[2010]{Primary 34B05, 34B27, 34L40; Secondary 34B20, 34L05, 47A10, 47E05.}
\keywords{Self-adjoint Sturm--Liouville operators on a finite interval, boundary
data maps, Krein-type resolvent formulas, spectral shift functions, perturbation determinants, parametrizations of self-adjoint extensions.}

\begin{abstract}
We continue the study of boundary data maps, that is, generalizations of spectral parameter
dependent Dirichlet-to-Neumann maps for (three-coefficient) Sturm--Liouville operators on the finite
interval $(a,b)$, to more general boundary conditions, began in \cite{CGM10}
and \cite{GZ11}. While these earlier studies of boundary data maps focused on the case
of general separated boundary conditions at $a$ and $b$, the present work develops a
unified treatment for all possible self-adjoint boundary conditions (i.e., separated as well
as non-separated ones).

In the course of this paper we describe the connections with Krein's resolvent formula for
self-adjoint extensions of the underlying minimal Sturm--Liouville operator (parametrized in terms
of boundary conditions), with some emphasis on the Krein extension, develop the basic
trace formulas for resolvent differences of self-adjoint extensions, especially, in terms of
the associated spectral shift functions, and describe the connections between various
parametrizations of all self-adjoint extensions, including the precise relation to von Neumann's
basic parametrization in terms of unitary maps between deficiency subspaces.
\end{abstract}

\maketitle

{\scriptsize \tableofcontents}

\section{Introduction} \lb{s1}

The principal theme developed in this paper concerns a detailed treatment of generalizations of the spectral parameter dependent Dirichlet-to-Neumann map for (three-coefficient) Sturm--Liouville operators on the finite interval $(a,b)$ to that for all self-adjoint boundary conditions. While the earlier treatments of boundary data maps in \cite{CGM10} and \cite{GZ11} focused on the special case of separated boundary conditions at $a$ and $b$, this paper now treats the case of all self-adjoint boundary conditions in a unified matter. Applications of the formalism discussed in this paper include the precise connections with Krein's resolvent formula for self-adjoint extensions of the underlying minimal Sturm--Liouville operator (parametrized in terms of boundary conditions); in particular, we will describe in detail the connections with Krein's extension of the minimal operator. We also offer a systematic treatment of the basic trace formulas for resolvent differences of self-adjoint extensions, including the connection with the associated spectral shift functions. Moreover,
we describe the interrelations between various parametrizations of all self-adjoint extensions of the minimal Sturm--Liouville operator, including the precise connection with von Neumann's basic parametrization.

We turn to a brief description of the content of this paper: Section \ref{s1a} recalls a variety of convenient parametrizations of all self-adjoint extensions associated with a regular, symmetric, second-order differential expression. This section is of an introductory character and serves as background material for the bulk of this paper. Section \ref{s2} is devoted to a comprehensive discussion of all self-adjoint extensions of the minimal Sturm--Liouville operator in terms of Krein's formula for resolvent differences, given the 
Sturm--Liouville operator with Dirichlet boundary conditions at $a$ and $b$ as a convenient reference operator. In particular, we carefully delineate the cases of separated and non-separated self-adjoint boundary conditions. We conclude this section with a detailed description of the Krein extension of the minimal Sturm--Liouville operator (this result appears to be new in the general case presented in Example \ref{e3.3}). Boundary data maps for general self-adjoint extensions of the minimal operator are the principal topic in Section \ref{s4}. Special emphasis is put on a unified treatment of all self-adjoint extensions (i.e., separated and non-separated ones). In particular, the resolvent difference between {\it any} pair of self-adjoint extensions of the minimal Sturm--Liouville operator is characterized in terms of the general boundary data map and associated boundary trace maps. Again, the precise connection with Krein's resolvent formula is established. Trace formulas for resolvent differences and associated spectral shift functions and symmetrized perturbation determinants are the focus of
Section \ref{s5}. In particular, it is shown that in the general non-degenerate case, the determinant of the boundary data map coincides with the symmetrized perturbation determinant up to a spectral parameter independent constant (the latter depends on the boundary conditions involved). 
In Section \ref{s7} we provide the precise connection with von Neumann's parametrization and the boundary data map $\La_\ABp^\AB(\cdot)$, the principal object studied in this paper. 
A very brief outlook on the applicability of boundary data maps to inverse spectral problems is provided in our last Section \ref{s6}.
(A detailed discussion of this circle of ideas is beyond the scope of this paper and hence will appear elsewhere.) 

To achieve a certain degree of self-containment, we also offer Appendix \ref{sB} which recalls the 
basics of Krein's resolvent formula for any pair of self-adjoint extensions of a symmetric operator 
of finitely-many (equal) deficiency indices.

Finally, we briefly summarize some of the notation used in this paper: Let $\cH$ be
a separable complex Hilbert space, $(\cdot,\cdot)_{\cH}$ the scalar product in $\cH$
(linear in the second argument), and $I_{\cH}$ the identity operator in $\cH$.
Next, if $T$ is a linear operator mapping (a subspace of) a
Banach space into another, then $\dom(T)$ and $\ker(T)$ denote the
domain and kernel (i.e., null space) of $T$.
The closure of a closable operator $S$ is denoted by $\ol S$. At times, and only for typographical reasons, we will also use $S^{\rm cl}$ for the closure of $S$. The spectrum, essential spectrum, discrete spectrum, and resolvent set
of a closed linear operator in $\cH$ will be denoted by $\sigma(\cdot)$,
$\sigma_{\rm ess}(\cdot)$, $\sigma_{\rm d}(\cdot)$, and $\rho(\cdot)$, respectively.
The Banach space of bounded linear operators on $\cH$ is
denoted by $\cB(\cH)$, the analogous notation $\cB(\cX_1,\cX_2)$,
will be used for bounded operators between two Banach spaces $\cX_1$ and
$\cX_2$. Moreover, $\cY_1 \dotplus \cY_2$ denotes the (not necessarily orthogonal)
direct sum of the subspaces $\cY_1$ and $\cY_2$ of a Banach (or Hilbert) space $\cY$.

The Banach space of compact operators
defined on $\cH$ is denoted by $\cB_{\infty}(\cH)$ and the $\ell^p$-based trace
ideals are denoted by $\cB_p(\cH)$, $p \geq 1$. The Fredholm determinant for
trace class perturbations of the identity in $\cH$ is denoted by ${\det}_{\cH}(\cdot)$,
the trace for trace class operators in $\cH$ will be denoted by ${\tr}_{\cH}(\cdot)$.

For brevity, the identity operator in $L^2((a,b); rdx)$ will be denoted by
$I_{(a,b)}$ and that in $\bbC^n$ by $I_n$, $n\in\bbN$. For simplicity of
notation, the subscript
$L^2((a,b); rdx)$ will typically be omitted in the scalar product
$(\cdot, \cdot)_{L^2((a,b); rdx)}$ in the proofs of our results in
Sections \ref{s2}--\ref{s5}. For an $n \times n$ matrix
$M \in \bbC^{n\times n}$, its operator norm, $\|M\|_{\cB(\bbC^n)}$, will
simply be abbreviated by $\|M\|$.

\section{Basics on the Classification and Parametrization of All Self-Adjoint Regular 
Sturm--Liouville Operators}
\lb{s1a}

In this section we recall several convenient parametrizations of all self-adjoint
extensions associated with a regular, symmetric, second-order differential expression
as discussed in detail, for instance, in \cite[Theorem\ 13.15]{We03} and \cite[Theorem\ 10.4.3]{Ze05}. 
While the first part of this section is of an introductory character and serves as background material 
for the bulk of this paper, its second part provides a detailed discussion of the extent to which these 
parametrizations uniquely characterize self-adjoint extensions. 

Throughout this paper we make the following set of assumptions:

\begin{hypothesis} \lb{hA.1}
Suppose $p, q, r$ satisfy the following conditions: \\
$(i)$ \,\, $r>0$ a.e.\ on $(a,b)$, $r\in L^1((a,b); dx)$. \\
$(ii)$ \, $p>0$  a.e.\ on $(a,b)$, $1/p\in L^1((a,b); dx)$. \\
$(iii)$ $q\in L^1((a,b); dx)$, $q$ is real-valued a.e.\ on $(a,b)$.
\end{hypothesis}

Given Hypothesis \ref{hA.1}, we take $\tau$ to be the Sturm--Liouville-type differential expression
defined by
\begin{equation}\label{A.1}
\tau=\frac{1}{r(x)}\left[-\frac{d}{dx}p(x)\frac{d}{dx} + q(x)\right],\quad x\in(a,b),\quad -\infty < a < b < \infty,
\end{equation}
and note that $\tau$ is regular on $[a,b]$. In addition, the following convenient notation for the 
\emph{first quasi-derivative} is introduced, 
\begin{equation} 
y^{[1]}(x)=p(x)y'(x) \, \text{ for a.e.\ $x\in(a,b)$, } \, y \in AC([a,b]).     
\end{equation}
Here $\AC([a,b])$ denotes the set of absolutely continuous functions on $[a,b]$. 

Given that $\tau$ is regular on $[a,b]$, the \emph{maximal operator}
$H_\max$ in $L^2((a,b);rdx)$ associated with $\tau$ is defined by
\begin{align}
&H_\max f=\tau f,   \\
& \, f \in \dom(H_\max)= \big\{g\in L^2((a,b);rdx) \, \big| \, g,  g^{[1]}\in \AC([a,b]); \,
\tau g\in  L^2((a,b);rdx)\big\},   \no 
\intertext{while the \emph{minimal operator} $H_{\min}$  in $L^2((a,b);rdx)$ associated with 
$\tau$ is given by} 
&H_{\min} f=\tau f,    \no \\
& \, f \in \dom(H_{\min})= \big\{g\in L^2((a,b);rdx) \, \big| \, g,  g^{[1]}\in \AC([a,b]);    \lb{A.6} \\
&\hspace*{7mm}g(a)=g^{[1]}(a)=g(b)=g^{[1]}(b)=0; \, \tau g\in L^2((a,b);rdx)\big\}.      \no 
\end{align}

We recall that an operator $\widetilde H$ in $L^2((a,b);rdx)$ is an extension
of $H_{\min}$, and denoted so by writing $H_{\min}\subseteq\widetilde H$, when
 $\dom(H_{\min})\subseteq\dom(\widetilde H)$,
and $\widetilde Hf=H_{\min} f$ for all $f\in\dom(H_{\min})$. $\widetilde H$ is \emph{symmetric} when its
adjoint operator $\widetilde H^*$ is an extension of $\widetilde H$, that is,
$\widetilde H \subseteq
\widetilde H^*$, and said to be \emph{self-adjoint} when $\widetilde H=\widetilde H^*$.
We note that the operator $H_{\min}$ is symmetric and that
\begin{equation}
H_{\min}^*=H_\max, \quad H_\max^*=H_{\min},
\end{equation}
(cf. Weidmann \cite[Theorem 13.8]{We00}).  If $\widetilde H$ is a symmetric extension of $H_{\min}$, then, by taking adjoints, one has
\begin{equation}
H_{\min}\subseteq \widetilde H \subseteq H_{\max},
\end{equation}
so that any symmetric extension of $H_{\min}$ is actually a restriction of $H_{\max}$.  Thus, in order to completely specify a symmetric (in particular, self-adjoint) extension of $H_{\min}$, it suffices to specify its domain of definition.

We now summarize material found, for instance, in \cite[Ch.\ 13]{We03} and 
\cite[Sects.\ 10.3, 10.4]{Ze05} in which self-adjoint extensions of the minimal operator $H_{\min}$ are characterized.

\begin{theorem} [See, e.g., \cite{We03}, Theorem\ 13.14; \cite{Ze05}, Theorem\ 10.4.2] \lb{tA.2}
Assume Hypothesis \ref{hA.1} and suppose that $\widetilde H$  is an  extension of the minimal operator $H_{\min}$ defined in \eqref{A.6}. Then the following hold: \\
$(i)$ $\widetilde H$ is a self-adjoint extension of $H_{\min}$ if and only if there exist $2\times 2$ matrices $A$ and $B$ with complex-valued entries satisfying
\begin{equation} \lb{A.7}
\rank (A\ \ B)=2, \quad AJA^*=BJB^*, \quad
J=\begin{pmatrix} 0 & -1 \\ 1 & 0\end{pmatrix},
\end{equation}
 with $\widetilde Hf= \tau f$,  and where
\begin{equation} \lb{A.8}
\dom(\widetilde H)=\left\{ g\in\dom(H_{\max})\, \bigg| \, A\binom{g(a)}{g^{[1]}(a)} =
B\binom{g(b)}{g^{[1]}(b)}\right\}.
\end{equation}
Henceforth, the self-adjoint extension $\widetilde H$ corresponding to the matrices
 $A$ and $B$ will  be denoted by $H_{A,B}$. \\
$(ii)$ For $z\in\rho(H_{A,B})$, the resolvent $H_{A,B}$ is of the form
\begin{equation}
\begin{split}
\left((H_{A,B} - z I_{(a,b)})^{-1}f\right)(x)= \int_a^br(x')dx' \, G_{A,B}(z,x,x')f(x'),\\
f\in L^2((a,b);rdx),
\end{split}
\end{equation}
where the Green's function $G_{A,B}(z,x,x')$ is of the form given by
\begin{equation}
\hspace*{10pt}G_{A,B}(z,x,x')= \begin{cases}
\sum_{j,k=1}^{2} m_{j,k}^+(x)u_j(z,x)u_k(z,x'), & x'\le x,\\[2pt]
\sum_{j,k=1}^{2} m_{j,k}^-(x)u_j(z,x)u_k(z,x'), & x'> x.
\end{cases}
\end{equation}
Here $\{u_1, u_2\}$ represents a fundamental set of solutions for $(\tau -z)u=0$ and
$m_{j,k}^\pm$, $1 \leq j, k \leq 2$, are appropriate constants. In particular,
\begin{equation}
(H_{A,B} - z I_{(a,b)})^{-1} \in \cB\big(L^2((a,b); r dx)\big), \quad z \in \rho(H_{A,B}).
\end{equation}
$(iii)$ $H_{A,B}$ has purely discrete spectrum with eigenvalues of multiplicity
at most $2$. Moreover, if $\sigma(H_{A,B})=\{\lambda_{A,B,j}\}_{j\in\bbN}$,
then
\begin{equation}
\sum_{\substack{j\in\bbN\\ \lambda_{A,B,j}\ne 0}}|\lambda_{A,B,j}|^{-2}<\infty.
\end{equation}
\end{theorem}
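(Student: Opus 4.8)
The plan is to treat the three parts in turn, anchoring each in a single standard tool: Green's formula for the self-adjointness criterion in (i), an explicit resolvent kernel for (ii), and the Hilbert--Schmidt property of that kernel for (iii). Since $\tau$ is regular on the finite interval $[a,b]$, the quotient $\dom(H_\max)/\dom(H_{\min})$ is four-dimensional (the four boundary values $g(a), g^{[1]}(a), g(b), g^{[1]}(b)$ may be prescribed freely modulo $\dom(H_{\min})$), so $H_{\min}$ has deficiency indices $(2,2)$ and its self-adjoint extensions correspond, via von Neumann theory, to two-dimensional Lagrangian subspaces of the boundary data. For part (i) I would start from the Lagrange identity
\begin{equation}
(H_\max f, g) - (f, H_\max g) = [f,g](b) - [f,g](a), \quad f,g \in \dom(H_\max),
\end{equation}
where $[f,g](x) = \overline{f(x)}\,g^{[1]}(x) - \overline{f^{[1]}(x)}\,g(x)$. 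Introducing the boundary vectors $\widehat{g} = \binom{g(a)}{g^{[1]}(a)}$ and $\check{g} = \binom{g(b)}{g^{[1]}(b)}$, the right-hand side equals $(\widehat f, J \widehat g)_{\bbC^2} - (\check f, J \check g)_{\bbC^2}$ up to an overall sign fixed by $J$. A restriction $\widetilde H$ of $H_\max$ to the domain \eqref{A.8} is symmetric precisely when this boundary form vanishes on all pairs obeying $A\widehat g = B\check g$, and self-adjoint precisely when the associated subspace of $\bbC^4$ is Lagrangian (maximal isotropic) for the form. The condition $\rank(A\ B)=2$ guarantees that $(A\ {-B})$ imposes exactly two independent constraints, so the solution subspace has the correct dimension two; a direct computation then identifies isotropy of that subspace with the algebraic identity $AJA^*=BJB^*$ (the sign ambiguity cancels, since $J^{-1}=-J$). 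For the converse, every self-adjoint extension corresponds to such a Lagrangian subspace; choosing a spanning pair of boundary vectors and reading off the relation $A\widehat g = B\check g$ produces admissible matrices $A,B$, and the dimension count matches the von Neumann parametrization, so the correspondence is onto.

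For part (ii), fix $z \in \rho(H_{A,B})$ and choose a fundamental system $u_1(z,\cdot), u_2(z,\cdot)$ of $(\tau - z)u = 0$. I would construct the resolvent as an integral operator via the variation-of-parameters ansatz displayed in the statement, determining the constants $m_{j,k}^\pm$ from two requirements: the Wronskian jump across the diagonal $x = x'$ (so that applying $(\tau - z)$ to the kernel reproduces the identity), and the boundary conditions encoded by $A,B$. Solvability and uniqueness of the $m_{j,k}^\pm$ is exactly the statement $z \in \rho(H_{A,B})$. Since each $u_j(z,\cdot)$ is continuous on the compact interval $[a,b]$, the kernel $G_{A,B}(z,\cdot,\cdot)$ is continuous off the diagonal and continuous across it (only the first-variable derivative jumps), hence bounded on $[a,b]^2$; together with $r \in L^1((a,b);dx)$ this yields boundedness of the resolvent.

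For part (iii), boundedness of $G_{A,B}(z,\cdot,\cdot)$ on the compact square together with $r \in L^1((a,b);dx)$ gives $G_{A,B}(z,\cdot,\cdot) \in L^2((a,b)^2; r\,dx\,r\,dx')$, so $(H_{A,B} - zI_{(a,b)})^{-1} \in \cB_2(L^2((a,b);rdx))$ is Hilbert--Schmidt, in particular compact. Compactness of the resolvent forces purely discrete spectrum, while the eigenspace at any $\lambda$ consists of solutions of the second-order equation $(\tau - \lambda)u = 0$ subject to \eqref{A.8}, hence has dimension at most two. The Hilbert--Schmidt bound gives $\sum_j |(\lambda_{A,B,j} - z)^{-1}|^2 < \infty$; taking $z=0$ when $0 \in \rho(H_{A,B})$, or otherwise discarding the finitely many zero eigenvalues and comparing $|\lambda - z|^{-2}$ with $|\lambda|^{-2}$ for large $|\lambda|$, yields the asserted bound $\sum_{\lambda_{A,B,j}\ne 0} |\lambda_{A,B,j}|^{-2} < \infty$.

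The main obstacle is the equivalence in (i): pinning down that $\rank(A\ B)=2$ together with $AJA^*=BJB^*$ characterizes exactly the self-adjoint boundary conditions — both that these conditions are sufficient (the isotropic subspace is maximal, not merely symmetric) and that they are necessary (surjectivity of the matrix parametrization onto all self-adjoint extensions). The kernel construction in (ii) and the trace-ideal bookkeeping in (iii) are comparatively routine once the joint continuity of $G_{A,B}$ on the compact square is in hand.
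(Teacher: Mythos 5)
The paper offers no proof of this theorem --- it is imported verbatim from Weidmann \cite{We03} and Zettl \cite{Ze05} --- and your sketch correctly reproduces the standard argument those sources use: the Lagrange identity turns the boundary terms into the skew-Hermitian form induced by $J$ on $\bbC^4$, the conditions $\rank(A\ \ B)=2$ and $AJA^*=BJB^*$ say exactly that $\ker(A\ \ {-B})=\ran\binom{JA^*}{JB^*}$ is a two-dimensional isotropic (hence maximal isotropic) subspace for that form, and parts $(ii)$ and $(iii)$ follow from the variation-of-parameters kernel and its Hilbert--Schmidt property on the finite measure space $((a,b);rdx)$. I see no genuine gaps.
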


The characterization of self-adjoint extensions of $H_{\text{min}}$ in terms of pairs of matrices $(A,B)\in \bbC^{2\times 2}\times \bbC^{2\times 2}$ satisfying \eqref{A.7} is not unique in the sense that different pairs may lead to the same self-adjoint extension (i.e., it is possible that $H_{A,B}=H_{A',B'}$ with $(A,B)\neq (A',B')$) as the following simple example illustrates.

\begin{example}\lb{eA.6}
Let $A,B\in \bbC^{2\times 2}$ satisfy \eqref{A.7}.  If $C\in \bbC^{2\times 2}$ is
nonsingular, then the pair $(A',B')$ with $A'=CA$ and $B'=CB$ satisfies \eqref{A.7} and one readily 
verifies $\dom(H_{A,B})=\dom(H_{A',B'})$ so that $H_{A,B}=H_{A',B'}$.  One can actually show 
$H_{A,B}=H_{A',B'}$ {\it if and only if} $A'=CA$ and $B'=CB$ for a nonsingular matrix 
$C\in \bbC^{2\times 2}$, see Corollary \ref{cA.8} below.
\end{example}

Thus, Theorem \ref{tA.2}\,$(i)$ establishes the existence of a surjective
mapping from the set of all pairs $(A,B)\in \bbC^{2\times 2}\times
\bbC^{2\times 2}$ which satisfy \eqref{A.7}  to the set of self-adjoint
extensions of $H_{\text{min}}$,
\begin{equation}\lb{A.25}
(A,B)\mapsto H_{A,B},\ \text{where $A,B\in \bbC^{2\times 2}$ satisfy \eqref{A.7}.}
\end{equation}
Example \ref{eA.6} shows that the mapping \eqref{A.25} is not injective.

To obtain unique representations for the self-adjoint extensions of $H_{\min}$
described in Theorem \ref{tA.2}, we first take note of some additional
consequences for matrix pairs $(A,B)\in \bbC^{2\times 2}\times \bbC^{2\times 2}$
satisfying the conditions given in \eqref{A.7}.

\begin{lemma} \lb{lA.3}
Let $A$ and $B$ be $2\times 2$ matrices with complex-valued entries which satisfy the conditions given in \eqref{A.7}. Then the following hold: \\
$(i)$ $\rank(A)=\rank(B)\ne 0$. \\
$(ii)$ $\Col(A)\cap\Col(B)=\{0\}$ if and only if $\rank(A)=\rank(B)=1$,
 where $\Col(A)$ \hspace*{5mm} represents the span of the columns of a matrix $A$.
\end{lemma}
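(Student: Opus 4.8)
The plan is to prove both statements by direct linear-algebra manipulation of the structural identities in \eqref{A.7}, namely $\rank(A\ \ B)=2$ and $AJA^*=BJB^*$, exploiting that $J$ is invertible and skew-Hermitian ($J^*=-J=J^{-1}$).

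For part $(i)$, first I would establish the auxiliary identity that $AJB^*$ is of the special form forced by the self-adjointness condition. Since $AJA^*=BJB^*$ and both sides are $2\times 2$, I take determinants: using $\det(J)=1$ and $\det(A^*)=\overline{\det(A)}$, one finds $\abs{\det(A)}^2=\abs{\det(B)}^2$, hence $\abs{\det(A)}=\abs{\det(B)}$. This already shows $\det(A)=0$ if and only if $\det(B)=0$, so either both matrices have rank $2$ or both have rank at most $1$. The remaining task is to rule out the possibility that $\rank(A)=\rank(B)=0$, i.e.\ that $A=B=0$; but this is immediate from $\rank(A\ \ B)=2$, since the zero matrices would give the $2\times 4$ block zero, contradicting full row rank. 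It then remains to show $\rank(A)=\rank(B)$ in the genuinely reduced case, i.e.\ when the common determinant vanishes; I would argue that if, say, $\rank(A)=1$ while $\rank(B)=0$, then $B=0$ forces $AJA^*=0$, and since $J$ is invertible this gives $AA^*=0$ after absorbing $J$ appropriately, whence $A=0$, again a contradiction. Thus $\rank(A)=\rank(B)$ and this common value is nonzero.

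For part $(ii)$, I would argue in both directions. Suppose first $\rank(A)=\rank(B)=1$; then $\Col(A)$ and $\Col(B)$ are each one-dimensional subspaces of $\bbC^2$. If they coincided, then $\Col(A\ \ B)=\Col(A)=\Col(B)$ would be one-dimensional, contradicting $\rank(A\ \ B)=2$; hence $\Col(A)\cap\Col(B)=\{0\}$, as two distinct lines in $\bbC^2$ meet only at the origin. Conversely, suppose $\Col(A)\cap\Col(B)=\{0\}$. By part $(i)$ the common rank is either $1$ or $2$; if it were $2$, then $\Col(A)=\Col(B)=\bbC^2$, whose intersection is all of $\bbC^2\neq\{0\}$, a contradiction. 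Hence $\rank(A)=\rank(B)=1$.

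The main obstacle I anticipate lies in the reduced-rank bookkeeping of part $(i)$: carefully excluding the degenerate subcases using the interplay between $\rank(A\ \ B)=2$ and the condition $AJA^*=BJB^*$, rather than in any deep computation. The determinant identity $\abs{\det(A)}=\abs{\det(B)}$ does the heavy lifting, collapsing the problem to ruling out the spurious possibility that one matrix vanishes while the other does not; once that is dispatched via the rank-$2$ condition on the augmented matrix $(A\ \ B)$, everything else follows from elementary dimension counting in $\bbC^2$.
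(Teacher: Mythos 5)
Your overall strategy coincides with the paper's: extract $\abs{\det(A)}=\abs{\det(B)}$ from $AJA^*=BJB^*$, use $\rank(A\ \ B)=2$ to kill the degenerate cases, and settle part $(ii)$ by dimension counting in $\bbC^2$. Part $(ii)$ and most of part $(i)$ are fine. However, there is one genuinely false step in your treatment of the mixed case $\rank(A)=1$, $\rank(B)=0$: you claim that $B=0$ forces $AJA^*=0$ (true, since then $AJA^*=BJB^*=0$) and that ``since $J$ is invertible this gives $AA^*=0$ after absorbing $J$ appropriately, whence $A=0$.'' That implication does not hold. The matrix $J$ is skew-Hermitian with eigenvalues $\pm i$, not positive definite, and there is no way to ``absorb'' it: for
\begin{equation*}
A=\begin{pmatrix}1&0\\0&0\end{pmatrix} \quad\text{one has}\quad
AJA^*=\begin{pmatrix}0&-1\\0&0\end{pmatrix}\begin{pmatrix}1&0\\0&0\end{pmatrix}=0,
\end{equation*}
yet $A\neq 0$. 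More generally, any rank-one $A$ whose rows are multiples of a vector $(\alpha_1\ \alpha_2)$ with $\Im(\alpha_1\overline{\alpha_2})=0$ satisfies $AJA^*=0$; this is exactly the situation that actually occurs in the separated-boundary-condition case (cf.\ the proof of Theorem \ref{tA.4}), so the vanishing of $AJA^*$ carries no information about the vanishing of $A$.

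The good news is that the case you were trying to exclude dies for a much simpler reason, which is implicitly how the paper argues: if $\rank(B)=0$ then $B=0$, so $\rank(A\ \ B)=\rank(A)=1\neq 2$, contradicting \eqref{A.7} directly without any appeal to the identity $AJA^*=BJB^*$. Replacing your ``absorb $J$'' step by this one-line observation repairs the proof; with that substitution your argument is correct and essentially identical to the paper's.
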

\begin{proof}
With $A$ and $B$ representing $2\times 2 $ matrices with
complex-valued entries that satisfy \eqref{A.7}, we note that $|\det(A)|^2=|\det(B)|^2$; which,
together with $\rank(A\ \ B)=2$, implies that $\rank(A)=\rank(B)$. Let
\begin{equation}
\rho=\rank(A)=\rank(B),
\end{equation}
and observe that while a priori $\rho \in \{0,1,2\}$, in fact, $\rho\ne 0$; for otherwise one concludes that $\rank (A\ B)=0$, in violation of the rank condition imposed in \eqref{A.7}.

 If $\rho=1$ and $\Col(A)\cap\Col(B)$ contains a nonzero vector, then
 $\Col(A)=\Col(B)$, and $\rank (A\ B)=1$, thus violating the rank condition given in \eqref{A.7}. If $\rho=2$, then $\Col(A)\cap\Col(B)=\bbC^2$. Thus,
 $\Col(A)\cap\Col(B)=\{0\}$ if and only if $\rho=1$ when $A$ and $B$
 satisfy the conditions provided in \eqref{A.7}.
\end{proof}

The next result provides unique characterizations for all self-adjoint
extensions of $H_{\min}$ and hence can be viewed as a refinement of Theorem \ref{tA.2}.

\begin{theorem} [See, e.g., \cite{We03}, Theorem\ 13.15; \cite{Ze05}, Theorem\ 10.4.3] \lb{tA.4}
Assume Hypothesis \ref{hA.1}.  Let $H_{\min}$ be the minimal operator associated with
$\tau$ and defined in \eqref{A.6} and $H_{A,B}$ a self-adjoint extension of the minimal operator as characterized
in Theorem\ \ref{tA.2}; then, the following hold: \\
$(i)$ $H_{A',B'}$ is a self-adjoint extension of $H_{\min}$ where $\rank(A')=\rank(B')=1$
if and only if $H_{A',B'}=H_{A,B}$,  where
\begin{equation} \lb{A.13}
A=\begin{pmatrix}\cos(\theta_a)&\sin(\theta_a)\\0&0 \end{pmatrix},\quad
B=\begin{pmatrix}0&0\\ -\cos(\theta_b)&\sin(\theta_b) \end{pmatrix},
\end{equation}
for a unique pair $\theta_a, \theta_b\in[0,\pi),$ where
\begin{equation} \lb{A.14}
\begin{split}
\dom(H_{A,B})=\{ g\in\dom(H_{\max}) \, | \,
 g(a)\cos(\theta_a) + g^{[1]}(a)\sin(\theta_a) & = 0,  \\
 g(b)\cos(\theta_b) - g^{[1]}(b)\sin(\theta_b) & = 0 \}.
\end{split}
\end{equation}
$(ii)$ $H_{A',B'}$ is a self-adjoint extension of $H_{\min}$  with
$\rank(A')=\rank(B')=2$ if and only if $H_{A',B'}=H_{A,B}$,  where
\begin{equation} \lb{A.15}
A=e^{i\phi} F,\quad B=I_2,
\end{equation}
for a unique $\phi\in[0,2\pi)$, and unique $F \in \SL_2(\mathbb{R})$, and
where
\begin{equation} \lb{A.16}
\dom(H_{A,B})=\left\{g\in\dom(H_{\max})
 \, \bigg| \, \binom{g(b)}{g^{[1]}(b)}=e^{i\phi} F
\binom{g(a)}{g^{[1]}(a)} \right\} .
\end{equation}
\end{theorem}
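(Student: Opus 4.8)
The plan is to exploit the freedom, established in Example \ref{eA.6}, that left multiplication of the pair $(A,B)$ by a nonsingular $C \in \bbC^{2\times 2}$ leaves the extension unchanged, $H_{A,B} = H_{CA,CB}$, and to use this freedom to drive a given pair $(A',B')$ into the canonical normal forms \eqref{A.13} or \eqref{A.15} according to the common rank $\rho = \rank(A') = \rank(B') \in \{1,2\}$ supplied by Lemma \ref{lA.3}. Throughout, the self-adjointness constraint $AJA^* = BJB^*$ from \eqref{A.7} is the mechanism forcing the surviving entries to be (real multiples of) the trigonometric data, and the residual diagonal scaling freedom in $C$ is spent to normalize the parameter ranges.

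For part $(i)$, $\rho = 1$: by Lemma \ref{lA.3}$(ii)$ the column spaces $\Col(A')$ and $\Col(B')$ are two distinct lines spanning $\bbC^2$. I would choose a nonsingular $C$ carrying $\Col(A')$ onto $\spn\{(1,0)^\top\}$ and $\Col(B')$ onto $\spn\{(0,1)^\top\}$; then $CA'$ has vanishing second row, $CB'$ vanishing first row, and $H_{A',B'} = H_{CA',CB'}$ by Example \ref{eA.6}. Writing the surviving rows as $(a_1,a_2)$ and $(b_1,b_2)$, a direct computation shows $AJA^*$ and $BJB^*$ are diagonal, and the constraint \eqref{A.7} reduces to $\Im(a_2\bar a_1) = 0 = \Im(b_2\bar b_1)$, i.e.\ each surviving row is a complex scalar times a real vector. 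The leftover freedom to rescale the two rows of $C$ independently then lets me strip the common phases and normalize each real row to unit length, producing $(\cos\theta_a, \sin\theta_a)$ and $(-\cos\theta_b,\sin\theta_b)$ with $\theta_a,\theta_b \in [0,\pi)$; this is exactly \eqref{A.13}--\eqref{A.14}. The converse is the routine verification that any such $A,B$ satisfy \eqref{A.7} with $\rank = 1$ (indeed $AJA^* = BJB^* = 0$), so Theorem \ref{tA.2} applies.

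For part $(ii)$, $\rho = 2$: here $A',B'$ are invertible, so taking $C = (B')^{-1}$ gives $H_{A',B'} = H_{A,I_2}$ with $A := (B')^{-1}A'$, matching \eqref{A.16}, and \eqref{A.7} becomes $AJA^* = J$. Taking determinants and using $\det(J)=1$ yields $|\det(A)|^2 = 1$, so I may write $\det(A) = e^{2i\phi}$ and set $F := e^{-i\phi}A$, which has $\det(F)=1$ and still satisfies $FJF^* = J$. The decisive step is the universal $2\times 2$ identity $MJM^\top = \det(M)\,J$, valid for every $M \in \bbC^{2\times 2}$: applied to $F$ it gives $FJF^\top = J$ as well, so $FJ(F^\top - F^*)=0$, and cancelling the invertible factors $F$ and $J$ forces $F^\top = F^*$, i.e.\ $F$ is real. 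Hence $A = e^{i\phi}F$ with $F \in \SL_2(\bbR)$, which is \eqref{A.15}; the converse once more uses $FJF^\top = \det(F)J = J$ to verify \eqref{A.7}.

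Finally, uniqueness follows since the extension determines its domain, and hence---using surjectivity of the boundary-trace map $g \mapsto (g(a),g^{[1]}(a),g(b),g^{[1]}(b))$ onto $\bbC^4$---the actual boundary-value constraints: in case $(i)$ this fixes the two boundary lines at $a$ and at $b$, hence $\theta_a,\theta_b$ modulo $\pi$, pinning them in $[0,\pi)$; in case $(ii)$ it fixes the invertible map $A$ sending $(g(a),g^{[1]}(a))^\top$ to $(g(b),g^{[1]}(b))^\top$, after which $e^{2i\phi} = \det(A)$ determines $\phi$ and $F = e^{-i\phi}A$. I expect the main obstacle to be the reality argument in $(ii)$---passing from the Hermitian constraint $FJF^* = J$ to an honestly real $F$---for which comparing $FJF^*$ with the algebraic $FJF^\top$ is the key trick; the fiddliest bookkeeping is tracking the residual scaling freedom in $(i)$ and fixing the angle and phase ranges so that the representatives are genuinely unique.
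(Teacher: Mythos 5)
Your proof is correct and follows essentially the same route as the paper's: both reduce a given pair $(A',B')$ to a normal form using the left-multiplication freedom $H_{A,B}=H_{CA,CB}$ together with the rank dichotomy of Lemma \ref{lA.3}, and both extract reality of the surviving data from the constraint $AJA^*=BJB^*$. Your packaging of the rank-two case via the identity $MJM^\top=\det(M)\,J$ is a slightly cleaner rendition of the paper's computation $B_{b,a}=-J(B_{b,a}^*)^{-1}J=e^{i\psi}\overline{B_{b,a}}$, but the mathematical content is identical; likewise your column-space normalization in the rank-one case is just a reparametrization of the paper's form \eqref{A.17}.

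One step deserves a caveat, although it is inherited from the theorem statement and the paper's own proof rather than introduced by you: in case $(ii)$ the relation $e^{2i\phi}=\det(A)$ determines $\phi\in[0,2\pi)$ only modulo $\pi$, so it does not single out $\phi$. Concretely, the pairs $(\phi,F)$ and $((\phi+\pi)\bmod 2\pi,\,-F)$ satisfy $e^{i\phi}F=e^{i(\phi+\pi)}(-F)$ with both $F,-F\in\SL_2(\bbR)$, hence define the same boundary condition \eqref{A.16}. The paper's uniqueness argument has the same blind spot, since $e^{i(\phi'-\phi)}F'F^{-1}=I_2$ also admits the solution $\phi'-\phi=\pi$, $F'=-F$; genuine uniqueness requires an extra normalization, e.g.\ restricting $\phi$ to $[0,\pi)$ or fixing the sign of $F$. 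Apart from this shared wrinkle, your argument is complete: the existence of the normalizing $C$ in case $(i)$, the computation showing that $AJA^*$ and $BJB^*$ are diagonal matrices supported in different entries and therefore must both vanish, and the passage from $FJF^*=J=FJF^\top$ to reality of $F$ all check out.
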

\begin{proof}
First, with $A$ and $B$ defined either by \eqref{A.13} or by \eqref{A.15}, one
notes that $A$ and $B$ satisfy the properties in \eqref{A.7}. Hence, by
Theorem \ref{tA.2},  $H_{A,B}$ is a self-adjoint extension of the minimal operator $H_{\min}$. Clearly, when \eqref{A.13} holds, $\rank(A)=\rank(B)=1$, and when \eqref{A.15} holds, $\rank(A)=\rank(B)=2$.

With $A, B \in \bbC^{2 \times 2}$ satisfying \eqref{A.7}, $H_{A,B}$ as
characterized in Theorem \ref{tA.2} represents a self-adjoint extension of
the minimal operator $H_{\min}$.  By Lemma \ref{lA.3},
$\rho=\rank(A)=\rank(B)\ne 0$.

When $\rho=1$, the row vectors of $A$ and $B$ are linearly dependent, and
\begin{equation} \lb{A.17}
A=\begin{pmatrix}c\alpha_1& c\alpha_2\\ d\alpha_1& d\alpha_2 \end{pmatrix},\quad
B=\begin{pmatrix}c'\beta_1& c'\beta_2\\ d'\beta_1& d'\beta_2 \end{pmatrix},
\end{equation}
with $(c,d)\ne(0,0)$, $(c',d')\ne(0,0)$, $(\alpha_1,\alpha_2)\ne(0,0)$, $(\beta_1,\beta_2)\ne(0,0)$.
We note that $\Col(A)\cap\Col(B)=\{0\}$, which is equivalent to $\rho=1$, implies that
\begin{equation} \lb{A.18}
A\xi=B\eta\  \text{ only when }\, A\xi=0=B\eta,
\end{equation}
and hence that $AJA^*=0=BJB^*$. As a consequence, 
$\Im(\alpha_1\overline\alpha_2)= \Im(\beta_1\overline\beta_2)=0$; by which it follows that the 
$\bbC^2$-vectors $(\alpha_1,\alpha_2)$ and $(\beta_1,\beta_2)$ are  complex multiples of 
$\bbR^2$-vectors; thus, without loss of generality, we may assume in \eqref{A.17} that
\begin{equation} \lb{A.19}
\alpha_1=\cos(\theta_a), \quad \alpha_2=\sin(\theta_a),\quad
\beta_1= - \cos(\theta_b),\quad \beta_2=\sin(\theta_b),
\end{equation}
with $\theta_a,\theta_b\in[0,\pi)$. A second consequence of \eqref{A.18} is that
the domain of $H_{A,B}$, provided in \eqref{A.8}, is then given by
\begin{equation}
\dom(H_{A,B})=\left\{ g\in\dom(H_{\max})\,\bigg| \, A\binom{g(a)}{g^{[1]}(a)} =0=
B\binom{g(b)}{g^{[1]}(b)}\right\},
\end{equation}
and consequently, by \eqref{A.14}.

With $A$ and $B$ defined in \eqref{A.13}, and $\theta_a$ and $\theta_b$ defined
in \eqref{A.8}, $H_{A,B}$ is a self-adjoint extension with
$\rank(A)=\rank(B)=1$ as noted at the beginning of the proof.  Then, as a
consequence of the principle provided in \eqref{A.18} applied to $A$ and
$B$ , we see that $\dom(H_{A,B})=\dom(H_{A',B'})$ and hence that
$H_{A',B'}=H_{A,B}$.

Uniqueness of the representation given in \eqref{A.14} follows by noting that
if \eqref{A.14} holds for the distinct pairs $(\theta_a,\theta_b),
(\theta_a',\theta_b')\in [0,\pi)\times [0,\pi)$, then
$\sin(\theta_a-\theta_a')=\sin(\theta_b-\theta_b')=0$.

When $\rho=2$, then $A$ and $B$ are invertible and hence the boundary condition present in the definition of the domain of $H_{A,B}$ in \eqref{A.18} can be rewritten as
\begin{equation} \lb{A.21}
\binom{g(b)}{g^{[1]}(b)}=B_{b,a}\binom{g(a)}{g^{[1]}(a)},
\quad B_{b,a}=B^{-1}A^{}.
\end{equation}
With $B_{b,a}={B}^{-1}A^{}$, one notes that $B_{b,a}JB_{b,a}^*=J$; hence, that
$|\det(B_{b,a})|=1$, and as a consequence that $\det(B_{b,a})=e^{i\psi}$. In addition,
$B_{b,a}=-J(B_{b,a}^*)^{-1}J= e^{i\psi}\overline{B_{b,a}}$ and hence that
$B_{b,a}=e^{i\psi/2}F$, where $F$ is a $2\times 2$ matrix with real-valued entries for which $\det(F)=1$, that is, $F\in \SL_2(\bbR)$. Thus, the boundary condition
in \eqref{A.21} can now be rewritten as
\begin{equation} \lb{A.22}
\binom{g(b)}{g^{[1]}(b)}=e^{i\phi}F \binom{g(a)}{g^{[1]}(a)},\quad \phi\in[0,2\pi),
\; F \in \SL_2(\bbR).
\end{equation}

Uniqueness of the representation given in \eqref{A.16} follows by noting that
if \eqref{A.16} holds for the distinct pairs $(\phi, F), (\phi', F') \in
[0,2\pi)\times \SL_2(\mathbb{R}) $, then $e^{i(\phi'-\phi)}F'F^{-1}=I_2$ and
hence that $\phi'=\phi$, $F'=F$.
\end{proof}

We now elaborate on two alternative characterizations for the self-adjoint
extensions of $H_{\min}$. These characterizations are summarized below in
Theorems \ref{tA.8} and \ref{tA.9}. The characterization given in
Theorem \ref{tA.8} is directly related to that found in Theorem \ref{tA.2}
and proves central to the development of Sections \ref{s4} and \ref{s5}. Like in
Theorem \ref{tA.2}, the extensions in Theorem \ref{tA.8} are not uniquely characterized.
By contrast, the characterization given in Theorem \ref{tA.9} provides a unique association between elements of the space of $2\times 2$ unitary matrices and
the set of all self-adjoint extensions of $H_{\min}$. Theorem \ref{tA.9} can be
derived from the theory of Hermitian relations as developed by Rofe-Beketov and Kholkin in 
Appendix A of \cite{RBK05}. In particular, Theorem \ref{tA.9} represents the scalar case of 
\cite[Theorem\ A.7]{RBK05}. We begin with a characterization of the self-adjoint extensions of 
$H_{\text{min}}$ in the language of {\it boundary trace maps} to be discussed in detail in Section \ref{s4}.

For a pair $\AB\in \bbC^{2\times 2}$ satisfying \eqref{A.7}, one introduces
the {\it general boundary trace map}, $\gamma_{A,B}$, associated with the
boundary $\{a,b\}$ of $(a,b)$ by
\begin{align}
\ga_{A,B} \colon \begin{cases}
C^1({[a,b]}) \rightarrow \bbC^2,
\\
u \mapsto A\begin{pmatrix}u(a)\\u^{[1]}(a)\end{pmatrix} - B\begin{pmatrix}u(b)\\u^{[1]}(b)\end{pmatrix}.
\end{cases}      \lb{A.26}
\end{align}
Comparing \eqref{A.26} with \eqref{A.8}, the boundary trace formalism allows one to write
\begin{equation} \lb{A.26b}
\dom(H_{A,B})=\{ g\in\dom(H_{\max})\, | \, \gamma_{A,B}g=0\}.
\end{equation}
Two special cases of \eqref{A.26} are to be distinguished, namely,
\begin{equation}
\gamma_D u = \begin{pmatrix}u(a)\\u(b)\end{pmatrix}, \quad
\gamma_N u = \begin{pmatrix}u^{[1]}(a)\\-u^{[1]}(b)\end{pmatrix}, \lb{A.27}
\end{equation}
that is,
\begin{align}
\gamma_D &= \gamma_{A_D,B_D}, \quad
A_D = \begin{pmatrix} 1 & 0 \\ 0 & 0 \end{pmatrix}, \quad
B_D = \begin{pmatrix} 0 & 0 \\ -1 & 0 \end{pmatrix}, \lb{A.28a}
\\
\gamma_N &= \gamma_{A_N,B_N}, \quad
A_N = \begin{pmatrix} 0 & 1 \\ 0 & 0 \end{pmatrix}, \quad
B_N = \begin{pmatrix} 0 & 0 \\ 0 & 1 \end{pmatrix}. \lb{A.28b}
\end{align}
The boundary trace maps $\gamma_D$ and $\gamma_N$ are canonical in the sense that any other boundary trace map $\gamma_{A,B}$ can be directly expressed in terms of $\gamma_D$ and $\gamma_N$ by
\begin{equation}\lb{A.29}
\gamma_{A,B}=D_{A,B}\gamma_{D}+N_{A,B}\gamma_{N},
\end{equation}
where the $2\times 2$ matrices $D_{A,B}$ and $N_{A,B}$ are given by
\begin{align}
D_{A,B}=\begin{pmatrix}A_{1,1}&-B_{1,1} \\ A_{2,1}&-B_{2,1}\end{pmatrix},\quad
N_{A,B}=\begin{pmatrix}A_{1,2}&B_{1,2} \\ A_{2,2}&B_{2,2}\end{pmatrix}. \lb{A.30}
\end{align}
By the elementary Lemma \ref{lA.7} below, the conditions in \eqref{A.7} are equivalent to
\begin{align}
\rank(D_{A,B}\ \ N_{A,B})=2, \quad D_{A,B}N_{A,B}^*=N_{A,B}D_{A,B}^*. \lb{A.31}
\end{align}
Therefore, one obtains an alternative characterization of all self-adjoint extensions of $H_{\text{min}}$ in terms of pairs of $2\times 2$ matrices satisfying the conditions in \eqref{A.31}.

The following result is elementary, but we include it for future reference.
\begin{lemma}\lb{lA.7}
Let $A$ and $B$ denote $2\times 2$ matrices with complex-valued entries.  Then $A$ and $B$ satisfy \eqref{A.7} if and only if
\begin{align}
X_D=\begin{pmatrix}A_{1,1}&-B_{1,1} \\ A_{2,1}&-B_{2,1}\end{pmatrix},\quad
X_{N}=\begin{pmatrix}A_{1,2}&B_{1,2} \\ A_{2,2}&B_{2,2}\end{pmatrix} \lb{A.31a}
\end{align}
satisfy
\begin{align}
\rank(X_D\ \ X_N)=2, \quad X_DX_N^*=X_NX_D^*. \lb{A.32}
\end{align}
\end{lemma}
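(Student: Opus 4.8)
The plan is to realize the pair $(X_D\ \ X_N)$ as a signed column rearrangement of $(A\ \ B)$ and then push both pairs of conditions through that rearrangement. Writing the columns of the $2\times 4$ matrix $(A\ \ B)$ as $a_1,a_2,b_1,b_2$ (so $a_1=(A_{1,1},A_{2,1})^\top$, $a_2=(A_{1,2},A_{2,2})^\top$, and similarly for $B$), one reads off directly from \eqref{A.31a} that $(X_D\ \ X_N)=(a_1\ \ {-b_1}\ \ a_2\ \ b_2)$. In other words, $(X_D\ \ X_N)=(A\ \ B)\,P$, where $P$ is the real invertible $4\times 4$ matrix obtained from the identity by interchanging its second and third columns and negating the $b_1$-column, namely
\begin{equation*}
P=\begin{pmatrix} 1 & 0 & 0 & 0 \\ 0 & 0 & 1 & 0 \\ 0 & -1 & 0 & 0 \\ 0 & 0 & 0 & 1\end{pmatrix}.
\end{equation*}
Since $P$ is invertible, right multiplication by $P$ preserves rank, so $\rank(A\ \ B)=\rank(X_D\ \ X_N)$; this already gives the equivalence of the two rank conditions in \eqref{A.7} and \eqref{A.32}.

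For the remaining (symmetry) conditions I would recast each as the vanishing of a skew-Hermitian sesquilinear form. Because $J^*=-J$, the matrix $AJA^*-BJB^*$ is skew-Hermitian, so $AJA^*=BJB^*$ is equivalent to $AJA^*-BJB^*=0$; likewise $X_DX_N^*-X_NX_D^*=(X_DX_N^*)-(X_DX_N^*)^*$ is skew-Hermitian, so $X_DX_N^*=X_NX_D^*$ is equivalent to $X_DX_N^*-X_NX_D^*=0$. The key step is the pair of block-multiplication identities
\begin{align*}
AJA^*-BJB^* &= (A\ \ B)\begin{pmatrix} J & 0 \\ 0 & -J\end{pmatrix}(A\ \ B)^*, \\
X_DX_N^*-X_NX_D^* &= (X_D\ \ X_N)\begin{pmatrix} 0 & I_2 \\ -I_2 & 0\end{pmatrix}(X_D\ \ X_N)^*,
\end{align*}
both of which follow immediately by carrying out the indicated block products.

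Combining these two identities with $(X_D\ \ X_N)=(A\ \ B)P$ reduces the entire equivalence to the single matrix identity
\begin{equation*}
P\begin{pmatrix} 0 & I_2 \\ -I_2 & 0\end{pmatrix}P^\top = -\begin{pmatrix} J & 0 \\ 0 & -J\end{pmatrix}
\end{equation*}
(here $P^*=P^\top$ since $P$ is real). Once this is checked, substitution yields $X_DX_N^*-X_NX_D^*=-(AJA^*-BJB^*)$, so one side vanishes precisely when the other does, which establishes the equivalence of the symmetry conditions and completes the lemma. The main obstacle is purely bookkeeping: correctly extracting $P$ from the index reshuffling in \eqref{A.31a} and then verifying the displayed $4\times 4$ identity without sign errors. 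I expect this to be a short explicit computation rather than a conceptual difficulty, consistent with the lemma being elementary. As an independent sanity check, one may instead verify the equivalence entrywise, comparing the four entries of $AJA^*-BJB^*$ against those of $X_NX_D^*-X_DX_N^*$ and observing that they agree.
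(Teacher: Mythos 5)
Your proof is correct. The paper's own ``proof'' of this lemma consists of declaring the rank equivalence clear and the symmetry equivalence an elementary calculation, so there is no substantive argument to compare against; your write-up is a legitimate filling-in of that calculation. Every step checks out: the columns of $(X_D\ \ X_N)$ are indeed $a_1,\,-b_1,\,a_2,\,b_2$, so $(X_D\ \ X_N)=(A\ \ B)P$ with your $P$; both block identities follow by direct block multiplication; and the $4\times4$ identity
\begin{equation*}
P\begin{pmatrix} 0 & I_2 \\ -I_2 & 0\end{pmatrix}P^\top
= -\begin{pmatrix} J & 0 \\ 0 & -J\end{pmatrix}
\end{equation*}
holds (I verified it entrywise), yielding the sharper conclusion $X_DX_N^*-X_NX_D^*=BJB^*-AJA^*$, from which the equivalence of the two symmetry conditions is immediate, while invertibility of $P$ gives the rank equivalence. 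Compared with the brute-force entrywise check the paper gestures at, your packaging---conjugating one skew-Hermitian form into the other by a signed permutation---is slightly more structured and explains \emph{why} the two conditions coincide; the only cost is the bookkeeping needed to extract $P$ and confirm the displayed identity, which you flag correctly.
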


\begin{proof}
The equivalence of the statements regarding the ranks in \eqref{A.7} and \eqref{A.32} is clear.  The equivalence of the matrix identities in \eqref{A.7} and \eqref{A.32} is an elementary calculation.
\end{proof}

The alternative characterization of self-adjoint extensions in terms of matrices satisfying \eqref{A.31} is summarized in the following theorem, and its connection to the characterization of self-adjoint extensions given by Theorem \ref{tA.2} is made explicit.

\begin{theorem}\lb{tA.8}
Assume Hypothesis \ref{hA.1}.  Suppose that $H$ is a symmetric extension of the minimal operator $H_{\min}$ defined in \eqref{A.6}. Then the following hold: \\
$(i)$  $H$ is a self-adjoint extension of $H_{\min}$ if and only if there exist $2\times 2$ matrices $X_D$ and $X_N$ with complex-valued entries satisfying
\begin{align}
\rank(X_D\ \ X_N)=2, \quad X_DX_N^*=X_NX_D^*. \lb{A.32a}
\end{align}
with
\begin{align}
Hf=\tau f,\quad f\in \dom(H)=\{g\in \dom(H_{\max})\, |\, X_D\gamma_Dg+X_N\gamma_Ng=0\}.\lb{A.33}
\end{align}
Henceforth, the self-adjoint extension $H$ corresponding to the matrices $X_D$ and $X_N$, and defined by \eqref{A.33}, will be denoted by $\widetilde H_{X_D,X_N}$.\\
$(ii)$  Given matrices $A,B\in \bbC^{2\times 2}$ satisfying \eqref{A.7}, the corresponding self-adjoint extension $H_{A,B}$ satisfies
\begin{equation}\lb{A.34}
H_{A,B}=\widetilde H_{X_D,X_N},
\end{equation}
with
\begin{equation}\lb{A.35}
X_D=D_{A,B},\quad X_N=N_{A,B},
\end{equation}
where $D_{A,B}$ and $N_{A,B}$ are defined by \eqref{A.30}.\\
$(iii)$  Given matrices $X_D,X_N\in \bbC^{2\times 2}$ satisfying \eqref{A.32} and the corresponding self-adjoint extension, $\widetilde H_{X_D,X_N}$, one has
\begin{equation}\lb{A.36}
\widetilde H_{X_D,X_N}=H_{A,B},
\end{equation}
with
\begin{align}
A=\begin{pmatrix}X_{D,1,1}&X_{N,1,1} \\ X_{D,2,1}&X_{N,2,1}\end{pmatrix},\quad
B=\begin{pmatrix}-X_{D,1,2}&X_{N,1,2} \\ -X_{D,2,2}&X_{N,2,2}\end{pmatrix}.\lb{A.37}
\end{align}
$(iv)$  $\widetilde H_{X_D,X_N}=\widetilde H_{X_D',X_N'}$ if and only if $X_D'=CX_D$ and $X_N'=CX_N$ for some nonsingular matrix $C\in \bbC^{2\times 2}$.
\end{theorem}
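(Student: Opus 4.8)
The plan is to reduce all four assertions to Theorem~\ref{tA.2}, the elementary Lemma~\ref{lA.7}, and the trace identity \eqref{A.29}, by exploiting that the passages \eqref{A.30} and \eqref{A.37} are mutually inverse bijections on matrix pairs: forming $(D_{A,B},N_{A,B})$ from $(A,B)$ via \eqref{A.30} and then recovering a pair from $(X_D,X_N)=(D_{A,B},N_{A,B})$ via \eqref{A.37} returns $(A,B)$, as one checks entry by entry, and likewise in the reverse order. Together with $\ga_{A,B}=D_{A,B}\ga_D+N_{A,B}\ga_N$ from \eqref{A.29}, this means the two boundary constraints $\ga_{A,B}g=0$ and $X_D\ga_Dg+X_N\ga_Ng=0$ carve out the identical subspace of $\dom(H_{\max})$ precisely when $(X_D,X_N)=(D_{A,B},N_{A,B})$.

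For $(i)$, if $H$ is self-adjoint, Theorem~\ref{tA.2}\,$(i)$ furnishes $A,B$ satisfying \eqref{A.7} with $\dom(H)=\{g\in\dom(H_{\max})\,|\,\ga_{A,B}g=0\}$ by \eqref{A.26b}; putting $X_D=D_{A,B},\ X_N=N_{A,B}$ and using \eqref{A.29} recasts this as the domain in \eqref{A.33}, and Lemma~\ref{lA.7} supplies \eqref{A.32a}. Conversely, given $X_D,X_N$ with \eqref{A.32a}, define $A,B$ by \eqref{A.37}; Lemma~\ref{lA.7} returns \eqref{A.7}, so $H_{A,B}$ is self-adjoint by Theorem~\ref{tA.2}, and the inversion property identifies its domain with \eqref{A.33}. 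Parts $(ii)$ and $(iii)$ then follow with no further work: in each the hypotheses pin $(X_D,X_N)$ to $(D_{A,B},N_{A,B})$ --- directly through \eqref{A.35} for $(ii)$, through the inversion of \eqref{A.37} for $(iii)$ --- whence \eqref{A.29} forces the domains to agree while both operators act as $\tau$, giving $H_{A,B}=\wti H_{X_D,X_N}$.

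Part $(iv)$ carries the actual content. The forward direction is immediate: $X_D'=CX_D$, $X_N'=CX_N$ with $C$ nonsingular gives $X_D'\ga_Dg+X_N'\ga_Ng=C(X_D\ga_Dg+X_N\ga_Ng)$, so the two domains coincide. For the converse the key input --- and the step I expect to be the main obstacle --- is the \textbf{surjectivity of the combined boundary map} $\Phi\colon g\mapsto(\ga_Dg,\ga_Ng)$ from $\dom(H_{\max})$ onto $\bbC^2\oplus\bbC^2\cong\bbC^4$. This is the standard regularity fact that the four functionals $g\mapsto g(a),g^{[1]}(a),g(b),g^{[1]}(b)$ are jointly surjective on $\dom(H_{\max})$, equivalently that $\dom(H_{\min})$ is exactly their common kernel and $\dim(\dom(H_{\max})/\dom(H_{\min}))=4$. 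Granting it, $\dom(\wti H_{X_D,X_N})=\Phi^{-1}\big(\ker(X_D\ \ X_N)\big)$, so $\wti H_{X_D,X_N}=\wti H_{X_D',X_N'}$ is equivalent to $\ker(X_D\ \ X_N)=\ker(X_D'\ \ X_N')$ as subspaces of $\bbC^4$.

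It remains to convert equal kernels into the asserted relation. By \eqref{A.32a} the $2\times 4$ blocks $(X_D\ \ X_N)$ and $(X_D'\ \ X_N')$ both have rank $2$; since for any matrix $M$ the kernel $\ker M$ is the annihilator of the row space of $M$ under the standard non-degenerate bilinear pairing on $\bbC^4$, equal kernels yield equal row spaces, and hence $(X_D'\ \ X_N')=C\,(X_D\ \ X_N)$ for a unique $C\in\bbC^{2\times 2}$, which is nonsingular because both blocks have rank $2$. Reading off the two $\bbC^2$-blocks gives $X_D'=CX_D$ and $X_N'=CX_N$. Thus the only non-formal ingredients are the surjectivity of $\Phi$ and the row-space/kernel duality; everything else is bookkeeping steered by \eqref{A.29} and Lemma~\ref{lA.7}.
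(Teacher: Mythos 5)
Your proof is correct and follows essentially the same route as the paper's: items $(i)$--$(iii)$ are reduced to Theorem \ref{tA.2} and Lemma \ref{lA.7} via the mutually inverse correspondences \eqref{A.30} and \eqref{A.37}, and item $(iv)$ passes from equality of domains to equality of the kernels of the rank-two $2\times 4$ blocks $(X_D\ \ X_N)$ and $(X_D'\ \ X_N')$, hence to a nonsingular left factor $C$. The only difference is one of explicitness: you spell out the surjectivity of $g\mapsto(\ga_D g,\ga_N g)$ from $\dom(H_{\max})$ onto $\bbC^4$ and the kernel/row-space duality, which the paper compresses into the assertion that the two homogeneous systems in \eqref{A.38a} are ``equivalent systems.''
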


\begin{proof}
We begin with item $(i)$.  Suppose $H$ is defined by \eqref{A.33} for a pair of matrices $X_D,X_N\in \bbC^{2\times 2}$ which satisfy \eqref{A.32a}.  For $A$ and $B$ as defined in \eqref{A.37}, Lemma \ref{lA.7} guarantees that \eqref{A.7} is satisfied.  By construction (cf. \eqref{A.29} and \eqref{A.30}),
\begin{equation}\lb{A.38}
X_D\gamma_Du+X_N\gamma_Nu=\gamma_{A,B}u, \quad u\in \dom(H_{\max}).
\end{equation}
Thus, comparing \eqref{A.26b} with the definition of $\dom(H)$ in \eqref{A.33}, one concludes that
$u\in \dom(H_{\max})$ belongs to $\dom(H)$ if and only if it belongs to $\dom(H_{A,B})$.  As a result, $H=H_{A,B}$, and it follows that $H$ is a self-adjoint extension of $H_{\min}$.

Conversely, suppose $H$ is a self-adjoint extension of $H_{\min}$.  According to Theorem \ref{tA.2}, $H=H_{A,B}$ for a pair of matrices $A,B\in \bbC^{2\times 2}$ which satisfy \eqref{A.7}.  Choosing $X_D=D_{A,B}$ and $X_N=N_{A,B}$ with $D_{A,B}$ and $N_{A,B}$ as defined in \eqref{A.30}, Lemma \ref{lA.7} guarantees that $X_D$ and $X_N$ satisfy the conditions in \eqref{A.32a}.  Comparing \eqref{A.26b} with \eqref{A.29}, gives \eqref{A.33}.  This completes the proof of item $(i)$.

Items $(ii)$ and $(iii)$ are now immediate consequences of the proof of item $(i)$.

Sufficiency in item $(iv)$ is clear since
\begin{equation}
X_D\gamma_Du+X_N\gamma_Nu=0 \iff CX_D\gamma_Du+CX_N\gamma_Nu=0, \quad
u\in \dom(H_{\max}),
\end{equation}
for any nonsingular $C\in \bbC^{2\times 2}$.  In order to establish necessity, we now assume that
$\widetilde H_{X_D,X_N}=\widetilde H_{X_D',X_N'}$ or, equivalently, that
$\dom(\widetilde H_{X_D,X_N})=\dom(\widetilde H_{X_D',X_N'})$. One observes that the latter
equality (of domains) means that for $u\in \dom(H_{\max})$,
\begin{equation}\lb{A.38a}
X_D\gamma_Du+X_N\gamma_Nu=0\iff X_D'\gamma_Du+X_N'\gamma_Nu=0.
\end{equation}
Viewing the two equations in \eqref{A.38a} as two homogeneous linear systems (in the variables $(u(a),u(b),u^{[1]}(a),-u^{[1]}(b))$) with coefficient matrices $(X_D\ \ X_N)$ and $(X_D'\ \ X_N')$, the condition in \eqref{A.38a} implies that these two systems are equivalent systems.  Therefore, there exists a nonsingular matrix $C\in \bbC^{2\times 2}$ relating the coefficient matrices according to
\begin{equation}\lb{A.38b}
(X_D'\ \ X_N')=C(X_D\ \ X_N).
\end{equation}
Consequently, $X_D'=CX_D$ and $X_N'=CX_N$, implying the necessity claim. This completes the
proof of item $(iv)$.
\end{proof}

\begin{corollary}\lb{cA.8}
$H_{A,B}=H_{A',B'}$ if and only if $A'=CA$ and $B'=CB$ for some nonsingular matrix
$C\in \bbC^{2\times 2}$.
\end{corollary}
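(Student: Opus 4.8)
The plan is to deduce this from Theorem \ref{tA.8}, transferring the question about pairs $(A,B)$ into the equivalent question about pairs $(D_{A,B},N_{A,B})$, where it has already been settled in Theorem \ref{tA.8}\,$(iv)$. Throughout I assume, as the statement implicitly requires, that both pairs $(A,B)$ and $(A',B')$ satisfy \eqref{A.7}, so that $H_{A,B}$ and $H_{A',B'}$ are genuine self-adjoint extensions of $H_{\min}$.

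The sufficiency direction is essentially the content of Example \ref{eA.6}: if $A'=CA$ and $B'=CB$ with $C$ nonsingular, then the boundary condition $A\binom{g(a)}{g^{[1]}(a)}=B\binom{g(b)}{g^{[1]}(b)}$ is equivalent to the condition obtained by multiplying both sides on the left by $C$, whence $\dom(H_{A,B})=\dom(H_{A',B'})$ and therefore $H_{A,B}=H_{A',B'}$.

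For the necessity direction I would invoke Theorem \ref{tA.8}\,$(ii)$ to write $H_{A,B}=\widetilde H_{D_{A,B},N_{A,B}}$ and $H_{A',B'}=\widetilde H_{D_{A',B'},N_{A',B'}}$, with $D_{\cdot,\cdot}$ and $N_{\cdot,\cdot}$ as in \eqref{A.30}. By Theorem \ref{tA.8}\,$(iv)$, the equality $H_{A,B}=H_{A',B'}$ holds if and only if there exists a nonsingular matrix $C\in \bbC^{2\times 2}$ with $D_{A',B'}=C D_{A,B}$ and $N_{A',B'}=C N_{A,B}$.

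The only remaining point, and the sole place calling for care, is to translate these two identities back into identities for $A'$ and $B'$. This rests on the observation that $D_{A,B}$ and $N_{A,B}$ are assembled from the columns of $A$ and $B$: writing $a_1,a_2$ for the columns of $A$ and $b_1,b_2$ for those of $B$, one has $D_{A,B}=(a_1\,|\,-b_1)$ and $N_{A,B}=(a_2\,|\,b_2)$. Since left multiplication by $C$ acts columnwise, $D_{A',B'}=C D_{A,B}$ is equivalent to the pair of relations $a_1'=Ca_1$ and $b_1'=Cb_1$, while $N_{A',B'}=C N_{A,B}$ is equivalent to $a_2'=Ca_2$ and $b_2'=Cb_2$. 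Reassembling the columns yields precisely $A'=CA$ and $B'=CB$, which establishes necessity. The sign carried by the $B$-columns inside $D_{A,B}$ cancels on both sides and poses no difficulty, so I expect no genuine obstacle beyond this columnwise bookkeeping.
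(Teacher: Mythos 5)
Your proposal is correct and follows essentially the same route as the paper: sufficiency via Example \ref{eA.6}, necessity by passing to $\widetilde H_{D_{A,B},N_{A,B}}$ through Theorem \ref{tA.8}\,$(ii)$, applying Theorem \ref{tA.8}\,$(iv)$ to get $D_{A',B'}=CD_{A,B}$ and $N_{A',B'}=CN_{A,B}$, and then recovering $A'=CA$, $B'=CB$. The paper carries out the final translation by writing out the matrix products entrywise (cf.\ \eqref{A.38e}), which is just your columnwise bookkeeping in expanded form.
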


\begin{proof}
Sufficiency is clear (and has, in fact, already been mentioned in Example \ref{eA.6}).  In order to prove necessity, suppose $H_{A,B}=H_{A',B'}$.  Then by Theorem \ref{tA.8}\,$(ii)$,
\begin{equation}\lb{A.38c}
\widetilde H_{D_{A,B},N_{A,B}} = H_{A,B} = H_{A',B'} = \widetilde H_{D_{A',B'},N_{A',B'}},
\end{equation}
where the matrices $D_{A,B}$ and $N_{A,B}$ are defined by \eqref{A.30} and $D_{A',B'}$ and $N_{A',B'}$ are defined analogously.  In light of \eqref{A.38c} and Theorem \ref{tA.8}\,$(iv)$, there exists a nonsingular matrix $C\in \bbC^{2\times 2}$ such that
\begin{equation}\lb{A.38d}
D_{A',B'} = CD_{A,B} \qquad N_{A',B'} = CN_{A,B}.
\end{equation}
Explicitly computing the matrix products in \eqref{A.38d} yields
\begin{align}
\begin{pmatrix}A_{1,1}'&-B_{1,1}'\\A_{2,1}'&-B_{2,1}'\end{pmatrix} &=
\begin{pmatrix}C_{1,1}A_{1,1}+C_{1,2}A_{2,1}&-C_{1,1}B_{1,1}-C_{1,2}B_{2,1}\\
C_{2,1}A_{1,1}+C_{2,2}A_{2,1}&-C_{2,1}B_{1,1}-C_{2,2}B_{2,1}  \end{pmatrix}\no\\
\begin{pmatrix}A_{1,2}'&B_{1,2}'\\A_{2,2}'&B_{2,2}'\end{pmatrix} &=
\begin{pmatrix}C_{1,1}A_{1,2}+C_{1,2}A_{2,2}&C_{1,1}B_{1,2}+C_{1,2}B_{2,2}\\
C_{2,1}A_{1,2}+C_{2,2}A_{2,2}&C_{2,1}B_{1,2}+C_{2,2}B_{2,2}  \end{pmatrix}.\lb{A.38e}
\end{align}
By equating coefficients in \eqref{A.38e}, one concludes that $A'=CA$ and $B'=CB$.
\end{proof}

The next result provides a unique characterization of self-adjoint Sturm--Liouville extensions in terms of unitary $2 \times 2$ matrices:

\begin{theorem}\lb{tA.9}
Assume Hypothesis \ref{hA.1}.  Suppose that $H$ is a symmetric extension of the minimal operator $H_{\min}$ defined in \eqref{A.6}.  Then the following hold:\\
$(i)$  $H$ is a self-adjoint extension of $H_{\min}$ if and only if there exists a unitary $U\in \bbC^{2\times 2}$ with
\begin{equation}\lb{A.39}
Hf=\tau f, \quad f\in \dom(H)=\{g\in \dom(H_{\max})\, |\, i(U-I_2)\gamma_Dg=(U+I_2)\gamma_Ng\}.
\end{equation}
Henceforth, the self-adjoint extension $H$ corresponding to the unitary $2\times 2$ matrix $U$ and defined by \eqref{A.39} will be denoted by $H_U$.\\
$(ii)$  Given a unitary matrix $U\in \bbC^{2\times 2}$, the corresponding self-adjoint extension $H_U$ satisfies
\begin{equation}\lb{A.39a}
H_U=\widetilde H_{X_{D,U},X_{N,U}},
\end{equation}
where $X_{D,U},X_{N,U}\in \bbC^{2\times 2}$ are defined by
\begin{equation}\lb{A.39b}
X_{D,U}=\frac{i}{2}(I_2-U)\qquad  X_{N,U}=\frac{1}{2}(U+I_2).
\end{equation}
The matrix $U$ can be recovered from
\begin{equation}\lb{A.39ba}
U=(X_{D,U}+iX_{N,U})^{-1}(iX_{N,U}-X_{D,U}).
\end{equation}
$(iii)$  Given matrices $X_D,X_N\in \bbC^{2\times 2}$ satisfying \eqref{A.32a}, the corresponding self-adjoint extension $\widetilde H_{X_D,X_N}$ satisfies
\begin{equation}\lb{A.39c}
\widetilde H_{X_D,X_N}=H_{U_{X_D,X_N}},
\end{equation}
where $U_{X_D,X_N}\in \bbC^{2\times 2}$ is the unitary matrix
\begin{equation}\lb{A.39d}
U_{X_D,X_N}=(X_D+iX_N)^{-1}(iX_N-X_D).
\end{equation}
$(iv)$  $H_U=H_{U'}$ for $2\times 2$ unitary matrices $U$ and $U'$ if and only if $U=U'$.  Thus, the mapping $U\mapsto H_U$, $U\in \bbC^{2\times 2}$ unitary, is a bijection.
\end{theorem}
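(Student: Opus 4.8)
The plan is to reduce everything to Theorem \ref{tA.8}, which already parametrizes all self-adjoint extensions by pairs $(X_D,X_N)$ satisfying \eqref{A.32a} modulo left multiplication by nonsingular matrices, and then to set up a bijective Cayley-type correspondence between such pairs and unitary $2\times 2$ matrices. First I would record the decisive algebraic identities for the forward map. Writing out \eqref{A.39b}, a direct computation gives $X_{D,U}+iX_{N,U}=iI_2$ and $iX_{N,U}-X_{D,U}=iU$. The first identity is visibly invertible, which instantly yields the rank condition in \eqref{A.32a} and, since $(iI_2)^{-1}(iU)=U$, also the recovery formula \eqref{A.39ba}; meanwhile the Hermitian symmetry $X_{D,U}X_{N,U}^*=X_{N,U}X_{D,U}^*=\tfrac{i}{4}(U^*-U)$ follows from $UU^*=I_2$. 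Thus $X_{D,U},X_{N,U}$ satisfy \eqref{A.32a}, so $H_U:=\widetilde H_{X_{D,U},X_{N,U}}$ is a well-defined self-adjoint extension by Theorem \ref{tA.8}\,$(i)$; together with the elementary rewriting of the constraint in \eqref{A.39} as $X_{D,U}\gamma_Dg+X_{N,U}\gamma_Ng=0$, this delivers sufficiency in $(i)$ and all of $(ii)$.

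For the converse direction, which underlies $(iii)$ and necessity in $(i)$, I would start from $X_D,X_N$ satisfying \eqref{A.32a}, set $P=X_D+iX_N$ and $Q=X_D-iX_N$, and define $U_{X_D,X_N}=-P^{-1}Q$ (matching \eqref{A.39d}). The heart of the argument, and the step I expect to require the most care, is showing that $P$ is invertible and that $U_{X_D,X_N}$ is unitary. Here the two conditions in \eqref{A.32a} conspire: using the symmetry $X_DX_N^*=X_NX_D^*$ the cross terms cancel, giving $PP^*=QQ^*=X_DX_D^*+X_NX_N^*=(X_D\ \ X_N)(X_D\ \ X_N)^*$, and the full row rank in \eqref{A.32a} forces this Gram matrix to be positive definite, hence $P$ invertible. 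Then $U_{X_D,X_N}U_{X_D,X_N}^*=P^{-1}(QQ^*)(P^*)^{-1}=P^{-1}(PP^*)(P^*)^{-1}=I_2$, so $U_{X_D,X_N}$ is unitary.

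To finish $(iii)$ I would identify the common left factor that matches the two boundary conditions. Substituting $U=-P^{-1}Q$ into \eqref{A.39b} and using $P+Q=2X_D$, $P-Q=2iX_N$ yields $X_{D,U}=iP^{-1}X_D$ and $X_{N,U}=iP^{-1}X_N$; thus with the nonsingular matrix $C=iP^{-1}$ one has $X_{D,U}=CX_D$ and $X_{N,U}=CX_N$, and Theorem \ref{tA.8}\,$(iv)$ gives $H_{U_{X_D,X_N}}=\widetilde H_{X_{D,U},X_{N,U}}=\widetilde H_{X_D,X_N}$. Necessity in $(i)$ is then immediate, since any self-adjoint $H$ equals some $\widetilde H_{X_D,X_N}$ by Theorem \ref{tA.8}\,$(i)$, hence equals $H_{U_{X_D,X_N}}$.

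Finally, for $(iv)$ I would exploit the rigidity already observed in the forward map. If $H_U=H_{U'}$, then $\widetilde H_{X_{D,U},X_{N,U}}=\widetilde H_{X_{D,U'},X_{N,U'}}$ by $(ii)$, so Theorem \ref{tA.8}\,$(iv)$ supplies a nonsingular $C$ with $(X_{D,U'}\ \ X_{N,U'})=C(X_{D,U}\ \ X_{N,U})$; forming the combination $X_{D,\cdot}+iX_{N,\cdot}=iI_2$ on both sides forces $iI_2=C(iI_2)$, i.e. $C=I_2$, whence $U'=U$ by \eqref{A.39ba}. Surjectivity of the map $U\mapsto H_U$ is precisely the necessity proved above, so it is a bijection. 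The only genuinely substantive point is the unitarity/invertibility step in the second paragraph; the remaining parts are short linear-algebra verifications built on the identities $X_{D,U}+iX_{N,U}=iI_2$ and $PP^*=QQ^*$.
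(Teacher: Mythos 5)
Your proposal is correct and follows essentially the same route as the paper: the Cayley-type correspondence \eqref{A.39b}/\eqref{A.39d}, verification of \eqref{A.32a} and of unitarity via the identity $PP^*=QQ^*=X_DX_D^*+X_NX_N^*$ together with the rank condition, and reduction of everything to Theorem \ref{tA.8}. The only notable (and welcome) streamlining is your use of the identities $X_{D,U}+iX_{N,U}=iI_2$ and $X_{D,U}=iP^{-1}X_D$, $X_{N,U}=iP^{-1}X_N$, which lets you settle item $(iii)$ and the boundary-condition equivalence by invoking Theorem \ref{tA.8}\,$(iv)$ with $C=iP^{-1}$, and item $(iv)$ by forcing $C=I_2$, in place of the paper's direct chain of equivalences \eqref{A.48} and its cancellation argument in \eqref{A.51}.
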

\begin{proof}
We begin with item $(i)$.  Suppose $H$ is defined by \eqref{A.39} for a fixed unitary matrix
$U\in \bbC^{2\times 2}$ and define the matrices $X_{D,U}$ and $X_{N,U}$ according to \eqref{A.39b}.  We claim that
\begin{equation}\lb{A.40a}
\text{$X_D:=X_{D,U}$ and $X_N:=X_{N,U}$ satisfy the conditions in \eqref{A.32a}.}
\end{equation}
Assuming \eqref{A.40a}, a self-adjoint extension $H_{X_{D,U},X_{N,U}}$ of $H_{\min}$ is defined by \eqref{A.33}.  Evidently, $u\in \dom(H_{\max})$ belongs to $\dom(H_{X_{D,U},X_{N,U}})$ if and only if it belongs to $\dom(H)$ as defined by \eqref{A.39}; hence, $\dom(H)=\dom(H_{X_{D,U},X_{N,U}})$.  As a result, $H=H_{X_{D,U},X_{N,U}}$ is a self-adjoint extension of $H_{\min}$.  We now proceed to verify the claim in \eqref{A.40a}.  To this end, one computes (applying unitarity of $U$),
\begin{equation}\lb{A.41}
\rank(X_{D,U}\ \ X_{N,U})=\rank [(X_{D,U}\ \ X_{N,U})(X_{D,U}\ \ X_{N,U})^*]=\rank I_2=2.
\end{equation}
Once more, unitarity of $U$ yields
\begin{equation}\lb{A.42}
X_{D,U}X_{N,U}^*=\frac{i}{4}(U^*-U),
\end{equation}
and consequently,
\begin{equation}\lb{A.43}
X_{N,U}X_{D,U}^*=(X_{D,U}X_{N,U}^*)^*=\bigg(\frac{i}{4}(U^*-U) \bigg)^*=\frac{i}{4}(U^*-U)=X_{D,U}X_{N,U}^*.
\end{equation}
Hence, \eqref{A.40a} is established, completing the proof that $H$ defined by \eqref{A.39} is a self-adjoint extension of $H_{\min}$.

Conversely, supposing that $H$ is a self-adjoint extension of $H_{\min}$, we will show that $H$ satisfies \eqref{A.39} for some unitary matrix $U\in \bbC^{2\times 2}$.  Since $H$ must be a restriction of
$H_{\max}$, \eqref{A.39} reduces to proving the existence of a unitary $U\in \bbC^{2\times 2}$ for which
\begin{equation}\lb{A.43a}
\dom(H)=\{g\in \dom(H_{\max})\, | \, i(U-I_2)\gamma_D g=(U+I_2)\gamma_Ng\}.
\end{equation}
According to Theorem \ref{tA.8}\,$(i)$, $H=\widetilde H_{X_D,X_N}$ for two matrices $X_D,X_N\in \bbC^{2\times 2}$ satisfying \eqref{A.32a}, and hence $\dom(H)$ is characterized by
\begin{equation}\lb{A.43b}
\dom(H)=\{g\in \dom(H_{\max})\, | \, X_D\gamma_Dg+X_N\gamma_Ng=0\}.
\end{equation}
Next, one observes that the matrix $(X_D+iX_N)$ is nonsingular; in fact,
\begin{align}
\rank(X_D+iX_N)&=\rank[(X_D+iX_N)(X_D+iX_N)^*]\no\\
&= \rank(X_DX_D^*+X_NX_N^*)\lb{A.44}\\
&= \rank[(X_D\ \ X_N)(X_D\ \ X_N)^*]\no\\
&= \rank(X_D\ \ X_N)=2.\lb{A.45}
\end{align}
To get \eqref{A.44} and \eqref{A.45} one makes use of the fact that $X_D$ and $X_N$ satisfy the conditions in \eqref{A.32a}.  The matrix $U_{X_D,X_N}$ defined by \eqref{A.39d} is unitary since
\begin{align}
&U_{X_D,X_N}U_{X_D,X_N}^*\no\\
&\quad=(X_D+iX_N)^{-1}(iX_N-X_D)(-iX_N^*-X_D^*)(X_D^*-iX_N^*)^{-1}\no\\
&\quad=(X_D+iX_N)^{-1}(X_NX_N^*+X_DX_D^*)(X_D^*-iX_N^*)^{-1}\lb{A.47}\\
&\quad=(X_D+iX_N)^{-1}(X_D+iX_N)(X_D^*-iX_N^*)(X_D^*-iX_N^*)^{-1}\no\\
&\quad=I_2,\no
\end{align}
using the identity $X_DX_N^*=X_NX_D^*$ twice.  Finally, \eqref{A.43b} together with the following chain of equivalences,
\begin{align}
& i(U_{X_D,X_N}-I_2)\gamma_Du=(U_{X_D,X_N}+I_2)\gamma_Nu\no\\
& \quad \Longleftrightarrow  U_{X_D,X_N}(i\gamma_Du-\gamma_Nu)=i\gamma_Du+\gamma_Nu \no\\
& \quad \Longleftrightarrow  (iX_N-X_D)(i\gamma_Du-\gamma_Nu)=(X_D+iX_N)(i\gamma_Du+\gamma_Nu)\no\\
& \quad \Longleftrightarrow  -X_N\gamma_Du-iX_N\gamma_Nu-iX_D\gamma_Du+X_D\gamma_Nu\no\\
&\qquad \quad \qquad = iX_D\gamma_Du+X_D\gamma_Nu-X_N\gamma_Du+iX_N\gamma_Nu\no\\
& \quad \Longleftrightarrow  X_D\gamma_Du+X_N\gamma_Nu=0,\quad u\in \dom(H_{\max}),\lb{A.48}
\end{align}
yields \eqref{A.43a} with $U=U_{X_D,X_N}$ defined by \eqref{A.39d}.  This completes the proof of item $(i)$.

Regarding item $(ii)$, \eqref{A.39a} with \eqref{A.39b} is an immediate consequences of the proof
of item $(i)$, while \eqref{A.39ba} is an elementary calculation using \eqref{A.39b}.  Item $(iii)$ is an immediate consequence of the proof of item $(i)$.

Sufficiency in item $(iv)$ is clear.  To establish necessity, suppose $H_U=H_{U'}$ for some
$2\times 2$ unitary matrices $U$ and $U'$.  Then one has
\begin{equation}\lb{A.49}
\widetilde H_{X_{D,U},X_{N,U}}=H_U=H_{U'}=\widetilde H_{X_{D,U'},X_{N,U'}},
\end{equation}
with $X_{D,U}$ and $X_{N,U}$ as defined in \eqref{A.39b}, and $X_{D,U'}$ and $X_{N,U'}$ defined analogously.  By Theorem \ref{tA.8}\,$(iv)$, \eqref{A.49} implies the existence of a nonsingular matrix $C\in \bbC^{2\times 2}$ such that
\begin{equation}\lb{A.50}
X_{D,U'}=CX_{D,U}, \quad X_{N,U'}=CX_{N,U}.
\end{equation}
Using \eqref{A.39ba} to recover $U'$ along with the identities in \eqref{A.50}, one has
\begin{align}
U'&=(X_{D,U'}+iX_{N,U'})^{-1}(iX_{N,U'}-X_{D,U'})\no\\
&=(CX_{D,U}+iCX_{N,U})^{-1}(iCX_{N,U}-CX_{D,U})\no\\
&=(X_{D,U}+iX_{N,U})^{-1}C^{-1}C(iX_{N,U}-X_{D,U})\no\\
&=(X_{D,U}+iX_{N,U})^{-1}(iX_{N,U}-X_{D,U})\no\\
&=U.\lb{A.51}
\end{align}
To get \eqref{A.51}, one applies the reconstruction formula in \eqref{A.39ba} (this time for $U$).
\end{proof}

Interest in the issue of parametrizing self-adjoint extensions was revived by Kostrykin and Schrader in the context of quantum graphs in \cite{KS99}, \cite{KS00}. In addition to the fundamental treatment of unique characterizations of all self-adjoint extensions in terms of unitary matrices and boundary conditions of the type appearing in \cite[Theorem\ A.7]{RBK05}, the corresponding extension to the more general case of Laplacians on quantum graphs has also been studied in \cite{BL10}, \cite{Ha00a}--\cite{Ha05},
\cite[Ch.\ 3]{Ku12}, \cite{KN10}, and \cite[Sect.\ 3]{No07}. The characterization of self-adjoint extensions
in terms of pairs of matrices $X_D,X_N\in \bbC^{2\times 2}$ satisfying \eqref{A.32} is given in \cite{BL10}
in the more general context of Laplacians on quantum graphs.

\section{Self-Adjoint Extensions in Terms of Krein's Formula}    \lb{s2}

The principal aim in this section is to relate resolvents of different self-adjoint extensions of 
$H_{\min}$ via Krein's resolvent formula. For an abstract approach to the latter we refer to 
Appendix \ref{sB}.

In accordance with Theorem \ref{tA.4}, we now introduce the following two families of self-adjoint 
extensions of the minimal operator $H_{\min}$: The operator $\Hte$ in $L^2((a,b);rdx)$,  
\begin{align}
& \Hte f= \tau f,  \quad \te_a, \te_b\in [0,\pi),     \no   \\
& \, f\in\dom(\Hte)=\big\{ g \in L^2((a,b);rdx)\,\big|\, g, g^{[1]} \in \AC([a,b]);  \lb{3.1}  \\
& \hspace*{7mm} \cos(\theta_a)g(a)+\sin(\theta_a)g^{[1]}(a)=0, \,
\cos(\theta_b)g(b)-\sin(\theta_b)g^{[1]}(b)=0; \no \\
& \hspace*{7.5cm}\tau g \in L^2((a,b);rdx)\big\},   \no
\end{align}
and for each $F=(F_{j,k})_{1\leq j,k\leq 2}\in \SL_2(\bbR)$
and $\phi \in [0,2\pi)$, the self-adjoint extension $H_{F,\phi}$ in $L^2((a,b); rdx)$
of $H_{\min}$ defined by
\begin{align}
&H_{F,\phi}f=\tau f, \quad F \in \SL_2(\bbR), \ \phi \in [0,2\pi),\no\\
& \, f\in \dom(H_{F,\phi})= \bigg\{g\in L^2((a,b);rdx) \,\bigg|\, g,g^{[1]}\in \AC([a,b]);
\lb{3.4}\\
& \hspace*{3cm} \begin{pmatrix} g(b) \\ g^{[1]}(b) \end{pmatrix}=
e^{i\phi} F \begin{pmatrix} g(a) \\ g^{[1]}(a) \end{pmatrix}; \,
\tau g\in L^2((a,b);rdx)\bigg\}.   \no
\end{align}

As discussed in detail in Section \ref{s1a}, $\Hte$ and $H_{F,\phi}$ characterize all
self-adjoint extensions of $H_{\min}$.

The generalized Cayley transform of $H_{0,0}$ (a convenient reference operator) is defined by
\begin{align}
\begin{split}
U_{z,z'}&=(H_{0,0}-z'I_{(a,b)})(H_{0,0}-zI_{(a,b)})^{-1}      \\
&=I_{(a,b)}+(z-z')(H_{0,0}-zI_{(a,b)})^{-1}, \quad z,z'\in \rho(H_{0,0}),  \lb{3.6}
\end{split}
\end{align}
and forms a bijection from $\ker(H_{\max} - z'I_{(a,b)})$ to
$\ker(H_{\max} - zI_{(a,b)})$ (cf.\ \eqref{B.18}, Appendix A).  In particular,
\begin{equation}\lb{3.7}
\text{dim}(\ker(H_{\max} - zI_{(a,b)}))=2, \quad z\in \rho(H_{0,0}).
\end{equation}
For each $z\in \rho(H_{0,0})$, a basis for $\ker(H_{\max} - zI_{(a,b)})$, denoted $\{u_j(z,\cdot)\}_{j=1,2}$, is fixed by specifying
\begin{equation}\lb{3.8}
\begin{split}
&u_1(z,a)=0, \quad u_1(z,b)=1,  \\
&u_2(z,a)=1, \quad u_2(z,b)=0,
\end{split}
\quad z\in \rho(H_{0,0}).
\end{equation}
One verifies
\begin{equation} \lb{3.9}
\begin{split}
U_{z,z'}u_1(z',\cdot)&=u_1(z,\cdot), \\
U_{z,z'}u_2(z',\cdot)&=u_2(z,\cdot),
\end{split}
\quad   j\in \{1,2\}, \; z,z'\in \rho(H_{0,0}).
\end{equation}
The identities \eqref{3.9} follow easily from the representation \eqref{3.6}.  In fact, since $U_{z,z'}$ maps into $\ker(H_{\max} - zI_{(a,b)})$,
\begin{equation} \lb{3.11}
\begin{split}
U_{z,z'}u_1(z',\cdot)&=c_{1,1}u_1(z,\cdot)+c_{1,2}u_2(z,\cdot),   \\
U_{z,z'}u_2(z',\cdot)&=c_{2,1}u_1(z,\cdot)+c_{2,2}u_2(z,\cdot),
\end{split}
\quad z,z'\in \rho(H_{0,0}),
\end{equation}
for certain scalars $c_{11},c_{12},c_{21},c_{22}\in \mathbb{C}$.
On the other hand, by \eqref{3.6},
\begin{align}
U_{z,z'}u_1(z',\cdot)&= u_1(z',\cdot)+(z-z')(H_{0,0}-zI_{(a,b)})^{-1}u_1(z',\cdot)\lb{3.13}\\
U_{z,z'}u_2(z',\cdot)&= u_2(z',\cdot)+(z-z')(H_{0,0}-zI_{(a,b)})^{-1}u_2(z',\cdot),\lb{3.14}\\
&\hspace*{4.93cm} z,z'\in \rho(H_{0,0}),\no
\end{align}
so that
\begin{equation}\lb{3.15}
\begin{split}
\big[U_{z,z'}u_1(z',\cdot)\big](a)=u_1(z',a), \quad \big[U_{z,z'}u_1(z',\cdot)\big](b)=u_1(z',b), \ \\
\big[U_{z,z'}u_2(z',\cdot)\big](a)=u_2(z',a), \quad \big[U_{z,z'}u_2(z',\cdot)\big](b)=u_2(z',b).
\end{split}
\end{equation}
Evaluating \eqref{3.11} at $a$ (resp., $b$) and comparing to \eqref{3.15} yields $c_{1,2}=0$ and $c_{2,2}=1$ (resp., $c_{1,1}=1$ and $c_{2,1}=0$), implying \eqref{3.9}.  Moreover, due to reality of the coefficients $p$, $q$, and $r$, one also verifies that
\begin{equation}\lb{3.15a}
\overline{u_j(z,\cdot)}=u_j(\overline{z},\cdot),\quad j=1,2, \; z\in \rho(H_{0,0}).
\end{equation}

Using a resolvent formula due to Krein (cf.\ \eqref{B.16}, Appendix A), the
next result provides a characterization, in terms of the Dirichlet resolvent
$(H_{0,0} - zI_{(a,b)})^{-1}$, for the resolvents of all self-adjoint
extensions of the minimal operator $H_\min$.
\begin{theorem} \lb{t3.1}
Assume Hypothesis \ref{hA.1}, let $\theta_a, \theta_b \in [0, \pi)$, and denote by $u_j(z,\cdot)$, $j=1,2$, the basis for $\ker(H_{\max}-zI_{(a,b)})$ as defined in \eqref{3.8}. \\
$(i)$  If $\theta_a\neq 0$ and $\theta_b\neq 0$, then the maximal common part
$($cf.\ Appendix \ref{sB}$)$ of $H_{\theta_a,\theta_b}$ and $H_{0,0}$ is $H_{\min}$.  The matrix
\begin{align}
&D_{\theta_a,\theta_b}(z)= \begin{pmatrix}
\cot(\theta_b) - u_1^{[1]}(z,b) & - u_2^{[1]}(z,b)\\
u_1^{[1]}(z,a) & \cot(\theta_a) + u_2^{[1]}(z,a)
\end{pmatrix},  \quad z\in \rho(H_{\theta_a,\theta_b})\cap\rho(H_{0,0}),      \lb{3.16}
\end{align}
is invertible and
\begin{align}\lb{3.17}
&(H_{\theta_a,\theta_b}-zI_{(a,b)})^{-1}= (H_{0,0}-zI_{(a,b)})^{-1}    \\
& \quad
- \sum_{j,k=1}^2 D_{\theta_a,\theta_b}(z)_{j,k}^{-1}
(u_k(\overline{z},\cdot),\cdot)_{L^2((a,b); rdx)} u_j(z,\cdot),
\quad z\in \rho(H_{\theta_a,\theta_b})\cap\rho(H_{0,0}).     \no
\end{align}
$(ii)$  If $\theta_a\neq 0$, then the maximal common part of $H_{\theta_a,0}$ and $H_{0,0}$ is the restriction, $\widetilde{H}_{\text{min}}$, of $H_{\max}$ with domain
\begin{equation}\lb{3.18}
\dom\big(\widetilde{H}_{\text{min}}\big)=\dom(H_{\max})\cap
\{g\in \AC([a,b]) \,|\, g(b)=g(a)=g^{[1]}(a)=0\}.
\end{equation}
The quantity
\begin{equation}\lb{3.19}
d_{\theta_a,0}(z)= \cot(\theta_a) + u_2^{[1]}(z,a), \quad
z\in \rho(H_{\theta_a,0})\cap \rho(H_{0,0}),
\end{equation}
is nonzero and
\begin{equation}\lb{3.20}
\begin{split}
& (H_{\theta_a,0}-zI_{(a,b)})^{-1}=(H_{0,0}-zI_{(a,b)})^{-1}  \\
& \quad
- d_{\theta_a,0}(z)^{-1}(u_2(\overline{z},\cdot),\cdot)_{L^2((a,b); rdx)}
u_2(z,\cdot),
\quad z\in \rho(H_{\theta_a,0})\cap \rho(H_{0,0}).
\end{split}
\end{equation}
$(iii)$  If $\theta_b\neq 0$, then the maximal common part of $H_{0,\theta_b}$ and $H_{0,0}$ is the restriction, $\widehat{H}_{\text{min}}$, of $H_{\max}$ with domain
\begin{equation}\lb{3.21}
\dom\big(\widehat{H}_{\text{min}}\big)=\dom(H_{\max})\cap
\{g\in \AC([a,b]) \,|\, g(b)=g(a)=g^{[1]}(b)=0\}.
\end{equation}
The quantity
\begin{equation}\lb{3.22}
d_{0,\theta_b}(z)= \cot(\theta_b) - u_1^{[1]}(z,b), \quad
z\in \rho(H_{0,\theta_b})\cap \rho(H_{0,0}),
\end{equation}
is nonzero and
\begin{equation}\lb{3.23}
\begin{split}
& (H_{0,\theta_b}-zI_{(a,b)})^{-1}=(H_{0,0}-zI_{(a,b)})^{-1}   \\
& \quad  - d_{0,\theta_b}(z)^{-1}
(u_1(\overline{z},\cdot),\cdot)_{L^2((a,b); rdx)} u_1(z,\cdot),   \quad
z\in \rho(H_{0,\theta_b})\cap \rho(H_{0,0}).
\end{split}
\end{equation}
\end{theorem}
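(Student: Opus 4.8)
The strategy is to apply the abstract Krein resolvent formula from Appendix \ref{sB} to the pair consisting of $H_{0,0}$ as reference operator and $H_{\theta_a,\theta_b}$ (resp.\ $H_{\theta_a,0}$, $H_{0,\theta_b}$) as the perturbed extension. The key is to identify correctly the maximal common part $S$ of the two self-adjoint extensions, compute its deficiency subspaces $\ker(S^* - zI_{(a,b)})$, and then explicitly determine the finite-dimensional matrix (the ``$D$-matrix'' or scalar $d$) that appears in Krein's formula. Throughout, the basis $\{u_1(z,\cdot),u_2(z,\cdot)\}$ fixed in \eqref{3.8} plays a central role, since these are solutions of $(\tau-z)u=0$ normalized at the endpoints $a,b$, and hence span $\ker(H_{\max}-zI_{(a,b)})$ by \eqref{3.7}.

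\textbf{Computing the common part.} First I would determine the maximal common part in each case. In $(i)$, since $H_{\theta_a,\theta_b}$ with $\theta_a,\theta_b\neq 0$ imposes Robin-type conditions at both endpoints that are \emph{distinct} from the Dirichlet conditions defining $H_{0,0}$, a function lying in both domains must satisfy $g(a)=g(b)=0$ (from $H_{0,0}$) together with the Robin conditions, which then force $g^{[1]}(a)=g^{[1]}(b)=0$; hence the common part is precisely $H_{\min}$ from \eqref{A.6}. In $(ii)$ and $(iii)$, only one endpoint carries a nontrivial Robin condition, so the common part is the intermediate restriction $\widetilde H_{\min}$ (resp.\ $\widehat H_{\min}$) with the domain stated in \eqref{3.18} (resp.\ \eqref{3.21}); these have deficiency index $1$ at each endpoint, giving a one-dimensional Krein correction. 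The deficiency subspace $\ker(H_{\min}^* - zI_{(a,b)}) = \ker(H_{\max} - zI_{(a,b)})$ is two-dimensional and spanned by $u_1(z,\cdot),u_2(z,\cdot)$; in the rank-one cases the relevant deficiency subspace is spanned by the single solution compatible with the frozen endpoint condition, namely $u_2(z,\cdot)$ in $(ii)$ and $u_1(z,\cdot)$ in $(iii)$ (these being the solutions that respect $g(b)=0$, resp.\ $g(a)=0$).

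\textbf{Identifying the Krein matrix.} The heart of the computation is evaluating the entries of $D_{\theta_a,\theta_b}(z)$. Krein's formula expresses $(H_{\theta_a,\theta_b}-zI_{(a,b)})^{-1} - (H_{0,0}-zI_{(a,b)})^{-1}$ as a finite-rank operator built from the deficiency basis, with the coefficient matrix determined by how the boundary conditions of $H_{\theta_a,\theta_b}$ act on these basis vectors. Concretely, I would substitute the ansatz for a function in $\dom(H_{\theta_a,\theta_b})$ as a Dirichlet-resolvent term plus a linear combination $\sum_k \xi_k u_k(z,\cdot)$, then impose the two Robin boundary conditions $\cos(\theta_a)g(a)+\sin(\theta_a)g^{[1]}(a)=0$ and $\cos(\theta_b)g(b)-\sin(\theta_b)g^{[1]}(b)=0$. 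Using the normalization \eqref{3.8} together with the quasi-derivative values $u_j^{[1]}(z,a)$, $u_j^{[1]}(z,b)$, this yields a linear system whose coefficient matrix is exactly $D_{\theta_a,\theta_b}(z)$ in \eqref{3.16} (after dividing the Robin conditions by $\sin(\theta_a)$, $\sin(\theta_b)$, which is legitimate precisely because $\theta_a,\theta_b\neq 0$, and which produces the $\cot$ terms). Invertibility of $D_{\theta_a,\theta_b}(z)$ for $z\in\rho(H_{\theta_a,\theta_b})\cap\rho(H_{0,0})$ follows because a nontrivial kernel vector would produce a genuine eigenfunction of $H_{\theta_a,\theta_b}$ at the point $z$, contradicting $z\in\rho(H_{\theta_a,\theta_b})$; equivalently it is the statement that the corresponding extension is invertible. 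The rank-one cases $(ii)$ and $(iii)$ are the diagonal degenerations: freezing one endpoint to Dirichlet removes one row and column, leaving the scalars $d_{\theta_a,0}(z)$ and $d_{0,\theta_b}(z)$ in \eqref{3.19}, \eqref{3.22}, whose nonvanishing is argued identically.

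\textbf{Main obstacle.} The genuinely delicate step is the bookkeeping that matches the abstract Krein correction term to the \emph{specific} rank-two form in \eqref{3.17}, in particular verifying that the bra-vectors are $(u_k(\overline z,\cdot),\cdot)_{L^2((a,b);rdx)}$ with the conjugated spectral parameter. This is where the reality relation \eqref{3.15a}, $\overline{u_j(z,\cdot)} = u_j(\overline z,\cdot)$, is indispensable: it ensures that the adjoint/inner-product structure coming from Appendix \ref{sB} is expressed correctly, since the Krein formula naturally pairs against the deficiency vectors of the adjoint problem at $\overline z$. I would also need to confirm the precise correspondence between the ordering of the indices $j,k$ in $D_{\theta_a,\theta_b}(z)^{-1}_{j,k}$ and the placement of $u_j(z,\cdot)$ versus $u_k(\overline z,\cdot)$, which is the sort of indexing that must be checked against the abstract formula \eqref{B.16} rather than guessed. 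Once this matching is pinned down, items $(ii)$ and $(iii)$ follow by specializing the same argument, so no separate work beyond recording the degenerate matrices is required.
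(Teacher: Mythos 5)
Your proposal is correct and follows essentially the same route as the paper: identify the maximal common part via the domain intersection, prove invertibility of $D_{\theta_a,\theta_b}(z)$ (resp.\ nonvanishing of the scalars) by the eigenfunction contradiction, and pin down the rank-two correction by imposing the Robin conditions on an ansatz of the form Dirichlet resolvent plus $\sum_k \xi_k u_k(z,\cdot)$, using the endpoint quasi-derivatives of $(H_{0,0}-zI_{(a,b)})^{-1}f$ computed from the Green's function (the paper's \eqref{3.30}) to produce the bra-vectors $(u_k(\overline z,\cdot),\cdot)_{L^2((a,b);rdx)}$. The only cosmetic difference is that you derive the coefficient matrix by solving the linear system while the paper verifies the stated formula by checking that its right-hand side lands in $\dom(H_{\theta_a,\theta_b})$; the computations involved are identical.
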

\begin{proof}
We begin with the {\it proof of item $(i)$}.  The maximal common part of $H_{\theta_a,\theta_b}$ and $H_{0,0}$ is $H_{\min}$ since $\theta_a\neq 0$ and $\theta_b\neq0$ imply
\begin{equation}\lb{3.24}
\dom(H_{\theta_a,\theta_b})\cap\dom(H_{0,0})=\dom(H_{\min}).
\end{equation}

By way of contradiction, suppose $\det(D_{\theta_a,\theta_b}(z_0))=0$ for some $z_0\in \rho(H_{\theta_a,\theta_b})\cap\rho(H_{0,0})$.  Then
\begin{align}
&\det\begin{pmatrix}
\cos(\theta_a)u_2(z_0,a)+\sin(\theta_a)u_2^{[1]}(z_0,a) & \cos(\theta_a)u_1(z_0,a)
+ \sin(\theta_a) u_1^{[1]}(z_0,a)\\
\cos(\theta_b)u_2(z_0,b)-\sin(\theta_b)u_2^{[1]}(z_0,b) & \cos(\theta_b)u_1(z_0,b)
- \sin(\theta_b) u_1^{[1]}(z_0,b)
\end{pmatrix}  \no \\
&\quad = \sin(\theta_a)\sin(\theta_b)\det(D_{\theta_a,\theta_b}(z))=0.\lb{3.25}
\end{align}
Thus, there exists a constant $c\in \mathbb{C}$ such that
\begin{align}
\cos(\theta_a)[u_1(z_0,a)+cu_2(z_0,a)]+\sin(\theta_a)[u_1^{[1]}(z_0,a)+cu_2^{[1]}(z_0,a)]=0,
\lb{3.26}\\
\cos(\theta_b)[u_1(z_0,b)+cu_2(z_0,b)]-\sin(\theta_b) [u_1^{[1]}(z_0,b)+cu_2^{[1]}(z_0,b)]=0.\lb{3.27}
\end{align}
As a result, $u_1(z_0,\cdot)+cu_2(z_0,\cdot)\in \dom(H_{\theta_a,\theta_b})$ is an eigenfunction with corresponding eigenvalue $z_0$, contradicting $z_0\in \rho(H_{\theta_a,\theta_b})$.

In order to prove \eqref{3.17}, it suffices to show
\begin{align}
g_f(z,\cdot) &:= (H_{0,0}-zI_{(a,b)})^{-1}f  \lb{3.28} \\
& \quad \;\, - \sum_{j,k=1}^2D_{\theta_a,\theta_b}(z)_{j,k}^{-1}(u_k(\overline{z},\cdot),f)_{L^2((a,b); rdx)}
u_j(z,\cdot)\in \dom(H_{\theta_a,\theta_b}),\no\\
& \hspace*{3.7cm} f\in L^2((a,b);rdx), \ z\in \rho(H_{\theta_a,\theta_b})\cap \rho(H_{0,0}).
\no
\end{align}
One then verifies that
\begin{equation}\lb{3.29}
\begin{split}
(H_{\theta_a,\theta_b}-zI_{(a,b)})g_f(z,\cdot)=(H_{\max}-zI_{(a,b)})g_f(z,\cdot)=f, \\
 f\in L^2((a,b);rdx), \; z\in \rho(H_{\theta_a,\theta_b})\cap \rho(H_{0,0}),
\end{split}
\end{equation}
since $H_{\max}$ is an extension of $H_{\theta_a,\theta_b}$ and $H_{0,0}$ and
$\{u_j(z,\cdot)\}_{j=1,2}\subseteq \ker(H_{\max}-z)$.  In order to show \eqref{3.28},
one need only to show that $g_f(z,\cdot)$ satisfies the boundary conditions in \eqref{3.1}.  One has
\begin{equation}  \lb{3.30}
\begin{split}
[(H_{0,0}-zI_{(a,b)})^{-1}f]^{[1]}(a)&=(u_2(\overline{z},\cdot),f)_{L^2((a,b); rdx)},  \\
[(H_{0,0}-zI_{(a,b)})^{-1}f]^{[1]}(b)&=-(u_1(\overline{z},\cdot),f)_{L^2((a,b); rdx)},
\end{split}
\quad z\in \rho(H_{0,0}),
\end{equation} 
which can be seen using the integral kernel for the resolvent of $H_{0,0}$,
\begin{align}
[(H_{0,0}-zI_{(a,b)})^{-1}f](x)&=W_{2,1}(z)^{-1} \bigg[u_2(z,x)\int_a^x r(x') dx'u_1(z,x')f(x')\no \\
&\hspace*{2.1cm} +u_1(z,x)\int_x^b r(x') dx'u_2(z,x')f(x')\bigg],    \no \\
&\hspace*{.7cm} f\in L^2((a,b);rdx), \ x\in [a,b], \  z\in \rho(H_{0,0}),   \lb{3.32}
\end{align}
where $W_{2,1}(z)$ denotes the Wronskian of $u_2(z,\cdot)$ and $u_1(z,\cdot)$. One recalls 
that the Wronskian of $f$ and $g$ is defined for a.e.\ $x \in (a,b)$ by
\begin{equation}
W(f,g)(x) = f(x) g^{[1]}(x) - f^{[1]}(x) g(x), \quad f, g \in AC([a,b]). 
\end{equation}
A short computation using \eqref{3.8} yields
\begin{equation}\lb{3.33}
W_{2,1}(z)=u_1^{[1]}(z,a)=-u_2^{[1]}(z,b), \quad z\in \rho(H_{0,0}).
\end{equation}
Differentiating \eqref{3.32} and then using \eqref{3.33} yields
\begin{align}
[(H_{0,0}-zI_{(a,b)})^{-1}f]^{[1]}(x)&=-\frac{u_2^{[1]}(z,x)}{u_2^{[1]}(z,b)} \int_a^xdx'u_1(z,x')f(x')\no \\
&\quad + \frac{u_1^{[1]}(z,x)}{u_1^{[1]}(z,a)}\int_x^bdx'u_2(z,x')f(x'),\lb{3.34}\\
& \hspace*{-1.16cm}f\in L^2((a,b);rdx), \ x\in [a,b], \ \ z\in \rho(H_{0,0}),  \no
\end{align}
and relations \eqref{3.30} now follow by evaluating \eqref{3.34} separately at $x=a$ and $x=b$, respectively.

Using \eqref{3.8} and \eqref{3.30}, one obtains
\begin{align}
& g_f(z,a)=\det(D_{\theta_a,\theta_b}(z))^{-1}\big[(-\cot(\theta_b)+ u_1^{[1]}(z,b))(u_2(\overline{z},\cdot),f)_{L^2((a,b); rdx)}\no \\
&\hspace*{4.2cm}+u_1^{[1]}(z,a)(u_1(\overline{z},\cdot),f)_{L^2((a,b); rdx)} \big],\lb{3.35}\\
& g_f(z,b)=\det(D_{\theta_a,\theta_b}(z))^{-1}\big[-(\cot(\theta_a)+ u_2^{[1]}(z,a))(u_1(\overline{z},\cdot),f)_{L^2((a,b); rdx)}\no \\
&\hspace*{4.2cm}-u_2^{[1]}(z,b)(u_2(\overline{z},\cdot),f)_{L^2((a,b); rdx)} \big],\lb{3.36}\\
& g_f^{[1]}(z,a)=(u_2(\overline{z},\cdot),f)_{L^2((a,b); rdx)}\no \\
&\quad+\det(D_{\theta_a,\theta_b}(z))^{-1} \big[(-\cot(\theta_a)-u_2^{[1]}(z,a))(u_1(\overline{z},\cdot),f)_{L^2((a,b); rdx)}u_1^{[1]}(z,a)\no \\
&\quad+(-\cot(\theta_b)+u_1^{[1]}(z,b))(u_2(\overline{z},\cdot),f)_{L^2((a,b); rdx)}u_2^{[1]}(z,a)\no \\
&\quad-u_2^{[1]}(z,b)(u_2(\overline{z},\cdot),f) u_1^{[1]}(z,a)+u_1^{[1]}(z,a)(u_1(\overline{z},\cdot),f)_{L^2((a,b); rdx)}u_2^{[1]}(z,a) \big],\lb{3.37}\\
&g_f^{[1]}(z,b)=-(u_1(\overline{z},\cdot),f)_{L^2((a,b); rdx)}+\det(D_{\theta_a,\theta_b}(z))^{-1}\no\\
&\quad \times \big[(-\cot(\theta_a)-u_2^{[1]}(z,a))(u_1(\overline{z},\cdot),f)_{L^2((a,b); rdx)}u_1^{[1]}(z,b)\no \\
&\qquad+(-\cot(\theta_b)+u_1^{[1]}(z,b))(u_2(\overline{z},\cdot),f)_{L^2((a,b); rdx)}u_2^{[1]}(z,b)\no \\
&\qquad-u_2^{[1]}(z,b)(u_2(\overline{z},\cdot),f)_{L^2((a,b); rdx)} u_1^{[1]}(z,b)\no\\
&\qquad+u_1^{[1]}(z,a)(u_1(\overline{z},\cdot),f)_{L^2((a,b); rdx)}u_2^{[1]}(z,b) \big],\lb{3.38}\\
&\hspace*{7mm}f\in L^2((a,b);rdx), \ z\in \rho(H_{\theta_a,\theta_b})\cap \rho(H_{0,0}),\no
\end{align}
and as a result, one verifies
\begin{align}
0&=\cos(\theta_a)g_f(z,a)+\sin(\theta_a)g_f^{[1]}(z,a),   \lb{3.39}\\
0&=\cos(\theta_b)g_f(z,b)-\sin(\theta_b)g_f^{[1]}(z,b),\lb{3.40}\\
f & \in L^2((a,b);rdx), \ z\in \rho(H_{\theta_a,\theta_b})\cap \rho(H_{0,0}).\no
\end{align}
{\it Proof of item $(ii)$}.  If $\theta_a\neq 0$ and $\theta_b=0$, then one verifies that
\begin{equation}\lb{3.41}
\dom(H_{\theta_a,0})\cap\dom(H_{0,0})=\dom(H_{\max})\cap
\{g\in \AC([a,b]) \,|\, g(b)=g(a)=g^{[1]}(a)=0\}.
\end{equation}
By definition, the maximal common part of $H_{\theta_a,0}$ and $H_{0,0}$ is the restriction of $H_{\max}$ to $\dom(H_{\theta_a,0})\cap\dom(H_{0,0})$, that is, the maximal common part of $H_{\theta_a,0}$ and $H_{0,0}$ is $\widetilde{H}_{\min}$
as defined in item $(ii)$.

By way of contradiction, suppose $d_{\theta_a,0}(z_0)=0$ for some $z_0\in \rho(H_{\theta_a,0})\cap\rho(H_{0,0})$.  Then
\begin{align}
0&= \sin(\theta_a)d_{\theta_a,0}(z_0)   \no\\
&=\cos(\theta_a)+\sin(\theta_a)u_2^{[1]}(z_0,a)\no\\
&=\cos(\theta_a)u_2(z_0,a)+\sin(\theta_a)u_2^{[1]}(z_0,a),\lb{3.42}
\end{align}
together with the trivial identity
\begin{equation}\lb{3.43}
0=\cos(0)u_2(z_0,b)-\sin(0)u_2^{[1]}(z_0,b),
\end{equation}
shows that $u_2(z_0,\cdot)$ is an eigenfunction of $H_{\theta_a,0}$ with eigenvalue $z_0$, contradicting $z_0\in \rho(H_{\theta_a,0})$.

To verify \eqref{3.20}, one only needs to show
\begin{align}
 g_f(z,\cdot)&\equiv(H_{0,0}-zI_{(a,b)})^{-1}f\no\\
&\quad -d_{\theta_a,0}(z)^{-1}(u_2(\overline{z},\cdot),f)_{L^2((a,b); rdx)}u_2(z,\cdot)\in \dom(H_{\theta_a,0}),   \lb{3.44} \\
& \hspace*{2.5cm} f\in L^2((a,b);rdx), \ z\in \rho(H_{\theta_a,0})\cap\rho(H_{0,0}). \no
\end{align}
Repeating the computation in \eqref{3.29}, the proof of \eqref{3.44} reduces to showing that $g_f(z,\cdot)$ satisfies the boundary conditions for $\dom(H_{\theta_a,0})$.  One computes
\begin{align}
g_f(z,b)&=0,  \lb{3.45}\\
g_f(z,a)&= d_{\theta_a,0}(z)^{-1}(u_2(\overline{z},\cdot),f)_{L^2((a,b); rdx)},  \lb{3.46}\\
g_f^{[1]}(z,a)&=(u_2(\overline{z},\cdot),f)_{L^2((a,b); rdx)} \no\\
&\quad + d_{\theta_a,0}(z)^{-1}(u_2(\overline{z},\cdot),f)_{L^2((a,b); rdx)}u_2^{[1]}(z,a),\lb{3.47}\\
& \hspace*{.55cm}f\in L^2((a,b);rdx), \ z\in \rho(H_{\theta_a,0})\cap \rho(H_{0,0}).\no
\end{align}
where the last equality makes use of \eqref{3.30}.  As a result,
\begin{align}
0&=(u_2(\overline{z},\cdot),f)_{L^2((a,b); rdx)}[\cos(\theta_a)d_{\theta_a,0}(z)^{-1}+\sin(\theta_a)
+ \sin(\theta_a)d_{\theta_a,0}(z)^{-1}u_2^{[1]}(z,a) ]    \no \\
&=\cos(\theta_a)g_f(z,a)+\sin(\theta_a)g_f^{[1]}(z,a),
\;\; f\in L^2((a,b);rdx), \ z\in \rho(H_{\theta_a,0})\cap \rho(H_{0,0}),  \lb{3.48}
\end{align}
and \eqref{3.44} follows.

\noindent
{\it Proof of item $(iii)$}. As this is very similar to the proof of item $(ii)$, we only sketch an outline.  The statement regarding the maximal common part follows since, in the case
$\theta_a=0$ and $\theta_b\neq 0$,
\begin{align}\lb{3.49}
\begin{split}
&\dom(H_{0,\theta_b})\cap\dom(H_{0,0})   \\
& \quad =\dom(H_{\max})\cap
\{g\in \AC([a,b]) \,|\, g(b)=g(a)=g^{[1]}(b)=0\}.
\end{split}
\end{align}

If $d_{0,\theta_b}(z_0)=0$, then $u_1(z_0,\cdot)$ is an eigenfunction of $H_{0,\theta_b}$ and $z_0$ is the corresponding eigenvalue, a contradiction.  Verification of \eqref{3.23} reduces to showing
\begin{align}
g_f(z,\cdot)&\equiv(H_{0,0}-zI_{(a,b)})^{-1}f \no\\
&\quad - d_{0,\theta_b}(z)^{-1}(u_1(\overline{z},f),\cdot)_{L^2((a,b); rdx)}u_1(z,\cdot)\in \dom(H_{0,\theta_b}),\lb{3.50} \\
& \hspace*{2.5cm} f\in L^2((a,b);rdx), \ z\in \rho(H_{0,\theta_b})\cap\rho(H_{0,0}),\no
\end{align}
which, in turn, reduces to verifying $g_f(z,\cdot)$ satisfies the boundary conditions for $\dom(H_{0,\theta_b})$:
\begin{align}
& g_f(z,a)=0,  \lb{3.51}\\
& \cos(\theta_b)g_f(z,b)-\sin(\theta_b)g_f^{[1]}(z,b) = 0,\lb{3.52}\\
& f\in L^2((a,b);rdx), \ z\in \rho(H_{0,\theta_b})\cap\rho(H_{0,0}).    \no
\end{align}
\eqref{3.51} and \eqref{3.52} are the results of straightforward calculations.
\end{proof}

\begin{theorem} \lb{t3.2}
Assume Hypothesis \ref{hA.1}, let $F=(F_{j,k})_{1\leq j,k\leq 2}\in \SL_2(\bbR)$ and $\phi \in [0,2\pi)$, and denote by $u_j(z,\cdot)$, $j=1,2$, the basis for $\ker(H_{\max}-zI_{(a,b)})$ as defined in \eqref{3.8}.  \\
$(i)$ If $F_{1,2}\neq 0$, then the maximal common part of $H_{F,\phi}$ and $H_{0,0}$ is $H_{\min}$.  The matrix
\begin{equation}\lb{3.53}
\begin{split}
Q_{F,\phi}(z)=\begin{pmatrix}
\frac{F_{2,2}}{F_{1,2}} - u_1^{[1]}(z,b) & \frac{-1}{e^{-i\phi}F_{1,2}} - u_2^{[1]}(z,b)\\
\frac{-1}{e^{i\phi}F_{1,2}} + u_1^{[1]}(z,a) & \frac{F_{1,1}}{F_{1,2}} + u_2^{[1]}(z,a)
\end{pmatrix},\\
z\in \rho(H_{F,\phi})\cap\rho(H_{0,0}),
\end{split}
\end{equation}
is invertible and
\begin{align} \lb{3.54}
& (H_{F,\phi}-zI_{(a,b)})^{-1}=(H_{0,0}-zI_{(a,b)})^{-1}   \\
& \quad  - \sum_{j,k=1}^2Q_{F,\phi}(z)^{-1}_{j,k}
(u_k(\overline{z},\cdot),\cdot)_{L^2((a,b); rdx)} u_j(z,\cdot),  \quad
z\in \rho(H_{F,\phi})\cap\rho(H_{0,0}).   \no
\end{align}
$(ii)$ If $F_{1,2}=0$, then the maximal common part of $H_{F,\phi}$ and $H_{0,0}$ is the restriction of $H_{\max}$ to the domain
\begin{equation}\lb{3.55}
\dom(H_{\max})\cap\{g\in L^2((a,b);rdx)\,|\, g(a)=g(b)=0, \,
g^{[1]}(b)=e^{i\phi}F_{2,2}g^{[1]}(a)\}.
\end{equation}
In this case,
\begin{align}
q_{F,\phi}(z)&= F_{2,1}F_{2,2} + F_{2,2}^2u_2^{[1]}(z,a) +
e^{i\phi}F_{2,2}u_1^{[1]}(z,a)\no \\
&\quad - e^{-i\phi}F_{2,2}u_2^{[1]}(z,b) - u_1^{[1]}(z,b),
\quad z\in \rho(H_{F,\phi})\cap\rho(H_{0,0}),    \lb{3.56}
\end{align}
is nonzero and
\begin{align}  \lb{3.57}
& (H_{F,\phi}-zI_{(a,b)})^{-1}=(H_{0,0}-zI_{(a,b)})^{-1}    \\
& \quad - q_{F,\phi}(z)^{-1}(u_{F,\phi}
(\overline{z},\cdot),\cdot)_{L^2((a,b); rdx)} u_{F,\phi}(z,\cdot),  \quad
z\in \rho(H_{F,\phi})\cap\rho(H_{0,0}),   \no
\end{align}
where
\begin{equation}\lb{3.58}
u_{F,\phi}(z,\cdot)=e^{-i\phi}F_{2,2}u_2(z,\cdot)+u_1(z,\cdot), \quad z\in \rho(H_{F,\phi})\cap\rho(H_{0,0}).
\end{equation}
\end{theorem}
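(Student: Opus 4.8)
The plan is to follow the template of the proof of Theorem \ref{t3.1} almost verbatim, now treating the non-separated boundary condition
\begin{equation*}
\binom{g(b)}{g^{[1]}(b)} = e^{i\phi} F \binom{g(a)}{g^{[1]}(a)}
\end{equation*}
in place of the separated ones. First I would identify the maximal common part by intersecting domains. Every $g\in\dom(H_{0,0})$ satisfies the Dirichlet conditions $g(a)=g(b)=0$, so the first component of the boundary relation collapses to $e^{i\phi}F_{1,2}g^{[1]}(a)=0$. When $F_{1,2}\neq 0$ this forces $g^{[1]}(a)=0$, and then the second component gives $g^{[1]}(b)=0$, so $g\in\dom(H_{\min})$ and the maximal common part is $H_{\min}$. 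When $F_{1,2}=0$ the first component is automatic and only the single relation $g^{[1]}(b)=e^{i\phi}F_{2,2}g^{[1]}(a)$ survives, yielding the domain \eqref{3.55}. This dichotomy is exactly why case $(i)$ yields a matrix ($Q_{F,\phi}$) Krein formula while case $(ii)$ yields a rank-one (scalar $q_{F,\phi}$) formula.

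For invertibility of $Q_{F,\phi}(z)$ (resp.\ nonvanishing of $q_{F,\phi}(z)$) I would argue by contradiction, paralleling \eqref{3.25}--\eqref{3.27} (resp.\ \eqref{3.42}--\eqref{3.43}). Writing a general element of $\ker(H_{\max}-z_0 I_{(a,b)})$ as $u=c_1 u_1(z_0,\cdot)+c_2 u_2(z_0,\cdot)$ and reading off its endpoint values from \eqref{3.8}, I would substitute into the two scalar boundary conditions; dividing the first condition by $e^{i\phi}F_{1,2}$ and invoking $\det(F)=1$ in the second shows that the resulting homogeneous system in $(c_1,c_2)$ has a nontrivial solution precisely when $\det(Q_{F,\phi}(z_0))=0$. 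Such a solution would produce a nonzero $u\in\dom(H_{F,\phi})$ with $H_{F,\phi}u=z_0 u$, contradicting $z_0\in\rho(H_{F,\phi})$. In case $(ii)$ the same substitution shows that $u_{F,\phi}(z_0,\cdot)=e^{-i\phi}F_{2,2}u_2(z_0,\cdot)+u_1(z_0,\cdot)$ automatically satisfies the first boundary condition (using $F_{1,1}F_{2,2}=\det(F)=1$) and satisfies the second exactly when $q_{F,\phi}(z_0)=0$.

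To establish the resolvent identities \eqref{3.54} and \eqref{3.57}, I would imitate \eqref{3.28}--\eqref{3.29}: set $g_f(z,\cdot)$ equal to the claimed right-hand side applied to $f$, observe that the subtracted terms lie in $\ker(H_{\max}-zI_{(a,b)})$ so that $(H_{\max}-zI_{(a,b)})g_f=f$, and reduce the problem to checking that $g_f$ satisfies the boundary conditions of $\dom(H_{F,\phi})$. The boundary values of $g_f$ follow from the Dirichlet values of $(H_{0,0}-zI_{(a,b)})^{-1}f$, which vanish at $a$ and $b$, together with the quasi-derivative formulas \eqref{3.30} and the normalizations \eqref{3.8}. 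In case $(i)$, setting $\alpha_k=(u_k(\overline{z},\cdot),f)_{L^2((a,b);rdx)}$ and $\beta=Q_{F,\phi}(z)^{-1}\alpha$, the first boundary condition is equivalent to the second row of $Q_{F,\phi}(z)\beta=\alpha$, and the second boundary condition, after substituting the first and using $\det(F)=1$, to the first row. In case $(ii)$ one uses the conjugation symmetry \eqref{3.15a} and the reality of $F_{2,2}$ to evaluate $(u_{F,\phi}(\overline{z},\cdot),f)_{L^2((a,b);rdx)}=\alpha_1+e^{i\phi}F_{2,2}\alpha_2$, after which the two boundary conditions reduce to the definition \eqref{3.56} of $q_{F,\phi}(z)$.

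The routine but genuinely delicate part will be the bookkeeping in these boundary-condition verifications: the entries of $Q_{F,\phi}$ interleave the quasi-derivatives $u_j^{[1]}$ at both endpoints with the phases $e^{\pm i\phi}$ and the entries $F_{j,k}$, and the identity $\det(F)=1$ must be deployed at exactly the right moment (in the second boundary condition) for the off-diagonal terms to cancel. I expect no conceptual obstacle beyond this—the architecture is rigidly parallel to Theorem \ref{t3.1}—but the algebra is precisely where any sign or phase slip would surface.
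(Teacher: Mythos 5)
Your proposal is correct and follows essentially the same route as the paper: the maximal common part via domain intersection, invertibility of $Q_{F,\phi}(z)$ (resp.\ nonvanishing of $q_{F,\phi}(z)$) by exhibiting an eigenfunction when the determinant (resp.\ scalar) vanishes, and the Krein formulas by verifying that the candidate resolvent satisfies the boundary conditions in \eqref{3.4}, with $\det(F)=1$ invoked in the second condition. Your reformulation of the case-$(i)$ boundary check as the two rows of $Q_{F,\phi}(z)\beta=\alpha$ with $\beta=Q_{F,\phi}(z)^{-1}\alpha$ is only an organizational improvement over the paper's explicit cofactor computations in \eqref{3.73}--\eqref{3.78}, not a different argument.
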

\begin{proof}
We begin with the {\it proof of item $(i)$:} Let $F$ and $\phi$ satisfy the assumptions of the theorem, and suppose that $F_{1,2}\neq 0$.  By inspecting boundary conditions, one sees that
$\dom(H_{F,\phi})\cap \dom(H_{0,0})\subseteq \dom(H_{\min})$ so that
$H_{\min}$ is the maximal common part of $H_{F,\phi}$ and $H_{0,0}$,
that is, $H_{F,\phi}$ and $H_{0,0}$ are relatively prime with respect to
$H_{\min}$ (in the terminology of Appendix A, cf.\ \eqref{B.4}).

We now show that $Q_{F,\phi}(z)$ is invertible for all $z\in \rho(H_{F,\phi})\cap \rho(H_{0,0})$.  If $Q_{F,\phi}(z_0)$ is singular for some $z_0\in \rho(H_{F,\phi})\cap \rho(H_{0,0})$, then the columns of $e^{i\phi}F_{1,2}Q_{F,\phi}(z_0)$ are linearly dependent.  Therefore, there exists a constant $\alpha\in \mathbb{C}$ such that
\begin{align}
-e^{i\phi}F_{2,2}+e^{i\phi}F_{1,2}u_1^{[1]}(z_0,b)
&= \alpha\bigg(\frac{e^{i\phi}F_{1,2}}{e^{-i\phi}F_{1,2}}
+e^{i\phi}F_{1,2}u_2^{[1]}(z_0,b) \bigg),  \lb{3.59}\\
1-e^{i\phi}F_{1,2}u_1^{[1]}(z_0,a)&= \alpha\big(-e^{i\phi}F_{1,1}
-e^{i\phi}F_{1,2}u_2^{[1]}(z_0,a)\big).\lb{3.60}
\end{align}
We rewrite \eqref{3.60} as
\begin{equation}\lb{3.61}
1=-\alpha e^{i\phi}F_{1,1}+\big(u_1^{[1]}(z_0,a)-\alpha u_2^{[1]}(z_0,a) \big)
e^{i\phi}F_{1,2}.
\end{equation}
Define the function
\begin{equation}\lb{3.62}
g(z_0,\cdot)=u_1(z_0,\cdot)-\alpha u_2(z_0,\cdot),
\end{equation}
and observe that
\begin{align}
\begin{split}
g(z_0,a)&=u_1(z_0,a)-\alpha u_2(z_0,a)=-\alpha,   \lb{3.63}\\
g(z_0,b)&=u_1(z_0,b)-\alpha u_2(z_0,b)=1.
\end{split}
\end{align}
As a result of \eqref{3.61} and \eqref{3.63},
\begin{equation}\lb{3.65}
g(z_0,b)=e^{i\phi}F_{1,1}g(z_0,a)+e^{i\phi}F_{1,2}g^{[1]}(z_0,a).
\end{equation}
Moreover, using \eqref{3.59},
\begin{align}
g^{[1]}(z_0,b)&=u_1^{[1]}(z_0,b)-\alpha u_2^{[1]}(z_0,b)\no \\
&= \frac{F_{2,2}}{F_{1,2}}+\frac{\alpha}{e^{-i\phi}F_{1,2}}\no \\
&=-\alpha e^{i\phi}F_{2,1}+e^{i\phi}F_{2,2}\bigg(\frac{1}{e^{i\phi}F_{1,2}}
+ \alpha\frac{F_{1,1}}{F_{1,2}} \bigg)\lb{3.66}\\
&=e^{i\phi}F_{2,1}g(z_0,a)+e^{i\phi}F_{2,2}g^{[1]}(z_0,a).\lb{3.67}
\end{align}
To get \eqref{3.66}, we have used $\det(F)=1$; \eqref{3.67} follows from \eqref{3.61}.  Now \eqref{3.65} and \eqref{3.67} yield $g(z_0,\cdot)\in \dom(H_{F,\phi})$.  Since $\tau g(z_0,\cdot)=z_0 g(z_0,\cdot)$, the function $g(z_0,\cdot)$ is an eigenfunction of
$H_{F,\phi}$ corresponding to $z_0$, contradicting $z_0\in \rho(H_{F,\phi})$.

Now we verify \eqref{3.54}.  To this end, define
\begin{align}
g_f(z,\cdot)&:=(H_{0,0}-zI_{(a,b)})^{-1}f -\sum_{j,k=1}^2Q_{F,\phi}(z)^{-1}_{j,k}(u_k(\overline{z},\cdot),f)_{L^2((a,b); rdx)}u_j(z,\cdot),\no\\
&\hspace*{3.5cm}f\in L^2((a,b);rdx), \ z\in \rho(H_{F,\phi})\cap\rho(H_{0,0}).\lb{3.68}
\end{align}
If
\begin{equation}\lb{3.69}
g_f(z,\cdot)\in \dom(H_{F,\phi}), \quad f\in L^2((a,b);rdx), \; z\in \rho(H_{F,\phi})\cap \rho(H_{0,0}),
\end{equation}
then the representation \eqref{3.54} is valid.  In fact, if \eqref{3.69} holds, one computes
\begin{align}
(H_{F,\phi}-zI_{(a,b)})g_f(z,\cdot)&=(H_{\max}-zI_{(a,b)})g_f(z,\cdot)\nonumber\\
&=(H_{\max}-zI_{(a,b)})(H_{0,0}-zI_{(a,b)})^{-1}f=f, \lb{3.70}\\
&\; \, f\in L^2((a,b);rdx), \; z\in \rho(H_{F,\phi})\cap\rho(H_{0,0}),\nonumber
\end{align}
since $H_{\max}$ is an extension of both $H_{F,\phi}$ and $H_{0,0}$ and
\begin{equation}\lb{3.71}
(H_{\max}-zI_{(a,b)})u_1(z,\cdot)=(H_{\max}-zI_{(a,b)})u_2(z,\cdot)=0, \quad
z\in \rho(H_{F,\phi})\cap\rho(H_{0,0}).
\end{equation}
Therefore, verification of \eqref{3.54} reduces to establishing \eqref{3.69}.  In turn, \eqref{3.69} reduces to showing $g_f(z,\cdot)$ satisfies the boundary conditions in \eqref{3.4}.  To this end, using
\begin{equation}\lb{3.72}
[(H_{0,0}-zI_{(a,b)})^{-1}f](a)=[(H_{0,0}-zI_{(a,b)})^{-1}f](b)=0, \quad z\in \rho(H_{0,0}),
\end{equation}
one computes
\begin{align}
g_f(z,a)&=\det(Q_{F,\phi}(z))^{-1}\bigg\{\bigg[-\frac{e^{-i\phi}}{F_{1,2}}+ u_1^{[1]}(z,a)\bigg]\big(u_1(\overline{z},\cdot),f\big)_{L^2((a,b);rdx)}\no \\
&\quad + \bigg[-\frac{F_{2,2}}{F_{1,2}}+u_1^{[1]}(z,b) \bigg]\big(u_2(\overline{z},\cdot),f\big)_{L^2((a,b);rdx)}\bigg\},   \lb{3.73}\\
g_f(z,b)&=\det(Q_{F,\phi}(z))^{-1}\bigg\{ \bigg[-\frac{F_{1,1}}{F_{1,2}}-u_2^{[1]}(z,a)\bigg]\big(u_1(\overline{z},\cdot),f \big)_{L^2((a,b);rdx)}\no \\
&\quad + \bigg[-\frac{e^{i\phi}}{F_{1,2}}-u_2^{[1]}(z,b) \bigg]\big(u_2(\overline{z},\cdot),f \big)_{L^2((a,b);rdx)}\bigg\},\lb{3.74}\\
&\hspace*{1.25cm} f\in L^2((a,b);rdx),\ z\in \rho(H_{F,\phi})\cap \rho(H_{0,0}).\no
\end{align}

With \eqref{3.30} one computes
\begin{align}
g_f^{[1]}(z,a)&=\big(u_2(\overline{z},\cdot),f\big)_{L^2((a,b);rdx)}\no\\
&\quad + \det(Q_{F,\phi}(z))^{-1}\bigg\{\bigg[-\frac{F_{1,1}}{F_{1,2}}-u_2^{[1]}(z,a)\bigg]\big( u_1(\overline{z},\cdot),f\big)_{L^2((a,b);rdx)}u_1^{[1]}(z,a)\no \\
&\quad +\bigg[ -\frac{e^{i\phi}}{F_{1,2}}-u_2^{[1]}(z,b)\bigg]\big(u_2(\overline{z},\cdot),f \big)_{L^2((a,b);rdx)}u_1^{[1]}(z,a)\no \\
&\quad + \bigg[-\frac{1}{e^{i\phi}F_{1,2}}+u_1^{[1]}(z,a) \bigg]\big(u_1(\overline{z},\cdot),f\big)_{L^2((a,b);rdx)}u_2^{[1]}(z,a)\no \\
&\quad + \bigg[-\frac{F_{2,2}}{F_{1,2}}+u_1^{[1]}(z,b) \bigg]\big(u_2(\overline{z},\cdot),f \big)_{L^2((a,b);rdx)}u_2^{[1]}(z,a)\bigg\},   \lb{3.75}\\
g_f^{[1]}(z,b)& = -\big(u_1(\overline{z},\cdot),f\big)_{L^2((a,b);rdx)}  \no \\
& \quad +\det(Q_{F,\phi}(z))^{-1}\bigg\{ \bigg[-\frac{F_{1,1}}{F_{1,2}}-u_2^{[1]}(z,a)\bigg]\big(u_1(\overline{z},\cdot),f \big)_{L^2((a,b);rdx)}u_1^{[1]}(z,b)\no \\
&\quad +\bigg[-\frac{e^{i\phi}}{F_{1,2}}-u_2^{[1]}(z,b) \bigg]\big(u_2(\overline{z},\cdot),f \big)_{L^2((a,b);rdx)}u_1^{[1]}(z,b)\no \\
&\quad +\bigg[-\frac{1}{e^{i\phi}F_{1,2}}+u_1^{[1]}(z,a) \bigg]\big(u_1(\overline{z},\cdot),f \big)_{L^2((a,b);rdx)}u_2^{[1]}(z,b)\no \\
&\quad +\bigg[-\frac{F_{2,2}}{F_{1,2}}+u_1^{[1]}(z,b) \bigg]\big(u_2(\overline{z},\cdot),f \big)_{L^2((a,b);rdx)}u_2^{[1]}(z,b)\bigg\},\lb{3.76}\\
&\hspace*{2.55cm}  f\in L^2((a,b);rdx),\ z\in \rho(H_{F,\phi})\cap \rho(H_{0,0}).\no
\end{align}
Using \eqref{3.73}, \eqref{3.74}, and \eqref{3.75} one infers, after accounting for some immediate cancellations, that
\begin{align}
&\det(Q_{F,\phi}(z))\big(e^{i\phi}F_{1,1}g_f(z,a)+e^{i\phi}F_{1,2}g_f^{[1]}(z,a)-g_f(z,b)\big)\no \\
&\quad = \big(u_2(\overline{z},\cdot),f \big)_{L^2((a,b);rdx)}\bigg\{-\frac{e^{i\phi}F_{1,1}F_{2,2}}{F_{1,2}}+ e^{i\phi}F_{1,1}u_1^{[1]}(z,b)\no\\
&\qquad+e^{i\phi}F_{1,2}\det(Q_{F,\phi}(z))\no \\
&\qquad -e^{2i\phi}u_1^{[1]}(z,a)-e^{i\phi}F_{1,2} u_2^{[1]}(z,b)u_1^{[1]}(z,a)
-e^{i\phi}F_{2,2}u_2^{[1]}(z,a)\no \\
&\qquad+e^{i\phi}F_{1,2} u_1^{[1]}(z,b)u_2^{[1]}(z,a)+\frac{e^{i\phi}}{F_{1,2}}+u_2^{[1]}(z,b)\bigg\},\lb{3.77}\\
&\hspace*{1.45cm}  f\in L^2((a,b);rdx),\ z\in \rho(H_{F,\phi})\cap \rho(H_{0,0}).\no
\end{align}
Using the expression for $\det(Q_{F,\phi}(z))$ dictated by \eqref{3.53}, one concludes that the quantity
in the brackets on the right-hand side of \eqref{3.77} is zero. Moreover, since $\det(Q_{F,\phi}(z))\neq0$ for $z\in \rho(H_{F,\phi})\cap\rho(H_{0,0})$, it follows that the function $g_f(z,\cdot)$ satisfies the first boundary condition in \eqref{3.4} (involving $g(b)$).  A similar calculation shows that
\begin{equation}\lb{3.78}
\begin{split}
\det(Q_{F,\phi}(z))\big(e^{i\phi}F_{2,1}g_f(z,a)
+ e^{i\phi}F_{2,2}g_f^{[1]}(z,a)-g_f^{[1]}(z,b)\big)=0,\\
f\in L^2((a,b);rdx), \ z\in \rho(H_{m,\phi})\cap\rho(H_{0,0}),
\end{split}
\end{equation}
and, therefore, that $g_f(z,\cdot)$ satisfies the second boundary condition in \eqref{3.4}
(involving $g^{[1]}(b)$).  Hence, the containment \eqref{3.69} is proven.

\noindent
{\it Proof of item $(ii)$:}  If $F_{1,2}=0$, one infers that
\begin{align}
\begin{split}
&\dom(H_{F,\phi})\cap\dom(H_{0,0})=
\big\{g\in L^2((a,b);rdx) \,\big|\, g,g^{[1]}\in \AC([a,b]);     \\
 &\hspace*{1.15cm}  0=g(a)=g(b), \, g^{[1]}(b)=e^{i\phi}F_{2,2}g^{[1]}(a); \,
 \tau g\in L^2((a,b);rdx)\big\}.     \lb{3.79}
 \end{split}
\end{align}
Thus, the maximal common part of $H_{F,\phi}$ and $H_{0,0}$ is the restriction,
$\widetilde{H}_{F,\phi}$, of $H_{\max}$ with domain
$\dom\big(\widetilde{H}_{F,\phi}\big)=\dom(H_{F,\phi})\cap \dom(H_{0,0})$.  Moreover, one computes
\begin{align}
\begin{split}
&\dom\big(\big(\widetilde{H}_{F,\phi}\big)^{\ast}\big)=
\big\{g\in L^2((a,b);rdx) \,\big|\, g,g^{[1]}\in \AC([a,b]);       \\
& \hspace*{2.6cm} g(a)=e^{-i\phi}F_{2,2}g(b); \, \tau g \in L^2((a,b);rdx)\big\}.      \lb{3.80}
\end{split}
\end{align}

Next we show that $q_{F,\phi}(z)\neq 0$ if $z\in \rho(H_{F,\phi})\cap \rho(H_{0,0})$.
If $q_{F,\phi}(z_0)=0$, then $z_0$ is an eigenvalue of $H_{F,\phi}$ and
\begin{equation}\lb{3.81}
u_{F,\phi}(z_0,\cdot)=e^{-i\phi}F_{2,2}u_2(z_0,\cdot)+u_1(z_0,\cdot).
\end{equation}
 is a corresponding eigenfunction.  The latter reduces to showing $u_{F,\phi}(z_0,\cdot)$ belongs to $\dom(H_{F,\phi})$, that is, $u_{F,\phi}(z_0,\cdot)$ satisfies the boundary conditions in \eqref{3.4} (with $F_{1,2}=0$).   Observe that
\begin{align}
u_{F,\phi}(z_0,a)&=e^{-i\phi}F_{2,2}u_2(z_0,a)+u_1(z_0,a)=e^{-i\phi}F_{2,2},
\lb{3.82}\\
u_{F,\phi}(z_0,b)&=e^{-i\phi}F_{2,2}u_2(z_0,b)+u_1(z_0,b)=1,\lb{3.83}
\end{align}
and as a result,
\begin{equation}\lb{3.84}
u_{F,\phi}(z_0,b)-e^{i\phi}F_{1,1}u_{F,\phi}(z_0,a)=1-F_{1,1}F_{2,2}=1-\det(F)=0,
\end{equation}
that is, $u_{F,\phi}(z_0,\cdot)$ satisfies the first boundary condition in \eqref{3.4} (involving
$g(b)$). Moreover,
\begin{equation}\lb{3.85}
e^{i\phi}F_{2,1}u_{F,\phi}(z_0,a)+e^{i\phi}F_{2,2}u_{F,\phi}^{[1]}(z_0,a) 
- u_{F,\phi}^{[1]}(z_0,b) = q_{F,\phi}(z_0)=0
\end{equation}
implies that $u_{F,\phi}(z_0,\cdot)$ satisfies the second boundary condition in \eqref{3.4}
(involving $g^{[1]}(b)$). Thus,
\begin{equation}\lb{3.86}
u_{F,\phi}(z_0,\cdot)\in \dom(H_{F,\phi}).
\end{equation}
Since $\tau u_{F,\phi}(z_0,\cdot)=z u_{F,\phi}(z_0,\cdot)$, it follows that $z_0\in \sigma(H_{F,\phi})$.  Therefore, $q_{F,\phi}(z)\neq 0$ if $z\in \rho(H_{F,\phi})\cap \rho(H_{0,0})$.

Using the argument in \eqref{3.70}, verification of  \eqref{3.57} reduces to showing that
\begin{align} \lb{3.87}
&g_f(z,\cdot):= (H_{0,0}-zI_{(a,b)})^{-1}f-q_{F,\phi}(z)^{-1}(u_{F,\phi}(\overline{z},\cdot),f) u_{F,\phi}(z,\cdot)\in \dom(H_{F,\phi}), \no \\
&\hspace*{4.7cm} f\in L^2((a,b);rdx), \ z\in \rho(H_{F,\phi})\cap \rho(H_{0,0}),
\end{align}
which, in turn, reduces to proving that $g_f(z,\cdot)$ satisfies the boundary conditions
\begin{align}
g_f(z,b)&=e^{i\phi}F_{1,1}g_f(z,a),  \lb{3.88}\\
g_f^{[1]}(z,b)&=e^{i\phi}F_{2,1}g_f(z,a)+e^{i\phi}F_{2,2}g_f^{[1]}(z,a).\lb{3.89}
\end{align}
To show that $g_f(z,\cdot)$ satisfies the first boundary condition \eqref{3.88}, one can use
\eqref{3.8}, $F_{1,1}F_{2,2}=\det(F)=1$, and \eqref{3.72} to calculate
\begin{align}
&g_f(z,b)-e^{i\phi}F_{1,1}g_f(z,a)\no \\
&\quad = - q_{F,\phi}(z)^{-1}(u_{F,\phi}(\overline{z},\cdot),f)_{L^2((a,b);rdx)}    \no \\
&\qquad+ e^{i\phi}F_{1,1}
q_{F,\phi}(z)^{-1} (u_{R,\phi}(z,\cdot),f)_{L^2((a,b);rdx)}e^{-i\phi}F_{2,2}=0,     \lb{3.90} \\
&\hspace*{5.95cm} z\in \rho(H_{F,\phi})\cap \rho(H_{0,0}).   \no
\end{align}
In view of \eqref{3.30},
\begin{align}
g_f^{[1]}(z,a)&=(u_2(\overline{z},\cdot),f)_{L^2((a,b);rdx)}\no\\
&\quad - q_{F,\phi}(z)^{-1}(u_{F,\phi}(\overline{z},\cdot),f)u_{F,\phi}(z,a)_{L^2((a,b);rdx)},   \lb{3.91}\\
g_f^{[1]}(z,b)&=-(u_1(\overline{z},\cdot),f)_{L^2((a,b);rdx)}\no\\
&\quad - q_{F,\phi}(z)^{-1}(u_{F,\phi}(\overline{z},\cdot),f)u_{F,\phi}^{[1]}(z,b)_{L^2((a,b);rdx)}.\lb{3.92}
\end{align}
Employing \eqref{3.58}, \eqref{3.91}, and \eqref{3.92}, one computes for the difference $g_f^{[1]}(z,b)-e^{i\phi}F_{2,1}g_f(z,a)-e^{i\phi}F_{2,2}g_f^{[1]}(z,a)$ in terms of $(u_1(\overline{z},\cdot),f)_{L^2((a,b);rdx)}$ and \newline
$(u_2(\overline{z},\cdot),f)_{L^2((a,b);rdx)}$,
\begin{align}
&g_f^{[1]}(z,b)-e^{i\phi}F_{2,1}g_f(z,a)-e^{i\phi}F_{2,2}g_f^{[1]}(z,a)\no \\
&\quad =(u_1(\overline{z},\cdot),f)_{L^2((a,b);rdx)}\big\{q_{F,\phi}(z)^{-1}[F_{2,1}F_{2,2}
+ F_{2,2}^2u_2^{[1]}(z,a) + e^{i\phi}F_{2,2}u_1^{[1]}(z,a)\no \\
&\qquad - e^{-i\phi}F_{2,2}u_2^{[1]}(z,b) - u_1^{[1]}(z,b)]-1 \big\}\no \\
&\qquad+e^{i\phi}F_{2,2}(u_2(\overline{z},\cdot),f)_{L^2((a,b);rdx)}
\big\{q_{F,\phi}(z)^{-1}[F_{2,1}F_{2,2} + F_{2,2}^2u_2^{[1]}(z,a)\no \\
&\quad\quad + e^{i\phi}F_{2,2}u_1^{[1]}(z,a) - e^{-i\phi}F_{2,2}u_2^{[1]}(z,b)
- u_1^{[1]}(z,b)]-1 \big\}\no \\
&\quad =(u_1(\overline{z},\cdot),f)_{L^2((a,b);rdx)}\big[q_{F,\phi}(z)^{-1}q_{F,\phi}(z) - 1 \big]  \no \\
& \qquad
+e^{i\phi}F_{2,2}(u_2(\overline{z},\cdot),f)_{L^2((a,b);rdx)}\big[q_{F,\phi}(z)^{-1}q_{F,\phi}(z) - 1 \big]\no \\
&\quad =0, \quad f\in L^2((a,b);rdx), \; z\in \rho(H_{F,\phi})\cap \rho(H_{0,0}).
\lb{3.93}
\end{align}
\end{proof}

As an example of a self-adjoint extension of $H_{\text{min}}$ with non-separated
boundary conditions, we now consider in detail the case of the Krein--von Neumann  extension. For background information on this topic we refer to \cite{AGMST10},
\cite{AGMT10} and the extensive list of references therein.

\begin{example} \lb{e3.3} 
Suppose $H_{\text{min}}$, defined by \eqref{A.6}, is strictly positive in the sense that there exists an $\varepsilon>0$ for which
\begin{equation}\lb{3.94}
(f,H_{\text{min}}f)\geq \varepsilon \|f\|^2, \quad f\in \dom(H_{\text{min}}).
\end{equation}
Since the deficiency indices of $H_{\text{min}}$ are $(2,2)$, the assumption \eqref{3.94} implies that
\begin{equation}\lb{3.95}
\dim(\ker(H_{\text{min}}^{\ast}))=2.
\end{equation}
As a basis for $\ker(H_{\text{min}}^{\ast})$, we choose $\{u_1(0,\cdot),u_2(0,\cdot)\}$, where $u_1(0,\cdot)$ and $u_2(0,\cdot)$ are real-valued and satisfy \eqref{3.8} (with $z=0$).

The Krein--von Neumann extension of $H_{\text{min}}$ in $L^2((a,b); rdx)$, which we denote by $H_{\text{K}}$, is defined as the restriction of $H_{\text{min}}^{\ast}$ with domain
\begin{equation}\lb{3.96}
\dom(H_{\text{K}})=\dom(H_{\text{min}}) \dotplus \ker(H_{\text{min}}^{\ast}).
\end{equation}
Since $H_{\text{K}}$ is a self-adjoint extension of $H_{\text{min}}$, functions in $\dom(H_{\text{K}})$ must satisfy certain boundary conditions; we now provide a characterization of these boundary conditions.  Let $u\in \dom(H_{\text{K}})$; by \eqref{3.96} there exist $f\in \dom(H_{\text{min}})$ and $\eta\in \ker(H_{\text{min}}^{\ast})$ with
\begin{equation}\lb{3.97}
u(x)=f(x)+\eta(x), \quad x\in [a,b].
\end{equation}
Since $f\in \dom(H_{\text{min}})$,
\begin{equation}\lb{3.98}
f(a)=f^{[1]}(a)=f(b)=f^{[1]}(b)=0,
\end{equation}
and as a result,
\begin{equation}\lb{3.100}
u(a)=\eta(a),  \quad  u(b)=\eta(b).
\end{equation}
Since $\eta\in \ker(H_{\text{min}}^{\ast})$, we write (cf. \eqref{3.8})
\begin{equation}\lb{3.101}
\eta(x)=c_1u_1(0,x)+c_2u_2(0,x), \quad x\in [a,b],
\end{equation}
for appropriate scalars $c_1,c_2\in \bbC$.  By separately evaluating \eqref{3.101} at $x=a$ and $x=b$, one infers from \eqref{3.8} that
\begin{equation}\lb{3.103}
\eta(a)=c_2,   \quad   \eta(b)=c_1.
\end{equation}
Comparing \eqref{3.103} and \eqref{3.100} allows one to write \eqref{3.101} as
\begin{equation}\lb{3.104}
\eta(x)=u(b)u_1(0,x)+u(a)u_2(0,x), \quad x\in [a,b].
\end{equation}
Finally, \eqref{3.97} and \eqref{3.104} imply
\begin{equation}\lb{3.105}
u(x)=f(x)+u(b)u_1(0,x)+u(a)u_2(0,x), \quad x\in [a,b],
\end{equation}
and as a result,
\begin{equation}\lb{3.106}
u^{[1]}(x)=f^{[1]}(x)+u(b)u_1^{[1]}(0,x)+u(a)u_2^{[1]}(0,x), \quad x\in [a,b].
\end{equation}
Evaluating \eqref{3.106} separately at $x=a$ and $x=b$, and using \eqref{3.98}
yields the following boundary conditions for $u$:
\begin{equation}
u^{[1]}(a)=u(b)u_1^{[1]}(0,a)+u(a)u_2^{[1]}(0,a),   \quad
u^{[1]}(b)=u(b)u_1^{[1]}(0,b)+u(a)u_2^{[1]}(0,b).       \lb{3.108}
\end{equation}
Since $u_1^{[1]}(0,a)\neq0$ (one recalls that $u_1(0,a)=0$), relations \eqref{3.108} can be
recast as
\begin{equation}\lb{3.109}
\begin{pmatrix} u(b) \\ u^{[1]}(b)\end{pmatrix} 
= F_{\text{K}}\begin{pmatrix} u(a) \\ u^{[1]}(a)\end{pmatrix},
\end{equation}
where
\begin{equation}\lb{3.110}
F_{\text{K}}=\frac{1}{u_1^{[1]}(0,a)} \begin{pmatrix}
-u_2^{[1]}(0,a) & 1\\
u_1^{[1]}(0,a)u_2^{[1]}(0,b)-u_1^{[1]}(0,b)u_2^{[1]}(0,a) & u_1^{[1]}(0,b)
\end{pmatrix}.
\end{equation}
Then $F_{\text{K}}\in \SL_2(\bbR)$ since \eqref{3.110} and \eqref{3.33} imply
\begin{equation}\lb{3.111}
\det(F_{\text{K}})=-\frac{u_2^{[1]}(0,b)}{u_1^{[1]}(0,a)}=1.
\end{equation}
Thus, we have shown $H_{\text{K}}\subset H_{F_{\text{K}},0}$, where
$H_{F_{\text{K}},0}$ is defined by \eqref{3.4} with $F=F_{\text{K}}$ and $\phi=0$.
Since both $H_{\text{K}}$ and  $H_{F_{\text{K}},0}$ are self-adjoint, we conclude
$H_{\text{K}}=H_{F_{\text{K}},0}$; that is, $H_{F_{\text{K}},0}$ is the
Krein--von Neumann extension of $H_{\text{min}}$.

Applying the result of Theorem \ref{t3.2}, one has
\begin{align} \lb{3.112}
& (H_{{\text{K}}}-zI_{(a,b)})^{-1}=(H_{0,0}-zI_{(a,b)})^{-1}    \\
& \quad  - \sum_{j,k=1}^2Q_{F_{\text{K}},0}(z)^{-1}_{j,k}
(u_k(\overline{z},\cdot),\cdot)_{L^2((a,b); rdx)} u_j(z,\cdot),  \quad
z\in \rho(H_{\text{K}})\cap\rho(H_{0,0}),    \no
\end{align}
where
\begin{align}\lb{3.113}
\begin{split} 
Q_{F_{\text{K}},0}(z)=\begin{pmatrix}
u_1^{[1]}(0,b) - u_1^{[1]}(z,b) & - u_1^{[1]}(0,a) - u_2^{[1]}(z,b)\\
- u_1^{[1]}(0,a) + u_1^{[1]}(z,a) & - u_2^{[1]}(0,a) + u_2^{[1]}(z,a)
\end{pmatrix},&  \\ 
z\in \rho(H_{\text{K}})\cap\rho(H_{0,0}).&
\end{split}
\end{align}

In the special case $q\equiv 0$, and using the corresponding notation $H_{\text{K}}^{(0)}$ (and 
similarly, $\tau^{(0)}$, $u_1^{(0)}(0,\cdot)$, $u_2^{(0)}(0,\cdot)$), the above analysis is 
particularly transparent. In this case, a basis for
$\ker(H_{\text{min}}^{\ast})$ is provided by $\{u_1^{(0)}(0,\cdot),u_2^{(0)}(0,\cdot)\}$, where
\begin{align}
\begin{split} 
u_1^{(0)}(0,x) & = \bigg[\int_a^b ds \, p(s)^{-1}\bigg]^{-1} \int_a^x dt \, p(t)^{-1},   \\ 
u_2^{(0)}(0,x) & = 1 - \bigg[\int_a^b ds \, p(s)^{-1}\bigg]^{-1} \int_a^x dt \, p(t)^{-1}, 
\quad x\in [a,b],  \lb{3.115}
\end{split} 
\end{align}
as one verifies that 
\begin{equation}\lb{3.116}
H_{\text{min}}^{\ast}u_j^{(0)}(0,\cdot)=H_{\text{max}}u_j^{(0)}(0,\cdot)
=\tau^{(0)} u_j^{(0)}(0,\cdot)=0, \quad j=1,2.
\end{equation}
The boundary conditions for $H_{\text{K}}^{(0)}$ then read
\begin{equation}\lb{3.117}
\begin{pmatrix} u(b) \\ u^{[1]}(b)\end{pmatrix}
= F_{\text{K}}^{(0)}\begin{pmatrix} u(a) \\ u^{[1]}(a)\end{pmatrix}, \quad 
u\in \dom\big(H_{\text{K}}^{(0)}\big),
\end{equation}
where
\begin{equation}\lb{3.118}
F_{\text{K}}^{(0)}= \begin{pmatrix} 1 & \int_a^b dt \, p(t)^{-1} \\ 0 & 1 \end{pmatrix}.
\end{equation}
Explicitly,
\begin{equation}
u^{[1]}(b) = u^{[1]}(a) = \bigg[\int_a^b dt \, p(t)^{-1}\bigg]^{-1} [u(b) - u(a)],  
\quad u\in \dom\big(H_{\text{K}}^{(0)}\big).    \lb{3.119}
\end{equation}
\end{example}

We note that \eqref{3.119} has been derived in \cite{AS80} and \cite[Sect.\ 2.3]{Fu80}
(see also \cite[Sect.\ 3.3]{FOT94}) in the special case where $p = r \equiv 1$. While it appears 
that our characterization \eqref{3.109}, \eqref{3.110} of the Krein--von Neumann boundary condition 
for general Sturm--Liouville operators on a finite interval is new, the special case $q \equiv 0$ was recently 
discussed in \cite{FN09}.

\section{General Boundary Data Maps and Their Basic Properties} \lb{s4}

This section is devoted to general boundary data maps  associated with
self-adjoint extensions of the operator $H_{\min}$ defined in
\eqref{A.6}. A special case of the boundary data maps corresponding to
separated boundary conditions was recently introduced in \cite{CGM10} and
further discussed in \cite{GZ11}. At the end of this section we show how the
general boundary data map appears naturally in Krein's resolvent formula for
a difference of resolvents of any two self-adjoint extensions of $H_{\min}$.

We recall the general boundary trace map, $\ga_{A,B}$, introduced in \eqref{A.26} associated with the boundary $\{a,b\}$ of $(a,b)$ and the $2\times2$ (parameter) matrices $A,B$ satisfying \eqref{A.7}, the special cases of the Dirichlet trace $\gamma_D$, and the Neumann trace $\gamma_N$ (in connection 
with the outward pointing unit normal vector at $\partial (a,b) = \{a, b\}$) defined in \eqref{A.27}, the 
matrices $A_D$, $B_D$, $A_N$, and $B_N$ in \eqref{A.28a} and \eqref{A.28b}, and the relations in 
\eqref{A.29}--\eqref{A.31}. 

It follows from Theorems \ref{tA.2} and \ref{tA.4} that
\begin{equation}
H_\AB \, f = \tau f,  \quad f \in \dom(H_\AB)=\{g\in\dom(H_\max) \, | \, \ga_\AB\,g = 0\},
 \lb{4.8b}
\end{equation}
defines a self-adjoint extension of $H_{\min}$ whenever $A,B\in\bbC^{2\times2}$ satisfy \eqref{A.7}. In particular, we note that
\begin{align}
\ga_\AB (H_\AB - z I_{(a,b)})^{-1} = 0, \quad z\in\rho(H_\AB). \lb{4.8c}
\end{align}
\begin{remark}
Given \eqref{4.8b}, the Dirichlet extension of $H_{\min}$ will be defined by
$H_{A_D,B_D}$ and denoted by $H_D$ while the Neumann extension of $H_{\min}$
will be defined by $H_{A_N,B_N}$ and denoted by $H_N$. Note that $H_D$ and
$H_N$  are associated with $\ga_D$ and $\ga_N$, respectively and, relative to
the notation used in Theorem \ref{t3.1}, that $H_{0,0}=H_D$ while
$H_{\pi/2,\pi/2}=H_N$.
\end{remark}
Given the general boundary trace map $\ga_\AB$, we now introduce a complimentary trace
map $\ga_\AB^\perp$ by
\begin{align}
\ga_\AB^\perp = D^\perp_{A,B}\ga_D + N^\perp_{A,B}\ga_N, \lb{4.8d}
\end{align}
where the $2\times2$ matrices $D^\perp_{A,B}$, $N^\perp_{A,B}$ are given by
\begin{align}
\begin{split}
D^\perp_{A,B} &= -[D_{A,B}D_{A,B}^*+N_{A,B}N_{A,B}^*]^{-1} N_{A,B}, \\
N^\perp_{A,B} &= [D_{A,B}D_{A,B}^*+N_{A,B}N_{A,B}^*]^{-1} D_{A,B}, \lb{4.8e}
\end{split}
\end{align}
and where the $2\times 2$ self-adjoint matrix
\begin{equation}
D_{A,B}D_{A,B}^*+N_{A,B}N_{A,B}^* = (D_{A,B}\ \ N_{A,B})(D_{A,B}\ \ N_{A,B})^*
\end{equation}
in \eqref{4.8e} is invertible given the rank condition in \eqref{A.31}. It
follows from \eqref{A.31} and \eqref{4.8e} that
\begin{align}
\rank(D^\perp_{A,B}\ \ N^\perp_{A,B})=2, \quad
D^\perp_{A,B} (N^\perp_{A,B})^*=N^\perp_{A,B} (D^\perp_{A,B})^*, \lb{4.8f}
\end{align}
and hence that $\ga_\AB^\perp$ is also a boundary trace map for which
$\ga_\AB^\perp=\ga_{A^\perp_{A,B},B^\perp_{A,B}}$, where
\begin{align}
A^\perp_{A,B}=
\begin{pmatrix}
D^\perp_{A,B,1,1}&N^\perp_{A,B,1,1}\\D^\perp_{A,B,2,1}&N^\perp_{A,B,2,1}
\end{pmatrix},\quad
B^\perp_{A,B}=
\begin{pmatrix}-D^\perp_{A,B,1,2}&N^\perp_{A,B,1,2}\\
-D^\perp_{A,B,2,2}&N^\perp_{A,B,2,2}
\end{pmatrix}. \lb{4.8g}
\end{align}
In particular, one notes that
\begin{align}
\ga_D^\perp = \ga_N, \quad \ga_N^\perp = -\ga_D, \quad
(\ga_\AB^\perp)^\perp = -\ga_\AB. \lb{4.8h}
\end{align}

One reason for introducing the complimentary boundary  trace map
$\ga_\AB^\perp$ is to obtain a convenient way of connecting two arbitrary
boundary trace maps, $\ga_\ABp$ and $\ga_\AB$. This is done by generalizing
the identity in \eqref{A.29} to obtain
\begin{align}
\ga_\ABp = T_{A',B',A,B} \, \ga_\AB + S_{A',B',A,B} \, \ga_\AB^\perp, \lb{4.8i}
\end{align}
where
\begin{align}
\begin{split}
T_{A',B',A,B} &= D_{A',B'}(N^\perp_{A,B})^* - N_{A',B'}(D^\perp_{A,B})^*,    \\
S_{A',B',A,B} &= N_{A',B'}D^*_{A,B} - D_{A',B'}N^*_{A,B}. \lb{4.8j}
\end{split}
\end{align}
The above formulas \eqref{4.8i} and \eqref{4.8j} easily follow from \eqref{A.29}, \eqref{A.31}, \eqref{4.8d}, and \eqref{4.8e}. Moreover, we note that it follows from \eqref{A.31}, \eqref{4.8e}, \eqref{4.8f}, and \eqref{4.8j} that
\begin{align}
\rank(T_{A',B',A,B}\ \ S_{A',B',A,B})=2, \quad
T_{A',B',A,B}S_{A',B',A,B}^*=S_{A',B',A,B}T^*_{A',B',A,B}. \lb{4.8k}
\end{align}
Conversely, every pair of $2\times2$ matrices  $T_{A',B',A,B}, S_{A',B',A,B}$
satisfying \eqref{4.8k} defines a general boundary trace map $\ga_\ABp$ via
\eqref{4.8i} with $D_{A',B'},N_{A',B'}$ satisfying \eqref{A.31}.

For future use we record the following  two special cases of \eqref{4.8i},
\begin{align}
\ga_D = (N^\perp_{A,B})^*\ga_\AB - N^*_{A,B} \ga_\AB^\perp, \quad
\ga_N = -(D^\perp_{A,B})^*\ga_\AB + D^*_{A,B}\ga_\AB^\perp. \lb{4.8l}
\end{align}
Moreover, interchanging the role of $\AB$ and $\ABp$ in \eqref{4.8i}, yields
\begin{align}
\ga_\AB = T_{A,B,A',B'}\ga_\ABp + S_{A,B,A',B'}\ga_\ABp^\perp, \lb{4.8m}
\end{align}
with
\begin{align}
\begin{split}
T_{A,B,A',B'} &= D_{A,B}(N_{A',B'}^\perp)^*
- N_{A,B}(D_{A',B'}^\perp)^*,    \\
S_{A,B,A',B'} &= N_{A,B} D_{A',B'}^* - D_{A,B} N_{A',B'}^*. \lb{4.8n}
\end{split}
\end{align}
Comparing \eqref{4.8j} with \eqref{4.8n} one observes that
$S_{A,B,A',B'}=-S^*_{A',B',A,B}$ and hence
\begin{align}
\ga_\AB = T_{A,B,A',B'}\ga_\ABp - S^*_{A',B',A,B}\ga_\ABp^\perp. \lb{4.8o}
\end{align}

Finally, we note that the conditions of the type \eqref{A.31} and \eqref{4.8k} imply the following useful property for the matrices involved: If a pair
$T_{A',B',A,B}, S_{A',B',A,B}$ satisfies \eqref{4.8k} then the pair
$T_{A',B',A,B},S_{A',B',A,B,\de}$ with
\begin{align}
S_{A',B',A,B,\de}=S_{A',B',A,B}+\de \, T_{A',B',A,B}, \quad \de\in\bbR, \lb{4.8p}
\end{align}
also satisfies \eqref{4.8k} and, in addition, the matrix $S_{A',B',A,B,\de}$ is necessarily invertible for all sufficiently small $\de\neq0$. The conditions in \eqref{A.31} yield a similar result for the pair of matrices $D_{A,B}, N_{A,B}$.

Next, we recall the following elementary, yet fundamental, fact.

\begin{lemma} \lb{l4.2}
Assume Hypothesis \ref{hA.1}, choose matrices $A, B\in\bbC^{2\times2}$ such
that $\rank(A\ \ B) = 2$, and suppose that $z\in\rho(H_{A,B})$. Then the boundary value problem
\begin{align}
-&\big(u^{[1]}\big)' + qu=zru,\quad u, u^{[1]}\in \AC([a,b]), \lb{4.9}
\\
&\ga_\AB u=\begin{pmatrix}c_1\\ c_2 \end{pmatrix}\in\bbC^2, \lb{4.10}
\end{align}
has a unique solution $u(z,\cdot)=u_{A,B}(z,\,\cdot\ ;c_1,c_2)$ for each
$c_1, c_2\in\bbC$. In addition, for each $x \in [a,b]$ and $c_1, c_2\in\bbC$,
$u_{A,B}(\cdot, x; c_1,c_2)$ is analytic on $\rho(H_{A,B})$.
\end{lemma}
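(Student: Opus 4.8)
The plan is to recognize that equation \eqref{4.9} is simply the eigenvalue equation $\tau u = z u$, so that its solution set is $\ker(H_{\max} - z I_{(a,b)})$, and to regard the boundary condition \eqref{4.10} as asking for the inverse image of $(c_1, c_2)^\top$ under the restriction of the trace map $\ga_{A,B}$ (introduced in \eqref{A.26}) to that kernel. First I would record that, by the standard existence-and-uniqueness theory for regular second-order linear differential equations with $L^1$ coefficients (as in the Sturm--Liouville references already cited), the solution space of $(\tau - z)u = 0$ is two-dimensional for every $z \in \bbC$; since the interval is finite and $\tau$ is regular, every such solution is continuous on $[a,b]$, lies in $L^2((a,b); rdx)$, and satisfies $u, u^{[1]} \in \AC([a,b])$, so $\ker(H_{\max} - zI_{(a,b)})$ coincides with this two-dimensional solution space and $\ga_{A,B}$ is well-defined on it.

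Next I would prove existence and uniqueness by a dimension count. The map $\ga_{A,B}\colon \ker(H_{\max} - zI_{(a,b)}) \to \bbC^2$ is linear between two-dimensional spaces, so it suffices to show injectivity. If $u \in \ker(H_{\max} - zI_{(a,b)})$ satisfies $\ga_{A,B}u = 0$, then the domain characterization \eqref{A.26b} places $u \in \dom(H_{A,B})$, and $(H_{A,B} - zI_{(a,b)})u = \tau u - z u = 0$, i.e.\ $u \in \ker(H_{A,B} - zI_{(a,b)})$. Since $z \in \rho(H_{A,B})$, the operator $H_{A,B} - zI_{(a,b)}$ is injective, forcing $u = 0$. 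Hence the restriction of $\ga_{A,B}$ to the kernel is a linear isomorphism onto $\bbC^2$, which yields the unique solution $u_{A,B}(z, \cdot\,; c_1, c_2)$ for every $(c_1, c_2)$.

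For the analyticity claim I would pass to an explicitly $z$-analytic basis. Fix the fundamental system $\phi_1(z, \cdot), \phi_2(z, \cdot)$ of $(\tau - z)u = 0$ determined by the $z$-independent initial data $\phi_1(z,a) = 1, \phi_1^{[1]}(z,a) = 0$ and $\phi_2(z,a) = 0, \phi_2^{[1]}(z,a) = 1$; by the usual successive-approximation construction these solutions, together with their quasi-derivatives, are for each fixed $x \in [a,b]$ entire functions of $z$. Writing $u = \alpha_1 \phi_1(z, \cdot) + \alpha_2 \phi_2(z, \cdot)$, the boundary condition \eqref{4.10} becomes the linear system $M(z)\binom{\alpha_1}{\alpha_2} = \binom{c_1}{c_2}$, where $M(z) = \big(\ga_{A,B}\phi_1(z,\cdot)\ \ \ga_{A,B}\phi_2(z,\cdot)\big)$ is an entire $\bbC^{2\times 2}$-valued function. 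The isomorphism property established above says precisely that $M(z)$ is invertible for every $z \in \rho(H_{A,B})$, so $\binom{\alpha_1(z)}{\alpha_2(z)} = M(z)^{-1}\binom{c_1}{c_2}$ is analytic there; consequently $u_{A,B}(z, x; c_1, c_2) = \alpha_1(z)\phi_1(z, x) + \alpha_2(z)\phi_2(z, x)$ is analytic in $z$ on $\rho(H_{A,B})$ for each fixed $x$.

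The only step demanding genuine care is the entire dependence of $\phi_1, \phi_2$ and their quasi-derivatives on $z$ under the weak Hypothesis \ref{hA.1} ($1/p, q, r \in L^1$): this is supplied by the standard iteration argument for the first-order system equivalent to $(\tau - z)u = 0$, whose Neumann series converges uniformly for $(z, x)$ in compact subsets of $\bbC \times [a,b]$. Everything else---the two-dimensionality of the solution space, injectivity from the resolvent condition, and the resulting invertibility and analyticity of $M(z)^{-1}$---is then immediate.
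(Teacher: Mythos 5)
Your proposal is correct and follows essentially the same route as the paper: reduce the boundary value problem to a $2\times2$ linear system $M(z)\alpha=c$ in a basis of solutions, and show invertibility of $M(z)$ by observing that a nontrivial element of its kernel would produce an eigenfunction of $H_{A,B}$ at $z\in\rho(H_{A,B})$ (your injectivity-plus-dimension-count phrasing is just the contrapositive of the paper's $\det(M)\neq0$ argument). Your treatment of analyticity is somewhat more explicit than the paper's one-line remark—you fix a fundamental system with $z$-independent initial data at $a$ and invoke the Neumann-series argument for entire dependence on $z$—but this is exactly the detail the paper is implicitly relying on.
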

\begin{proof}
This is well-known, but for the sake of completeness, we briefly recall the argument.
Let $u_j(z,\cdot)$, $j=1,2$, be a basis for the solutions of \eqref{4.9} and let
\begin{align}
u(z,\cdot)= d_1 u_1(z,\cdot) + d_2 u_2(z,\cdot), \quad d_1, d_2\in\bbC, \lb{4.11}
\end{align}
be the general solution of \eqref{4.9}. Then
\begin{align}
\ga_\AB (u(z,\cdot)) &=
M \begin{pmatrix} d_1 \\ d_2 \end{pmatrix}, \lb{4.12}
\end{align}
where $M\in\bbC^{2\times2}$ and the entries are given by
\begin{align}
\begin{pmatrix}
M_{1,1}\\M_{2,1}
\end{pmatrix}
= \ga_\AB (u_1(z,\cdot)), \quad
\begin{pmatrix}
M_{1,2}\\M_{2,2}
\end{pmatrix}
= \ga_\AB (u_2(z,\cdot)). \lb{4.13}
\end{align}
Thus, by \eqref{4.11}, \eqref{4.12}, the boundary value problem \eqref{4.9}, \eqref{4.10} is equivalent to
\begin{align}
M\begin{pmatrix} d_1 \\ d_2 \end{pmatrix} =
\begin{pmatrix} c_1 \\ c_2 \end{pmatrix}. \lb{4.14}
\end{align}
Given $(c_1 \ c_2)^\top\in\bbC^2$, \eqref{4.14} has a unique solution $(d_1\ d_2)^\top\in\bbC^2$ if and only if $\det(M)\neq0$. Thus, it suffices to show that $\det(M)\neq0$. Assume to the contrary that $\det(M)=0$. Then there is a nonzero vector
$(d_1 \ d_2)^\top\in\bbC^2$ such that
\begin{align}
M\begin{pmatrix} d_1 \\ d_2 \end{pmatrix} =
\begin{pmatrix} 0 \\ 0 \end{pmatrix}, \lb{4.15}
\end{align}
which, by \eqref{4.11}, is equivalent to the existence of a nontrivial
solution $u(z,\cdot)$ of the boundary value problem \eqref{4.9}, \eqref{4.10}
with homogeneous boundary conditions (i.e., with $c_0=c_1=0$). Equivalently,
$u(z,\cdot)$ satisfies
\begin{equation}
H_{A,B} u(z,\cdot) = z u(z,\cdot), \quad u(z,\cdot) \in \dom(H_{A,B}),
\end{equation}
which in turn is equivalent to $z\in\sigma(H_{A,B})$, a contradiction.

The $z$-independence of the initial condition \eqref{4.10} yields that for fixed
$x \in [a,b]$, $c_1, c_2 \in \bbC$,
$u_{A,B}(\cdot, x; c_1,c_2)$ is analytic on $\rho(H_{A,B})$.
\end{proof}

Given $\AB,\ABp\in\bbC^{2\times2}$ with $\AB$ and $\ABp$ satisfying
\eqref{A.7} and assuming $z\in\rho(H_{A,B})$, we introduce in association
with the boundary value problem \eqref{4.9}, \eqref{4.10},  the 
\emph{general boundary data map},
$\La_\AB^\ABp(z) : \bbC^2 \rightarrow \bbC^2$, by
\begin{align}
\begin{split}
\La_\AB^\ABp(z) \begin{pmatrix}c_1\\ c_2\end{pmatrix} &=
\La_\AB^\ABp(z) \, \ga_{A,B} (u_\AB(z,\,\cdot \, ;c_1,c_2)) \\
&= \ga_\ABp (u_\AB(z,\,\cdot \, ;c_1,c_2)),
\end{split} \lb{4.17}
\end{align}
where $u_\AB(z, \, \cdot \, ;c_1,c_2)$ is the solution of the boundary value problem
in \eqref{4.9}, \eqref{4.10}.

As defined, $\La_\AB^\ABp(z)$ is a linear transformation and thus
representable as an element of $\bbC^{2\times 2}$. A basis-independent
description for the boundary data map defined in \eqref{4.17} is
provided in the next result.
\begin{theorem}\label{t4.2.1}
Assume that $\AB,\ABp\in\bbC^{2\times2}$, where $\AB$ and $\ABp$ satisfy
\eqref{A.7}, and let  $z\in\rho(H_{A,B})$. In addition, denote by $y_j(z,\cdot)$,
$j=1,2$, a basis for the solutions of \eqref{4.9}. Then,
\begin{equation}\label{4.17a}
\La_\AB^\ABp(z) = \begin{pmatrix} \ga_\ABp(y_1(z,\cdot)) & \ga_\ABp(y_2(z,\cdot))
\end{pmatrix}
\begin{pmatrix} \ga_\AB(y_1(z,\cdot)) & \ga_\AB(y_2(z,\cdot))\end{pmatrix}^{-1}.
\end{equation}
Moreover, $\La_\AB^\ABp(z)$ is invariant with respect to change of basis for
the solutions of \eqref{4.9}.
\end{theorem}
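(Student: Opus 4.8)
The plan is to reduce \eqref{4.17a} to elementary linear algebra by expanding an arbitrary solution of \eqref{4.9} in the basis $\{y_1(z,\cdot),y_2(z,\cdot)\}$ and exploiting the linearity of the boundary trace maps $\ga_\AB$ and $\ga_\ABp$. The single fact I would extract first, as the foundation, is that the $2\times2$ matrix $\begin{pmatrix}\ga_\AB(y_1) & \ga_\AB(y_2)\end{pmatrix}$ is invertible: this is precisely the matrix $M$ appearing in the proof of Lemma \ref{l4.2} (formed with respect to the particular basis $\{y_1,y_2\}$), and the hypothesis $z\in\rho(H_\AB)$ was shown there to force $\det(M)\neq0$. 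This is exactly what makes the right-hand side of \eqref{4.17a} well defined.

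Next, given $(c_1,c_2)^\top\in\bbC^2$, I would write the associated solution as $u_\AB(z,\cdot\,;c_1,c_2)=d_1 y_1(z,\cdot)+d_2 y_2(z,\cdot)$ for the unique coefficients $(d_1,d_2)^\top$ supplied by Lemma \ref{l4.2}. Applying linearity of the two trace maps gives, in matrix form,
\[
\begin{pmatrix}c_1\\c_2\end{pmatrix}
= \ga_\AB\big(u_\AB(z,\cdot\,;c_1,c_2)\big)
= \begin{pmatrix}\ga_\AB(y_1) & \ga_\AB(y_2)\end{pmatrix}\begin{pmatrix}d_1\\d_2\end{pmatrix},
\]
together with the analogous identity $\ga_\ABp\big(u_\AB(z,\cdot\,;c_1,c_2)\big)=\begin{pmatrix}\ga_\ABp(y_1) & \ga_\ABp(y_2)\end{pmatrix}(d_1,d_2)^\top$. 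Solving the first relation for $(d_1,d_2)^\top$ via the invertibility established above and substituting into the second, the defining property \eqref{4.17} of $\La_\AB^\ABp(z)$ yields \eqref{4.17a}, since $(c_1,c_2)^\top$ was arbitrary.

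Finally, for invariance under change of basis, I would take any second basis $\{\wti y_1,\wti y_2\}$ of solutions of \eqref{4.9}, so that $(\wti y_1\ \ \wti y_2)=(y_1\ \ y_2)P$ for some invertible $P\in\bbC^{2\times2}$. By linearity of $\ga_\AB$ and $\ga_\ABp$, both factor matrices on the right-hand side of \eqref{4.17a} are right-multiplied by $P$; the two occurrences of $P$ and $P^{-1}$ cancel in the product, so the expression is unchanged. I do not anticipate any genuine obstacle here: the proof is a direct computation, and the only step demanding care is the explicit identification of $\begin{pmatrix}\ga_\AB(y_1) & \ga_\AB(y_2)\end{pmatrix}$ with the matrix $M$ of Lemma \ref{l4.2}, which is what guarantees the required inverse exists for every $z\in\rho(H_\AB)$.
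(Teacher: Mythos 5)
Your proposal is correct and follows essentially the same route as the paper: expand an arbitrary solution in the basis, use linearity of the trace maps to reduce \eqref{4.17a} to a matrix identity, invoke the invertibility of $\begin{pmatrix}\ga_\AB(y_1(z,\cdot)) & \ga_\AB(y_2(z,\cdot))\end{pmatrix}$ supplied by Lemma \ref{l4.2} (your identification of this matrix with the matrix $M$ there is exactly the point the paper relies on), and handle basis invariance by the cancellation of the change-of-basis matrix with its inverse. No gaps.
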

\begin{proof}
Letting $y(z,\cdot) = d_1 y_1(z,\cdot) + d_2 y_2(z,\cdot)$, $d_1, d_2 \in\bbC$,
be an arbitrary solution of \eqref{4.9}, one observes that
\begin{align}
\La_\AB^\ABp(z)\ga_\AB(y(z,\cdot))&=\La_\AB^\ABp(z)
\begin{pmatrix}\ga_\AB(y_1(z,\cdot)) & \ga_\AB(y_2(z,\cdot))\end{pmatrix}
\begin{pmatrix} d_1 \\ d_2 \end{pmatrix}       \no \\
&= \begin{pmatrix}\ga_\ABp(y_1(z,\cdot)) & \ga_\ABp(y_2(z,\cdot))\end{pmatrix}
\begin{pmatrix} d_1 \\ d_2 \end{pmatrix}
\end{align}
for every $\begin{pmatrix} d_1 & d_2 \end{pmatrix}^\top \in\bbC^2$. Equation
\eqref{4.17a} then follows by the invertibility of
$\begin{pmatrix}\ga_\AB(y_1(z,\cdot)) & \ga_\AB(y_2(z,\cdot))\end{pmatrix}$
noted in Lemma \ref{l4.2}.

Let $\widehat{y}_j(z,\cdot)$, $j=1,2$, denote a second basis for the solutions of
\eqref{4.9}. Then, there is a nonsingular matrix $K\in\bbC^{2\times 2}$ such
that $\begin{pmatrix} y_1 & y_2\end{pmatrix}=\begin{pmatrix} \widehat{y}_1 &
\widehat{y}_2 \end{pmatrix}K$. Next, by \eqref{A.27} and \eqref{A.29}, one notes that
\begin{align}\label{4.17b}
&\begin{pmatrix}\ga_\AB(y_1)&\ga_\AB(y_2)\end{pmatrix} =
D_\AB\begin{pmatrix}\ga_D(y_1)&\ga_D(y_2)\end{pmatrix} +
N_\AB\begin{pmatrix}\ga_N(y_1)&\ga_N(y_2)\end{pmatrix}     \no \\
&\hspace{20pt}  =\left[D_\AB\begin{pmatrix} \widehat{y}_1(z,a)& \widehat{y}_2(z,a)\\
\widehat{y}_1(z,b)& \widehat{y}_2(z,b) \end{pmatrix}
+ N_\AB \left(\negthickspace\begin{array}{rr} \widehat{y}_1^{[1]}(z,a)& \widehat{y}_2^{[1]}(z,a)\\
-\widehat{y}_1^{[1]}(z,b) & - \widehat{y}_2^{[1]}(z,b) \end{array}\negthickspace\right)\right]K     \no \\
&\hspace{20pt} = \begin{pmatrix}\ga_\AB(\widehat{y}_1)&\ga_\AB(\widehat{y}_2)\end{pmatrix}K.
\end{align}
The invariance of $\La_\AB^\ABp(z)$ with respect to change of basis for the
solutions of \eqref{4.9} now follows from \eqref{4.17a} and \eqref{4.17b}.
\end{proof}

\begin{remark}
In what follows, we let
\begin{equation}
\Lambda_{A,B}^D = \Lambda_{A,B}^{A_D,B_D},\qquad \Lambda_{A,B}^N = \Lambda_{A,B}^{A_N,B_N}, \qquad \Lambda_D^N = \Lambda_{A_D,B_D}^{A_N,B_N}
\end{equation}
where $A_D$, $B_D$ were defined in \eqref{A.28a}, and $A_N$, $B_N$ in \eqref{A.28b}. Similarly, we define $\Lambda_D^{A,B}$,
$\Lambda_N^{A,B}$, and $\Lambda_N^D$. In other words, $\La_D^\AB$ (resp.,
$\La_N^\AB$) will denote the boundary data maps that map the Dirichlet
(resp., Neumann) boundary data into a general $(A,B)$-boundary data set.
In particular, the Dirichlet-to-Neumann boundary data map,
$\La_D^N$, and the Neumann-to-Dirichlet boundary data map, $\La_N^D$, are
special cases of such maps.
\end{remark}

The following result collects fundamental algebraic properties for general boundary data maps.

\begin{lemma}\label{c4.3}
Assume that $\AB,\ABp\in\bbC^{2\times2}$, where $\AB$ and $\ABp$ satisfy
\eqref{A.7}. Then,
\begin{align}
& \La_\AB^\ABp(z) = D_{A',B'}\La_\AB^D(z)
+ N_{A',B'}\La_\AB^N(z), \quad z\in\rho(H_\AB), \lb{4.18}
\\
& \La_\AB^\AB(z) = I_2, \quad z\in\rho(H_\AB), \lb{4.18a}
\\
& \La_\ABp^{A^{\prime\prime}\!,B^{\prime\prime}}(z) \, \La_\AB^\ABp(z) = \La_\AB^{A^{\prime\prime}\!,B^{\prime\prime}}(z), \quad z\in\rho(H_\AB)\cap\rho(H_\ABp), \lb{4.19}
\\
& \La_\AB^\ABp(z) = \Big[\La_\ABp^\AB(z)\Big]^{-1},
\quad z\in\rho(H_\AB)\cap\rho(H_\ABp). \lb{4.20}\\
& \La_\AB^\ABp(z) = \big[D_{A',B'} + N_{A',B'} \Lambda_D^N (z)\big]
\big[D_{A,B} + N_{A,B} \Lambda_D^N (z)\big]^{-1},     \lb{4.104a} \\
& \hspace{6cm} z \in \rho(H_\AB) \cap \rho(H_D).    \no
\end{align}
In particular, $\La_\AB^\ABp(z)$ is invertible for $z\in\rho(H_\AB)\cap\rho(H_\ABp)$ and
for every fixed $z\in\rho(H_\AB)\cap\rho(H_\ABp)$, $\La_\AB^\ABp(z)$ depends continuously on $\ABp$ and $\AB$. In addition, for fixed $\ABp$ and $\AB$,
$\La_\AB^\ABp(\cdot)$ is analytic on $\rho(H_\AB)$.
\end{lemma}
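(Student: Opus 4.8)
The plan is to reduce every assertion to the basis-independent representation \eqref{4.17a} established in Theorem \ref{t4.2.1}. Fix $z\in\rho(H_\AB)$, let $y_1(z,\cdot), y_2(z,\cdot)$ be a basis for the solutions of \eqref{4.9}, and abbreviate the column-assembled matrix of boundary data by $Y_\AB(z)=(\ga_\AB(y_1(z,\cdot))\ \ga_\AB(y_2(z,\cdot)))\in\bbC^{2\times2}$, so that \eqref{4.17a} reads $\La_\AB^\ABp(z)=Y_\ABp(z)\,Y_\AB(z)^{-1}$, with $Y_\AB(z)$ invertible by Lemma \ref{l4.2}. In this product form the identities \eqref{4.18a}, \eqref{4.19}, and \eqref{4.20} are immediate: \eqref{4.18a} is simply $Y_\AB(z)\,Y_\AB(z)^{-1}=I_2$; for \eqref{4.19} the inner factors cancel, $(Y_{A'',B''}Y_\ABp^{-1})(Y_\ABp Y_\AB^{-1})=Y_{A'',B''}Y_\AB^{-1}$; and \eqref{4.20} follows by setting $(A'',B'')=(A,B)$ in \eqref{4.19}, which also gives invertibility of $\La_\AB^\ABp(z)$ with explicit inverse $\La_\ABp^\AB(z)$. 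Here \eqref{4.19} and \eqref{4.20} require $z\in\rho(H_\AB)\cap\rho(H_\ABp)$, so that both $Y_\AB(z)$ and $Y_\ABp(z)$ are invertible, which is precisely the domain stated.

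For \eqref{4.18} I would return to the defining relation \eqref{4.17} and invoke the canonical decomposition \eqref{A.29}, that is, $\ga_\ABp=D_\ABp\,\ga_D+N_\ABp\,\ga_N$. Applying this to $u_\AB(z,\cdot;c_1,c_2)$ and using the definition \eqref{4.17} of the three maps $\La_\AB^\ABp$, $\La_\AB^D$, $\La_\AB^N$ yields $\La_\AB^\ABp(z)\ga_\AB(u)=D_\ABp\La_\AB^D(z)\ga_\AB(u)+N_\ABp\La_\AB^N(z)\ga_\AB(u)$. Since $\ga_\AB(u_\AB(z,\cdot;c_1,c_2))=(c_1,c_2)^\top$ by Lemma \ref{l4.2} and $(c_1,c_2)$ ranges over all of $\bbC^2$, the common factor $\ga_\AB(u)$ exhausts $\bbC^2$ and may be dropped, giving \eqref{4.18} as a matrix identity.

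Identity \eqref{4.104a} is then obtained by combining the previous items in the correct order. First, specializing \eqref{4.18} to base point $D$ with target $\AB$, and using $\La_D^D(z)=I_2$ (from \eqref{4.18a}) together with $D_{A_D,B_D}=I_2$, $N_{A_D,B_D}=0$ (read off from \eqref{A.28a} and \eqref{A.30}), one finds $\La_D^\AB(z)=D_\AB+N_\AB\La_D^N(z)$; by \eqref{4.20} its inverse is $\La_\AB^D(z)=[D_\AB+N_\AB\La_D^N(z)]^{-1}$. Next, \eqref{4.19} with intermediate index $D$ gives $\La_\AB^N(z)=\La_D^N(z)\La_\AB^D(z)$. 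Substituting both into \eqref{4.18} and factoring $[D_\AB+N_\AB\La_D^N(z)]^{-1}$ out on the right produces $\La_\AB^\ABp(z)=[D_\ABp+N_\ABp\La_D^N(z)][D_\AB+N_\AB\La_D^N(z)]^{-1}$, valid precisely on $\rho(H_\AB)\cap\rho(H_D)$, which is \eqref{4.104a}.

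Finally, the regularity assertions follow from the explicit formulas. Continuity in $\ABp$ and $\AB$ is read directly from \eqref{4.104a}: the matrices $D_\AB, N_\AB, D_\ABp, N_\ABp$ depend linearly, hence continuously, on the entries of $A,B,A',B'$ through \eqref{A.30}, while $\La_D^N(z)$ does not involve these parameters, and matrix multiplication and inversion are continuous on the invertible locus. Analyticity of $\La_\AB^\ABp(\cdot)$ on $\rho(H_\AB)$ follows from \eqref{4.17a} upon choosing $y_j(z,\cdot)$ to solve \eqref{4.9} with $z$-independent initial data, so that $z\mapsto Y_\AB(z)$ and $z\mapsto Y_\ABp(z)$ are entire matrix functions and $Y_\AB(z)$ is invertible on $\rho(H_\AB)$ by Lemma \ref{l4.2}; thus $z\mapsto Y_\AB(z)^{-1}$ is analytic there and so is the product. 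I expect the only genuine bookkeeping obstacle to be \eqref{4.104a}, where one must verify that each intermediate step (the inversion via \eqref{4.20} and the composition via \eqref{4.19}) is legitimate exactly on $\rho(H_\AB)\cap\rho(H_D)$ and that $D_\AB+N_\AB\La_D^N(z)$ is indeed the invertible matrix $\La_D^\AB(z)$ there; the remaining identities are essentially formal consequences of the product structure $\La_\AB^\ABp=Y_\ABp Y_\AB^{-1}$.
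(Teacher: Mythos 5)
Your argument follows the paper's proof in all essentials: \eqref{4.18} is obtained by inserting the decomposition \eqref{A.29} into the definition \eqref{4.17}, the group identities \eqref{4.18a}--\eqref{4.20} are read off the product form \eqref{4.17a}, \eqref{4.104a} comes from composing through the Dirichlet data, and analyticity from Lemma \ref{l4.2} and \eqref{4.17}/\eqref{4.17a}. The one place where your write-up falls short of the stated claim is the continuity in the parameter matrices: you read it off from \eqref{4.104a}, which is available only for $z\in\rho(H_\AB)\cap\rho(H_D)$, whereas the lemma asserts continuity for every fixed $z\in\rho(H_\AB)\cap\rho(H_\ABp)$, a set that may well contain points of $\sigma(H_D)$. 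The paper avoids this by arguing directly from \eqref{4.17}: the trace map $\ga_\ABp$ depends continuously on $\ABp$ through \eqref{A.27}--\eqref{A.30}, which gives continuity in $\ABp$, and then \eqref{4.20} transfers this to continuity in $\AB$ on $\rho(H_\AB)\cap\rho(H_\ABp)$. Your own setup already contains an equally clean fix: in the representation $\La_\AB^\ABp(z)=Y_\ABp(z)\,Y_\AB(z)^{-1}$ from \eqref{4.17a}, the matrix $Y_\AB(z)=D_\AB\begin{pmatrix}\ga_D y_1 & \ga_D y_2\end{pmatrix}+N_\AB\begin{pmatrix}\ga_N y_1 & \ga_N y_2\end{pmatrix}$ is affine in the entries of $A,B$ via \eqref{A.30}, with coefficient matrices independent of the parameters, and likewise for $Y_\ABp(z)$; since matrix inversion is continuous on the invertible locus and $Y_\AB(z)$ is invertible for $z\in\rho(H_\AB)$ by Lemma \ref{l4.2}, continuity on all of $\rho(H_\AB)\cap\rho(H_\ABp)$ follows. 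Everything else in the proposal is correct and matches the paper's route.
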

\begin{proof}
Given the definition of the general boundary data map in \eqref{4.17} and the
description of the boundary trace map given in \eqref{A.29},
equation \eqref{4.18} follows from the observation that
\begin{align}
\begin{split}
\La_\AB^\ABp(z)\gamma_\AB(u(z,\cdot))&= D_\ABp\gamma_D(u(z,\cdot)) + N_\ABp\gamma_N(u(z,\cdot))\\
&=(D_\ABp\La_\AB^D(z) + N_\ABp\La_\AB^N(z))\gamma_\AB(u(z,\cdot)).
\end{split}
\end{align}
The group properties for $\La_\AB^\ABp(z)$ given in equations
\eqref{4.18a}--\eqref{4.20} follow from Theorem \ref{t4.2.1}, (cf.~\eqref{4.17a}). The linear fractional transformation given in \eqref{4.104a}
follows immediately from \eqref{4.18}--\eqref{4.20} and $\La_\AB^\ABp =
\La_{A_D,B_D}^\ABp\La_\AB^{A_D,B_D}$.

By \eqref{A.27}--\eqref{A.30} the boundary trace map $\ga_\ABp$ depends
continuously on the parameter matrices $\ABp$, thus it follows from
\eqref{4.17} that the boundary data map $\La_\AB^\ABp(z)$ depends
continuously on $\ABp$ as well. By \eqref{4.20} $\La_\AB^\ABp(z)$ also
depends continuously on the parameter matrices $\AB$ for every fixed
$z\in\rho(H_\AB)\cap\rho(H_\ABp)$. Finally, analyticity of
$\La_\AB^\ABp(\cdot)$ on $\rho(H_\AB)$ is clear from Lemma \ref{l4.2} and
\eqref{4.17}.
\end{proof}

The linear fractional transformation given in \eqref{4.104a} also implies the
existence of a linear fractional transformation between general boundary data
maps $\La_\AB^\ABp(\cdot)$ and $\Lambda_{A'',B''}^{A''',B'''}(\cdot)$. (Of
course, existence of such linear fractional transformations, even in the
context of infinite deficiency indices, is clear from the general approach to
Krein-type resolvent formulas in \cite{GMT98}.)

More precisely, let $R= \big[R_{j,k}\big]_{1\leq j,k\leq 2}\in \bbC^{4 \times
4}$, with $R_{j,k}\in \bbC^{2 \times 2}$, $1\leq j,k\leq 2$, and $L\in
\bbC^{2 \times 2}$, chosen such that $\ker (R_{1,1} + R_{1,2} L)=\{0\}$; that
is, $(R_{1,1} + R_{1,2} L)$ is invertible in $\bbC^2$. Define for such $R$
(cf., e.g., \cite{KS74}),
\begin{equation}
M_R(L)=(R_{2,1} + R_{2,2} L)(R_{1,1} + R_{1,2} L)^{-1},   \lb{4.109}
\end{equation}
and observe that
\begin{align}
& M_{I_{4}} (L) = L,   \lb{4.110} \\
& M_{R S}(L) = M_R(M_S(L)), \lb{4.111} \\
& M_{R^{-1}} (M_R(L)) = L = M_R(M_{R^{-1}} (L)), \quad R \, \text{ invertible,}
\lb{4.112} \\
& M_R(L)=M_{R S^{-1}}(M_S(L)),  \quad S \in \bbC^{4 \times 4} \, \text{ invertible,} \lb{4.113}
\end{align}
whenever the right-hand sides (and hence the left-hand sides) in
\eqref{4.110}--\eqref{4.113} exist. Thus, with the choices
\begin{align}
\begin{split}
R(A,B,A',B') &= \begin{pmatrix} D_{A,B} & N_{A,B} \\ D_{A',B'} & N_{A',B'} \end{pmatrix},
\\
S(A'',B'',A''',B''') &= \begin{pmatrix} D_{A'',B''} & N_{A'',B''} \\ D_{A''',B'''} & N_{A''',B'''}
\end{pmatrix},
\end{split}
\end{align}
one infers that
\begin{equation}
\La_\AB^\ABp(z) = M_{R(A,B,A',B') S(A'',B'',A''',B''')^{-1}}
\Big(\Lambda_{A'',B''}^{A''',B'''}(z)\Big).    \lb{4.115}
\end{equation}
Unfortunately, the computation of $S(A'',B'',A''',B''')^{-1}$ appears to be
too elaborate to pursue explicit formulas for  \eqref{4.115}. (The special
case of separated boundary conditions, however, is sufficiently simple, and
in this case $S(A'',B'',A''',B''')^{-1}$ was explicitly computed in
\cite{CGM10}).


We now turn our attention to a derivation of a representation for the general
boundary data map $\La_\AB^\ABp(z)$ in terms of the resolvent $(H_\AB-z
I_{(a,b)})^{-1}$ and the boundary trace map $\ga_\ABp$ (cf.\ Theorem \ref{t4.4a}).

Assuming $z\in\rho(H_D)$, let $u_j(z,\cdot)$, $j=1,2$, denote the solutions of \eqref{4.9} satisfying \eqref{3.8}.
Then the system $\{u_1(z,\cdot),u_2(z,\cdot)\}$ is a basis for solutions of \eqref{4.9}. The solution $u_D(z,\,\cdot \, ;c_2,c_1)$ of \eqref{4.9} with the boundary data $\ga_D(u_D(z,\, \cdot \, ;c_2,c_1))=\begin{pmatrix}c_2 & c_1\end{pmatrix}^\top$ is given by
\begin{align}
u_D(z,\,\cdot \, ;c_2,c_1)=c_1u_1(z,\cdot)+c_2u_2(z,\cdot).
\end{align}
Using the basis $\{u_1(z,\cdot),u_2(z,\cdot)\}$ one can represent the boundary data maps, $\La_D^\AB$, as $2\times2$ complex matrices. First, the special case of the Dirichlet-to-Neumann boundary data map is given by
\begin{align}
\begin{split}
\La_D^N(z) \begin{pmatrix}c_2\\c_1\end{pmatrix} &= \ga_N(c_1u_1(z,\cdot)+c_2u_2(z,\cdot))
\\
&= \begin{pmatrix}
u_2^{[1]}(z,a) & u_1^{[1]}(z,a) \\ -u_2^{[1]}(z,b) & -u_1^{[1]}(z,b)
\end{pmatrix}
\begin{pmatrix}c_2\\c_1\end{pmatrix}, \quad z\in\rho(H_D).
\end{split} \lb{4.22}
\end{align}
Then, by \eqref{A.29}, the boundary data map $\La_D^\AB(z)$ is given by
\begin{align}
\La_D^\AB(z) \begin{pmatrix}c_2\\c_1\end{pmatrix} &= \ga_\AB(c_1u_1(z,\cdot)+c_2u_2(z,\cdot))  \no
\\
&= D_{A,B}\ga_D(c_1u_1(z,\cdot)+c_2u_2(z,\cdot)) + N_{A,B}\ga_N(c_1u_1(z,\cdot)+c_2u_2(z,\cdot))
\no \\
&= \Big(D_{A,B}+N_{A,B}\La_D^N(z)\Big) \begin{pmatrix}c_2\\c_1\end{pmatrix},
\quad z\in\rho(H_D). \lb{4.23}
\end{align}

One can also represent $\La_D^\AB(z)$ in terms of the resolvent $(H_D-z I_{(a,b)})^{-1}$ and the boundary trace $\ga_\AB$. One recalls that
\begin{align}
\begin{split}
\big((H_D-z I_{(a,b)})^{-1}g\big)(x) = \int_a^b \, dx'\, G_D(z,x,x')g(x'),& \\
g\in L^2((a,b);rdx), \; z\in\rho(H_D), \; x\in(a,b),&
\end{split} \lb{4.24}
\end{align}
where the Green's function $G_D(z,x,x')$ is given by (cf.\ \eqref{3.8})
\begin{align}
\begin{split}
G_D(z,x,x') & =
\frac{1}{W(u_2(z,\cdot),u_1(z,\cdot))}
\begin{cases}
u_2(z,x)u_1(z,x'), & 0\le x'\le x,\\
u_1(z,x)u_2(z,x'), & 0\le x\le x',
\end{cases}
\\
&\hspace{4.6cm}  z\in\rho(H_D), \; x,x' \in (a,b).
\end{split} \lb{4.25}
\end{align}
Here $W(u_2(z,\cdot),u_1(z,\cdot))$ is the Wronskian of $u_2(z,\cdot)$ and $u_1(z,\cdot)$,
\begin{align}
\begin{split}
W(u_2(z,\cdot),u_1(z,\cdot))
& = u_2(z,a)u^{[1]}_1(z,a) - u^{[1]}_2(z,a)u_1(z,a) \\
& = u^{[1]}_1(z,a) = -u^{[1]}_2(z,b),
\end{split} \lb{4.26}
\end{align}
and $I_{(a,b)}$ denotes the identity operator in $L^2((a,b);rdx)$.

Now it follows from \eqref{A.27} and \eqref{4.24}--\eqref{4.26} that
\begin{align}
\ga_N(H_D-z I_{(a,b)})^{-1}g &=
\frac{1}{W(u_2(z,\cdot),u_1(z,\cdot))}
\begin{pmatrix}
u_1^{[1]}(z,a)\int_a^b dx'\,u_2(z,x')g(x')
\\[1mm]
-u_2^{[1]}(z,b)\int_a^b dx'\,u_1(z,x')g(x')
\end{pmatrix} \no
\\ &=
\begin{pmatrix}
\big(\,\ol{u_2(z,\cdot)},g\big)_{L^2((a,b);rdx)}
\\[1mm]
\big(\,\ol{u_1(z,\cdot)},g\big)_{L^2((a,b);rdx)}
\end{pmatrix}, \quad g\in L^2((a,b);rdx). \lb{4.27}
\end{align}
Thus, changing $z$ to $\bz$ and noting that $u_j(\bz,\cdot)=\ol{u_j(z,\cdot)}$, $j=1,2$, (cf. \eqref{3.15a}) one obtains from \eqref{4.27},
\begin{align}
\big[\ga_N(H_D-\bz I_{(a,b)})^{-1}\big]^* \begin{pmatrix}c_2\\c_1\end{pmatrix}
= c_1u_1(z,\cdot)+c_2u_2(z,\cdot), \lb{4.28}
\end{align}
and hence, by \eqref{4.23},
\begin{align}
\La_D^\AB(z) &= \ga_\AB \big[\ga_N(H_D - \bz I_{(a,b)})^{-1}\big]^*, \quad z\in\rho(H_D). \lb{4.29}
\end{align}
In addition, we note that, by \eqref{4.8c}, $\ga_D(H_D-z I_{(a,b)})^{-1}=0$, and hence \eqref{A.29} implies
\begin{align}
\ga_\AB(H_D - z I_{(a,b)})^{-1} = N_{A,B}\ga_N(H_D - z I_{(a,b)})^{-1}, \quad z\in\rho(H_D). \lb{4.30}
\end{align}
Thus, combining \eqref{4.29} with \eqref{4.30} yields
\begin{align}
\La_D^\AB(z) N^*_{A,B} &= \ga_\AB \big[\ga_\AB(H_D - \bz I_{(a,b)})^{-1}\big]^*, \quad z\in\rho(H_D). \lb{4.31}
\end{align}
We will obtain analogous formulas for the general boundary data map $\La_\AB^\ABp$ after two short preparatory lemmas.

\begin{lemma} \lb{l4.3}
Assume that $A, B, A', B' \in\bbC^{2\times2}$, where $\AB$ and $\ABp$ satisfy
\eqref{A.7}, and let the self-adjoint extensions $H_\AB, H_\ABp$ be defined as in
\eqref{4.8b}. In addition, let $S_{A',B',A,B}\in\bbC^{2\times2}$ be as in \eqref{4.8i},
\eqref{4.8j}, and suppose that $z\in\rho(H_\AB)$. Then
\begin{align}
&\ran\big(\ga_\ABp(H_\AB - z I_{(a,b)})^{-1}\big) = \ran\big(S_{A',B',A,B}\big), \lb{4.32a}
\\
&\ran\big(\ga_\AB(H_\ABp - z I_{(a,b)})^{-1}\big) = \ran\big(S^*_{A',B',A,B}\big). \lb{4.32b}
\end{align}
In particular,
\begin{align}
&\ran\big(\ga_\AB^\perp(H_\AB - z I_{(a,b)})^{-1}\big) = \bbC^2. \lb{4.32c}
\end{align}
\end{lemma}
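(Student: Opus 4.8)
The plan is to establish the ``in particular'' assertion \eqref{4.32c} first, since \eqref{4.32a} and \eqref{4.32b} then follow with little extra work. The starting point is that, for $z\in\rho(H_\AB)$, the resolvent $(H_\AB-zI_{(a,b)})^{-1}$ is a bijection of $L^2((a,b);rdx)$ onto $\dom(H_\AB)$, so that the range of any boundary trace map composed with this resolvent equals the image of that trace map restricted to $\dom(H_\AB)$. In particular, \eqref{4.32c} is equivalent to the statement that $\ga_\AB^\perp$ maps $\dom(H_\AB)$ onto $\bbC^2$.

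To prove this surjectivity I would first record that the full boundary trace $(\ga_D,\ga_N)\colon \dom(H_\max)\to\bbC^4$ is onto. This follows from a dimension count: its kernel on $\dom(H_\max)$ is precisely $\dom(H_{\min})$ by \eqref{A.6}, and since the deficiency indices of $H_{\min}$ equal $(2,2)$ (cf.\ \eqref{3.7}, as $\pm i\in\rho(H_{0,0})$), the quotient $\dom(H_\max)/\dom(H_{\min})$ is four-dimensional; hence $(\ga_D,\ga_N)$ descends to an injection between spaces of equal finite dimension and is therefore a bijection onto $\bbC^4$. Given an arbitrary $w\in\bbC^2$, I would then select $u\in\dom(H_\max)$ realizing the prescribed data $\ga_D u=-N_{A,B}^*w$ and $\ga_N u=D_{A,B}^*w$. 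The symmetry relation $D_{A,B}N_{A,B}^*=N_{A,B}D_{A,B}^*$ from \eqref{A.31} gives $\ga_\AB u=D_{A,B}\ga_D u+N_{A,B}\ga_N u=0$, so that $u\in\dom(H_\AB)$; while inserting the explicit forms of $D^\perp_{A,B},N^\perp_{A,B}$ from \eqref{4.8e} yields $\ga_\AB^\perp u=[D_{A,B}D_{A,B}^*+N_{A,B}N_{A,B}^*]^{-1}(N_{A,B}N_{A,B}^*+D_{A,B}D_{A,B}^*)w=w$. This produces for every $w$ a preimage in $\dom(H_\AB)$, establishing \eqref{4.32c}.

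For \eqref{4.32a} I would decompose $\ga_\ABp=T_{A',B',A,B}\,\ga_\AB+S_{A',B',A,B}\,\ga_\AB^\perp$ via \eqref{4.8i}, compose with $(H_\AB-zI_{(a,b)})^{-1}$, and invoke \eqref{4.8c} to annihilate the $\ga_\AB$-term, leaving $\ga_\ABp(H_\AB-zI_{(a,b)})^{-1}=S_{A',B',A,B}\,\ga_\AB^\perp(H_\AB-zI_{(a,b)})^{-1}$. Taking ranges and using that the inner map is onto $\bbC^2$ by \eqref{4.32c} gives $\ran(\ga_\ABp(H_\AB-zI_{(a,b)})^{-1})=\ran(S_{A',B',A,B})$. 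Finally, \eqref{4.32b} follows by interchanging the roles of $\AB$ and $\ABp$ (now for $z\in\rho(H_\ABp)$) in \eqref{4.32a}, which yields $\ran(\ga_\AB(H_\ABp-zI_{(a,b)})^{-1})=\ran(S_{A,B,A',B'})$, together with the identity $S_{A,B,A',B'}=-S^*_{A',B',A,B}$ recorded just before \eqref{4.8o}, since $\ran(-S^*_{A',B',A,B})=\ran(S^*_{A',B',A,B})$.

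I expect the surjectivity of $\ga_\AB^\perp$ on $\dom(H_\AB)$ to be the only genuine obstacle; once the full trace map is known to be onto $\bbC^4$, the remaining verifications are short matrix manipulations drawn from \eqref{A.31} and \eqref{4.8e}, and the reductions of \eqref{4.32a} and \eqref{4.32b} to \eqref{4.32c} are purely formal consequences of \eqref{4.8i} and \eqref{4.8c}.
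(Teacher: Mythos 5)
Your proof is correct, and the reductions of \eqref{4.32a} and \eqref{4.32b} to \eqref{4.32c} via \eqref{4.8i}, \eqref{4.8c}, and the identity $S_{A,B,A',B'}=-S^*_{A',B',A,B}$ coincide with what the paper does. Where you genuinely diverge is in the proof of the surjectivity statement \eqref{4.32c}. The paper first establishes the abstract Green's identity
\begin{equation*}
\big((H_\max - \bz I_{(a,b)})\phi,\psi\big)_{L^2((a,b);rdx)} - \big(\phi,(H_\max - z I_{(a,b)})\psi\big)_{L^2((a,b);rdx)}
= \big(\ga_\AB^\perp\phi,\ga_\AB\psi\big)_{\bbC^2} - \big(\ga_\AB\phi,\ga_\AB^\perp\psi\big)_{\bbC^2},
\end{equation*}
then uses Lemma \ref{l4.2} to produce a basis $\{\phi_1,\phi_2\}$ of $\ker(H_\max-\bz I_{(a,b)})$ with $\ga_\AB\phi_j=e_j$, and exhibits a preimage of a given $v\in\bbC^2$ as an explicit combination of $(H_\AB-zI_{(a,b)})^{-1}\phi_j$ weighted by the inverse Gram matrix of the $\phi_j$. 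You instead observe that $\ran\big((H_\AB-zI_{(a,b)})^{-1}\big)=\dom(H_\AB)$, prove that the full trace $(\ga_D,\ga_N)$ is onto $\bbC^4$ by a deficiency-index dimension count on $\dom(H_\max)/\dom(H_{\min})$, and then solve for a preimage of $w$ purely by matrix algebra: the choice $\ga_D u=-N_{A,B}^*w$, $\ga_N u=D_{A,B}^*w$ lands in $\dom(H_\AB)$ by \eqref{A.31} and satisfies $\ga_\AB^\perp u=w$ by \eqref{4.8e}. Your route is more elementary and self-contained for this lemma; the paper's route has the advantage that the Green's identity it establishes en passant is exactly the tool reused immediately afterwards in Lemma \ref{l4.4} to derive the resolvent difference formula \eqref{4.33}, so the extra machinery is not wasted. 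One small point of care, which you handle correctly: \eqref{4.32b} requires $z\in\rho(H_\ABp)$ for the resolvent there to be defined, so the role-swap argument is the natural (and the paper's intended) way to obtain it.
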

\begin{proof}
First, one notes that it suffices to establish \eqref{4.32c} since \eqref{4.32a} and \eqref{4.32b} follow from \eqref{4.8i}, \eqref{4.8o}, and \eqref{4.32c}.

Let $\phi,\psi\in\dom(H_\max)$, then using integration by parts, \eqref{A.29}, and \eqref{4.8l}, one computes
\begin{align}
\begin{split}
& \big((H_\max - \bz I_{(a,b)}) \phi, \psi\big)_{L^2((a,b);rdx)} -
\big(\phi, (H_\max - z I_{(a,b)}) \psi\big)_{L^2((a,b);rdx)}
\\
& \quad = - \int_a^b dx \, \ol{\big(\phi^{[1]}\big)'(x)} \psi(x) + \int_a^b dx \, \ol{\phi(x)} \big(\psi^{[1]}\big)'(x)
\\
& \quad = - \ol{\phi^{[1]}(b)} \psi(b) + \ol{\phi^{[1]}(a)} \psi(a) +
\ol{\phi(b)} \psi^{[1]}(b) - \ol{\phi(a)} \psi^{[1]}(a)
\\
&\quad = \big(\ga_N\phi,\ga_D\psi\big)_{\bbC^2} - \big(\ga_D\phi,\ga_N\psi\big)_{\bbC^2}
\\
&\quad = \big(\ga_\AB^\perp\phi,\ga_\AB\psi\big)_{\bbC^2} - \big(\ga_\AB\phi,\ga_\AB^\perp\psi\big)_{\bbC^2}.
\end{split} \lb{4.32d}
\end{align}

Next, pick an arbitrary $v=\begin{pmatrix}v_1 & v_2\end{pmatrix}^\top\in\bbC^2$ and using Lemma \ref{l4.2} let $\{\phi_1,\phi_2\}$ be the basis of solutions of $\tau\phi = \bz\phi$ with
\begin{align}
\ga_\AB\phi_1 = \begin{pmatrix}1 & 0\end{pmatrix}^\top, \quad \ga_\AB\phi_2=\begin{pmatrix} 0 & 1\end{pmatrix}^\top. \lb{4.32e}
\end{align}
Since, by construction, the functions $\phi_1$ and $\phi_2$ are linearly independent, the matrix
\begin{align}
M=\big((\phi_j,\phi_k)_{L^2((a,b);rdx)}\big)_{j,k=1,2} \lb{4.32f}
\end{align}
is invertible. To establish \eqref{4.32c}, we will show that the function
\begin{align}
\psi(\cdot)=\big((H_\AB - z I_{(a,b)})^{-1}\phi_1(\cdot),(H_\AB - z I_{(a,b)})^{-1}\phi_2(\cdot)\big)M^{-1}v , \lb{4.32g}
\end{align}
satisfies $\ga_\AB^\perp\psi=v$. Indeed, since by construction $(H_\max-\bz
I_{(a,b)})\phi_j=0$, $j=1,2$, and by \eqref{4.8c}, $\ga_\AB\psi=0$, it
follows from \eqref{4.32d} that
\begin{align}
\big(\phi_j, (H_\max - z I_{(a,b)}) \psi\big)_{L^2((a,b);rdx)} = \big(\ga_\AB\phi_j,\ga_\AB^\perp\psi\big)_{\bbC^2}, \quad j=1,2. \lb{4.32h}
\end{align}
Substituting \eqref{4.32e}--\eqref{4.32g} into \eqref{4.32h} then yields,
\begin{align}
v_j = \big((\phi_j,\phi_1)_{L^2((a,b);rdx)},(\phi_j,\phi_2)_{L^2((a,b);rdx)}\big)M^{-1}v
= (\ga_\AB^\perp\psi)_j, \quad j=1,2.
\end{align}
\end{proof}

\begin{lemma} \lb{l4.4}
Assume that $A, B, A', B' \in\bbC^{2\times2}$, where $\AB$ and $\ABp$ satisfy
\eqref{A.7}, and let the self-adjoint extensions $H_\AB, H_\ABp$ be defined
as in \eqref{4.8b}. In addition, let $S_{A',B',A,B}\in\bbC^{2\times2}$ be as in
\eqref{4.8i}, \eqref{4.8j}, and suppose that $z\in\rho(H_\AB)\cap\rho(H_\ABp)$. Then
\begin{align}
\begin{split}
& (H_\ABp - z I_{(a,b)})^{-1} = (H_\AB - z I_{(a,b)})^{-1}    \\
&\quad + \big[\ga_\AB^\perp(H_\AB - \bz I_{(a,b)})^{-1}\big]^*
\big[\ga_\AB(H_\ABp - z I_{(a,b)})^{-1}\big]. \lb{4.33}
\end{split}
\end{align}
In addition, depending on the $\rank(S_{A',B',A,B})$, one of the following three alternatives holds: If $rank(S_{A',B',A,B})=2$, that is, if the matrix $S_{A',B',A,B}$ is invertible, then
\begin{align}
& (H_\ABp - z I_{(a,b)})^{-1} = (H_\AB - z I_{(a,b)})^{-1}    \lb{4.34} \\
&\quad + \big[\ga_\ABp(H_\AB - \bz I_{(a,b)})^{-1}\big]^* \big[S_{A',B',A,B}^{-1}\big]^*
\big[\ga_\AB(H_\ABp - z I_{(a,b)})^{-1}\big].     \no
\end{align}
If $S_{A',B',A,B}$ is not invertible, then either $\rank(S_{A',B',A,B})=1$ and
\begin{align}
&(H_\ABp - z I_{(a,b)})^{-1} = (H_\AB - z I_{(a,b)})^{-1}     \no \\
&\quad + \big[\ga_\ABp(H_\AB - \bz I_{(a,b)})^{-1}\big]^*
  \|S_{A',B',A,B}\|^{-2}  S_{A',B',A,B}   \lb{4.35}  \\
& \qquad \times \big[\ga_\AB(H_\ABp - z I_{(a,b)})^{-1}\big],    \no
\end{align}
or $\rank(S_{A',B',A,B})=0$ $($i.e., $S_{A',B',A,B}=0$$)$ and then
\begin{align}
& (H_\ABp - z I_{(a,b)})^{-1} = (H_\AB - z I_{(a,b)})^{-1}. \lb{4.36}
\end{align}
\end{lemma}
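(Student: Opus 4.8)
The plan is to establish the master formula \eqref{4.33} first, and then to treat the three rank alternatives as corollaries obtained by substituting \eqref{4.8i} into the abstract right-hand side of \eqref{4.33}. To prove \eqref{4.33}, I would start from the second resolvent-type difference
\begin{equation}
R_\ABp(z) - R_\AB(z), \quad R_\AB(z) := (H_\AB - z I_{(a,b)})^{-1}, \quad R_\ABp(z) := (H_\ABp - z I_{(a,b)})^{-1}, \notag
\end{equation}
and show it coincides with the rank-(at most)-two operator appearing on the right. The natural strategy is to verify that both sides agree as maps on $L^2((a,b);rdx)$ by testing against the Green's identity \eqref{4.32d}. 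Concretely, for $f, g \in L^2((a,b);rdx)$, I would set $\phi := R_\AB(\bz) f$ and $\psi := R_\ABp(z) g$, both of which lie in $\dom(H_\max)$, and apply \eqref{4.32d}. Since $(H_\max - \bz I_{(a,b)})\phi = f$ and $(H_\max - z I_{(a,b)})\psi = g$, the left-hand side of \eqref{4.32d} evaluates to $(f,\psi) - (\phi, g) = (f, R_\ABp(z) g) - (R_\AB(\bz) f, g) = \big(f, [R_\ABp(z) - R_\AB(z)] g\big)$, using self-adjointness of $H_\AB$ to rewrite $R_\AB(\bz)^* = R_\AB(z)$. The right-hand side of \eqref{4.32d} becomes $(\ga_\AB^\perp \phi, \ga_\AB \psi)_{\bbC^2} - (\ga_\AB \phi, \ga_\AB^\perp \psi)_{\bbC^2}$; here $\ga_\AB \phi = \ga_\AB R_\AB(\bz) f = 0$ by \eqref{4.8c}, so only the first term survives, giving $(\ga_\AB^\perp R_\AB(\bz) f, \ga_\AB R_\ABp(z) g)_{\bbC^2}$. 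Rewriting this inner product over $\bbC^2$ as an $L^2$ inner product via the adjoint $[\ga_\AB^\perp R_\AB(\bz)]^*$ yields exactly the second term in \eqref{4.33}, establishing the formula since $f,g$ were arbitrary.

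Once \eqref{4.33} is in hand, the rank trichotomy follows by inserting the connecting identity \eqref{4.8i}, namely $\ga_\ABp = T_{A',B',A,B}\, \ga_\AB + S_{A',B',A,B}\, \ga_\AB^\perp$, in the form that expresses $\ga_\AB^\perp$ (applied to a function on which $\ga_\AB$ already vanishes). Acting on $R_\AB(\bz) f$, we have $\ga_\AB R_\AB(\bz) f = 0$ by \eqref{4.8c}, so $\ga_\ABp R_\AB(\bz) f = S_{A',B',A,B}\, \ga_\AB^\perp R_\AB(\bz) f$. When $S_{A',B',A,B}$ is invertible this inverts to $\ga_\AB^\perp R_\AB(\bz) = S_{A',B',A,B}^{-1} \ga_\ABp R_\AB(\bz)$; substituting into the factor $[\ga_\AB^\perp R_\AB(\bz)]^* = [\ga_\ABp R_\AB(\bz)]^* [S_{A',B',A,B}^{-1}]^*$ produces \eqref{4.34}. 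The rank-$1$ case replaces $S_{A',B',A,B}^{-1}$ with the appropriately normalized operator: since $\ran(\ga_\ABp R_\AB(\bz)) = \ran(S_{A',B',A,B})$ by \eqref{4.32a}, the output vector of $\ga_\AB^\perp R_\AB(\bz)$ lies in a controlled subspace, and one recovers the Moore--Penrose-type factor $\|S_{A',B',A,B}\|^{-2} S_{A',B',A,B}$ that appears in \eqref{4.35}. The rank-$0$ case $S_{A',B',A,B}=0$ forces $\ga_\ABp R_\AB(\bz) = 0$, collapsing the correction term and giving \eqref{4.36}; in fact $S_{A',B',A,B}=0$ means $\ga_\ABp$ and $\ga_\AB$ differ only by the $T$-term, so $H_\ABp = H_\AB$ and the resolvents coincide outright.

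The main obstacle is the rank-$1$ case \eqref{4.35}, where the factor $\|S_{A',B',A,B}\|^{-2} S_{A',B',A,B}$ must be shown to be the correct substitute for $[S_{A',B',A,B}^{-1}]^*$. The difficulty is that $S := S_{A',B',A,B}$ is genuinely non-invertible, so one cannot simply solve $\ga_\ABp R_\AB(\bz) = S\, \ga_\AB^\perp R_\AB(\bz)$ for $\ga_\AB^\perp R_\AB(\bz)$. The key structural input is that a rank-$1$ matrix $S$ satisfying the symmetry $T S^* = S T^*$ from \eqref{4.8k} is normal (in fact self-adjoint up to scalar), so $S$ acts as $\|S\|^2$ times a rank-one orthogonal projection onto $\ran(S)$; this lets one write $\ga_\AB^\perp R_\AB(\bz) = \|S\|^{-2} S^* \ga_\ABp R_\AB(\bz)$ up to components in $\ker(S)^\perp$, which are the only ones that contribute after pairing with $\ga_\AB R_\ABp(z)$. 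I would justify this by combining \eqref{4.32a} and \eqref{4.32b}, which pin down the ranges of the two boundary traces as $\ran(S)$ and $\ran(S^*)$ respectively, so that the bilinear form in \eqref{4.33} only ever sees $S$ restricted to $(\ker S)^\perp$ where it is invertible with inverse $\|S\|^{-2} S^*$. Assembling these range identifications carefully is the delicate point; the remaining computations are routine.
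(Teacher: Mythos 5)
Your proposal follows essentially the same route as the paper's proof: the master identity \eqref{4.33} is obtained exactly as in the paper by testing the Green's identity \eqref{4.32d} against $\phi=(H_\AB-\bz I_{(a,b)})^{-1}f$ and $\psi=(H_\ABp-zI_{(a,b)})^{-1}g$, and the three rank alternatives are then extracted from the relation $\ga_\ABp(H_\AB-zI_{(a,b)})^{-1}=S_{A',B',A,B}\,\ga_\AB^\perp(H_\AB-zI_{(a,b)})^{-1}$ precisely as the paper does, with the range identifications of Lemma \ref{l4.3} supplying the projection $P_{\ran(S^*_{A',B',A,B})}$ in the degenerate cases.

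One correction to your rank-one discussion: the claim that \eqref{4.8k} forces a rank-one $S_{A',B',A,B}$ to be normal (or a scalar multiple of an orthogonal projection) is neither justified nor needed. Writing a rank-one matrix as $S=uv^*$, one has $S^*S=\|u\|^2vv^*$ and $\|S\|^2=\|u\|^2\|v\|^2$, so the identity $\|S\|^{-2}S^*S=P_{\ran(S^*)}$ holds for \emph{every} rank-one matrix with no symmetry hypothesis; combined with $\ran\big(\ga_\AB(H_\ABp-zI_{(a,b)})^{-1}\big)=\ran(S^*_{A',B',A,B})$ from \eqref{4.32b}, this is all that is required to pass from \eqref{4.33} to \eqref{4.35}, and it is exactly what the paper uses. (Also, in the rank-zero case the factor that vanishes in the correction term of \eqref{4.33} is $\ga_\AB(H_\ABp-zI_{(a,b)})^{-1}$, whose range is $\ran(S^*_{A',B',A,B})=\{0\}$, not $\ga_\ABp(H_\AB-\bz I_{(a,b)})^{-1}$; your alternative observation that $S_{A',B',A,B}=0$ forces $H_\ABp=H_\AB$ outright is correct and suffices.)
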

\begin{proof}
To get started, we pick $f, g \in L^2((a,b);rdx)$ and introduce
\begin{align}
\begin{split}
\phi &= (H_\AB - \bz I_{(a,b)})^{-1} f \in \dom(H_\AB), \\
\psi &= (H_\ABp - z I_{(a,b)})^{-1} g \in \dom(H_\ABp). \lb{4.37}
\end{split}
\end{align}
Then using \eqref{4.32d} and the fact that by \eqref{4.8c}, $\ga_\AB\phi=0$, one computes
\begin{align}
\begin{split}
& \big(f, (H_\ABp - z I_{(a,b)})^{-1}g\big)_{L^2((a,b);rdx)} -
\big(f, (H_\AB - z I_{(a,b)})^{-1}g\big)_{L^2((a,b);rdx)}
\\
& \quad = \big((H_\AB - \bz I_{(a,b)}) \phi, \psi\big)_{L^2((a,b);rdx)} -
\big(\phi, (H_\ABp - z I_{(a,b)}) \psi\big)_{L^2((a,b);rdx)}
\\
& \quad = \big(\ga_\AB^\perp\phi,\ga_\AB\psi\big)_{\bbC^2}
\\
& \quad = \big(\ga_\AB^\perp(H_\AB - \bz I_{(a,b)})^{-1}f,\ga_\AB(H_\ABp - z I_{(a,b)})^{-1}g\big)_{\bbC^2}
\\
& \quad = \big(f,\big[\ga_\AB^\perp(H_\AB - \bz I_{(a,b)})^{-1}\big]^*
\big[\ga_\AB(H_\ABp - z I_{(a,b)})^{-1}\big]g)\big)_{L^2((a,b);rdx)}. \lb{4.39}
\end{split}
\end{align}
Since $f$ and $g$ are arbitrary elements of $L^2((a,b);rdx)$, \eqref{4.33} follows from \eqref{4.39}.

Next, we note that by \eqref{4.8c} and \eqref{4.8i},
\begin{align}
\ga_\ABp(H_\AB - z I_{(a,b)})^{-1} &= S_{A',B',A,B} \, \ga_\AB^\perp(H_\AB - z I_{(a,b)})^{-1},
\quad z\in\rho(H_\AB).      \lb{4.40}
\end{align}
Thus, if $S_{A',B',A,B}$ is invertible, then \eqref{4.34} follows immediately from
\eqref{4.33} and \eqref{4.40}. Alternatively, if $S_{A',B',A,B}$ is not invertible, then
by \eqref{4.32b}, \eqref{4.33} is equivalent to
\begin{align}
& (H_\ABp - z I_{(a,b)})^{-1} = (H_\AB - z I_{(a,b)})^{-1}    \lb{4.42} \\
&\quad + \big[\ga_\AB^\perp(H_\AB - \bz I_{(a,b)})^{-1}\big]^*
P_{\ran(S^*_{A',B',A,B})}\big[\ga_\AB(H_\ABp - z I_{(a,b)})^{-1}\big],    \no
\end{align}
where $P_{\ran(S^*_{A',B',A,B})}$ is the orthogonal projection in $\bbC^2$ onto the range
of $S^*_{A',B',A,B}$.

Finally, if $S_{A',B',A,B}=0$ then $S^*_{A',B',A,B} =0$ as well, and \eqref{4.36} follows from \eqref{4.42}. If $S_{A',B',A,B}$ is not invertible and nonzero then
\begin{equation}
\|S_{A',B',A,B}\|^{-2} S^*_{A',B',A,B} S_{A',B',A,B} =P_{\ran(S^*_{A',B',A,B})}.
\end{equation}
Applying $\|S_{A',B',A,B}\|^{-2}S^*_{A',B',A,B}$ to both sides of \eqref{4.40} one
obtains
\begin{align}
& \|S_{A',B',A,B}\|^{-2}S^*_{A',B',A,B} \ga_\ABp(H_\AB - z I_{(a,b)})^{-1}   \no \\
& \quad =
\|S_{A',B',A,B}\|^{-2}S^*_{A',B',A,B} S_{A',B',A,B} \ga_\AB^\perp(H_\AB - z I_{(a,b)})^{-1}
\no \\
& \quad = P_{\ran(S^*_{A',B',A,B})}\ga_\AB^\perp(H_\AB - z I_{(a,b)})^{-1}. \lb{4.43}
\end{align}
Taking adjoints on both sides of \eqref{4.43}, replacing $z$ by $\bz$, and substituting into \eqref{4.42} then yields \eqref{4.35}.
\end{proof}

Next, we derive a representation of the general boundary data map $\La_\AB^\ABp(z)$ in terms of the resolvent $(H_\AB-z I_{(a,b)})^{-1}$ and the boundary trace map
$\ga_\ABp$.

\begin{theorem} \lb{t4.4a}
Assume that $A, B, A', B' \in\bbC^{2\times2}$, where $\AB$ and $\ABp$ satisfy
\eqref{A.7}, let the self-adjoint extensions $H_\AB, H_\ABp$ be defined
as in \eqref{4.8b}, and let $S_{A',B',A,B}\in\bbC^{2\times2}$ be as in
\eqref{4.8i}, \eqref{4.8j}. Then
\begin{align}
\La_\AB^\ABp(z)S^*_{A',B',A,B} = \ga_\ABp\big[\ga_\ABp(H_\AB-\bz I_{(a,b)})^{-1}\big]^*,   \quad z\in\rho(H_\AB). \lb{4.43A}
\end{align}
\end{theorem}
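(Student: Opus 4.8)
The plan is to first derive the basis-free representation
\[
\La_\AB^\ABp(z) = \ga_\ABp\big[\ga_\AB^\perp(H_\AB-\bz I_{(a,b)})^{-1}\big]^*, \quad z\in\rho(H_\AB),
\]
and then obtain \eqref{4.43A} by post-multiplying with $S^*_{A',B',A,B}$ and rewriting the resulting operator via \eqref{4.40}. This mirrors the Dirichlet computation in \eqref{4.28}--\eqref{4.31}, with the pair $(\ga_\AB^\perp, H_\AB)$ now playing the role that $(\ga_N, H_D)$ played there; the operator $\big[\ga_\AB^\perp(H_\AB-\bz I_{(a,b)})^{-1}\big]^*$ will serve as the Poisson-type map sending $\ga_\AB$-boundary data to the associated solution of $(\tau-z)u=0$.

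The core step is to show, for all $c_1,c_2\in\bbC$, that
\[
u_\AB(z,\,\cdot\,;c_1,c_2) = \big[\ga_\AB^\perp(H_\AB-\bz I_{(a,b)})^{-1}\big]^*\,\ga_\AB\big(u_\AB(z,\,\cdot\,;c_1,c_2)\big),
\]
where $u_\AB(z,\,\cdot\,;c_1,c_2)$ is the unique solution supplied by Lemma \ref{l4.2}. To this end I would fix $g\in L^2((a,b);rdx)$ and apply the Green-type identity \eqref{4.32d} with the choices $\phi=(H_\AB-\bz I_{(a,b)})^{-1}g\in\dom(H_\AB)$ and $\psi=u_\AB(z,\,\cdot\,;c_1,c_2)\in\dom(H_\max)$. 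Then $(H_\max-\bz I_{(a,b)})\phi=g$, while $(H_\max-z I_{(a,b)})\psi=0$ because $\psi$ solves $\tau\psi=z\psi$, so the left-hand side of \eqref{4.32d} reduces to $(g,\psi)_{L^2((a,b);rdx)}$. On the right-hand side, $\ga_\AB\phi=0$ by \eqref{4.8c}, which annihilates the term $\big(\ga_\AB\phi,\ga_\AB^\perp\psi\big)_{\bbC^2}$ and leaves $\big(\ga_\AB^\perp(H_\AB-\bz I_{(a,b)})^{-1}g,\ga_\AB\psi\big)_{\bbC^2}$. Since $g$ is arbitrary, the displayed adjoint identity follows. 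Because $\ga_\AB$ maps the solution space of \eqref{4.9} bijectively onto $\bbC^2$ (again by Lemma \ref{l4.2}), applying $\ga_\ABp$ and invoking the defining relation \eqref{4.17} of the boundary data map then yields the basis-free representation above.

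To finish, I would take \eqref{4.40} at the conjugate point $\bz$ --- which again lies in $\rho(H_\AB)$ since the spectrum of the self-adjoint operator $H_\AB$ is real --- and pass to adjoints to get
\[
\big[\ga_\AB^\perp(H_\AB-\bz I_{(a,b)})^{-1}\big]^* S^*_{A',B',A,B} = \big[\ga_\ABp(H_\AB-\bz I_{(a,b)})^{-1}\big]^*.
\]
Substituting this into the basis-free representation produces \eqref{4.43A} immediately.

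I expect the only genuinely delicate point to be the bookkeeping in the core step: correctly pairing $z$ against $\bz$, confirming that $\psi=u_\AB(z,\,\cdot\,;c_1,c_2)$ indeed belongs to $\dom(H_\max)$ so that its boundary traces are defined, and checking that the vanishing of $\ga_\AB\phi$ removes exactly the unwanted term in \eqref{4.32d}. Everything after the core identity is purely algebraic, resting on \eqref{4.40}, \eqref{4.17}, and the surjectivity of $\ga_\AB$ on solutions from Lemma \ref{l4.2}.
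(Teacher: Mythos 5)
Your proof is correct, and while it arrives at the same intermediate identity as the paper --- the basis-free representation $\La_\AB^\ABp(z) = \ga_\ABp\big[\ga_\AB^\perp(H_\AB - \bz I_{(a,b)})^{-1}\big]^*$ of \eqref{4.47}, followed by the adjoint of \eqref{4.40} (the paper's \eqref{4.48}) to insert $S^*_{A',B',A,B}$ --- the way you establish that identity is genuinely different and somewhat leaner. The paper works ``forward'': it first proves $\ga_\AB\big[\ga_\AB^\perp(H_\AB - \bz I_{(a,b)})^{-1}\big]^* = I_2$ (its \eqref{4.45}), which it obtains by applying $\ga_\AB$ to the resolvent formula \eqref{4.33} of Lemma \ref{l4.4} and invoking the surjectivity statement \eqref{4.32c} of Lemma \ref{l4.3}; it then verifies that the candidate function $\big[\ga_\AB^\perp(H_\AB - \bz I_{(a,b)})^{-1}\big]^* c$ lies in $\ker(H_\max - z I_{(a,b)})$ via the duality computation \eqref{4.46b} and density of $\dom(H_{\min})$. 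You instead work ``backward'' from the unique solution supplied by Lemma \ref{l4.2} and identify it with $\big[\ga_\AB^\perp(H_\AB-\bz I_{(a,b)})^{-1}\big]^*\ga_\AB(\cdot)$ in one stroke via the Green identity \eqref{4.32d} and \eqref{4.8c}; your pairing of $\phi=(H_\AB-\bz I_{(a,b)})^{-1}g$ against $\psi=u_\AB(z,\cdot\,;c_1,c_2)$ is exactly right, and the conjugation bookkeeping checks out. What your route buys is independence from Lemma \ref{l4.4} and from the surjectivity result \eqref{4.32c}; what the paper's route buys is the identity \eqref{4.45} itself, which it reuses later (e.g., in \eqref{4.52} and in Corollary \ref{c4.6}), so extracting it explicitly is not wasted effort there. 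Either way, the final step --- taking adjoints of \eqref{4.40} at $\bz\in\rho(H_\AB)$ and right-multiplying --- coincides with the paper's conclusion of \eqref{4.43A}.
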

\begin{proof}
Applying the boundary trace $\ga_\AB$ on both sides of \eqref{4.33} and using  \eqref{4.8c}, one obtains
\begin{align}
\begin{split}
& \ga_\AB(H_\ABp-z I_{(a,b)})^{-1}     \\
& \quad = \ga_\AB\big[\ga_\AB^\perp(H_\AB-\bz I_{(a,b)})^{-1}\big]^*
\big[\ga_\AB(H_\ABp-z I_{(a,b)})^{-1}\big]. \lb{4.44}
\end{split}
\end{align}
Taking $\ABp$ in \eqref{4.44} to be such that $\ga_\AB=\ga_\ABp^\perp$ and recalling \eqref{4.32c} yields
\begin{align}
\ga_\AB\big[\ga_\AB^\perp(H_\AB - \bz I_{(a,b)})^{-1}\big]^* = I_2. \lb{4.45}
\end{align}
Then for every $c=\begin{pmatrix}c_1 & c_2\end{pmatrix}^\top\in\bbC^2$ the function,
\begin{align}
u_\AB(z,\, \cdot \, ;c_1,c_2)=\big[\ga_\AB^\perp(H_\AB - \bz I_{(a,b)})^{-1}\big]^*\begin{pmatrix}c_1\\c_2\end{pmatrix}, \lb{4.46}
\end{align}
solves the boundary value problem \eqref{4.9}, \eqref{4.10}. Indeed,
\begin{align}
\ga_\AB u_\AB(z, \, \cdot \, ;c_1,c_2) = \begin{pmatrix}c_1\\c_2\end{pmatrix}, \lb{4.46a}
\end{align}
by \eqref{4.45}, and $(H_\max - z I_{(a,b)})u_\AB(z, \, \cdot \, ;c_1,c_2) = 0$ since
\begin{align}
& \big((H_\max-z I_{(a,b)})u_\AB(z,\, \cdot \, ;c_1,c_2),f\big)_{L^2((a,b);rdx)}   \no \\
& \quad = \big(u_\AB(z,\, \cdot \, ;c_1,c_2),(H_{\min}-\bz I_{(a,b)})f\big)_{L^2((a,b);rdx)}   \no \\
& \quad = \big(c,\ga_\AB^\perp(H_\AB - \bz I_{(a,b)})^{-1}(H_{\min}-\bz I_{(a,b)})f\big)_{\bbC^2}   \no \\
& \quad = \big(c,\ga_\AB^\perp f\big)_{\bbC^2} = 0, \quad f\in\dom(H_{\min}), \lb{4.46b}
\end{align}
and $\dom(H_{\min})$ is dense in $L^2((a,b);rdx)$. Thus, according to the definition of $\La_\AB^\ABp$ in \eqref{4.17}, one obtains
\begin{align}
\La_\AB^\ABp = \ga_\ABp\big[\ga_\AB^\perp(H_\AB - \bz I_{(a,b)})^{-1}\big]^*, \quad z\in\rho(H_\AB). \lb{4.47}
\end{align}
In addition, we note that \eqref{4.8c} and \eqref{4.8i} imply
\begin{align}
\ga_\ABp(H_\AB - z I_{(a,b)})^{-1}
= S_{A',B',A,B} \, \ga_\AB^\perp(H_\AB - z I_{(a,b)})^{-1}, \quad z\in\rho(H_\AB), \lb{4.48}
\end{align}
and hence, combining \eqref{4.47} with \eqref{4.48} yields \eqref{4.43A}.
\end{proof}

One can use the representation \eqref{4.43A} to prove that
$\La_\AB^\ABp(\cdot)S^*_{A',B',A,B}$ is a $2 \times 2$ matrix-valued
Nevanlinna--Herglotz function (cf.\ the proof of Theorem\ 4.6 in \cite{CGM10}). In this
paper we will pursue an alternative route based on Krein's resolvent formula in
Corollary \ref{c4.11}.

Next, we explore reflection symmetry of the expressions in \eqref{4.43A}. Applying
$\ga_\ABp$ to both sides of \eqref{4.33} and using \eqref{4.47} and the fact that
$\ga_\ABp(H_\ABp - z I_{(a,b)})^{-1}=0$, by \eqref{4.8c}, one obtains
\begin{align}
\ga_\ABp(H_\AB - z I_{(a,b)})^{-1} = -\La_\AB^\ABp(z) \big[\ga_\AB(H_\ABp - z I_{(a,b)})^{-1}\big]. \lb{4.50}
\end{align}
Using the identities \eqref{4.8c}, \eqref{4.8i}, \eqref{4.8m}, and \eqref{4.8o} in \eqref{4.50} then yields
\begin{align}
\begin{split}
& S_{A',B',A,B}\ga_\AB^\perp(H_\AB - z I_{(a,b)})^{-1}    \\
& \quad = \La_\AB^\ABp(z) S^*_{A',B',A,B} \big[\ga_\ABp^\perp
(H_\ABp - z I_{(a,b)})^{-1}\big].     \lb{4.51}
\end{split}
\end{align}
Changing $z$ to $\bz$, taking adjoints, applying $\ga_\ABp$ to both sides of \eqref{4.51}, and utilizing \eqref{4.45} and \eqref{4.47} then implies,
\begin{align}
\La_\AB^\ABp(z) S^*_{A',B',A,B} &= S_{A',B',A,B} \La_\AB^\ABp(\bz)^*   \no \\
&= \Big(\La_\AB^\ABp(\bz) S^*_{A',B',A,B}\Big)^*, \quad z\in\rho(H_\AB). \lb{4.52}
\end{align}

The principal result of this section, Krein's resolvent formula for the difference of
resolvents of $H_\ABp$ and $H_\AB$, then reads as follows:

\begin{theorem} \lb{t4.5}
Assume that $A, B, A', B' \in\bbC^{2\times2}$, where $\AB$ and $\ABp$ satisfy
\eqref{A.7}, and let the self-adjoint extensions $H_\AB, H_\ABp$ be defined as in
\eqref{4.8b}. In addition, let $S_{A',B',A,B}\in\bbC^{2\times2}$ be as in
\eqref{4.8i}, \eqref{4.8j}, and suppose that $z\in\rho(H_\AB)\cap\rho(H_\ABp)$. Then
\begin{align}
&(H_\ABp - z I_{(a,b)})^{-1} = (H_\AB - z I_{(a,b)})^{-1}      \lb{4.149}
\\
&\quad - \big[\ga_\AB^\perp(H_\AB - \bz I_{(a,b)})^{-1}\big]^* \La_\AB^\ABp(z)^{-1}
S_{A',B',A,B} \big[\ga_\AB^\perp(H_\AB - z I_{(a,b)})^{-1}\big].    \no
\end{align}
In addition, if $S_{A',B',A,B}$ is invertible $($i.e., $\rank(S_{A',B',A,B})=2$$)$, then
\begin{align}
& (H_\ABp - z I_{(a,b)})^{-1} = (H_\AB - z I_{(a,b)})^{-1} \no \\
& \quad - \big[\ga_\ABp(H_\AB - \bz I_{(a,b)})^{-1}\big]^*
\big[\La_\AB^\ABp(z) S^*_{A',B',A,B}\big]^{-1}      \lb{4.150} \\
& \qquad \times \big[\ga_\ABp(H_\AB - z I_{(a,b)})^{-1}\big].    \no
\end{align}
If $S_{A',B',A,B}$ is not invertible and nonzero $($i.e., $\rank(S_{A',B',A,B})=1$$)$, then
\begin{align}
&(H_\ABp - z I_{(a,b)})^{-1} = (H_\AB - z I_{(a,b)})^{-1} \no \\
&\quad - \big[\ga_\ABp(H_\AB - \bz I_{(a,b)})^{-1}\big]^* \big[\la_\AB^\ABp(z)\big]^{-1}
\big[\ga_\ABp(H_\AB - z I_{(a,b)})^{-1}\big], \lb{4.151}
\end{align}
where
\begin{equation}
\la_\AB^\ABp(z)
= P_{\ran(S_{A',B',A,B})}\La_\AB^\ABp(z) S^*_{A',B',A,B}
P_{\ran(S_{A',B',A,B})}\big\vert_{\ran(S_{A',B',A,B})}. \lb{4.151a}
\end{equation}
\end{theorem}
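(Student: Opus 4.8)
The plan is to obtain all three resolvent formulas from the single identity \eqref{4.33} in Lemma \ref{l4.4}, the only remaining task being to re-express the factor $\ga_\AB(H_\ABp - z I_{(a,b)})^{-1}$ occurring there in terms of the complementary trace $\ga_\AB^\perp(H_\AB - z I_{(a,b)})^{-1}$. First I would combine the reflection identity \eqref{4.50}, which gives $\ga_\ABp(H_\AB - z I_{(a,b)})^{-1} = -\La_\AB^\ABp(z)\,\ga_\AB(H_\ABp - z I_{(a,b)})^{-1}$, with the factorization \eqref{4.48}, namely $\ga_\ABp(H_\AB - z I_{(a,b)})^{-1} = S_{A',B',A,B}\,\ga_\AB^\perp(H_\AB - z I_{(a,b)})^{-1}$. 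Since $\La_\AB^\ABp(z)$ is invertible on $\rho(H_\AB)\cap\rho(H_\ABp)$ by \eqref{4.20}, eliminating $\ga_\ABp(H_\AB - z I_{(a,b)})^{-1}$ yields $\ga_\AB(H_\ABp - z I_{(a,b)})^{-1} = -[\La_\AB^\ABp(z)]^{-1} S_{A',B',A,B}\,\ga_\AB^\perp(H_\AB - z I_{(a,b)})^{-1}$. Substituting this into \eqref{4.33} produces \eqref{4.149} at once.

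For the nondegenerate case \eqref{4.150} I would again invoke \eqref{4.48}, now inverting $S_{A',B',A,B}$ to write $\ga_\AB^\perp(H_\AB - z I_{(a,b)})^{-1} = S_{A',B',A,B}^{-1}\ga_\ABp(H_\AB - z I_{(a,b)})^{-1}$ in the right-hand factor of \eqref{4.149}, and (after passing to adjoints at $\bz$) $[\ga_\AB^\perp(H_\AB - \bz I_{(a,b)})^{-1}]^* = [\ga_\ABp(H_\AB - \bz I_{(a,b)})^{-1}]^*(S^*_{A',B',A,B})^{-1}$ in the left-hand factor. The inner pair $S_{A',B',A,B}S_{A',B',A,B}^{-1}$ cancels, and the surviving product $(S^*_{A',B',A,B})^{-1}[\La_\AB^\ABp(z)]^{-1}$ collapses to $[\La_\AB^\ABp(z)S^*_{A',B',A,B}]^{-1}$, which is exactly \eqref{4.150}.

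The genuinely delicate case is the rank-one formula \eqref{4.151}, and I expect the invertibility of $\la_\AB^\ABp(z)$ on $\ran(S_{A',B',A,B})$ to be the main obstacle, since $S_{A',B',A,B}$ can no longer be inverted on all of $\bbC^2$. The crucial input is the reflection symmetry \eqref{4.52}, which shows $\ran(\La_\AB^\ABp(z)S^*_{A',B',A,B})\subseteq\ran(S_{A',B',A,B})$; hence the outer projections in the definition \eqref{4.151a} act trivially, $\la_\AB^\ABp(z)$ coincides with the restriction of $\La_\AB^\ABp(z)S^*_{A',B',A,B}$ to the one-dimensional space $\ran(S_{A',B',A,B})$, and this restriction is invertible there precisely because $\La_\AB^\ABp(z)$ is invertible. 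Writing $w=\la_\AB^\ABp(z)v$ with $v\in\ran(S_{A',B',A,B})$ gives $\La_\AB^\ABp(z)S^*_{A',B',A,B}v=w$, so that $[\La_\AB^\ABp(z)]^{-1}w = S^*_{A',B',A,B}[\la_\AB^\ABp(z)]^{-1}w$ for every $w\in\ran(S_{A',B',A,B})$. I would then return to \eqref{4.149} and apply \eqref{4.48} once more: on the right, $S_{A',B',A,B}\,\ga_\AB^\perp(H_\AB - z I_{(a,b)})^{-1}$ equals $\ga_\ABp(H_\AB - z I_{(a,b)})^{-1}$, whose range lies in $\ran(S_{A',B',A,B})$, so the pointwise identity just derived permits replacing $[\La_\AB^\ABp(z)]^{-1}$ by $S^*_{A',B',A,B}[\la_\AB^\ABp(z)]^{-1}$; on the left, $[\ga_\AB^\perp(H_\AB - \bz I_{(a,b)})^{-1}]^* S^*_{A',B',A,B} = [\ga_\ABp(H_\AB - \bz I_{(a,b)})^{-1}]^*$. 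Both complementary traces thereby turn into $\ga_\ABp$-traces and \eqref{4.151} follows, the surjectivity of $\ga_\AB^\perp(H_\AB - z I_{(a,b)})^{-1}$ onto $\bbC^2$ from \eqref{4.32c} guaranteeing that $\ga_\ABp(H_\AB - z I_{(a,b)})^{-1}$ exhausts $\ran(S_{A',B',A,B})$, so that no information is lost in passing to the restriction.
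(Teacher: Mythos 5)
Your proposal is correct and follows essentially the same route as the paper's proof: \eqref{4.149} is obtained by combining \eqref{4.50} with \eqref{4.48} to express $\ga_\AB(H_\ABp - z I_{(a,b)})^{-1}$ in terms of $\ga_\AB^\perp(H_\AB - z I_{(a,b)})^{-1}$ and substituting into \eqref{4.33}, and \eqref{4.150} by inverting $S_{A',B',A,B}$ in \eqref{4.48}. The only (inessential) deviation is in the rank-one case, where the paper projects onto $\ran(S^*_{A',B',A,B})$ and substitutes into \eqref{4.35}, whereas you work directly from \eqref{4.149} via the identity $\La_\AB^\ABp(z)^{-1}w = S^*_{A',B',A,B}\big[\la_\AB^\ABp(z)\big]^{-1}w$ for $w\in\ran(S_{A',B',A,B})$, using \eqref{4.52} to see that the outer projections in \eqref{4.151a} act trivially; the two arguments are equivalent.
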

\begin{proof}
First, using \eqref{4.48} and the fact that $\La_\AB^\ABp(z)$ is invertible, one rewrites \eqref{4.50} as
\begin{align}
\begin{split}
& \ga_\AB(H_\ABp - z I_{(a,b)})^{-1} = - \La_\AB^\ABp(z)^{-1} \big[\ga_\ABp(H_\AB - z I_{(a,b)})^{-1}\big]
\\
& \quad = - \La_\AB^\ABp(z)^{-1} S_{A',B',A,B} \big[\ga_\AB^\perp(H_\AB - z I_{(a,b)})^{-1}\big].
\lb{4.152}
\end{split}
\end{align}
Then inserting \eqref{4.152} into \eqref{4.33} yields \eqref{4.149}.

Next, if $S_{A',B',A,B}$ is invertible then combining \eqref{4.48} and
\eqref{4.43A} with \eqref{4.149} implies \eqref{4.150}. In the case
$S_{A',B',A,B}$ is not invertible and nonzero, it follows from \eqref{4.32b}
and \eqref{4.50} that
\begin{align}
& -\ga_\ABp(H_\AB - z I_{(a,b)})^{-1}     \lb{4.153} \\
& \quad = \La_\AB^\ABp(z)S^*_{A',B',A,B} \|S_{A',B',A,B}\|^{-2}
S_{A',B',A,B} \big[\ga_\AB(H_\ABp - z I_{(a,b)})^{-1}\big].    \no
\end{align}
Since $\ran(\ga_\ABp(H_\AB - z I_{(a,b)})^{-1})=\ran(S_{A',B',A,B})$ by \eqref{4.32a}, it follows from
\eqref{4.153} that
\begin{align}
\begin{split}
&  \|S_{A',B',A,B}\|^{-2} S_{A',B',A,B} \big[\ga_\AB(H_\ABp - z I_{(a,b)})^{-1}\big]   \\
& \quad = -\big[\la_\AB^\ABp(z)\big]^{-1}\big[\ga_\ABp(H_\AB - z I_{(a,b)})^{-1}\big]. \lb{4.154}
\end{split}
\end{align}
Inserting \eqref{4.154} into \eqref{4.35} yields \eqref{4.151}.
\end{proof}

It is instructive to compare the resolvent formulas obtained via the boundary data map approach in Theorem \ref{t4.5} with the resolvent formulas in Krein's abstract approach discussed in Appendix \ref{sB}, and more concretely, in Theorems \ref{t3.1} and \ref{t3.2}. For this purpose we now restate the resolvent formulas \eqref{4.149} and \eqref{4.151} using an explicit basis of $\ker(H_\max-z I_{(a,b)})$.

\begin{corollary} \lb{c4.6}
Assume that $A, B, A', B' \in\bbC^{2\times2}$, where $\AB$ and $\ABp$ satisfy
\eqref{A.7}, and let the self-adjoint extensions $H_\AB, H_\ABp$ be defined as in
\eqref{4.8b}. In addition, let $S_{A',B',A,B}\in\bbC^{2\times2}$ be as in
\eqref{4.8i}, \eqref{4.8j}, and suppose that $z\in\rho(H_\AB)\cap\rho(H_\ABp)$. \\
$(i)$ If $S_{A',B',A,B}$ is invertible $($i.e., $\rank(S_{A',B',A,B})=2$$)$, then
\begin{align}
\begin{split}
&(H_\ABp - z I_{(a,b)})^{-1} = (H_\AB - z I_{(a,b)})^{-1}    \\
& \quad - \sum_{k,n=1}^2 \big[P_{A',B',A,B}(z)\big]^{-1}_{k,n}
(u_{A,B,n}(\overline{z},\cdot),\cdot)_{L^2((a,b);rdx)} u_{A,B,k}(z,\cdot), \lb{4.155}
\end{split}
\end{align}
where the $2\times2$ matrix $P_{A',B',A,B}(\cdot)$ is given by
\begin{align}
P_{A',B',A,B}(z)=S_{A',B',A,B}^{-1}\La_\AB^\ABp(z) \lb{4.156}
\end{align}
and $\{u_{A,B,1}(z,\cdot),u_{A,B,2}(z,\cdot)\}$ is the basis of $\ker(H_\max-z I_{(a,b)})$ satisfying the boundary conditions
\begin{align}
\ga_\AB u_{A,B,1}(z,\cdot) =\begin{pmatrix} 1\\0\end{pmatrix}, \qquad \ga_\AB u_{A,B,2}(z,\cdot) = \begin{pmatrix} 0\\1\end{pmatrix}. \lb{4.156a}
\end{align}
$(ii)$ If $S_{A',B',A,B}$ is not invertible and nonzero $($i.e., $\rank(S_{A',B',A,B})=1$$)$, then
\begin{align}
\begin{split}
&(H_\ABp - z I_{(a,b)})^{-1}  = (H_\AB - z I_{(a,b)})^{-1}     \\
&\quad - p_{A',B',A,B}(z)^{-1}(u_{A',B',A,B,0}(\overline{z},\cdot),\cdot)_{L^2((a,b);rdx)}
u_{A',B',A,B,0}(z,\cdot),    \lb{4.157}
\end{split}
\end{align}
where the scalar $p_{A',B',A,B}(\cdot)$ is given by
\begin{align}
p_{A',B',A,B}(z) = P_{\ran(S_{A',B',A,B})}\La_\AB^\ABp(z) S^*_{A',B',A,B}
P_{\ran(S_{A',B',A,B})}\big\vert_{\ran(S_{A',B',A,B})} \lb{4.158}
\end{align}
and the element $u_{A',B',A,B,0}(z,\cdot)\in\ker(H_\max-z I_{(a,b)})$ is given by
\begin{align}
u_{A',B',A,B,0}(z,\cdot) = \big[\ga_\ABp(H_\AB-\bz I_{(a,b)})^{-1}\big]^* \big\vert_{\ran(S_{A',B',A,B})}. \lb{4.159}
\end{align}
\end{corollary}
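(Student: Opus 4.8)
The plan is to derive both formulas directly from the resolvent identities already proved in Theorem \ref{t4.5} (namely \eqref{4.149} for the general case and \eqref{4.151} for the rank-one case), by rewriting the boundary operators $\ga_\AB^\perp(H_\AB - z I_{(a,b)})^{-1}$ and their adjoints in terms of the explicit kernel functions. The single crucial observation, which I would isolate at the outset, is that $\big[\ga_\AB^\perp(H_\AB - \bz I_{(a,b)})^{-1}\big]^*$ is exactly the map sending the standard basis vector $e_j\in\bbC^2$ to $u_{A,B,j}(z,\cdot)$. This is precisely the content of \eqref{4.45} and \eqref{4.46} in the proof of Theorem \ref{t4.4a}: the function $u_\AB(z,\,\cdot\,;c_1,c_2)=\big[\ga_\AB^\perp(H_\AB - \bz I_{(a,b)})^{-1}\big]^*(c_1,c_2)^\top$ lies in $\ker(H_\max - zI_{(a,b)})$ and satisfies $\ga_\AB u_\AB(z,\,\cdot\,;c_1,c_2)=(c_1,c_2)^\top$, so the choices $(c_1,c_2)=e_1,e_2$ reproduce exactly the basis fixed by the normalization \eqref{4.156a}.

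For item $(i)$ I would then take adjoints (equivalently, replace $z$ by $\bz$) in this identification to conclude that the $n$-th component of $\ga_\AB^\perp(H_\AB - z I_{(a,b)})^{-1}f$ equals $(u_{A,B,n}(\bz,\cdot),f)_{L^2((a,b);rdx)}$, using that the scalar product is linear in its second argument. Substituting this and the identity $\big[\ga_\AB^\perp(H_\AB - \bz I_{(a,b)})^{-1}\big]^*e_k=u_{A,B,k}(z,\cdot)$ into \eqref{4.149}, and expanding the $\bbC^2$-matrix product $\La_\AB^\ABp(z)^{-1}S_{A',B',A,B}$ between the two boundary operators, I would read off the coefficient of $(u_{A,B,n}(\bz,\cdot),\cdot)\,u_{A,B,k}(z,\cdot)$ as $\big[\La_\AB^\ABp(z)^{-1}S_{A',B',A,B}\big]_{k,n}$. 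Since $P_{A',B',A,B}(z)=S_{A',B',A,B}^{-1}\La_\AB^\ABp(z)$ by \eqref{4.156}, one has $\La_\AB^\ABp(z)^{-1}S_{A',B',A,B}=\big[P_{A',B',A,B}(z)\big]^{-1}$, which is exactly the claimed coefficient, yielding \eqref{4.155}.

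For item $(ii)$ I would instead start from \eqref{4.151}. When $\rank(S_{A',B',A,B})=1$ the space $\ran(S_{A',B',A,B})$ is one-dimensional; fixing a unit generator $w$ gives $P_{\ran(S_{A',B',A,B})}=w\,w^*$, so that $\la_\AB^\ABp(z)$ in \eqref{4.151a} is the scalar $\big(w,\La_\AB^\ABp(z)S^*_{A',B',A,B}w\big)_{\bbC^2}$, which is precisely $p_{A',B',A,B}(z)$ of \eqref{4.158}. Because $\ga_\ABp(H_\AB - z I_{(a,b)})^{-1}$ has range $\ran(S_{A',B',A,B})$ by \eqref{4.32a}, I would write $\ga_\ABp(H_\AB - z I_{(a,b)})^{-1}f=\big(\big[\ga_\ABp(H_\AB - z I_{(a,b)})^{-1}\big]^*w,\,f\big)_{L^2((a,b);rdx)}\,w$ and note, in the notation of \eqref{4.159}, that $\big[\ga_\ABp(H_\AB - z I_{(a,b)})^{-1}\big]^*w=u_{A',B',A,B,0}(\bz,\cdot)$ while $\big[\ga_\ABp(H_\AB - \bz I_{(a,b)})^{-1}\big]^*w=u_{A',B',A,B,0}(z,\cdot)$. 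Feeding these into \eqref{4.151} collapses the operator to the rank-one kernel $p_{A',B',A,B}(z)^{-1}(u_{A',B',A,B,0}(\bz,\cdot),\cdot)_{L^2((a,b);rdx)}\,u_{A',B',A,B,0}(z,\cdot)$, which is \eqref{4.157}.

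The algebraic expansions are routine; the one point that genuinely requires care is the bookkeeping of the conjugation $z\leftrightarrow\bz$ when passing between an operator $L^2((a,b);rdx)\to\bbC^2$ and its adjoint, together with the convention that $(\cdot,\cdot)$ is linear in the second slot and conjugate-linear in the first. In item $(ii)$ a second small verification is needed: the generator $w$ is determined only up to a unimodular phase, so I would check that this phase cancels in the product $p_{A',B',A,B}(z)^{-1}(u_{A',B',A,B,0}(\bz,\cdot),\cdot)\,u_{A',B',A,B,0}(z,\cdot)$ (the factor from $u_{A',B',A,B,0}(z,\cdot)$ cancels the conjugate factor from $(u_{A',B',A,B,0}(\bz,\cdot),\cdot)$, while $p_{A',B',A,B}(z)$ is invariant), which makes the restriction in \eqref{4.159} and the scalar \eqref{4.158} unambiguous.
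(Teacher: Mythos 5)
Your proposal is correct and follows essentially the same route as the paper: the paper's proof likewise identifies $\big[\ga_\AB^\perp(H_\AB-\bz I_{(a,b)})^{-1}\big]^*$ with the map $e_k\mapsto u_{A,B,k}(z,\cdot)$ via \eqref{4.45}--\eqref{4.46b}, substitutes this into \eqref{4.149} to get item $(i)$, and in item $(ii)$ uses \eqref{4.32a} and \eqref{4.48} to reduce \eqref{4.151} to the rank-one kernel spanned by $u_{A',B',A,B,0}(z,\cdot)$. Your explicit remark on the cancellation of the unimodular phase of the generator $w$ of $\ran(S_{A',B',A,B})$ is a small but worthwhile clarification of the restriction notation in \eqref{4.159}, which the paper leaves implicit.
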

\begin{proof}
It follows from \eqref{4.46}--\eqref{4.46b} that the maps
\begin{align}
\begin{split}
&\big[\ga_\AB^\perp(H_\AB - \bz I_{(a,b)})^{-1}\big]^*\! : \bbC^2 \to \ker(H_\max-z I_{(a,b)}), \\
&\big[\ga_\AB^\perp(H_\AB-z I_{(a,b)})^{-1}\big]\; : \ker(H_\max-\bz I_{(a,b)}) \to \bbC^2,
\end{split} \lb{4.160}
\end{align}
are given by
\begin{align}
\begin{split}
&\big[\ga_\AB^\perp(H_\AB - \bz I_{(a,b)})^{-1}\big]^*
\begin{pmatrix}c_1\\c_2\end{pmatrix}
= c_1 u_{A,B,1}(z,\cdot) + c_2 u_{A,B,2}(z,\cdot), \quad c_1,c_2\in\bbC,
\\
&\big[\ga_\AB^\perp(H_\AB-z I_{(a,b)})^{-1}\big]f =
\begin{pmatrix}
\big(u_{A,B,1}(\bz,\cdot),f\big)_{L^2((a,b);rdx)}\\
\big(u_{A,B,2}(\bz,\cdot),f\big)_{L^2((a,b);rdx)}
\end{pmatrix}, \quad f\in L^2((a,b);rdx).
\end{split} \lb{4.161}
\end{align}
Thus, if $S_{A',B',A,B}$ is invertible, \eqref{4.155} and \eqref{4.156} follow from
\eqref{4.149} and \eqref{4.161}.

If $S_{A',B',A,B}$ is not invertible and nonzero it follows from
\eqref{4.32a}, \eqref{4.48}, and \eqref{4.160} that $\big[\ga_\ABp(H_\AB - z
I_{(a,b)})^{-1}\big]$  is  surjective, mapping $\ker(H_\max-\bz I_{(a,b)})$
onto the one-dimensional subspace $\ran(S_{\ABp,\AB})\subset\bbC^2$. Hence
$\big[\ga_\ABp(H_\AB-\bz I_{(a,b)})^{-1}\big]^*$ maps $\ran(S_{\ABp,\AB})$
onto a one dimensional subspace of $\ker(H_\max-z I_{(a,b)})$ spanned by the
function $u_{A',B',A,B,0}(z,\cdot)$. Thus,
\begin{align}
\begin{split}
&\big[\ga_\ABp^\perp(H_\AB - \bz I_{(a,b)})^{-1}\big]^*\! : \ran(S_{\ABp,\AB}) \to \spn(u_{A',B',A,B,0}(z,\cdot)), \\
&\big[\ga_\ABp^\perp(H_\AB-z I_{(a,b)})^{-1}\big]\; : \spn(u_{A',B',A,B,0}(\bz,\cdot)) \to \ran(S_{\ABp,\AB}),
\end{split} \lb{4.162}
\end{align}
and hence \eqref{4.157}--\eqref{4.159} follow from \eqref{4.151} and \eqref{4.151a}.
\end{proof}

The above result shows that, depending on the rank of $S_{A',B',A,B}$, the abstract Krein's formula \eqref{B.16} is equivalent either to \eqref{4.155} (and hence to \eqref{4.149}) or to \eqref{4.157} (and hence to \eqref{4.151}). Moreover, straightforward computations show that in the special case of $H_{A,B}=H_D$, Corollary \ref{c4.6} reduces to Theorem
\ref{t3.1} if $H_{\ABp}$ corresponds to separated boundary conditions \eqref{A.13} and to Theorem \ref{t3.2} if $H_{\ABp}$ corresponds to non-separated boundary conditions
\eqref{A.15}. Explicitly, one obtains the following result.

\begin{corollary} \lb{c4.7}
Assume that $H_\AB=H_D$ $($i.e., $A=A_D$ and $B=B_D$ given by \eqref{A.28a}$)$
and $\ABp\in\bbC^{2\times2}$ satisfy \eqref{A.7}. Suppose that
$z\in\rho(H_D)\cap\rho(H_\ABp)$, and let $\{u_1(z,\cdot),u_2(z,\cdot)\}$ be the basis
of $\ker(H_\max-z I_{(a,b)})$ dictated by \eqref{3.8}.
\begin{enumerate}[$(i)$]
\item If $A'=\begin{pmatrix}\cos(\te_a)&\sin(\te_a)\\0&0\end{pmatrix}$,
$B'=\begin{pmatrix}0&0\\-\cos(\te_b)&\sin(\te_b)\end{pmatrix}$,
$\te_a,\te_b\in(0,\pi)$, then \eqref{4.155} holds with $P_{A',B',A,B}(z)= \begin{pmatrix}0&1\\1&0\end{pmatrix} D_{\te_a,\te_b}(z) \begin{pmatrix}0&1\\1&0\end{pmatrix}$, where $D_{\te_a,\te_b}(z)$ is given by \eqref{3.16}.

\smallskip

\item If $A'=\begin{pmatrix}\cos(\te_a)&\sin(\te_a)\\0&0\end{pmatrix}$,
$B'=\begin{pmatrix}0&0\\-\cos(\te_b)&\sin(\te_b)\end{pmatrix}$,
$\te_a\in(0,\pi)$, $\te_b=0$, then \eqref{4.157} holds with
$p_{A',B',A,B}(z)= \sin^2(\te_a) d_{\te_a,0}(z)$, where $d_{\te_a,0}(z)$ is given by \eqref{3.19} and $u_{A',B',A,B,0}(z,\cdot)=\sin(\te_a)u_2(z,\cdot)$.

\smallskip

\item If $A'=\begin{pmatrix}\cos(\te_a)&\sin(\te_a)\\0&0\end{pmatrix}$,
$B'=\begin{pmatrix}0&0\\-\cos(\te_b)&\sin(\te_b)\end{pmatrix}$,
$\te_a=0$, $\te_b\in(0,\pi)$, then \eqref{4.157} holds with
$p_{A',B',A,B}(z)= \sin^2 (\te_b) d_{0,\te_b}(z)$, where $d_{0,\te_b}(z)$ is given by \eqref{3.22}
and $u_{A',B',A,B,0}(z,\cdot)=\sin(\te_b)u_1(z,\cdot)$.

\smallskip

\item If $A'=e^{i\phi}F$, $B'=I_2$, $F\in\mathrm{SL}_2(\bbR)$, $F_{1,2}\neq0$,
then \eqref{4.155} holds with $P_{A',B',A,B}(z)= \begin{pmatrix}0&1\\1&0\end{pmatrix} Q_{F,\phi}(z) \begin{pmatrix}0&1\\1&0\end{pmatrix}$, where $Q_{F,\phi}(z)$ is given by \eqref{3.53}.

\smallskip

\item If $A'=e^{i\phi}F$, $B'=I_2$, $F\in\mathrm{SL}_2(\bbR)$, $F_{1,2}=0$, then \eqref{4.157} holds with $p_{A',B',A,B}(z)= q_{F,\phi}(z)$, where the scalar $q_{F,\phi}(z)$ is given by \eqref{3.56}, and for $u_{A',B',A,B,0}(z,\cdot)=u_{F,\phi}(z,\cdot)$, with $u_{F,\phi}(z,\cdot)$ given by \eqref{3.58}.
\end{enumerate}
\end{corollary}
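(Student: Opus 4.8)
The plan is to specialize Corollary \ref{c4.6} to the reference operator $H_\AB = H_D$ and then carry out, case by case, the resulting explicit $2\times2$ matrix computations, matching each against the quantities in Theorems \ref{t3.1} and \ref{t3.2}. First I would record the data attached to $H_D$: applying \eqref{A.30} to $A_D,B_D$ from \eqref{A.28a} gives $D_{A_D,B_D}=I_2$ and $N_{A_D,B_D}=0$, so that $\ga_\AB=\ga_D$ and $\ga_\AB^\perp=\ga_N$. Consequently \eqref{4.8j} and \eqref{4.8n} collapse to $S_{A',B',A_D,B_D}=N_{A',B'}$ and $T_{A',B',A_D,B_D}=D_{A',B'}$, while \eqref{4.23} yields $\La_D^\ABp(z)=D_{A',B'}+N_{A',B'}\La_D^N(z)$ with $\La_D^N(z)$ in the explicit form \eqref{4.22}. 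The one bookkeeping subtlety to flag at the outset is the basis: the functions $u_{A,B,j}$ fixed by \eqref{4.156a} satisfy Dirichlet-type conditions $\ga_D u=(u(a),u(b))^\top$, so comparison with \eqref{3.8} forces $u_{A_D,B_D,1}=u_2(z,\cdot)$ and $u_{A_D,B_D,2}=u_1(z,\cdot)$, i.e.\ a transposition relative to the basis used in Section \ref{s2}. This transposition is exactly the source of the conjugating permutation $\begin{pmatrix}0&1\\1&0\end{pmatrix}$ in parts (i) and (iv).

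For the nondegenerate cases I would insert the parameter matrices directly. For the separated parametrization \eqref{A.13}, formula \eqref{A.30} gives $D_{A',B'}=\diag(\cos(\te_a),\cos(\te_b))$ and $N_{A',B'}=\diag(\sin(\te_a),\sin(\te_b))$; for the non-separated parametrization \eqref{A.15} one finds $D_{A',B'}=\begin{pmatrix}e^{i\phi}F_{1,1}&-1\\ e^{i\phi}F_{2,1}&0\end{pmatrix}$ and $N_{A',B'}=\begin{pmatrix}e^{i\phi}F_{1,2}&0\\ e^{i\phi}F_{2,2}&1\end{pmatrix}$, with determinant $e^{i\phi}F_{1,2}$. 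In cases (i) and (iv) the matrix $N_{A',B'}=S_{A',B',A,B}$ is invertible (since $\sin(\te_a)\sin(\te_b)\neq0$, resp.\ $F_{1,2}\neq0$), so $P_{A',B',A,B}(z)=S_{A',B',A,B}^{-1}\La_\AB^\ABp(z)=N_{A',B'}^{-1}\La_D^\ABp(z)$ by \eqref{4.156}. A direct multiplication, using $\det(F)=1$ to cancel the off-diagonal cross terms in the non-separated case, shows that $P_{A',B',A,B}(z)$ equals $\begin{pmatrix}0&1\\1&0\end{pmatrix}D_{\te_a,\te_b}(z)\begin{pmatrix}0&1\\1&0\end{pmatrix}$ and $\begin{pmatrix}0&1\\1&0\end{pmatrix}Q_{F,\phi}(z)\begin{pmatrix}0&1\\1&0\end{pmatrix}$, respectively, with $D_{\te_a,\te_b}$, $Q_{F,\phi}$ as in \eqref{3.16}, \eqref{3.53}; the conjugating permutation absorbs the index transposition noted above, turning \eqref{4.155} into the resolvent sums of Theorem \ref{t3.1}\,$(i)$ and Theorem \ref{t3.2}\,$(i)$.

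In the three degenerate cases (ii), (iii), (v), $N_{A',B'}=S_{A',B',A,B}$ has rank one, so I would pass to the compression formulas \eqref{4.158}, \eqref{4.159}. Here $\ran(S_{A',B',A,B})$ is spanned by $e_1$ (case (ii)) or $e_2$ (cases (iii), (v)); the compression $p_{A',B',A,B}(z)=P_{\ran(S_{A',B',A,B})}\La_D^\ABp(z)N_{A',B'}^*P_{\ran(S_{A',B',A,B})}\big\vert_{\ran(S_{A',B',A,B})}$ reduces to a single diagonal entry, and the generator $u_{A',B',A,B,0}(z,\cdot)=[\ga_\ABp(H_D-\bz I_{(a,b)})^{-1}]^*\big\vert_{\ran(S_{A',B',A,B})}$ is read off from $[\ga_N(H_D-\bz I_{(a,b)})^{-1}]^*N_{A',B'}^*$ via \eqref{4.161}. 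Computing these scalars produces $\sin^2(\te_a)d_{\te_a,0}(z)$, $\sin^2(\te_b)d_{0,\te_b}(z)$, and $q_{F,\phi}(z)$ (the last again invoking $F_{1,1}F_{2,2}=\det(F)=1$), while the generators come out as $\sin(\te_a)u_2(z,\cdot)$, $\sin(\te_b)u_1(z,\cdot)$, and $u_{F,\phi}(z,\cdot)$, matching \eqref{3.19}, \eqref{3.22}, \eqref{3.56}, \eqref{3.58} and hence \eqref{4.157} with Theorems \ref{t3.1}\,$(ii)$, \ref{t3.1}\,$(iii)$, and \ref{t3.2}\,$(ii)$.

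The computations are entirely mechanical once the reductions of Step one are in hand; the only genuine care lies in (a) tracking the basis transposition $u_{A,B,j}\leftrightarrow u_j$ so that the permutation conjugation in (i), (iv) emerges with the correct orientation, and (b) correctly identifying $\ran(S_{A',B',A,B})$ together with the canonical generator of its one-dimensional range in the rank-one cases, so that the scalar $p_{A',B',A,B}$ and the vector $u_{A',B',A,B,0}$ are normalized consistently within the rank-one term of \eqref{4.157}. I expect these two points to be the most error-prone part of the verification.
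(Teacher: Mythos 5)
Your proposal is correct and follows exactly the route the paper intends: the paper offers no written proof of Corollary \ref{c4.7} beyond the remark that it results from "straightforward computations" specializing Corollary \ref{c4.6} to $H_{A,B}=H_D$, and your reductions ($D_{A_D,B_D}=I_2$, $N_{A_D,B_D}=0$, hence $S_{A',B',A,B}=N_{A',B'}$, $\ga_{A_D,B_D}^\perp=\ga_N$), the identification of the basis transposition $u_{A_D,B_D,1}=u_2$, $u_{A_D,B_D,2}=u_1$ as the source of the conjugating permutation, and the rank-one compressions in cases $(ii)$, $(iii)$, $(v)$ all check out against \eqref{3.16}, \eqref{3.19}, \eqref{3.22}, \eqref{3.53}, \eqref{3.56}, and \eqref{3.58}. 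No gaps.
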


At this point we are ready to demonstrate the Nevanlinna--Herglotz property of
$\La_\AB^\ABp(\cdot) S_{A',B',A,B}^*$. We denote
$\bbC_+ = \{z \in \bbC \, | \, \Im(z) > 0\}$.

\begin{corollary} \lb{c4.11}
Assume that $A, B, A', B' \in\bbC^{2\times2}$, where $\AB$ and $\ABp$ satisfy
\eqref{A.7}, and let the self-adjoint extensions $H_\AB, H_\ABp$ be defined as in
\eqref{4.8b}. In addition, let $S_{A',B',A,B}\in\bbC^{2\times2}$ be as in
\eqref{4.8i}, \eqref{4.8j}. If $S_{A',B',A,B}$ is invertible, then
$\La_\AB^\ABp(\cdot) S_{A',B',A,B}^*$ is a $2 \times 2$ matrix-valued
Nevanlinna--Herglotz function satisfying
\begin{equation}
\Im\Big(\La_\AB^\ABp(\cdot) S_{A',B',A,B}^*\Big) > 0, \quad z \in \bbC_+.
\lb{4.103a}
\end{equation}
\end{corollary}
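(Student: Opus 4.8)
The plan is to verify directly the defining features of a matrix-valued Nevanlinna--Herglotz function for the object $M(z) := \La_\AB^\ABp(z) S_{A',B',A,B}^*$, namely analyticity on $\bbC_+$, the reflection symmetry $M(z) = M(\bz)^*$, and strict positivity of its imaginary part. Two of these are already available. Since $H_\AB$ is self-adjoint, its spectrum is real, so $\bbC_+ \subseteq \rho(H_\AB)$, and Lemma \ref{c4.3} then guarantees that $M(\cdot)$ is analytic on $\bbC_+$. The reflection symmetry is precisely \eqref{4.52}. Hence everything reduces to establishing \eqref{4.103a}, that is, $\Im(M(z)) > 0$ for $z \in \bbC_+$.

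For the positivity I would compute the quadratic form. Abbreviate $S := S_{A',B',A,B}$ and $T := T_{A',B',A,B}$, fix $z \in \bbC_+$ and a nonzero $c \in \bbC^2$. By the definition \eqref{4.17} of the boundary data map together with Lemma \ref{l4.2}, one has $M(z) c = \ga_\ABp v$, where $v := u_\AB(z, \,\cdot\,; S^* c) \in \ker(H_\max - z I_{(a,b)})$ is the unique solution of the boundary value problem \eqref{4.9}, \eqref{4.10} normalized by $\ga_\AB v = S^* c$. Inserting the connecting identity \eqref{4.8i}, $\ga_\ABp v = T \ga_\AB v + S \ga_\AB^\perp v$, and using $\ga_\AB v = S^* c$, one obtains
\[
(c, M(z) c)_{\bbC^2} = (c, T S^* c)_{\bbC^2} + (\ga_\AB v, \ga_\AB^\perp v)_{\bbC^2}.
\]
The crux is that, by the second relation in \eqref{4.8k}, the matrix $T S^*$ is self-adjoint, so $(c, T S^* c)_{\bbC^2}$ is real and drops out upon taking imaginary parts, leaving $\Im (c, M(z) c)_{\bbC^2} = \Im (\ga_\AB v, \ga_\AB^\perp v)_{\bbC^2}$.

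The remaining term is handled by the Lagrange (Green) identity \eqref{4.32d} applied with $\phi = \psi = v$. Since $(H_\max - z I_{(a,b)}) v = 0$ and $(H_\max - \bz I_{(a,b)}) v = (z - \bz) v$, the left-hand side of \eqref{4.32d} equals $-2 i \Im(z) \norm{v}^2$, while its right-hand side equals $-2 i \Im (\ga_\AB v, \ga_\AB^\perp v)_{\bbC^2}$; comparing yields $\Im (\ga_\AB v, \ga_\AB^\perp v)_{\bbC^2} = \Im(z) \norm{v}^2$ and hence $\Im (c, M(z) c)_{\bbC^2} = \Im(z) \norm{v}^2$. Here the invertibility hypothesis on $S$ is what forces strictness: for $c \neq 0$ we have $\ga_\AB v = S^* c \neq 0$, so $v$ is not the zero function and $\norm{v}^2 > 0$; together with $\Im(z) > 0$ this gives $(c, \Im(M(z)) c)_{\bbC^2} > 0$, which is \eqref{4.103a}.

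I expect the only genuinely subtle point to be the bookkeeping that eliminates the $T S^*$ contribution, i.e.\ recognizing that \eqref{4.8k} makes $T S^*$ self-adjoint, in tandem with the correct sign accounting in \eqref{4.32d}. Once these are arranged the estimate is immediate, and invertibility of $S$ enters only to upgrade $\Im(M(z)) \geq 0$ to the strict inequality in \eqref{4.103a}.
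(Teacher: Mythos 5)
Your proof is correct, but it takes a genuinely different route from the paper's. The paper proves Corollary \ref{c4.11} by identifying $\La_\AB^\ABp(z) S_{A',B',A,B}^* = S_{A',B',A,B} P_{A',B',A,B}(z) S_{A',B',A,B}^*$ via \eqref{4.156} and then invoking the abstract Krein-formula machinery of Appendix \ref{sB}: Theorem \ref{tB.1}\,$(iii)$ delivers the Nevanlinna--Herglotz property of $P_{A',B',A,B}(\cdot)$ from the resolvent identity \eqref{B.32} and the positive definiteness of the Gram matrix in \eqref{B.36a}, and conjugation by the invertible $S_{A',B',A,B}$ transports it to $\La_\AB^\ABp(\cdot)S_{A',B',A,B}^*$. (Indeed, the paper explicitly remarks just after Theorem \ref{t4.4a} that a direct proof via \eqref{4.43A} is possible, citing \cite{CGM10}, but deliberately chooses ``an alternative route based on Krein's resolvent formula.'') You instead carry out the direct quadratic-form computation: writing $M(z)c = \ga_\ABp v$ with $\ga_\AB v = S_{A',B',A,B}^* c$ by \eqref{4.17}, splitting off the self-adjoint matrix $T_{A',B',A,B}S_{A',B',A,B}^*$ via \eqref{4.8i} and \eqref{4.8k}, and evaluating the remaining term with the Lagrange identity \eqref{4.32d} to get $\Im\big((c, M(z)c)_{\bbC^2}\big) = \Im(z)\,\norm{v}_{L^2((a,b);rdx)}^2$; your sign bookkeeping (with the paper's convention that the scalar product is linear in the second argument) is right, and the reflection symmetry \eqref{4.52} and analyticity from Lemma \ref{c4.3} supply the rest. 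The paper's route is shorter once the appendix is in place and makes the link to the abstract Krein/Gramian picture explicit; your route is self-contained, produces the transparent identity $\Im\big((c, M(z)c)_{\bbC^2}\big) = \Im(z)\norm{v}^2$ that isolates exactly where invertibility of $S_{A',B',A,B}$ is used (only to force $v \neq 0$ and hence strictness), and as a bonus yields $\Im\big(\La_\AB^\ABp(z) S_{A',B',A,B}^*\big) \geq 0$ on $\bbC_+$ even when $S_{A',B',A,B}$ is singular.
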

\begin{proof}
Analyticity of $\La_\AB^\ABp(\cdot) S_{A',B',A,B}^*$ on $z\in\rho(H_\AB)$
follows from that of $\La_\AB^\ABp(\cdot)$ described in Lemma \ref{c4.3}.
Equation \eqref{4.156} then proves that
\begin{align}
S_{A',B',A,B} P(z) S_{A',B',A,B}^* = \La_\AB^\ABp(z) S_{A',B',A,B}^*,
\quad z \in \rho(H_\AB).       \lb{4.103b}
\end{align}
By Theorem \ref{tB.1}\,$(iii)$, $P(\cdot)$ and hence
$S_{A',B',A,B} P(\cdot) S_{A',B',A,B}^*$ is a $2 \times 2$ matrix-valued
Nevanlinna--Herglotz function satisfying \eqref{4.103a} as a consequence of
\eqref{B.36a}.
\end{proof}

\section{Trace Formulas, Symmetrized Perturbation Determinants, and Spectral
Shift Functions}    \lb{s5}

In this section we present the connection between the general boundary data maps, symmetrized perturbation determinants, trace formulas, and spectral shift functions  for general self-adjoint
extensions of $H_{\min}$, described in Theorems \ref{tA.2} and \ref{tA.4}.

Assuming as before Hypothesis \ref{hA.1} and \eqref{A.7}, we start by recalling the sesquilinear
form, denoted by $Q_\AB$, associated with the general self-adjoint extension
$H_\AB$ of $H_{\min}$. If the matrix $N_{A,B}$, defined as in \eqref{A.29}--\eqref{A.30}, is invertible
(i.e., $\rank(N_{A,B})=2$) then  
\begin{align}
& Q_\AB(f,g) = \int_a^b dx \, \big[p(x) \ol{f'(x)} g'(x) + q(x) \ol{f(x)} g(x)\big]
- \big(\ga_D f, N^{-1}_{A,B} D_{A,B} \ga_D g\big)_{\bbC^2},    \no \\
& f, g \in \dom(Q_\AB) = \big\{h\in L^2((a,b); rdx) \,|\,
h \in \AC ([a,b]);       \lb{5.1} \\
& \hspace*{5.1cm} p^{1/2} h' \in L^2((a,b); rdx)\big\}.   \no
\end{align}
If $N_{A,B}$ is not invertible then either $N_{A,B}$ is nonzero (i.e.,
$\rank(N_{A,B})=1$) and
\begin{align}
& Q_\AB(f,g) = \int_a^b dx \big[p(x) \ol{f'(x)} g'(x) + q(x) \ol{f(x)} g(x)\big]
- \f{\big(\ga_D f, N^*_{A,B} D_{A,B} \ga_D g\big)_{\bbC^2}}{\|N_{A,B}\|^2},   \no \\
&  f, g \in \dom(Q_\AB) = \big\{h\in L^2((a,b); rdx) \,|\,
h \in \AC ([a,b]);      \lb{5.2} \\
&  \hspace*{3.4cm} \ga_D h \in \ran(N^*_{A,B}); \, p^{1/2} h' \in L^2((a,b); rdx)\big\},     \no 
\end{align}
or $N_{A,B} =0$ (i.e., $\rank(N_{A,B})=0$) and
\begin{align}
& Q_\AB(f,g) = \int_a^b dx \big[p(x) \ol{f'(x)} g'(x) + q(x) \ol{f(x)} g(x)\big],     \no \\
&  f, g \in \dom(Q_\AB) = \big\{h\in L^2((a,b); rdx) \,|\, h \in \AC ([a,b]); \, \ga_D h = 0;    \lb{5.3} \\
& \hspace*{6.6cm}  p^{1/2} h' \in L^2((a,b); rdx)\big\}. \no
\end{align}

To see the connection between $Q_\AB$ and the self-adjoint extension
$H_\AB$ it suffices to perform an integration by parts. For instance, in the case of \eqref{5.2}, one obtains for all $f\in\dom(Q_\AB)$ and
$g\in\dom(H_\AB)$,
\begin{align}
Q_\AB(f,g) &= \big(f, H_\max g\big)_{L^2((a,b);rdx)} - \big(\ga_D
f,\ga_N g\big)_{\bbC^2} - \f{\big(\ga_D f, N^*_{A,B} D_{A,B} \ga_D
g\big)_{\bbC^2}}{\|N_{A,B}\|^2} \no
\\
&= \big(f, H_\AB g\big)_{L^2((a,b);rdx)} - \f{\big(\ga_D f,
\|N_{A,B}\|^2 \ga_N g + N^*_{A,B} D_{A,B} \ga_D
g\big)_{\bbC^2}}{\|N_{A,B}\|^2}. \lb{5.2a}
\end{align}
Since $\ga_D f \in \ran(N_\AB^*)$ one has $\ga_D f =
\|N_\AB\|^{-2}N_\AB^*N_\AB\ga_D f$ and hence
\begin{align}
\big(\ga_D f, \|N_{A,B}\|^2 \ga_N g\big)_{\bbC^2} = \big(\ga_D f,
N_{A,B}^*N_\AB \ga_N g\big)_{\bbC^2}. \lb{5.2b}
\end{align}
Combining \eqref{5.2a} and \eqref{5.2b} yields,
\begin{align}
Q_\AB(f,g) &= \big(f, H_\AB g\big)_{L^2((a,b);rdx)} - \f{\big(\ga_D f,
N_{A,B}^*(N_\AB \ga_N g + D_{A,B} \ga_D
g)\big)_{\bbC^2}}{\|N_{A,B}\|^2} \no
\\
&=
\big(f, H_\AB g\big)_{L^2((a,b);rdx)},
\end{align}
since $g\in\dom(H_\AB)$, and by \eqref{A.29} and \eqref{4.8b}, 
$\ga_\AB g = D_{A,B} \ga_D g + N_\AB \ga_N g = 0$.

The 2nd representation theorem for densely defined, semibounded, closed quadratic forms
(cf.\ \cite[Sect.\ 6.2.6]{Ka80}) then yields that
\begin{align}
\dom\big((H_\AB - z I_{(a,b)})^{1/2}\big) = \dom\big(|H_\AB|^{1/2}\big) = \dom(Q_\AB),
\quad z\in \bbC\bs[e_\AB,\infty), \lb{5.4}
\end{align}
where we abbreviated
\begin{equation}
e_\AB = \inf (\sigma(H_\AB)). \lb{5.5}
\end{equation}

Here $(H_\AB - z I_{(a,b)})^{1/2}$ is defined with the help of the spectral theorem and
a choice of a branch cut along $[e_\AB,\infty)$. Employing the fact that by \eqref{5.1}--\eqref{5.3},
\begin{align}
& \dom\big((H_\ABp - z I_{(a,b)})^{1/2}\big) = \dom\big(|H_\ABp|^{1/2}\big)    \no \\
& \quad = \big\{h\in L^2((a,b); rdx) \,|\, h \in \AC ([a,b]); \, p^{1/2} h' \in L^2((a,b); rdx)\big\},   \\
& \hspace*{4.75cm} z \in \bbC\bs[e_\ABp,\infty), \; \det(N_{A',B'})\neq0, \no
\\
& \dom\big((H_\AB - z I_{(a,b)})^{1/2}\big) = \dom\big(|H_\AB|^{1/2}\big)    \no \\ 
& \quad \subseteq \big\{h\in L^2((a,b); rdx) \,|\, h \in \AC ([a,b]); \, p^{1/2} h' \in L^2((a,b); rdx)\big\},    \\
& \hspace*{7.55cm} z \in \bbC\bs[e_\AB,\infty),    \no
\end{align}
then shows that
\begin{align}
& \ol{(H_\ABp - z I_{(a,b)})^{1/2} (H_\AB - z I_{(a,b)})^{-1} (H_\ABp - z I_{(a,b)})^{1/2}} \no \\
& \quad = \big[(H_\ABp - z I_{(a,b)})^{1/2} (H_\AB - z I_{(a,b)})^{-1/2}\big]  \no \\
& \qquad \times \big[(H_\ABp - \bz I_{(a,b)})^{1/2} (H_\AB - \bz I_{(a,b)})^{-1/2} \big]^* \no \\
&\quad \in \cB\big(L^2((a,b); rdx)\big), \quad z\in\bbC\bs[e_0,\infty), \;
\det(N_{A',B'})\neq0, \lb{5.8}
\end{align}
where we introduced the abbreviation
\begin{equation}
e_0 =\inf\big(\sigma(H_\AB) \cup \sigma( H_\ABp)\big)
= \min (e_\AB, e_\ABp).
\end{equation}
Then applying Theorem \ref{t4.5} one concludes that actually,
\begin{align}
& \ol{(H_\ABp - z I_{(a,b)})^{1/2} (H_\AB - z I_{(a,b)})^{-1} (H_\ABp - z I_{(a,b)})^{1/2}} - I_{(a,b)} \no
\\
& \quad = \big\{(H_\ABp - z I_{(a,b)})^{1/2}
\big[(H_\AB - z I_{(a,b)})^{-1} - (H_\ABp - z I_{(a,b)})^{-1}\big]    \no \\
& \qquad \times (H_\ABp - z I_{(a,b)})^{1/2}\big\}^{\rm cl} \no
\\
& \quad = (H_\ABp - z I_{(a,b)})^{1/2} \big[\ga_\AB^\perp(H_\AB - \bz I_{(a,b)})^{-1}\big]^*
\La_\AB^\ABp(z)^{-1}S_{A',B',A,B}     \no
\\
&\qquad \times\big[\ga_\AB^\perp(H_\AB - z I_{(a,b)})^{-1}\big] (H_\ABp - z I_{(a,b)})^{1/2},
\lb{5.10} \\
& \hspace*{3.3cm}  z\in\bbC\bs[e_0,\infty), \; \det(N_{A',B'})\neq0.    \no
\end{align}
is a finite-rank (and hence a trace class) operator on $L^2((a,b); rdx)$. Thus, the Fredholm
determinant, more precisely, the symmetrized perturbation determinant,
\begin{align}
& {\det}_{L^2((a,b); rdx)} \Big(\ol{(H_\ABp - z I_{(a,b)})^{1/2}
(H_\AB - z I_{(a,b)})^{-1} (H_\ABp - z I_{(a,b)})^{1/2}}\Big),   \no \\
& \hspace*{6cm} z\in\bbC\bs[e_0,\infty), \; \det(N_{A',B'})\neq0,     \lb{5.11}
\end{align}
is well-defined (cf.\ \cite[Ch.\ IV]{GK69} and \cite[Ch.\ 3]{Si05} for basics on Fredholm determinants).

Next, we show that the symmetrized (Fredholm) perturbation determinant \eqref{5.11} associated
with the pair $(H_\ABp, H_\AB)$ can essentially be reduced to the $2\times2$ determinant of the
general boundary data map $\La_\AB^\ABp(z)$:

\begin{theorem} \lb{t5.1}
Assume that $\AB\in\bbC^{2\times2}$, where $\ABp\in\bbC^{2\times2}$ satisfy
\eqref{A.7}, and let the self-adjoint extensions $H_\AB, H_\ABp$ be defined as in
\eqref{4.8b}. In addition, let $N_{A,B},N_{A',B'} \in\bbC^{2\times2}$ be as in
\eqref{A.29}, \eqref{A.30}, and suppose that $\det(N_{A',B'})\neq0$. Then,
\begin{align}
\begin{split}
& {\det}_{L^2((a,b); rdx)}\Big(\ol{(H_\ABp - z I_{(a,b)})^{1/2}
(H_\AB - z I_{(a,b)})^{-1} (H_\ABp - z I_{(a,b)})^{1/2}}\Big) \\
& \quad = \f{{\det}_{\bbC^2}(N_{A,B})}
{{\det}_{\bbC^2}(N_{A',B'})} \, {\det}_{\bbC^2}\Big(\La_\AB^\ABp(z)\Big),
\quad z \in \bbC\bs[e_0,\infty). \lb{5.12}
\end{split}
\end{align}
\end{theorem}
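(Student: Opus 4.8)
The plan is to combine the explicit finite-rank formula \eqref{5.10} with the cyclicity (Sylvester) identity for Fredholm determinants and then to evaluate the resulting $2\times2$ determinant by an integration by parts against the quadratic form $Q_\ABp$. \textbf{Step 1 (reduction to $\bbC^2$).} By \eqref{5.10} the argument of the determinant in \eqref{5.12} equals $I_{(a,b)}+\cK$, where $\cK=\cC\,\La_\AB^\ABp(z)^{-1}S_{A',B',A,B}\,\cD$ with
\[
\cC=(H_\ABp-zI_{(a,b)})^{1/2}\big[\ga_\AB^\perp(H_\AB-\bz I_{(a,b)})^{-1}\big]^*\colon \bbC^2\to L^2((a,b);rdx),
\]
\[
\cD=\overline{\ga_\AB^\perp(H_\AB-zI_{(a,b)})^{-1}(H_\ABp-zI_{(a,b)})^{1/2}}\colon L^2((a,b);rdx)\to\bbC^2.
\]
Since $\det(N_{A',B'})\neq0$, the form domain $\dom(Q_\ABp)$ is the full space in \eqref{5.1} and hence contains $\ker(H_\max-zI_{(a,b)})$; as the intermediate space $\bbC^2$ is finite-dimensional this renders $\cC$ and $\cD$ bounded, everywhere-defined maps. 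Cyclicity then gives
\[
{\det}_{L^2((a,b); rdx)}(I_{(a,b)}+\cK)={\det}_{\bbC^2}\!\Big(I_2+\La_\AB^\ABp(z)^{-1}S_{A',B',A,B}\,\cD\cC\Big).
\]

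\textbf{Step 2 (the $2\times2$ matrix).} I would compute $\cD\cC\in\bbC^{2\times2}$ from the form. For $v,w\in\bbC^2$ set $\phi=\big[\ga_\AB^\perp(H_\AB-\bz I_{(a,b)})^{-1}\big]^*v\in\ker(H_\max-zI_{(a,b)})$ and $\psi=\big[\ga_\AB^\perp(H_\AB-z I_{(a,b)})^{-1}\big]^*w\in\ker(H_\max-\bz I_{(a,b)})$; by \eqref{4.46a} one has $\ga_\AB\phi=v$, so $\phi=u_\AB(z,\cdot\,;v)$ and $\ga_\ABp\phi=\La_\AB^\ABp(z)v$ by \eqref{4.17}, while $\ga_D\psi=\La_\AB^D(\bz)w$. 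Then $(w,\cD\cC v)_{\bbC^2}=\big((H_\ABp-\bz I_{(a,b)})^{1/2}\psi,(H_\ABp-z I_{(a,b)})^{1/2}\phi\big)_{L^2}=Q_\ABp(\psi,\phi)-z(\psi,\phi)_{L^2}$. Integrating by parts in \eqref{5.1} and using $\tau\phi=z\phi$, the bulk integral cancels and the quasi-derivative boundary terms combine with the boundary term of $Q_\ABp$ into $-(\ga_D\psi,N_{A',B'}^{-1}\ga_\ABp\phi)_{\bbC^2}$, so that
\[
\cD\cC=-\,\La_\AB^D(\bz)^*\,N_{A',B'}^{-1}\,\La_\AB^\ABp(z).
\]
Substituting this and using similarity-invariance of the $\bbC^2$-determinant reduces \eqref{5.12} to showing that ${\det}_{\bbC^2}\big(I_2-S_{A',B',A,B}\La_\AB^D(\bz)^*N_{A',B'}^{-1}\big)$ equals the right-hand side.

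\textbf{Step 3 (algebraic simplification).} Using \eqref{4.20} and \eqref{4.23} I would write $\La_\AB^D(z)=[D_{A,B}+N_{A,B}\La_D^N(z)]^{-1}$; the representation \eqref{4.22} together with the Wronskian identity \eqref{4.26} shows $\La_D^N(z)$ is symmetric, and the reality relation \eqref{3.15a} yields $\La_D^N(\bz)^*=\La_D^N(z)$, hence $\La_\AB^D(\bz)^*=[D_{A,B}^*+\La_D^N(z)N_{A,B}^*]^{-1}$. Factoring $N_{A',B'}$ and this inverse out of the determinant and recalling $S_{A',B',A,B}=N_{A',B'}D_{A,B}^*-D_{A',B'}N_{A,B}^*$ from \eqref{4.8j} gives
\[
{\det}_{\bbC^2}\big(I_2-S_{A',B',A,B}\La_\AB^D(\bz)^*N_{A',B'}^{-1}\big)
=\frac{{\det}_{\bbC^2}(N_{A,B}^*)\,{\det}_{\bbC^2}(D_{A',B'}+N_{A',B'}\La_D^N(z))}{{\det}_{\bbC^2}(N_{A',B'})\,{\det}_{\bbC^2}(D_{A,B}^*+\La_D^N(z)N_{A,B}^*)}.
\]
Since ${\det}_{\bbC^2}(\La_\AB^\ABp(z))={\det}_{\bbC^2}(D_{A',B'}+N_{A',B'}\La_D^N(z))/{\det}_{\bbC^2}(D_{A,B}+N_{A,B}\La_D^N(z))$, the claim \eqref{5.12} follows once one verifies the scalar reflection identity $\overline{{\det}_{\bbC^2}(N_{A,B})}\,{\det}_{\bbC^2}(D_{A,B}+N_{A,B}\La_D^N(z))={\det}_{\bbC^2}(N_{A,B})\,\overline{{\det}_{\bbC^2}(D_{A,B}+N_{A,B}\La_D^N(\bz))}$. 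In the non-degenerate case $\det(N_{A,B})\neq0$ this reduces, upon writing $\La_D^N(z)$ as a symmetric matrix, to the observation that the self-adjointness constraint \eqref{A.31} (namely $D_{A,B}N_{A,B}^*=N_{A,B}D_{A,B}^*$) forces $N_{A,B}^{-1}D_{A,B}$ to be Hermitian; if $\det(N_{A,B})=0$ both sides of \eqref{5.12} vanish and there is nothing to prove.

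\textbf{Main obstacle.} The principal difficulty is analytic rather than algebraic: one must legitimately pass from the closure appearing in \eqref{5.10} to the sesquilinear-form pairing $Q_\ABp(\psi,\phi)-z(\psi,\phi)_{L^2}$, which is exactly where the hypothesis $\det(N_{A',B'})\neq0$ enters, guaranteeing $\ker(H_\max-zI_{(a,b)})\subset\dom(Q_\ABp)$ and thereby the boundedness of $\cC$ and $\cD$. A secondary obstacle is that the intermediate matrix unavoidably carries the anti-holomorphic value $\La_\AB^D(\bz)^*$; showing that this $\bz$-dependence cancels, so that the determinant is holomorphic in $z$ and equal to the stated expression, is precisely what the symmetry of the Dirichlet-to-Neumann map and the Hermiticity of $N_{A,B}^{-1}D_{A,B}$ accomplish.
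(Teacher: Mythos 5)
Your proposal is correct and, in its two substantive steps, coincides with the paper's own proof: the reduction of the symmetrized Fredholm determinant to a $2\times2$ determinant via cyclicity, and the evaluation of the resulting $2\times2$ matrix as $-\La_\AB^D(\bz)^*N_{A',B'}^{-1}\La_\AB^\ABp(z)$ through the sesquilinear form $Q_\ABp$ (integration by parts against $\tau\phi=z\phi$), are exactly the computations \eqref{5.15}--\eqref{5.18}. The only divergence is the endgame. The paper eliminates the residual $\bz$-dependence by the purely algebraic identity $I_2-(N_{A',B'}^{-1}S_{A',B',A,B})^*\La_\AB^D(z)=N_{A,B}N_{A',B'}^{-1}\La_\AB^\ABp(z)$ (a consequence of \eqref{4.18} and \eqref{4.18a}), takes determinants, and then substitutes $z\to\bz$ and conjugates the whole identity, implicitly using the reflection symmetry of the Fredholm determinant. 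You instead stay at the fixed spectral parameter $z$, pass through the fractional-linear representation \eqref{4.104a}, and close the argument with the matrix symmetry $\La_D^N(\bz)^*=\La_D^N(z)$ (from \eqref{4.22}, \eqref{4.26}, \eqref{3.15a}) together with the Hermiticity of $N_{A,B}^{-1}D_{A,B}$ forced by \eqref{A.31}; I checked these claims and they hold, as does your separate treatment of the degenerate case $\det(N_{A,B})=0$ (where both sides vanish because $\det\big(D_{A,B}^*+\La_D^N(z)N_{A,B}^*\big)=\overline{\det\big(\La_D^{A,B}(\bz)\big)}\neq0$). The paper's ending is marginally shorter and avoids case distinctions; yours makes the cancellation of the anti-holomorphic value $\La_\AB^D(\bz)^*$ explicit rather than delegating it to a global conjugation, which is a legitimate and arguably more transparent way to finish.
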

\begin{proof}
We start by introducing simplifying abbreviations,
\begin{align}
K_{A,B}(z) &= \big[\ga_\AB^\perp(H_\AB - \bz I_{(a,b)})^{-1}\big]^*, \lb{5.13}
\\
L_{A',B',A,B}(z) &= (H_\ABp - z I_{(a,b)})^{1/2} K_{A,B}(z). \lb{5.14}
\end{align}
Then substitution of \eqref{5.10} into \eqref{5.12} and employing the cyclicity property of the
determinant yields
\begin{align}
& {\det}_{L^2((a,b); rdx)} \Big(\ol{(H_\ABp - z I_{(a,b)})^{1/2}
(H_\AB - z I_{(a,b)})^{-1} (H_\ABp - z I_{(a,b)})^{1/2}}\Big) \no
\\
& \quad = {\det}_{L^2((a,b); rdx)}
\Big(I_{(a,b)} + L_{A',B',A,B}(z) \La_\AB^\ABp(z)^{-1}S_{A',B',A,B} L_{A',B',A,B}(\bz)^*\Big)
\no
\\
& \quad = {\det}_{\bbC^2}
\Big(I_2 + L_{A',B',A,B}(\bz)^*L_{A',B',A,B}(z)
\La_\AB^\ABp(z)^{-1}S_{A',B',A,B} \Big). \lb{5.15}
\end{align}
One notes that $L_{A',B',A,B}(z)$ maps $\bbC^2$ into $L^2((a,b); rdx)$ and hence
the product $L_{A',B',A,B}(\bz)^*L_{A',B',A,B}(z)$ is a linear map on $\bbC^2$.

Next, we turn to the computation of the $2\times2$ matrix representation for the map
$L_{A',B',A,B}(\bz)^*L_{A',B',A,B}(z)$ using \eqref{5.1}--\eqref{5.3},
\begin{align}
&\big(v_1, L_{A',B',A,B}(\bz)^*L_{A',B',A,B}(z)v_2\big)_{\bbC^2}   \no \\
& \quad = \big(L_{A',B',A,B}(\bz) v_1, L_{A',B',A,B}(z)v_2\big)_{L^2((a,b); rdx)} \no
\\
&\quad = Q_\ABp(K_{A,B}(\bz)v_1,K_{A,B}(z)v_2) \no
\\
&\quad = -\big(\ga_D K_{A,B}(\bz)v_1,N_{A',B'}^{-1}D_{A',B'}\ga_D K_{A,B}(z)v_2\big)_{\bbC^2}
\no \\
& \qquad -\big(\ga_D K_{A,B}(\bz)v_1, \ga_N K_{A,B}(z)v_2\big)_{\bbC^2} \no
\\
&\quad = -\big(v_1,[\ga_D K_{A,B}(\bz)]^* N_{A',B'}^{-1}\ga_\ABp K_{A,B}(z)v_2\big)_{\bbC^2}.
\lb{5.16}
\end{align}
Since, by \eqref{4.47} and \eqref{5.13}, $\ga_D K_{A,B}(\bz) = \La_\AB^D(\bz)$ and
$\ga_\ABp K_{A,B}(z) = \La_\AB^\ABp(z)$, it follows from \eqref{5.16} that
\begin{align}
L_{A',B',A,B}(\bz)^*L_{A',B',A,B}(z)
= - \La_\AB^D(\bz)^* N_{A',B'}^{-1}\La_\AB^\ABp(z),
\end{align}
and hence
\begin{align}
& I_2 + L_{A',B',A,B}(\bz)^*L_{A',B',A,B}(z) \La_\AB^\ABp(z)^{-1}S_{A',B',A,B} \no \\
&\quad = I_2 - \La_\AB^D(\bz)^* N_{A',B'}^{-1}S_{A',B',A,B}
\\
& \quad = \big[I_2 - (N_{A',B'}^{-1}S_{A',B',A,B})^*\La_\AB^D(\bz)\big]^*. \lb{5.18}
\end{align}
It follows from \eqref{A.31} and \eqref{4.8j} that
\begin{align}
N_{A',B'}^{-1}S_{A',B',A,B} &= D^*_{A,B} - N_{A',B'}^{-1}D_{A',B'}N_{A',B'}^*  \no \\
&= D^*_{A,B} - D_{A',B'}^* \big(N_{A',B'}^{-1}\big)^* N^*_{A,B},
\end{align}
and hence, by \eqref{4.18} and \eqref{4.18a},
\begin{align}
\begin{split}
& I_2 - \big(N_{A',B'}^{-1}S_{A',B',A,B}\big)^* \La_\AB^D(z)   \no \\
& \quad = \La_\AB^\AB - D_{A,B}\La_\AB^D(z) 
+ N_{A,B} N_{A',B'}^{-1}D_{A',B'}\La_\AB^D(z)
\\
& \quad = N_{A,B} N_{A',B'}^{-1}[N_{A',B'}\La_\AB^N(z)+D_{A',B'}\La_\AB^D(z)]
\\
& \quad = N_{A,B} N_{A',B'}^{-1}\La_\AB^\ABp(z). \lb{5.20}
\end{split}
\end{align}
Substituting \eqref{5.18} and \eqref{5.20} into \eqref{5.15} then yields
\begin{align}
& {\det}_{L^2((a,b); rdx)} \Big(\ol{(H_\ABp - z I_{(a,b)})^{1/2}
(H_\AB - z I_{(a,b)})^{-1} (H_\ABp - z I_{(a,b)})^{1/2}}\Big) \no
\\
& \quad = {\det}_{\bbC^2}\Big(\big[N_{A,B} N_{A',B'}^{-1}\La_\AB^\ABp(\bz)\big]^*\Big).
\end{align}
Changing $z$ to $\bz$ and taking complex conjugation on both sides then
implies \eqref{5.12}.
\end{proof}

\begin{remark} \lb{r5.1a}
It was crucial in Theorem \ref{t5.1} to use the symmetrized (Fredholm)
perturbation determinant,
\begin{equation}
{\det}_{L^2((a,b); rdx)}\Big(\ol{(H_\ABp - z I_{(a,b)})^{1/2}
(H_\AB - z I_{(a,b)})^{-1} (H_\ABp - z I_{(a,b)})^{1/2}}\Big),
\end{equation}
as in all nontrivial circumstances the ``standard'' perturbation determinant,
\begin{equation}
{\det}_{L^2((a,b); rdx)} \big((H_\ABp - z I_{(a,b)})(H_\AB - z I_{(a,b)})^{-1}\big),
\end{equation}
does not exist since $(H_\AB - z I_{(a,b)})^{-1}$ will not map
$L^2((a,b); rdx)$ into the set $\dom(H_\ABp)$ (it maps into
$\dom(H_\AB)$). On the other hand, the quadratic form domains
depicted in \eqref{5.1}--\eqref{5.3} guarantee that
\begin{equation}
\ol{(H_\ABp - z I_{(a,b)})^{1/2}
(H_\AB - z I_{(a,b)})^{-1} (H_\ABp - z I_{(a,b)})^{1/2}} \in
\cB\big(L^2((a,b); rdx)\big), 
\end{equation}
and a detailed analysis reveals (cf.\ \cite[Sect.\ 4]{GZ11}) that the latter is,
in fact, at most a rank-two
perturbation of the identity $I_{(a,b)}$ in $L^2((a,b); rdx)$. For a
discussion of symmetrized perturbation determinants in an abstract setting, including the case of non-self-adjoint operators, we refer to the detailed
treatment in \cite{GZ11}.
\end{remark}

Next, we derive the trace formula for the resolvent difference of
$H_\AB$ and $H_\ABp$ in terms of the spectral shift function
$\xi(\,\cdot\,; H_\ABp, H_\AB)$ and establish the connection between
$\La_\AB^\ABp(\cdot)$ and $\xi(\,\cdot\,; H_\ABp, H_\AB)$.

To prepare the ground for the basic trace formula we now state the following
fact:

\begin{lemma} \lb{l5.2}
Assume that $\AB\in\bbC^{2\times2}$, where $\ABp\in\bbC^{2\times2}$ satisfy
\eqref{A.7}, and let the self-adjoint extensions $H_\AB$ and $H_\ABp$ be defined as in \eqref{4.8b}.
Then, with $\La_\AB^\ABp$ given by \eqref{4.47},
\begin{align}
\f{d}{dz} \La_\AB^\ABp(z) &= \ga_\ABp (H_\AB - z I_{(a,b)})^{-1} \big[\ga_\AB^\perp
(H_\AB - \bz I_{(a,b)})^{-1}\big]^*, \quad  z\in\rho(H_\AB).    \lb{5.22}
\end{align}
\end{lemma}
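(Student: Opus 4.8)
The plan is to differentiate the representation \eqref{4.47}, namely $\La_\AB^\ABp(z) = \ga_\ABp\big[\ga_\AB^\perp(H_\AB - \bz I_{(a,b)})^{-1}\big]^*$ for $z\in\rho(H_\AB)$. Since the boundary trace map $\ga_\ABp$ is a fixed linear map independent of $z$, it may be pulled outside the $z$-derivative, so the problem reduces to showing that the operator $K(z) := \big[\ga_\AB^\perp(H_\AB - \bz I_{(a,b)})^{-1}\big]^*$ from $\bbC^2$ into $L^2((a,b);rdx)$ satisfies $\frac{d}{dz}K(z) = (H_\AB - z I_{(a,b)})^{-1}K(z)$; applying $\ga_\ABp$ then gives exactly \eqref{5.22}.

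First I would set $T(w) = \ga_\AB^\perp(H_\AB - w I_{(a,b)})^{-1}$ for $w\in\rho(H_\AB)$, viewed as a bounded operator from $L^2((a,b);rdx)$ to $\bbC^2$. This is legitimate because the resolvent maps into $\dom(H_\AB)\subseteq\dom(H_\max)$, on which the trace maps $\ga_D,\ga_N$, and hence $\ga_\AB^\perp = D^\perp_{A,B}\ga_D + N^\perp_{A,B}\ga_N$, are continuous in the graph norm. By the first resolvent identity $T(\cdot)$ is holomorphic with $T'(w) = \ga_\AB^\perp(H_\AB - wI_{(a,b)})^{-2}$. The key observation is that $K(z) = T(\bz)^*$, so I must track how the $z$-derivative interacts with the antiholomorphic substitution $w = \bz$ and with the (antilinear) adjoint operation.

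The central computation is the difference-quotient identity $\frac{T(\bz)^* - T(\bz_0)^*}{z - z_0} = \Big(\frac{T(\bz) - T(\bz_0)}{\bz - \bz_0}\Big)^*$, valid at any $z_0\in\rho(H_\AB)$ because $\overline{z - z_0} = \bz - \bz_0$ and scalar division passes through the adjoint with a conjugation. Letting $z\to z_0$, the right-hand side converges to $T'(\bz_0)^*$, so $\frac{d}{dz}K(z) = T'(\bz)^*$. Finally, writing $T'(\bz) = \ga_\AB^\perp(H_\AB - \bz I_{(a,b)})^{-1}(H_\AB - \bz I_{(a,b)})^{-1}$ and using $(AB)^*=B^*A^*$ together with self-adjointness of $H_\AB$, so that $\big[(H_\AB - \bz I_{(a,b)})^{-1}\big]^* = (H_\AB - z I_{(a,b)})^{-1}$, yields $T'(\bz)^* = (H_\AB - z I_{(a,b)})^{-1}K(z)$, as required.

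I expect the main obstacle to be the justification that differentiation commutes with the unbounded trace map $\ga_\AB^\perp$ and with adjunction, that is, establishing holomorphy of $T(\cdot)$ and the validity of the difference-quotient manipulation in the operator norm. A transparent alternative that sidesteps the adjoint bookkeeping is to argue directly at the level of the boundary value problem \eqref{4.9}, \eqref{4.10}: the function $u(z,\cdot) = K(z)c$ is the unique solution of $(H_\max - z I_{(a,b)})u = 0$ with $\ga_\AB u = c$, and by Lemma \ref{l4.2} it is analytic in $z$ in the $C^1([a,b])$ topology. Differentiating these two relations in $z$ shows that $\dot u := \frac{d}{dz}u(z,\cdot)$ satisfies $(H_\max - z I_{(a,b)})\dot u = u(z,\cdot)$ and $\ga_\AB\dot u = 0$; hence $\dot u\in\dom(H_\AB)$ by \eqref{4.8b}, and $\dot u = (H_\AB - z I_{(a,b)})^{-1}u(z,\cdot) = (H_\AB - z I_{(a,b)})^{-1}K(z)c$, recovering the same identity. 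On this route the obstacle becomes the standard but technical verification that the $z$-derivative may be taken under the differential expression $\tau$ and that continuity of $\ga_\AB$ legitimizes passing to the limit in $\ga_\AB u(z,\cdot) = c$.
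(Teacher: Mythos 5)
Your proposal is correct and follows essentially the same route as the paper: the paper likewise differentiates the representation \eqref{4.47}, using the resolvent equation to get $\frac{d}{dz}\big[\ga_\AB^\perp(H_\AB-\bz I_{(a,b)})^{-1}\big]^*=\big[\ga_\AB^\perp(H_\AB-\bz I_{(a,b)})^{-2}\big]^*$ and then factoring out $(H_\AB-zI_{(a,b)})^{-1}$ on the left via self-adjointness. Your additional care with the conjugate-variable difference quotient and the adjoint, and the alternative argument via the boundary value problem, only flesh out details the paper leaves implicit.
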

\begin{proof}
Employing the resolvent equation for $H_\AB$, one verifies that
\begin{align}
& \f{d}{dz} \ga_\ABp \big[\ga_\AB^\perp
(H_\AB - \bz I_{(a,b)})^{-1}\big]^* = \ga_\ABp
\big[\ga_\AB^\perp (H_\AB - \bz I_{(a,b)})^{-2}\big]^*   \no \\
& \quad = \ga_\ABp (H_\AB - z I_{(a,b)})^{-1}
\big[\ga_\AB^\perp (H_\AB - \bz I_{(a,b)})^{-1}\big]^*.   \lb{5.23}
\end{align}
Together with \eqref{4.47} this proves \eqref{5.22}.
\end{proof}

Combining Theorems \ref{t4.5} and \ref{t5.1} with Lemma \ref{l5.2} then yields the following result:

\begin{theorem} \lb{t5.3}
Assume that $\AB\in\bbC^{2\times2}$, where $\ABp\in\bbC^{2\times2}$ satisfy
\eqref{A.7}, and let the self-adjoint extensions $H_\AB$ and $H_\ABp$ be defined as in \eqref{4.8b}. Then,
\begin{align}
&  \tr_{L^2((a,b); rdx)}\big((H_\ABp -z I_{(a,b)})^{-1}
- (H_\AB -z I_{(a,b)})^{-1}\big)    \no \\
& \quad = - \tr_{\bbC^2}\bigg(\Big[\La_\AB^\ABp (z)\Big]^{-1}
\f{d}{dz} \Big[\La_\AB^\ABp (z)\Big]\bigg)    \no \\
& \quad = - \f{d}{dz} \ln\Big({\det}_{\bbC^2}\Big(\La_\AB^\ABp (z)\Big)\Big), \quad
z \in \bbC\backslash[e_0,\infty).   \lb{5.24}
\end{align}
If, in addition, both $N_{A,B}$ and $N_{A',B'}$, defined as in \eqref{A.30}, are invertible, then
\begin{align}
&  \tr_{L^2((a,b); rdx)}\big((H_\ABp -z I_{(a,b)})^{-1}
- (H_\AB -z I_{(a,b)})^{-1}\big)   \no \\
& \quad = - \f{d}{dz} \ln\Big({\det}_{L^2((a,b); rdx)}\Big(\big\{(H_\ABp - z I_{(a,b)})^{1/2}
(H_\AB - z I_{(a,b)})^{-1}      \lb{5.25} \\
& \hspace*{4.3cm} \times (H_\ABp - z I_{(a,b)})^{1/2}\big\}^{\rm cl}\Big)\Big),
\quad z \in \bbC\backslash[e_0,\infty).    \no
\end{align}
\end{theorem}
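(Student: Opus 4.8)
The plan is to combine three results already established in this section and the previous one: Krein's resolvent formula \eqref{4.149} from Theorem~\ref{t4.5}, the differentiation formula \eqref{5.22} for the boundary data map from Lemma~\ref{l5.2}, and the determinant identity \eqref{5.12} from Theorem~\ref{t5.1}. Fix $z\in\bbC\bs[e_0,\infty)$; by the definition $e_0=\min(e_\AB,e_\ABp)$ and the semiboundedness underlying \eqref{5.4}, \eqref{5.5}, we have $z\in\rho(H_\AB)\cap\rho(H_\ABp)$. I would begin with \eqref{4.149}, which displays the resolvent difference as
\begin{equation*}
(H_\ABp-z I_{(a,b)})^{-1}-(H_\AB-z I_{(a,b)})^{-1}
= - \big[\ga_\AB^\perp(H_\AB-\bz I_{(a,b)})^{-1}\big]^*\,
\La_\AB^\ABp(z)^{-1} S_{A',B',A,B}\,
\big[\ga_\AB^\perp(H_\AB-z I_{(a,b)})^{-1}\big].
\end{equation*}
Because $\ga_\AB^\perp(H_\AB-z I_{(a,b)})^{-1}$ maps $L^2((a,b);rdx)$ into $\bbC^2$ and its adjoint maps $\bbC^2$ back into $L^2((a,b);rdx)$, this operator factors through $\bbC^2$; it is therefore of finite rank (at most two), hence trace class, which justifies the left-hand side of \eqref{5.24}.

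Next I would invoke the cyclicity of the trace. Writing the rank-two operator above as $A^*\,(\La_\AB^\ABp(z)^{-1}S_{A',B',A,B})\,B$ with $B=\ga_\AB^\perp(H_\AB-z I_{(a,b)})^{-1}$ and $A^*=\big[\ga_\AB^\perp(H_\AB-\bz I_{(a,b)})^{-1}\big]^*$, cyclicity gives
\begin{equation*}
\tr_{L^2((a,b); rdx)}\big((H_\ABp-z I_{(a,b)})^{-1}-(H_\AB-z I_{(a,b)})^{-1}\big)
= -\tr_{\bbC^2}\big(\La_\AB^\ABp(z)^{-1}S_{A',B',A,B}\,B A^*\big).
\end{equation*}
The crucial observation is that the product $S_{A',B',A,B}\,B A^*$ is precisely $\f{d}{dz}\La_\AB^\ABp(z)$: by \eqref{4.48} one has $S_{A',B',A,B}\,\ga_\AB^\perp(H_\AB-z I_{(a,b)})^{-1}=\ga_\ABp(H_\AB-z I_{(a,b)})^{-1}$, so the right-hand side of \eqref{5.22} equals $S_{A',B',A,B}\,B A^*$. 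Substituting this identity yields the first equality in \eqref{5.24}. I emphasize that only the combination $S_{A',B',A,B}\,B A^*$ enters, so no invertibility of $S_{A',B',A,B}$ is needed and the formula holds uniformly over the three rank cases of Theorem~\ref{t4.5}.

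The second equality in \eqref{5.24} is then Jacobi's formula: since $\La_\AB^\ABp(\cdot)$ is analytic and invertible on $\rho(H_\AB)\cap\rho(H_\ABp)$ by Lemma~\ref{c4.3}, one has $\f{d}{dz}\ln{\det}_{\bbC^2}(\La_\AB^\ABp(z))=\tr_{\bbC^2}\big(\La_\AB^\ABp(z)^{-1}\tfrac{d}{dz}\La_\AB^\ABp(z)\big)$. Finally, for \eqref{5.25} under the extra hypothesis $\det(N_\AB)\neq0\neq\det(N_\ABp)$, I would take $\f{d}{dz}\ln$ of the determinant identity \eqref{5.12}; the prefactor ${\det}_{\bbC^2}(N_{A,B})/{\det}_{\bbC^2}(N_{A',B'})$ is independent of $z$, so its logarithmic derivative vanishes and $\f{d}{dz}\ln{\det}_{L^2((a,b);rdx)}(\dott)=\f{d}{dz}\ln{\det}_{\bbC^2}(\La_\AB^\ABp(z))$; combining this with \eqref{5.24} gives \eqref{5.25}. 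The only genuinely delicate point is the bookkeeping in the cyclicity step—keeping track of which resolvent carries $z$ versus $\bz$ and where the adjoint sits—so that $S_{A',B',A,B}\,B A^*$ matches the right-hand side of \eqref{5.22} verbatim rather than its adjoint; everything else is routine once Theorems~\ref{t4.5} and \ref{t5.1} and Lemma~\ref{l5.2} are in place.
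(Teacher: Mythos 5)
Your proof is correct and follows essentially the same route as the paper: rewrite Krein's formula \eqref{4.149} using \eqref{4.48} (equivalently \eqref{4.8c} and \eqref{4.8i}) so that $S_{A',B',A,B}\,\ga_\AB^\perp(H_\AB-zI_{(a,b)})^{-1}$ becomes $\ga_\ABp(H_\AB-zI_{(a,b)})^{-1}$, take the trace, apply cyclicity together with \eqref{5.22}, and then obtain \eqref{5.25} from \eqref{5.12} since the prefactor ${\det}_{\bbC^2}(N_{A,B})/{\det}_{\bbC^2}(N_{A',B'})$ is $z$-independent. Your explicit remark that only the combination $S_{A',B',A,B}\,B A^*$ enters, so no invertibility of $S_{A',B',A,B}$ is required, is a correct and worthwhile clarification of a point the paper leaves implicit.
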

\begin{proof}
The second equality in \eqref{5.24} is obvious. The first equality in \eqref{5.24} follows upon rewriting \eqref{4.149}, with the help of \eqref{4.8c} and \eqref{4.8i}, as
\begin{align}
&(H_\ABp - z I_{(a,b)})^{-1} - (H_\AB - z I_{(a,b)})^{-1} = \no
\\
&\quad - \big[\ga_\AB^\perp(H_\AB - \bz I_{(a,b)})^{-1}\big]^* \La_\AB^\ABp(z)^{-1} \big[\ga_\ABp(H_\AB - z I_{(a,b)})^{-1}\big].
\end{align}
taking the trace, using cyclicity of the trace, and applying \eqref{5.22}. Then \eqref{5.25} follows from \eqref{5.12} and \eqref{5.24}.
\end{proof}

In particular, in the non-degenerate case, where $N_{A,B}$ and $N_{A',B'}$ are invertible, the
determinant of $\La_\AB^\ABp (\cdot)$ coincides with the symmetrized perturbation determinant under the logarithm in \eqref{5.25} up to a spectral parameter independent constant (the latter depends on the boundary conditions involved).

Next, we note that the rank-two behavior of the difference of resolvents of
$H_\ABp$ and $H_\AB$ permits one to define the spectral shift function
$\xi(\,\cdot\,; H_\ABp, H_\AB)$ associated with the pair of self-adjoint operators
$(H_\ABp, H_\AB)$ in a standard manner. Moreover, using the typical normalization
in the context of self-adjoint operators bounded from below,
\begin{equation}
\xi(\, \cdot \,; H_\ABp, H_\AB) = 0, \quad
\lambda < e_0 =\inf\big(\sigma(H_\AB) \cup \sigma( H_\ABp)\big),
\lb{5.27}
\end{equation}
Krein's trace formula (see, e.g., \cite[Ch.\ 8]{Ya92}, \cite{Ya07}) reads
\begin{align}
\begin{split}
& \tr_{L^2((a,b); rdx)}\big((H_\ABp -z I_{(a,b)})^{-1}
- (H_\AB -z I_{(a,b)})^{-1}\big)   \\
& \quad = - \int_{[e_0,\infty)}
\f{\xi(\lambda; H_\ABp, H_\AB) \, d\lambda}{(\lambda - z)^2},
\quad z\in\rho(H_\AB)\cap \rho(H_\ABp),    \lb{5.28}
\end{split}
\end{align}
where $\xi(\cdot \, ; H_\ABp, H_\AB)$ satisfies
\begin{equation}
\xi(\cdot \, ; H_\ABp, H_\AB)
\in L^1\big(\bbR; (\lambda^2 + 1)^{-1} d\lambda\big).
\end{equation}
Since the spectra of $H_\AB$ and $H_\ABp$ are purely discrete,
$\xi(\, \cdot \,; H_\ABp, H_\AB)$ is an integer-valued piecewise constant function
on $\bbR$ with jumps precisely at the eigenvalues of $H_\AB$ and $H_\ABp$. In particular,
$\xi(\, \cdot \,; H_\ABp, H_\AB)$ represents the difference of the eigenvalue counting functions
of $H_\ABp$ and $H_\AB$.

Moreover, $\xi(\cdot \, ; H_\ABp, H_\AB)$ permits a representation in terms of nontangential boundary values to the real axis of $\det\big(\La_\AB^\ABp (\cdot)\big)$ (resp., of the symmetrized perturbation determinant \eqref{5.8}), to be described next.

\begin{theorem} \lb{t5.4}
Assume that $A,B\in\bbC^{2\times2}$, where $\ABp\in\bbC^{2\times2}$ satisfy
\eqref{A.7}, and let the self-adjoint extensions $H_\AB$ and $H_\ABp$ be defined as in \eqref{4.8b}. Then,
\begin{align}
\begin{split}
\xi(\lambda; H_\ABp, H_\AB) =
\pi^{-1} \lim_{\varepsilon \downarrow 0} \Im\Big(
\ln\Big(\eta_{A',B',A,B} \,
{\det}_{\bbC^2}\Big(\La_\AB^\ABp (\lambda + i \varepsilon)\Big)\Big)\Big)&    \lb{5.30} \\
\text{ for a.e.\ $\lambda \in\bbR$,}&
\end{split}
\end{align}
where $\eta_{A',B',A,B}=e^{i\te_{A',B',A,B}}$ for some $\te_{A',B',A,B} \in[0,2\pi)$.
\end{theorem}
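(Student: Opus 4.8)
The plan is to combine the two representations of the resolvent trace already established and then extract $\xi(\,\cdot\,;H_\ABp,H_\AB)$ as a nontangential boundary value of an argument function, in the spirit of recovering a spectral shift function from a perturbation determinant. First I would equate the trace formula \eqref{5.24} of Theorem \ref{t5.3} with Krein's trace formula \eqref{5.28}: both compute $\tr_{L^2((a,b);rdx)}\big((H_\ABp-zI_{(a,b)})^{-1}-(H_\AB-zI_{(a,b)})^{-1}\big)$, so after cancelling the common sign one obtains
\begin{equation*}
\f{d}{dz}\ln\big({\det}_{\bbC^2}(\La_\AB^\ABp(z))\big)
= \int_{[e_0,\infty)}\f{\xi(\mu;H_\ABp,H_\AB)\,d\mu}{(\mu-z)^2},
\quad z\in\bbC\bs[e_0,\infty).
\end{equation*}
Here I use that $\bbC\bs[e_0,\infty)\subseteq\rho(H_\AB)\cap\rho(H_\ABp)$, on which $\La_\AB^\ABp(\cdot)$ is analytic and invertible by Lemma \ref{c4.3} and \eqref{4.20}; hence ${\det}_{\bbC^2}(\La_\AB^\ABp(\cdot))$ is analytic and zero-free on the simply connected set $\bbC\bs[e_0,\infty)$ and admits a single-valued analytic logarithm there.

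Next I would integrate this identity from a base point. Fix $\mu_0<e_0$ and integrate along a path in $\bbC\bs[e_0,\infty)$, using $\int_{\mu_0}^z(\mu-w)^{-2}\,dw=(\mu-z)^{-1}-(\mu-\mu_0)^{-1}$ together with Fubini's theorem. The interchange is legitimate because $\xi(\,\cdot\,;H_\ABp,H_\AB)$ is bounded (it is integer-valued, piecewise constant, and equals the difference of the eigenvalue counting functions of two operators whose resolvents differ by an at most rank-two operator, cf.\ the discussion preceding \eqref{5.28}), and $(\mu-w)^{-2}$ is uniformly integrable in $\mu$ as $w$ ranges over the compact path, which stays a positive distance from $[e_0,\infty)$. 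This yields
\begin{equation*}
\ln\big({\det}_{\bbC^2}(\La_\AB^\ABp(z))\big)-\ln\big({\det}_{\bbC^2}(\La_\AB^\ABp(\mu_0))\big)
= \int_{[e_0,\infty)}\xi(\mu;H_\ABp,H_\AB)\Big[\f{1}{\mu-z}-\f{1}{\mu-\mu_0}\Big]\,d\mu.
\end{equation*}
Taking imaginary parts at $z=\lambda+i\veps$ and noting that $(\mu-\mu_0)^{-1}$ is real for $\mu_0<e_0\le\mu$, the subtracted term is invisible to $\Im(\cdot)$, and with $\theta_{A',B',A,B}:=-\Im\big(\ln({\det}_{\bbC^2}(\La_\AB^\ABp(\mu_0)))\big)\bmod 2\pi\in[0,2\pi)$ and $\eta_{A',B',A,B}=e^{i\theta_{A',B',A,B}}$ one arrives at
\begin{equation*}
\Im\Big(\ln\big(\eta_{A',B',A,B}\,{\det}_{\bbC^2}(\La_\AB^\ABp(\lambda+i\veps))\big)\Big)
= \int_{[e_0,\infty)}\f{\veps\,\xi(\mu;H_\ABp,H_\AB)}{(\mu-\lambda)^2+\veps^2}\,d\mu.
\end{equation*}
By construction $\eta_{A',B',A,B}$ is unimodular; its role is exactly to fix the additive constant of the logarithm so that the right-hand side carries no $\lambda$-independent offset, in accordance with the normalization \eqref{5.27}. (That this phase is in fact independent of the choice of $\mu_0<e_0$ follows from \eqref{4.52}, which forces ${\det}_{\bbC^2}(\La_\AB^\ABp(\mu))$ to have constant argument for real $\mu$ below $\sigma(H_\AB)\cup\sigma(H_\ABp)$; for the proof, however, one base point suffices.)

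Finally I would let $\veps\downarrow0$. The right-hand side is $\pi$ times the Poisson integral of the bounded function $\xi(\,\cdot\,;H_\ABp,H_\AB)$, hence by Fatou's theorem on boundary values of Poisson integrals it converges to $\pi\,\xi(\lambda;H_\ABp,H_\AB)$ for a.e.\ $\lambda\in\bbR$; dividing by $\pi$ gives \eqref{5.30}. The hard part will be the integration step: since $\xi(\,\cdot\,;H_\ABp,H_\AB)$ does not decay, the unsubtracted Cauchy transform $\int_{[e_0,\infty)}(\mu-z)^{-1}\xi\,d\mu$ diverges, so one cannot simply antidifferentiate term by term. The resolution is to carry the once-subtracted kernel $(\mu-z)^{-1}-(\mu-\mu_0)^{-1}$ through the integration and exploit that the subtracted piece is real; everything else reduces to tracking the branch of the logarithm and the standard approximate-identity limit.
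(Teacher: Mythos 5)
Your proposal is correct and follows essentially the same route as the paper's proof: equate \eqref{5.24} with Krein's trace formula \eqref{5.28}, integrate the resulting identity against the once-subtracted kernel $(\mu-z)^{-1}-(\mu-\mu_0)^{-1}$ via Fubini (the paper splits $\xi$ into $\xi_\pm$ where you invoke boundedness of $\xi$, which holds here since the resolvent difference has rank at most two), and recover $\xi$ from the a.e.\ boundary values of the resulting Poisson integral, the paper's appeal to the Stieltjes inversion formula being the same device as your appeal to Fatou's theorem. The only genuine divergence is your treatment of the phase $\eta_{A',B',A,B}$: by anchoring it at a single base point $\mu_0<e_0$ you bypass the paper's perturbation argument \eqref{5.33a}--\eqref{5.33b}, which is needed there only to establish constancy of $\arg\big({\det}_{\bbC^2}\big(\La_\AB^\ABp(z)\big)\big)$ on all of $(-\infty,e_0)$ when $S_{A',B',A,B}$ is singular and \eqref{4.52} gives no information; since the theorem asserts only the existence of some unimodular $\eta_{A',B',A,B}$, your lighter normalization suffices (and a posteriori coincides with the paper's choice, because your final identity forces $\eta_{A',B',A,B}\,{\det}_{\bbC^2}\big(\La_\AB^\ABp(z)\big)>0$ for all real $z<e_0$).
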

\begin{proof}
We recall  the definition of
$e_0 =\inf\big(\sigma(H_\AB) \cup \sigma( H_\ABp)\big)$ in \eqref{5.27}.

Combining \eqref{5.24} and \eqref{5.28} one obtains
\begin{align}
\begin{split}
\f{d}{dz} \ln\Big(\eta_{A',B',A,B} \, {\det}_{\bbC^2}\Big(\La_\AB^\ABp (z)\Big)\Big)
= \int_{[e_0,\infty)}
\f{\xi(\lambda; H_\ABp, H_\AB) \, d\lambda}{(\lambda - z)^2},&  \\
z\in\rho(H_\AB)\cap \rho(H_\ABp),&      \lb{5.31}
\end{split}
\end{align}
where $\eta_{A',B',A,B}$ is some $z$-independent constant.

Assuming temporarily that $S_{A',B',A,B}$ is invertible, we note that by \eqref{4.52},
\begin{align}
{\det}_{\bbC^2}\Big(\La_\AB^\ABp (z)\Big) {\det}_{\bbC^2}\big(S^*_{A',B',A,B}) \in\bbR,
\quad z\in\bbR\bs\si(H_\AB),
\end{align}
and since
\begin{align}
{\det}_{\bbC^2} \Big(\La_\AB^\ABp (z)\Big)\neq0, \quad z<e_0,
\end{align}
it follows that there is a unique $\eta_{A',B',A,B} = e^{i\te_{A',B',A,B}}$,
$\te_{A',B',A,B}\in[0,2\pi)$ such that
\begin{equation}
\eta_{A',B',A,B} \, {\det}_{\bbC^2}\Big(\La_\AB^\ABp (z)\Big) > 0, \quad z<e_0.
\lb{5.33}
\end{equation}
In the case $S_{A',B',A,B}$ is not invertible, one considers a slightly perturbed boundary trace $\ga_{\ABp;\de} = \ga_\ABp + \de \, T_{A',B',A,B} \, \ga_\AB^\perp$. Then the corresponding perturbed boundary data map converges to the unperturbed one
$\La_\AB^{\ABp;\de}(z)\to\La_\AB^\ABp(z)$ as $\de\to0$ and
\begin{align}
{\det}_{\bbC^2} \Big(\La_\AB^{\ABp;\de}(z)\Big)
{\det}_{\bbC^2}(S^*_{A',B',A,B} + \de \, T^*_{A',B',A,B})
\in \bbR, \quad z\in\bbR\bs\si(H_\AB), \; \de\in\bbR. \lb{5.33a}
\end{align}
As discussed around \eqref{4.8p},
${\det}_{\bbC^2}(S^*_{A',B',A,B} + \de \, T^*_{A',B',A,B})\neq0$ for all sufficiently
small $\de\neq0$, hence utilizing the identity
\begin{align}
\begin{split}
& {\det}_{\bbC^2}(S^*_{A',B',A,B} + \de \, T^*_{A',B',A,B})
= \de \, {\det}_{\bbC^2}(S^*_{A',B',A,B,1} \ \ T^*_{A',B',A,B,2})    \\
& \quad +
\de \, {\det}_{\bbC^2}(T^*_{A',B',A,B,1} \ \ S^*_{A',B',A,B,2})
+ \de^2 {\det}_{\bbC^2}(T^*_{A',B',A,B}), \lb{5.33b}
\end{split}
\end{align}
where the notation $Z_j$ is used to denote the $j$-th column of a matrix $Z$, substituting \eqref{5.33b} into \eqref{5.33a}, dividing by $\de$, taking $\de\to0$, and invoking the continuity of $\La_\AB^\ABp$ with respect to the parameter matrices $\ABp$ yields either
\begin{align}
\begin{split}
{\det}_{\bbC^2}\Big(\La_\AB^\ABp (z)\Big)
\big[& {\det}_{\bbC^2}(S^*_{A',B',A,B,1} \ \ T^*_{A',B',A,B,2})    \\
& + {\det}_{\bbC^2}(T^*_{A',B',A,B,1} \ \ S^*_{A',B',A,B,2})\big] \in \bbR\bs\{0\},
\quad z<e_0,
\end{split}
\end{align}
or
\begin{align}
{\det}_{\bbC^2} \Big(\La_\AB^\ABp (z)\Big)
{\det}_{\bbC^2} (T^*_{A',B',A,B}) \in \bbR\bs\{0\},
\quad z<e_0.
\end{align}
Thus, \eqref{5.33} holds in the case of a noninvertible matrix
$S_{A',B',A,B}$ as well.

Next, integrating \eqref{5.31} with respect to the $z$-variable along the real axis from
$z_0$ to $z$, assuming $z<z_0<e_0$, one obtains
\begin{align}
& \ln\Big(\eta_{A',B',A,B} \, {\det}_{\bbC^2}\Big(\La_\AB^\ABp (z)\Big)\Big) -
\ln\Big(\eta_{A',B',A,B} \, {\det}_{\bbC^2} \Big(\La_\AB^\ABp (z_0)\Big)\Big)   \no \\
& \quad = \int_{z_0}^z d\zeta
\int_{[e_0,\infty)} \f{\xi(\lambda; H_\ABp, H_\AB) \, d\lambda}
{(\lambda - \zeta)^2}  \no \\
& \quad = \int_{z_0}^z d\zeta
\int_{[e_0,\infty)} \f{[\xi_+ (\lambda; H_\ABp, H_\AB)
- \xi_- (\lambda; H_\ABp, H_\AB)] \, d\lambda}
{(\lambda - \zeta)^2}   \no \\
& \quad = \int_{[e_0,\infty)} [\xi_+ (\lambda; H_\ABp, H_\AB)
- \xi_- (\lambda; H_\ABp, H_\AB)] \, d\lambda
\int_{z_0}^z \f{d\zeta}{(\lambda - \zeta)^2}   \no \\
& \quad = \int_{[e_0,\infty)} \xi(\lambda; H_\ABp, H_\AB) \, d\lambda
\bigg(\f{1}{\lambda - z} - \f{1}{\lambda - z_0}\bigg), \quad z<z_0<e_0.   \lb{5.34}
\end{align}
Here we split $\xi$ into its positive and negative parts, $\xi_{\pm} = [|\xi| \pm \xi]/2$,
and applied the Fubini--Tonelli theorem to interchange the integrations with respect
to $\lambda$ and $\zeta$. Moreover, we chose the branch of $\ln(\cdot)$ such
that $\ln(x) \in \bbR$ for $x>0$, compatible with the normalization of
$\xi(\, \cdot \,; H_\ABp, H_\AB)$ in \eqref{5.27}.

An analytic continuation of the first and last line of \eqref{5.34} with respect to $z$ then
yields
\begin{align}
& \ln\Big(\eta_{A',B',A,B} \, {\det}_{\bbC^2} \Big(\La_\AB^\ABp (z)\Big)\Big) -
\ln\Big(\eta_{A',B',A,B} \, {\det}_{\bbC^2} \Big(\La_\AB^\ABp (z_0)\Big)\Big)   \no \\
& \quad = \int_{[e_0,\infty)} \xi(\lambda; H_\ABp, H_\AB) \, d\lambda
\bigg(\f{1}{\lambda - z} - \f{1}{\lambda - z_0}\bigg),
\quad z \in \bbC \backslash [e_0,\infty).
\end{align}

Since by \eqref{5.33},
\begin{equation}
\ln\Big(\eta_{A',B',A,B} \, {\det}_{\bbC^2} \Big(\La_\AB^\ABp (z_0)\Big)\Big) \in \bbR,
\quad z_0 < e_0,
\end{equation}
the Stieltjes inversion formula separately applied to the absolutely continuous
measures $\xi_{\pm} (\lambda; H_\ABp, H_\AB) \, d\lambda$
(cf., e.g., \cite[p.\ 328]{AD56}, \cite[App.\ B]{We80}), then yields \eqref{5.30}.
\end{proof}

\section{Connecting Von Neumann's Parametrization of All Self-Adjoint Extensions 
of $H_{\min}$ and the Boundary Data Map $\La_\ABp^\AB(\cdot)$}
\lb{s7}

In this section, we turn to the precise connection between the canonical von Neumann parametrization 
of all self-adjoint extensions of $H_{\min}$ in terms of unitary operators mapping between the associated deficiency subspaces and the boundary data map $\La_\ABp^\AB(\cdot)$. 

According to von Neumann's theory \cite{Ne30}, the self-adjoint extensions of a densely defined closed symmetric operator $T_0:\dom(T_0)\rightarrow \cH$, $\overline{\dom(T_0)}=\cH$, with equal deficiency indices $n_{\pm}$ are in one-to-one correspondence with the set of linear isometric isomorphisms (i.e., unitary maps) from $\cN_+$ to $\cN_-$, where
\begin{equation}\lb{A.52}
\cN_{\pm}=\ker(T_0^*\mp iI_{\cH}),  \quad  n_{\pm} = \dim(\cN_{\pm}).
\end{equation}
We summarize some of the basic facts of the theory in the following theorem.

\begin{theorem}\lb{tA.11}
Let $T_0:\dom(T_0)\rightarrow \cH$, $\overline{\dom(T_0)}=\cH$, denote a symmetric operator with equal deficiency indices $n_+ = n_-$ and $\cN_{\pm}$ as defined in \eqref{A.52}.  Then the following items $(i)$--$(iii)$ hold.\\
$(i)$  The domain of $T_0^*$ is given by
\begin{equation}\lb{A.53}
\dom(T_0^*)=\dom(T_0)\dotplus \cN_+\dotplus \cN_-,
\end{equation}
where $\dotplus$ indicates the direct $($but not necessarily orthogonal\,$)$ sum of subspaces.\\
$(ii)$ For a linear isometric isomorphism $\cU: \cN_+\rightarrow \cN_-$, define the linear operator $T_{\cU}:\dom(T_{\cU})\rightarrow \cH$ by
\begin{align}
T_{\cU}=T_0^*|_{\dom(T_{\cU})}, \quad \dom(T_{\cU})=\dom(T_0)\dotplus \cN_+ \dotplus \cU\cN_+.\lb{A.54}
\end{align}
The mapping $\cU\mapsto T_{\cU}$ is a bijection from the set of linear isometric isomorphisms $\cU:\cN_+\rightarrow \cN_-$ and the set of self-adjoint extensions of $T_0$.\\
$(iii)$  If $T$ is a self-adjoint extension of $T_0$ and
\begin{equation}\lb{A.55}
\cC_T=(T+iI_{\cH})(T-iI_{\cH})^{-1}
\end{equation}
denotes the unitary Cayley transform of $T$, then $T=T_{\cU}$ with
\begin{equation}\lb{A.56}
\cU=-\cC_T^{-1}|_{\cN_+}.
\end{equation}
\end{theorem}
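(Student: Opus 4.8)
The plan is to reduce everything to the geometry of the deficiency subspaces $\cN_{\pm}$ equipped with the graph inner product on $\dom(T_0^*)$, and to encode symmetry and self-adjointness through the boundary sesquilinear form of $T_0^*$. For item $(i)$ I would equip $\dom(T_0^*)$ with the graph inner product $(u,v)_* = (u,v)_{\cH} + (T_0^*u, T_0^*v)_{\cH}$, under which it is a Hilbert space and $\dom(T_0)$ is closed (since $T_0 = T_0^{**}$ is closed). The first step is to compute the graph-orthogonal complement of $\dom(T_0)$: a vector $u$ is $(\cdot,\cdot)_*$-orthogonal to $\dom(T_0)$ precisely when $(T_0^*)^2 u = -u$, i.e.\ $u \in \ker\big((T_0^*)^2 + I_{\cH}\big)$. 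The second step is the elementary identity $\ker\big((T_0^*)^2 + I_{\cH}\big) = \cN_+ \dotplus \cN_-$, realized by the explicit splitting $u = \tfrac{1}{2}(u - iT_0^*u) + \tfrac{1}{2}(u + iT_0^*u)$ into eigenvectors of $T_0^*$ for the eigenvalues $\pm i$; together with mutual $(\cdot,\cdot)_*$-orthogonality of $\dom(T_0)$, $\cN_+$, and $\cN_-$, this yields the direct decomposition \eqref{A.53}.

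The technical engine for items $(ii)$ and $(iii)$ is the boundary form $\langle u,v\rangle := (T_0^*u,v)_{\cH} - (u,T_0^*v)_{\cH}$. I would first record that $\langle\,\cdot\,,\cdot\,\rangle$ annihilates $\dom(T_0)$ in either slot (this is just symmetry of $T_0$), so by $(i)$ it depends only on the $\cN_{\pm}$-components; using $T_0^*u_{\pm} = \pm i u_{\pm}$ one finds $\langle u,v\rangle = -2i\big[(u_+,v_+)_{\cH} - (u_-,v_-)_{\cH}\big]$ for $u = f+u_++u_-$, $v = g+v_++v_-$. For item $(ii)$, a restriction $T_0 \subseteq S \subseteq T_0^*$ is symmetric iff $\langle\,\cdot\,,\cdot\,\rangle$ vanishes on $\dom(S)$ and self-adjoint iff $\dom(S)$ is maximal with this property. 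For $T_{\cU}$ with domain $\dom(T_0)\dotplus\{u_+ + \cU u_+ \,|\, u_+\in\cN_+\}$, the isometry property $(\cU u_+,\cU v_+)_{\cH} = (u_+,v_+)_{\cH}$ makes the boundary form vanish, giving $T_{\cU} \subseteq T_{\cU}^*$; conversely, since $T_0 \subseteq T_{\cU}$ forces $T_{\cU}^* \subseteq T_0^*$, any $w = g + w_+ + w_-$ in $\dom(T_{\cU}^*)$ must satisfy $(u_+,w_+)_{\cH} = (\cU u_+, w_-)_{\cH}$ for all $u_+\in\cN_+$, and because $\cU$ maps \emph{onto} all of $\cN_-$ this forces $w_- = \cU w_+$, i.e.\ $w\in\dom(T_{\cU})$. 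Injectivity of $\cU\mapsto T_{\cU}$ follows from uniqueness of the decomposition in $(i)$, while surjectivity is supplied by item $(iii)$.

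For item $(iii)$ the first step is to verify that $\cC_T$ extends the isometric Cayley transform $V_0 = (T_0+iI_{\cH})(T_0-iI_{\cH})^{-1}$ of $T_0$: for $f\in\dom(T_0)$ one has $\cC_T(T-iI_{\cH})f = (T+iI_{\cH})f$, so $\cC_T$ agrees with $V_0$ on $\ran(T_0-iI_{\cH}) = \cN_-^{\perp}$ and maps it onto $\ran(T_0+iI_{\cH}) = \cN_+^{\perp}$. Since $\cC_T$ is unitary it therefore maps $\cN_-$ isometrically onto $\cN_+$, whence $\cU := -\cC_T^{-1}|_{\cN_+}$ is an isometric isomorphism of $\cN_+$ onto $\cN_-$. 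The decisive computation is then: for $u_+\in\cN_+$ set $\eta = (T+iI_{\cH})^{-1}u_+ \in \dom(T)$; then $u_+ = (T+iI_{\cH})\eta$ and $\cU u_+ = -(T-iI_{\cH})\eta$, so $u_+ + \cU u_+ = 2i\eta \in \dom(T)$. This shows $\dom(T_{\cU})\subseteq\dom(T)$, i.e.\ $T_{\cU}\subseteq T$; as both operators are self-adjoint (the former by $(ii)$), equality $T = T_{\cU}$ follows, which simultaneously closes the surjectivity gap in $(ii)$.

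The step I expect to require the most care is proving self-adjointness, rather than mere symmetry, of $T_{\cU}$ in $(ii)$: here it is essential that $\cU$ is surjective onto $\cN_-$, since isometry alone yields only a symmetric (possibly proper) extension, so the argument that $\dom(T_{\cU}^*)\subseteq\dom(T_{\cU})$ must genuinely exploit maximality. A secondary subtlety is fixing the sign convention in $(iii)$ and confirming that $\cC_T$ restricts to a \emph{unitary} map $\cN_-\to\cN_+$ via its identification with the Cayley transform of $T_0$, which is what makes $-\cC_T^{-1}|_{\cN_+}$ an isometric isomorphism in the first place.
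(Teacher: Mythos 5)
Your argument is correct, but there is nothing in the paper to compare it against: the paper does not prove Theorem \ref{tA.11}. Immediately after the statement it refers the reader to \cite[\S 80]{AG81}, \cite[\S 14.4, \S 14.8]{Na68}, and \cite[\S 8.2]{We80} for items $(i)$ and $(ii)$, and to \cite{GMT98} for item $(iii)$. What you have written is precisely the classical von Neumann argument contained in those references: the graph inner product $(u,v)_* = (u,v)_{\cH} + (T_0^*u,T_0^*v)_{\cH}$ and the identity $\ker\big((T_0^*)^2+I_{\cH}\big)=\cN_+\dotplus\cN_-$ for item $(i)$; the boundary form $\langle u,v\rangle=(T_0^*u,v)_{\cH}-(u,T_0^*v)_{\cH}$ with its diagonalization $\langle u,v\rangle=-2i\big[(u_+,v_+)_{\cH}-(u_-,v_-)_{\cH}\big]$ for item $(ii)$; and the identification of $\cC_T$ as a unitary extension of the isometric Cayley transform of $T_0$, so that $\cC_T$ carries $\ran(T_0-iI_{\cH})=\cN_-^{\perp}$ onto $\ran(T_0+iI_{\cH})=\cN_+^{\perp}$ and hence $\cN_-$ onto $\cN_+$, for item $(iii)$. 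Your emphasis on surjectivity of $\cU$ as the ingredient upgrading symmetry of $T_{\cU}$ to self-adjointness is exactly right, and the computation $u_++\cU u_+=2i(T+iI_{\cH})^{-1}u_+\in\dom(T)$ combined with the fact that a self-adjoint operator admits no proper self-adjoint extension cleanly settles item $(iii)$ and the surjectivity of $\cU\mapsto T_{\cU}$ simultaneously. The one caveat worth recording is that both the theorem as stated and your proof implicitly require $T_0$ to be closed: you invoke $T_0=T_0^{**}$ to get closedness of $\dom(T_0)$ in the graph norm, and you need $\ran(T_0\mp iI_{\cH})$ to be closed (not merely dense in $\cN_{\mp}^{\perp}$) for the Cayley-transform step. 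For a non-closed symmetric $T_0$ the decomposition \eqref{A.53} must be written with $\dom\big(\ol{T_0}\big)$ in place of $\dom(T_0)$; in the paper's application $T_0=H_{\min}$ is closed, so nothing is lost.
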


Items $(i)$ and $(ii)$ in Theorem \ref{tA.11} are standard results in the theory of self-adjoint
extensions of symmetric operators and may be found, for example, in \cite[\S80]{AG81},
\cite[\S14.4 \& \S14.8]{Na68}, and \cite[\S8.2]{We80}.  Item $(iii)$ in Theorem \ref{tA.11} is taken
from \cite{GMT98}.

Our next result establishes a connection between von Neumann's isometric 
isomorphism $\cU$ in $T_{\cU}$, the boundary trace of bases in $\ker(H_{\max} \mp i I_{(a,b)})$, 
and the boundary data map $\La_\ABp^\AB(\cdot)$. To the best of our knowledge, this  
appears to be new.

\begin{theorem}\lb{tA.12a}
Suppose that $\cB_{\pm}=\{u_\pm,v_\pm\}$ denote ordered bases for
\begin{equation}\lb{A.57a}
\cN_{\pm}=\ker(H_{\max}\mp iI_{(a,b)}).
\end{equation}
For $\AB\in\bbC^{2\times2}$ satisfying \eqref{A.7}, assume that $\cU_\AB:\cN_+\rightarrow \cN_-$ denotes the unique linear isometric isomorphism with
\begin{equation}\lb{A.58a}
\dom(H_\AB)=\dom(H_{\min})\dotplus \cN_+\dotplus \cU_\AB\cN_+,
\end{equation}
guaranteed to exist by Theorem \ref{tA.11}\,$(ii)$. Suppose that  $\big[\cU_\AB\big]$ denotes the matrix representation of $\cU_\AB$ with respect to the bases $\cB_{\pm}$.  Then
\begin{equation}\lb{A.60a}
\big[\cU_\AB\big]= - \begin{pmatrix} \ga_\AB(u_-) & \ga_\AB(v_-)\end{pmatrix}^{-1} 
\begin{pmatrix} \ga_\AB(u_+) & \ga_\AB(v_+) \end{pmatrix},
\end{equation}
where the boundary trace map $\ga_\AB$ is given by \eqref{A.26}.

In particular, if $\ABp\in\bbC^{2\times2}$ is another pair for which \eqref{A.7} holds and the bases $\cB_\pm=\{u_\pm,v_\pm\}$ consist of functions satisfying the boundary conditions
\begin{align} \lb{A.61a}
\ga_\ABp(u_\pm) = \begin{pmatrix} 1\\0\end{pmatrix}, \quad
\ga_\ABp(v_\pm) = \begin{pmatrix} 0\\1\end{pmatrix},
\end{align}
then \eqref{A.60a} becomes
\begin{equation}\lb{A.62a}
\big[\cU_\AB\big]= -\La_\ABp^\AB(-i)^{-1}\La_\ABp^\AB(i),
\end{equation}
where the boundary data map $\La_\ABp^\AB(\cdot)$ is given by \eqref{4.17}.
\end{theorem}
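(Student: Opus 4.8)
The plan is to extract the matrix $[\cU_\AB]$ by comparing von Neumann's domain description \eqref{A.58a} with the boundary-condition characterization \eqref{4.8b} of $\dom(H_\AB)$, and then to translate the resulting matrix identity into boundary data maps via the basis-independent formula \eqref{4.17a} of Theorem \ref{t4.2.1}.

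First I would observe that, by \eqref{A.58a}, every function of the form $w_+ + \cU_\AB w_+$ with $w_+\in\cN_+$ lies in $\dom(H_\AB)$ (simply take the $\dom(H_{\min})$-component to be zero). Since $\dom(H_\AB)=\{g\in\dom(H_{\max}) \,|\, \ga_\AB g=0\}$ by \eqref{4.8b}, this forces $\ga_\AB(w_+) + \ga_\AB(\cU_\AB w_+) = 0$ for all $w_+\in\cN_+$. Here I invoke linearity of $\ga_\AB$ together with the fact that $\ga_\AB$ annihilates $\dom(H_{\min})$, since functions there satisfy $g(a)=g^{[1]}(a)=g(b)=g^{[1]}(b)=0$. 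Writing $w_+ = c_1 u_+ + c_2 v_+$ and recalling that $\cU_\AB w_+$ has coordinate vector $[\cU_\AB](c_1,c_2)^\top$ relative to $\cB_-$, linearity of $\ga_\AB$ turns the displayed condition into the $2\times2$ matrix identity
\[
\begin{pmatrix} \ga_\AB(u_+) & \ga_\AB(v_+)\end{pmatrix}
+ \begin{pmatrix} \ga_\AB(u_-) & \ga_\AB(v_-)\end{pmatrix}[\cU_\AB] = 0,
\]
valid because $(c_1,c_2)^\top\in\bbC^2$ is arbitrary.

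The key structural input is that $\begin{pmatrix} \ga_\AB(u_-) & \ga_\AB(v_-)\end{pmatrix}$ is invertible: since $u_-,v_-$ span $\cN_- = \ker(H_{\max}-(-i)I_{(a,b)})$ and $-i\in\rho(H_\AB)$ by self-adjointness, Lemma \ref{l4.2} supplies exactly this invertibility. Solving the displayed identity for $[\cU_\AB]$ then yields \eqref{A.60a}. I expect the only genuinely delicate point to be the bookkeeping around the matrix-representation convention for $\cU_\AB$ relative to $\cB_\pm$ and the identification of $u_\pm,v_\pm$ with solutions at $z=\pm i$; once these are pinned down the argument is pure linear algebra.

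For \eqref{A.62a}, I would specialize to bases satisfying \eqref{A.61a}, so that $\begin{pmatrix} \ga_\ABp(u_\pm) & \ga_\ABp(v_\pm)\end{pmatrix} = I_2$. Applying \eqref{4.17a} to $\La_\ABp^\AB$ (subscript $\ABp$, superscript $\AB$) with $z=i$ and the basis $\{u_+,v_+\}$ of solutions at $z=i$, and with $z=-i$ and $\{u_-,v_-\}$ at $z=-i$ — both admissible since $\pm i\in\rho(H_\ABp)$ — collapses the inverted factor to $I_2$ and gives $\La_\ABp^\AB(i)=\begin{pmatrix} \ga_\AB(u_+) & \ga_\AB(v_+)\end{pmatrix}$ and $\La_\ABp^\AB(-i)=\begin{pmatrix} \ga_\AB(u_-) & \ga_\AB(v_-)\end{pmatrix}$. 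Substituting these into \eqref{A.60a} produces \eqref{A.62a}, completing the proof.
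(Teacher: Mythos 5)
Your proposal is correct and follows essentially the same route as the paper's proof: both rest on the identity $\ga_\AB(f+\cU_\AB f)=0$ for $f\in\cN_+$, the same matrix bookkeeping for $[\cU_\AB]$, and the identification of $\La_\ABp^\AB(\pm i)$ via the bases satisfying \eqref{A.61a}. The only (immaterial) difference is logical direction: the paper defines a candidate map $\wti\cU_\AB$ from the right-hand side of \eqref{A.60a}, verifies $f+\wti\cU_\AB f\in\dom(H_\AB)$, and concludes via $\cN_-\cap\dom(H_\AB)=\{0\}$, whereas you solve the resulting linear identity for $[\cU_\AB]$ directly using the invertibility of $\begin{pmatrix}\ga_\AB(u_-) & \ga_\AB(v_-)\end{pmatrix}$ supplied by Lemma \ref{l4.2} --- an equivalent fact.
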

\begin{proof}
Suppose $\big[\wti\cU_\AB\big]\in\bbC^{2\times 2}$ denotes the right-hand side of \eqref{A.60a} and define $\wti\cU_\AB$ to be the linear map from $\cN_+$ to $\cN_-$ with the matrix representation $\big[\wti\cU_\AB\big]$ in the bases $\cB_\pm$. That is, for $f\in\cN_+$ let $c_1,c_2\in\bbC$ be such that
\begin{align}\lb{A.63a}
f= \begin{pmatrix} u_+ & v_+ \end{pmatrix} \begin{pmatrix}c_1\\c_2\end{pmatrix},
\end{align}
then
\begin{align}\lb{A.64a}
\wti\cU_\AB f = \begin{pmatrix} u_- & v_-\end{pmatrix} \, \big[\wti\cU_\AB\big] \begin{pmatrix}c_1\\c_2\end{pmatrix}.
\end{align}
It follows from \eqref{A.60a}, \eqref{A.63a}, \eqref{A.64a}, and the linearity of $\ga_\AB$ that
\begin{align}
\begin{split}
&\ga_\AB(f+\wti\cU_\AB f) \\
&\quad
= \begin{pmatrix} \ga_\AB(u_+) & \ga_\AB(v_+) \end{pmatrix} \begin{pmatrix}c_1\\c_2\end{pmatrix}
+ \begin{pmatrix} \ga_\AB(u_-) & \ga_\AB(v_-)\end{pmatrix} 
\big[\wti\cU_\AB\big] \begin{pmatrix}c_1\\c_2\end{pmatrix} \\
&\quad
= \begin{pmatrix} \ga_\AB(u_+) & \ga_\AB(v_+) \end{pmatrix} \begin{pmatrix}c_1\\c_2\end{pmatrix}
- \begin{pmatrix} \ga_\AB(u_+) & \ga_\AB(v_+) \end{pmatrix} \begin{pmatrix}c_1\\c_2\end{pmatrix} \\
&\quad
= 0.
\end{split}
\end{align}
Thus, by \eqref{A.26b}, $f+\wti\cU_\AB f \in \dom(H_\AB)$ for every $f\in\cN_+$. By Theorem \ref{tA.11}\,$(ii)$, also $f+\cU_\AB f\in \dom(H_\AB)$ for all $f\in\cN_+$. Hence, 
\begin{equation} 
(\cU_\AB-\wti\cU_\AB)f = (f+\cU_\AB f)-(f+\wti\cU_\AB f) \in \dom(H_\AB). 
\end{equation} 
Since both $\cU_\AB$ and $\wti\cU_\AB$ map into $\cN_-$ it follows that 
\begin{equation} 
(\cU_\AB-\wti\cU_\AB)f\in\cN_-\cap\dom(H_\AB)=\{0\}, \quad f\in\cN_+, 
\end{equation} 
that is, $\cU_\AB=\wti\cU_\AB$.
\end{proof}

Our final result in this section provides an explicit matrix representation of von Neumann's isometric isomorphism $\cU$ in $T_{\cU}$ in terms of a particular basis of solutions in 
$\ker(H_{\max} \mp i I_{(a,b)})$.

\begin{theorem}\lb{tA.12}
Suppose that $\cB_{\pm}=\{u_1(\pm i,\cdot),u_2(\pm i,\cdot)\}$ denote the ordered bases for
\begin{equation}\lb{A.57}
\cN_{\pm}=\ker(H_{\max}\mp iI_{(a,b)}),
\end{equation}
with $u_j(\pm i,\cdot)$, $j=1,2$, given by \eqref{3.8}.  For $\theta_a,\theta_b\in [0,\pi)$, assume 
that $\cU_{\theta_a,\theta_b}:\cN_+\rightarrow \cN_-$ denotes the unique linear isometric isomorphism 
with
\begin{equation}\lb{A.58}
 \dom(H_{\theta_a,\theta_b})=\dom(H_{\min})\dotplus \cN_+\dotplus \cU_{\theta_a,\theta_b}\cN_+,
 \end{equation}
 guaranteed to exist by Theorem \ref{tA.11}\,$(ii)$.
For $F\in \SL_2(\bbR)$, $\phi\in [0,2\pi)$, let $\cU_{F,\phi}$ denote the unique linear isometric isomorphism with
\begin{equation}\lb{A.59}
\dom(H_{F,\phi})=\dom(H_{\min})\dotplus \cN_+\dotplus \cU_{F,\phi}\cN_+,
\end{equation}
guaranteed to exist by Theorem \ref{tA.11}\,$(ii)$.  Let $\big[\cU_{\theta_a,\theta_b} \big]$ and $\big[\cU_{F,\phi} \big]$ denote the matrix representations $\cU_{\theta_a,\theta_b}$ and $\cU_{F,\phi}$ with respect to the bases $\cB_{\pm}$.  Then the following items $(i)$--$(vi)$ hold.\\
$(i)$ If $\theta_a\neq 0$ and $\theta_b\neq 0$, then
\begin{equation}\lb{A.60}
\big[\cU_{\theta_a,\theta_b} \big]= -D_{\theta_a,\theta_b}(-i)^{-1}D_{\theta_a,\theta_b}(i),
\end{equation}
where $D_{\theta_a,\theta_b}(\cdot)$ is given by \eqref{3.16}.\\
$(ii)$ If $\theta_a\neq 0$ and $\theta_b=0$, then
\begin{equation}\lb{A.61}
\big[\cU_{\theta_a,0} \big]= \begin{pmatrix}
-1 & 0\\
-d_{\theta_a,0}(-i)^{-1}\Big[u_2^{[1]}(-i,b)+u_1^{[1]}(i,a)\Big] & -d_{\theta_a,0}(-i)^{-1}d_{\theta_a,0}(i)
\end{pmatrix},
\end{equation}
where $d_{\theta_a,0}(\cdot)$ is given by \eqref{3.19}.\\
$(iii)$  If $\theta_a=0$ and $\theta_b\neq 0$, then
\begin{equation}\lb{A.62}
\big[\cU_{0,\theta_b} \big]= \begin{pmatrix}
-d_{0,\theta_b}(-i)^{-1}d_{0,\theta_b}(i) & d_{0,\theta_b}(-i)^{-1}\Big[u_2^{[1]}(i,b)+u_1^{[1]}(-i,a)\Big]\\
0 & -1
\end{pmatrix},
\end{equation}
where $d_{0,\theta_b}(\cdot)$ is given by \eqref{3.22}.\\
$(iv)$  If $\theta_a=\theta_b=0$, then
\begin{equation}\lb{A.63}
\big[\cU_{0,0}\big]=-I_2.
\end{equation}
$(v)$  If $F_{1,2}\neq 0$, then
\begin{equation}\lb{A.64}
\big[\cU_{F,\phi} \big] = -Q_{F,\phi}(-i)^{-1}Q_{F,\phi}(i),
\end{equation}
where $Q_{F,\phi}(\cdot)$ is given by \eqref{3.53}.\\
$(vi)$  If $F_{1,2}=0$, then
\begin{equation}\lb{A.65}
\big[\cU_{F,\phi}\big]=q_{F,\phi}(-i)^{-1}\begin{pmatrix} \widetilde{c}_{1,1}(F,\phi) & \widetilde{c}_{2,1}(F,\phi)\\
\widetilde{c}_{1,2}(F,\phi) & \widetilde{c}_{2,2}(F,\phi)\end{pmatrix}-I_2,
\end{equation}
where $q_{F,\phi}(\cdot)$ is given by \eqref{3.56} and
\begin{align}
&\widetilde{c}_{1,1}(F,\phi)=u_1^{[1]}(i,b)-u_1^{[1]}(-i,b)-e^{i\phi}F_{2,2}\Big[u_2^{[1]}(-i,b)
+u_1^{[1]}(i,a)\Big],       \lb{A.65a}\\
&\widetilde{c}_{1,2}(F,\phi)=F_{2,2}\Big\{e^{-i\phi}\Big[u_1^{[1]}(i,b)-u_1^{[1]}(-i,b) \Big]
-F_{2,2}\Big[u_2^{[1]}(-i,b)+u_1^{[1]}(i,a) \Big]  \Big\},     \lb{A.65b}\\
&\widetilde{c}_{2,2}(F,\phi)=F_{2,2}\Big\{F_{2,2}\Big[ u_2^{[1]}(-i,a)-u_2^{[1]}(i,a) \Big]
+e^{-i\phi}\Big[ u_2^{[1]}(i,b)+u_1^{[1]}(-i,a)\Big]\Big\},      \lb{A.65c}\\
&\widetilde{c}_{2,1}(F,\phi)=e^{i\phi}F_{2,2}\Big[u_2^{[1]}(-i,a)-u_2^{[1]}(i,a) \Big]+u_2^{[1]}(i,b)
+u_1^{[1]}(-i,a).      \lb{A.65d}
\end{align}
\end{theorem}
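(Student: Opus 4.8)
The plan is to reduce all six cases to a single formula and then to a finite matrix computation. The starting point is Theorem \ref{tA.12a}: I would choose the ordered bases $\cB_\pm=\{u_1(\pm i,\cdot),u_2(\pm i,\cdot)\}$ of $\cN_\pm$ dictated by \eqref{3.8}, so that the basis-dependent formula \eqref{A.60a} specializes to
\begin{equation}\lb{UU.1}
\big[\cU_\AB\big] = -M_{A,B}(-i)^{-1}M_{A,B}(i), \qquad M_{A,B}(z):=\begin{pmatrix}\ga_\AB(u_1(z,\cdot)) & \ga_\AB(u_2(z,\cdot))\end{pmatrix}.
\end{equation}
Before computing, I would note that since $H_{\theta_a,\theta_b}$, $H_{F,\phi}$, and $H_{0,0}$ are self-adjoint, the points $\pm i$ lie in the resolvent sets occurring in Theorems \ref{t3.1} and \ref{t3.2}; this makes $M_{A,B}(\pm i)$ invertible in every case, so \eqref{UU.1} is well defined throughout. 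The theorem then becomes a matter of evaluating $M_{A,B}(z)$ explicitly from the trace map \eqref{A.26}, the parameter matrices \eqref{A.13} or \eqref{A.15}, and the normalization \eqref{3.8}, and inserting the result into \eqref{UU.1}.

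For the two non-degenerate cases I expect the computation to be purely structural, with no honest evaluation of $M_{A,B}(\pm i)$ needed. In the separated case $\theta_a,\theta_b\neq0$ (item $(i)$) I anticipate the factorization
\begin{equation}
M_{A,B}(z)=\begin{pmatrix}\sin(\theta_a) & 0\\ 0 & \sin(\theta_b)\end{pmatrix}\begin{pmatrix}0&1\\1&0\end{pmatrix}D_{\theta_a,\theta_b}(z),
\end{equation}
with $D_{\theta_a,\theta_b}(z)$ from \eqref{3.16}; since the prefactor is a $z$-independent invertible matrix (here $\sin(\theta_a),\sin(\theta_b)\neq0$), it cancels in \eqref{UU.1} and leaves exactly \eqref{A.60}. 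In the non-separated case $F_{1,2}\neq0$ (item $(v)$) the same mechanism should apply via
\begin{equation}
M_{F,\phi}(z)=\begin{pmatrix}0 & e^{i\phi}F_{1,2}\\ 1 & e^{i\phi}F_{2,2}\end{pmatrix}Q_{F,\phi}(z),
\end{equation}
with $Q_{F,\phi}(z)$ from \eqref{3.53}; verifying this identity uses $\det(F)=1$ in the $(2,2)$-entry, and the prefactor has determinant $-e^{i\phi}F_{1,2}\neq0$, so it again cancels and produces \eqref{A.64}.

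For the degenerate cases I would instead invert $M_{A,B}(z)$ by hand, since no prefactor cancels. When $\theta_a=\theta_b=0$ (item $(iv)$) the normalization \eqref{3.8} should force $M_{A,B}(z)\equiv\begin{pmatrix}0&1\\1&0\end{pmatrix}$, giving $-I_2$ immediately. When $\theta_a\neq0,\theta_b=0$ (item $(ii)$, and symmetrically $(iii)$) I expect the second row of $M_{A,B}(z)$ to reduce to the $z$-independent vector $\begin{pmatrix}1 & 0\end{pmatrix}$ with $\det M_{A,B}(z)=-\sin(\theta_a)d_{\theta_a,0}(z)$; expanding \eqref{UU.1} entrywise and then applying the Wronskian identity \eqref{3.33} in the form $u_2^{[1]}(-i,b)=-u_1^{[1]}(-i,a)$ should turn the raw output into \eqref{A.61}. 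The last case $F_{1,2}=0$ (item $(vi)$) follows the same template: the first row of $M_{F,\phi}(z)$ becomes the $z$-independent vector $\begin{pmatrix}-1 & e^{i\phi}F_{1,1}\end{pmatrix}$, and I expect $\det M_{F,\phi}(z)=-e^{i\phi}F_{2,2}^{-1}q_{F,\phi}(z)$ with $q_{F,\phi}(z)$ from \eqref{3.56}; expanding $-M_{F,\phi}(-i)^{-1}M_{F,\phi}(i)$ and simplifying the four entries with $\det(F)=1$ and \eqref{3.33} should yield the coefficients \eqref{A.65a}--\eqref{A.65d}.

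The conceptual work sits entirely in \eqref{UU.1}; everything after it is linear algebra. I expect the main obstacle to be purely one of bookkeeping in item $(vi)$: with no cancelling prefactor, all four entries $\widetilde c_{j,k}(F,\phi)$ must be assembled individually, and it is exactly here that both $\det(F)=1$ and the Wronskian relation \eqref{3.33} are required to collapse the expressions into the stated closed form.
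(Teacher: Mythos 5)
Your proposal is correct, and I have checked the key identities: the factorizations $M_{A,B}(z)=\operatorname{diag}(\sin(\theta_a),\sin(\theta_b))\left(\begin{smallmatrix}0&1\\1&0\end{smallmatrix}\right)D_{\theta_a,\theta_b}(z)$ and $M_{F,\phi}(z)=\left(\begin{smallmatrix}0&e^{i\phi}F_{1,2}\\1&e^{i\phi}F_{2,2}\end{smallmatrix}\right)Q_{F,\phi}(z)$ hold (the latter using $\det(F)=1$ in the $(2,2)$-entry exactly as you predict), the determinant identities $\det M_{A,B}(z)=-\sin(\theta_a)d_{\theta_a,0}(z)$ and $\det M_{F,\phi}(z)=-e^{i\phi}F_{2,2}^{-1}q_{F,\phi}(z)$ are right, and the entrywise expansions in items $(ii)$, $(iii)$, and $(vi)$ reproduce \eqref{A.61}, \eqref{A.62}, and \eqref{A.65a}--\eqref{A.65d} after the substitution $u_1^{[1]}(z,a)=-u_2^{[1]}(z,b)$ from \eqref{3.33}. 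However, your route is genuinely different from the paper's. The paper does not invoke Theorem \ref{tA.12a} at all; instead it starts from the Cayley-transform representation \eqref{A.56} of Theorem \ref{tA.11}\,$(iii)$, writes $\cU u_\ell(i,\cdot)=2i(H+iI_{(a,b)})^{-1}u_\ell(i,\cdot)-u_\ell(i,\cdot)$, substitutes the Krein resolvent formulas \eqref{3.17}, \eqref{3.20}, \eqref{3.23}, \eqref{3.54}, \eqref{3.57} at $z=-i$, evaluates at $x=a$ and $x=b$ to read off the matrix entries, and then computes the Gram matrix $\big[(u_j(i,\cdot),u_k(i,\cdot))_{L^2((a,b);rdx)}\big]$ explicitly via the Wronskian computations \eqref{A.82}--\eqref{A.89}, finally invoking the abstract identity \eqref{B.32} to collapse $-2iD_{\theta_a,\theta_b}(-i)^{-1}[\text{Gram}]-I_2$ into $-D_{\theta_a,\theta_b}(-i)^{-1}D_{\theta_a,\theta_b}(i)$ (and likewise for $Q_{F,\phi}$). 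Your approach buys a substantially shorter and more uniform proof: all six cases reduce to inverting a single $2\times2$ boundary-trace matrix, the resolvent formulas and Gram-matrix computations are bypassed entirely, and the only external inputs are $\det(F)=1$ and \eqref{3.33}. What the paper's route buys in exchange is an explicit display of the link between the von Neumann parametrization, Krein's formula, and the Gram matrices of the deficiency subspaces, which is thematically the point of Section \ref{s7}; but as a proof of the stated formulas, yours is complete and self-contained given Theorem \ref{tA.12a}, which the paper establishes independently beforehand.
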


\begin{proof}
We begin with a few general observations in order to set the stage for the proofs of items $(i)$--$(iv)$.  Since $\cU_{\theta_a,\theta_b}$, $\theta_a,\theta_b\in[0,\pi)$, maps $\cN_+$ onto $\cN_-$ and $\cB_-$ is a basis for $\cN_-$,
\begin{equation}\lb{A.66}
\cU_{\theta_a,\theta_b}u_{\ell}(i,\cdot)=c_{\ell,1}(\theta_a,\theta_b)u_1(-i,\cdot)+c_{\ell,2}(\theta_a,\theta_b)u_2(-i,\cdot), \quad \ell=1,2,
\end{equation}
for suitable scalars $\{c_{\ell,k}(\theta_a,\theta_b)\}_{1\leq \ell,k\leq 2}$.  Then by definition, the matrix representation of $\cU_{\theta_a,\theta_b}$ with respect to the bases $\cB_{\pm}$ is given by
\begin{equation}\lb{A.67}
\big[\cU_{\theta_a,\theta_b}\big]=\begin{pmatrix} c_{1,1}(\theta_a,\theta_b) & c_{2,1}(\theta_a,\theta_b)\\
c_{1,2}(\theta_a,\theta_b) & c_{2,2}(\theta_a,\theta_b)\end{pmatrix}.
\end{equation}
By Theorem \ref{tA.11}\,$(iii)$,
\begin{equation}\lb{A.68}
\cU_{\theta_a,\theta_b}=-(H_{\theta_a,\theta_b}-iI_{(a,b)})(H_{\theta_a,\theta_b}+iI_{(a,b)})^{-1}|_{\cN_+},
\end{equation}
and as a result,
\begin{align}
\cU_{\theta_a,\theta_b}u_{\ell}(i,\cdot)&=-(H_{\theta_a,\theta_b}-iI_{(a,b)})(H_{\theta_a,\theta_b}+iI_{(a,b)})^{-1}u_{\ell}(i,\cdot)\no\\
&=-(H_{\theta_a,\theta_b}+(i-2i)I_{(a,b)})(H_{\theta_a,\theta_b}+iI_{(a,b)})^{-1}u_{\ell}(i,\cdot)\no\\
&=2i(H_{\theta_a,\theta_b}+iI_{(a,b)})^{-1}u_{\ell}(i,\cdot)-u_{\ell}(i,\cdot), \quad \ell=1,2.\lb{A.69}
\end{align}
{\it Proof of item $(i)$:} Applying Krein's formula \eqref{3.17} with $z=-i$ to the resolvent in \eqref{A.69}, relation \eqref{A.66} can be recast as
\begin{align}
&c_{\ell,1}(\theta_a,\theta_b)u_1(-i,\cdot)+c_{\ell,2}(\theta_a,\theta_b)u_2(-i,\cdot)=2i(H_{0,0}+iI_{(a,b)})^{-1}u_{\ell}(i,\cdot)-u_{\ell}(i,\cdot)\no\\
&\quad -2i\sum_{j,k=1}^2D_{\theta_a,\theta_b}(-i)_{j,k}^{-1}(u_k(i,\cdot),u_{\ell}(i,\cdot))_{L^2((a,b);rdx)}u_j(-i,\cdot), \quad \ell=1,2,\lb{A.70}
\end{align}
where $D_{\theta_a,\theta_b}(-i)$ is defined by \eqref{3.16}.
Taking $\ell=1$ in \eqref{A.70}, evaluating separately at $x=a$ and $x=b$, and using \eqref{3.8} along with
\begin{equation}\lb{A.71}
\big[(H_{0,0}-iI_{(a,b)})^{-1}u_1(i,\cdot)\big](a)=\big[(H_{0,0}-iI_{(a,b)})^{-1}u_1(i,\cdot)\big](b)=0,
\end{equation}
yields
\begin{align}
c_{1,1}(\theta_a,\theta_b)&=-2i\sum_{k=1}^2D_{\theta_a,\theta_b}(-i)_{1,k}^{-1}(u_k(i,\cdot),u_1(i,\cdot))_{L^2((a,b);rdx)}-1,     \lb{A.72}\\
c_{1,2}(\theta_a,\theta_b)&=-2i\sum_{k=1}^2D_{\theta_a,\theta_b}(-i)_{2,k}^{-1}(u_k(i,\cdot),u_1(i,\cdot))_{L^2((a,b);rdx)}.     \lb{A.72a}
\end{align}
On the other hand, taking $\ell=2$ in \eqref{A.70}, evaluating separately at $x=a$ and $x=b$, and using \eqref{3.8} along with
\begin{equation}\lb{A.73}
\big[(H_{0,0}-iI_{(a,b)})^{-1}u_2(i,\cdot)\big](a)=\big[(H_{0,0}-iI_{(a,b)})^{-1}u_2(i,\cdot)\big](b)=0,
\end{equation}
one concludes that
\begin{align}
c_{2,1}(\theta_a,\theta_b)&=-2i\sum_{k=1}^2D_{\theta_a,\theta_b}(-i)_{1,k}^{-1}(u_k(i,\cdot),u_2(i,\cdot))_{L^2((a,b);rdx)},     \lb{A.74}\\
c_{2,2}(\theta_a,\theta_b)&=-2i\sum_{k=1}^2D_{\theta_a,\theta_b}(-i)_{2,k}^{-1}(u_k(i,\cdot),u_2(i,\cdot))_{L^2((a,b);rdx)}-1.    \lb{A.74a}
\end{align}
Comparing \eqref{A.67} with \eqref{A.72}, \eqref{A.72a}, \eqref{A.74}, and \eqref{A.74a}, one infers
\begin{equation}\lb{A.75}
\big[\cU_{\theta_a,\theta_b}\big]=-2iD_{\theta_a,\theta_b}(-i)^{-1}\big[(u_j(i,\cdot),u_k(i,\cdot))_{L^2((a,b);rdx)}\big]_{1\leq j,k\leq 2}-I_2,
\end{equation}
where $\big[(u_j(i,\cdot),u_k(i,\cdot))_{L^2((a,b);rdx)}\big]_{1\leq j,k\leq 2}$ is the Gram matrix corresponding to the basis $\cB_+$.  Taking \eqref{B.32} in the case at hand (i.e., with $P(\cdot)=-D_{\theta_a,\theta_b}(\cdot)^{-1}$) with $z=-i$ and $z'=i$, one obtains
\begin{equation}\lb{A.76}
\big[(u_j(i,\cdot),u_k(i,\cdot))_{L^2((a,b);rdx)}\big]_{1\leq j,k\leq 2}=(-2i)^{-1}\big[D_{\theta_a,\theta_b}(-i)-D_{\theta_a,\theta_b}(i)\big].
\end{equation}
Using \eqref{A.76} in \eqref{A.75}, one arrives at \eqref{A.60}.

\noindent
{\it Proof of item $(ii)$:}  Applying Krein's formula \eqref{3.20} with $z=-i$ to the resolvent in \eqref{A.69}, relation \eqref{A.66} (with $\theta_b=0$) can be recast as
\begin{align}
&c_{\ell,1}(\theta_a,0)u_1(-i,\cdot)+c_{\ell,2}(\theta_a,0)u_2(-i,\cdot)=2i(H_{0,0}+i)^{-1}u_{\ell}(i,\cdot)-u_{\ell}(i,\cdot)\no\\
&\quad -2id_{\theta_a,0}(-i)^{-1}(u_2(i,\cdot),u_1(i,\cdot))_{L^2((a,b);rdx)}u_2(-i,\cdot), \quad \ell=1,2,\lb{A.77}
\end{align}
where $d_{\theta_a,0}(-i)$ is defined by \eqref{3.19}.
Taking $\ell=1$ in \eqref{A.77}, evaluating separately at $x=a$ and $x=b$, using \eqref{3.8} and \eqref{A.71}, yields
\begin{align}
c_{1,1}(\theta_a,0)&= -1,   \lb{A.78}\\
c_{1,2}(\theta_a,0)&= -2id_{\theta_a,0}(-i)^{-1}(u_2(i,\cdot),u_1(i,\cdot))_{L^2((a,b);rdx)}.   \lb{A.79}
\end{align}
Similarly, taking $\ell=2$ in \eqref{A.77}, evaluating separately at $x=a$ and $x=b$, using \eqref{3.8} and \eqref{A.73}, implies
\begin{align}
c_{2,1}(\theta_a,0)&=0,     \lb{A.80}\\
c_{2,2}(\theta_a,0)&=-2id_{\theta_a,0}(-i)^{-1}(u_2(i,\cdot),u_2(i,\cdot))_{L^2((a,b);rdx)}-1.    \lb{A.81}
\end{align}
The inner products in \eqref{A.79} and \eqref{A.81} can be calculated explicitly.  In fact, all entries
of the Gram matrix $\big[(u_j(i,\cdot),u_k(i,\cdot))_{L^2((a,b);rdx)}\big]_{1\leq j,k\leq 2}$ can be explicitly computed.  To this end, one observes that
\begin{align}
&\frac{d}{dx}W\big(u_j(-i,\cdot),u_k(i,\cdot)\big)(x)      \lb{A.82} \\
&\quad= u_j(-i,x)\frac{d}{dx}u_k^{[1]}(i,x)-u_k(i,x)\frac{d}{dx}u_j^{[1]}(-i,x)
\, \text{ for a.e.\ $x\in (a,b)$}, \; 1\leq j,k\leq 2.    \no
\end{align}
On the other hand, by the very definition of $u_j(\pm i,\cdot)$, $j=1,2$, one has
\begin{align}
\frac{d}{dx}u_j^{[1]}(\pm i,x)=(q(x)\mp i r(x))u_j(\pm i,x) \, \text{ for a.e.\ $x\in (a,b)$},
\; j=1,2.    \lb{A.83}
\end{align}
Taking \eqref{A.82} together with \eqref{A.83}, and accounting for cancellations, one concludes that
\begin{align}
\begin{split}
&r(x)u_j(-i,x)u_k(i,x)=-\frac{1}{2i}\frac{d}{dx}W\big(u_j(-i,\cdot),u_k(i,\cdot)\big)(x)   \lb{A.84}\\
& \hspace*{3.6cm} \text{ for a.e.\ $x\in (a,b)$}, \; 1\leq j,k\leq 2.
\end{split}
\end{align}
With \eqref{A.84} in hand, the inner product of $u_j(i,\cdot)$ with $u_k(i,\cdot)$ can be explicitly computed:
\begin{align}
& (u_j(i,\cdot),u_k(i,\cdot))_{L^2((a,b);rdx)}=\int_a^b r(x)dx \, u_j(-i,x)u_k(i,x)    \no\\
&\quad =-\frac{1}{2i}\int_a^b dx \, \frac{d}{dx}W\big(u_j(-i,\cdot),u_k(i,\cdot) \big)(x)   \no\\
&\quad =-\frac{1}{2i}W\big(u_j(-i,\cdot),u_k(i,\cdot) \big)(x)\big|_a^b,\quad 1\leq j,k\leq 2.\lb{A.85}
\end{align}
Finally, \eqref{A.85} and \eqref{3.8} yield
\begin{align}
(u_1(i,\cdot),u_1(i,\cdot))_{L^2((a,b);rdx)}&=-\frac{1}{2i}\Big(u_1^{[1]}(i,b)-u_1^{[1]}(-i,b) \Big),  \lb{A.86}\\
(u_2(i,\cdot),u_2(i,\cdot))_{L^2((a,b);rdx)}&=-\frac{1}{2i}\Big(u_2^{[1]}(-i,a)-u_2^{[1]}(i,a)\Big),  \lb{A.87}\\
(u_1(i,\cdot),u_2(i,\cdot))_{L^2((a,b);rdx)}&=-\frac{1}{2i}\Big(u_2^{[1]}(i,b)+u_1^{[1]}(-i,a)\Big),  \lb{A.88}\\
(u_2(i,\cdot),u_1(i,\cdot))_{L^2((a,b);rdx)}&=\frac{1}{2i}\Big(u_2^{[1]}(-i,b)+u_1^{[1]}(i,a)\Big).  \lb{A.89}
\end{align}
Combining \eqref{3.19} with \eqref{A.87} in \eqref{A.81} implies
\begin{align}
c_{2,2}(\theta_a,0)=-d_{\theta_a,0}(-i)^{-1}d_{\theta_a,0}(i),\lb{A.90}
\end{align}
and taking \eqref{3.19} with \eqref{A.89} in \eqref{A.79} yields
\begin{align}
c_{1,2}(\theta_a,0)=-d_{\theta_a,0}(-i)^{-1}\Big(u_2^{[1]} (-i,b)+u_1^{[1]}(i,a)\Big).   \lb{A.91}
\end{align}
Finally, \eqref{A.61} follows from \eqref{A.78}, \eqref{A.80}, \eqref{A.90}, and \eqref{A.91}.

\noindent
{\it Proof of item $(iii)$:}  Applying Krein's formula \eqref{3.23} with $z=-i$ to the resolvent in \eqref{A.69}, relation \eqref{A.66} (with $\theta_a=0$) can be recast as
\begin{align}
&c_{\ell,1}(0,\theta_b)u_1(-i,\cdot)+c_{\ell,2}(0,\theta_b)u_2(-i,\cdot)=2i(H_{0,0}+i)^{-1}u_{\ell}(i,\cdot)-u_{\ell}(i,\cdot)\no\\
&\quad -2id_{0,\theta_b}(-i)^{-1}(u_1(i,\cdot),u_1(i,\cdot))_{L^2((a,b);rdx)}u_1(-i,\cdot),
\quad \ell=1,2,\lb{A.92}
\end{align}
where $d_{0,\theta_b}(-i)$ is defined by \eqref{3.22}.
Taking $\ell=1$ and evaluating \eqref{A.92} separately at $x=a$ and $x=b$ implies
\begin{align}
c_{1,1}(0,\theta_b)&=-2id_{0,\theta_b}(-i)^{-1}(u_1(i,\cdot),u_1(i,\cdot))_{L^2((a,b);rdx)}-1,   \lb{A.93}\\
c_{1,2}(0,\theta_b)&=0,    \lb{A.94}
\end{align}
and taking $\ell=2$, evaluating \eqref{A.92} separately at $x=a$ and $x=b$ yields
\begin{align}
c_{2,1}(0,\theta_b)&=-2id_{0,\theta_b}(-i)^{-1}(u_1(i,\cdot),u_2(i,\cdot))_{L^2((a,b);rdx)},    \lb{A.95}\\
c_{2,2}(0,\theta_b)&=-1.\lb{A.96}
\end{align}
Recalling \eqref{3.22} and \eqref{A.86} in \eqref{A.93} implies
\begin{align}
c_{1,1}(0,\theta_b)=-d_{0,\theta_b}(-i)^{-1}d_{0,\theta_b}(i),\lb{A.97}
\end{align}
and recalling \eqref{A.88} in \eqref{A.95} yields
\begin{align}
c_{2,1}(0,\theta_b)=d_{0,\theta_b}(-i)^{-1}\Big(u_2^{[1]}(i,b)+u_1^{[1]}(-i,a)\Big).\lb{A.98}
\end{align}
Therefore, \eqref{A.94}, \eqref{A.96}, \eqref{A.97}, and \eqref{A.98} imply \eqref{A.62}.

\noindent
{\it Proof of item $(iv)$:}  In this case, $\theta_a=\theta_b=0$, so that \eqref{A.69} may be recast as
\begin{align}
&c_{\ell,1}(0,0)u_1(-i,\cdot)+c_{\ell,2}(0,0)u_2(-i,\cdot)&\no\\
&\quad =2i(H_{0,0}+iI_{(a,b)})^{-1}u_{\ell}(i,\cdot)-u_{\ell}(i,\cdot), \quad \ell=1,2,\lb{A.99}
\end{align}
and \eqref{A.63} follows immediately by taking $\ell=1,2$ in \eqref{A.99} and separately evaluating at $x=a$ and $x=b$, using \eqref{3.8}.

To set the stage for proving items $(v)$ and $(vi)$, we write the analogs of \eqref{A.66}--\eqref{A.69} in the non-separated case.  Since $\cB_-$ is a basis for $\cN_-$ and  $\cU_{F,\phi}$ maps $\cN_+$ into $\cN_-$ , we write
\begin{align}
\cU_{F,\phi}u_{\ell}(i,\cdot)&=c_{\ell,1}(F,\phi)u_1(-i,\cdot)+c_{\ell,2}(F,\phi)u_2(-i,\cdot), \quad \ell=1,2,\lb{A.100}
\end{align}
for suitable scalars $\{c_{\ell,k}(F,\phi)\}_{1\leq \ell,k\leq 2}$, so that the matrix representation for $\cU_{F,\phi}$ with respect to the bases $\cB_{\pm}$ is given by
\begin{equation}\lb{A.101}
\big[\cU_{F,\phi}\big]=\begin{pmatrix} c_{1,1}(F,\phi) & c_{2,1}(F,\phi)\\
c_{1,2}(F,\phi) & c_{2,2}(F,\phi)\end{pmatrix}.
\end{equation}
By Theorem \ref{tA.11}\,$(iii)$, one has
\begin{equation}\lb{A.102}
\cU_{F,\phi}=-(H_{F,\phi}-i)(H_{F,\phi}+i)^{-1}|_{\cN_+},
\end{equation}
and as a result,
\begin{equation}\lb{A.103}
\cU_{F,\phi}u_{\ell}(i,\cdot)=2i(H_{F,\phi}+i)^{-1}u_{\ell}(i,\cdot)-u_{\ell}(i,\cdot), \quad \ell=1,2.
\end{equation}

\noindent
{\it Proof of item $(v)$:}  Applying Krein's formula \eqref{3.54} in \eqref{A.103}, one obtains
\begin{align}
&c_{\ell,1}(F,\phi)u_1(-i,\cdot)+c_{\ell,2}(F,\phi)u_2(-i,\cdot)=2i(H_{0,0}+i)^{-1}u_{\ell}(i,\cdot)-u_{\ell}(i,\cdot)\no\\
&\quad -2i\sum_{j,k=1}^2Q_{F,\phi}(-i)_{j,k}^{-1}(u_k(i,\cdot),u_{\ell}(i,\cdot))_{L^2((a,b);rdx)}u_j(-i,\cdot), \quad \ell=1,2,\lb{A.104}
\end{align}
where $Q_{F,\phi}(-i)$ is defined by \eqref{3.53}.
At this point, repeating the argument used in the proof of item $(i)$, systematically replacing
$\cU_{\theta_a,\theta_b}$ by $\cU_{F,\phi}$, $c_{j,k}(\theta_a,\theta_b)$, $1\leq j,k\leq 2$, by
$c_{j,k}(F,\phi)$, $1\leq j,k\leq 2$, and $D_{\theta_a,\theta_b}(z)$, $z=\pm i$, by $Q_{F,\phi}(z)$,
$z=\pm i$, one arrives at \eqref{A.64}.

\noindent
{\it Proof of item $(vi)$:}  Applying Krein's formula \eqref{3.57} in \eqref{A.103}, one obtains
\begin{align}
&c_{\ell,1}(F,\phi)u_1(-i,\cdot)+c_{\ell,2}(F,\phi)u_2(-i,\cdot)=2i(H_{0,0}+i)^{-1}u_{\ell}(i,\cdot)-u_{\ell}(i,\cdot)\no\\
&\quad -2iq_{F,\phi}(-i)^{-1}(u_{F,\phi}(i,\cdot),u_{\ell}(i,\cdot))_{L^2((a,b); rdx)} u_{F,\phi}(-i,\cdot), \quad \ell=1,2, \lb{A.105}
\end{align}
where $q_{F,\phi}(-i)$ is defined by \eqref{3.56}.  By \eqref{3.8} and the very definition of the function $u_{F,\phi}(-i,\cdot)$ (cf. \eqref{3.58}), one has
\begin{align}
u_{F,\phi}(-i,a) = e^{-i\phi}F_{2,2}, \quad u_{F,\phi}(-i,b) = 1.\lb{A.106}
\end{align}
Taking $\ell=1$ in \eqref{A.105} and evaluating separately at $x=a$ and $x=b$ using \eqref{3.8} and \eqref{A.106} yields
\begin{align}
c_{1,1}(F,\phi)&=-2iq_{F,\phi}(-i)^{-1}(u_{F,\phi}(i,\cdot),u_1(i,\cdot))_{L^2((a,b);rdx)}-1,   \lb{A.107}\\
c_{1,2}(F,\phi)&=-2ie^{-i\phi}F_{2,2}q_{F,\phi}(-i)^{-1}(u_{F,\phi}(i,\cdot),u_1(i,\cdot))_{L^2((a,b);dx)}.
\lb{A.108}
\end{align}
Taking $\ell=2$ in \eqref{A.105} and evaluating separately at $x=a$ and $x=b$ using \eqref{3.8} and \eqref{A.106} implies
\begin{align}
c_{2,2}(F,\phi)&=-2ie^{-i\phi}F_{2,2}q_{F,\phi}(-i)^{-1}(u_{F,\phi}(i,\cdot),u_2(i,\cdot))_{L_2((a,b);rdx)}-1,
\lb{A.109}\\
c_{2,1}(F,\phi)&=-2iq_{F,\phi}(-i)^{-1}(u_{F,\phi}(i,\cdot),u_2(i,\cdot))_{L^2((a,b);rdx)}.\lb{A.110}
\end{align}
One observes that the inner products in \eqref{A.107}--\eqref{A.110} can be computed explicitly in terms of 
$u_j^{[1]}(\pm i, a)$, $u_j^{[1]}(\pm i, b)$, $j=1,2$, the angle $\phi$, and $F_{2,2}$  using \eqref{3.58} together with sesquilinearity of the inner product $(\cdot, \cdot)_{L^2((a,b);rdx)}$ and 
\eqref{A.86}--\eqref{A.89}.  For example,
\begin{align}
\begin{split}
&2i(u_{F,\phi}(i,\cdot),u_1(i,\cdot))_{L^2((a,b);rdx)}    \\
&\quad=e^{i\phi}F_{2,2}\Big[u_2^{[1]}(-i,b)+u_1^{[1]}(i,a) \Big]-\Big[u_1^{[1]}(i,b)-u_1^{[1]}(-i,b) \Big].
\end{split}
\end{align}
A similar expression holds for the inner product of $u_{F,\phi}(i,\cdot)$ with $u_2(i,\cdot)$.   Equations \eqref{A.65a}--\eqref{A.65d} follow as a result of inserting these expressions for the inner products in \eqref{A.107}--\eqref{A.110}.
\end{proof}

\section{A Brief Outlook on Inverse Spectral Problems}    \lb{s6}

We present a very brief outlook on inverse spectral problems to
be developed in a forthcoming paper. Here we only describe a special case that indicates the
potential for results in this direction.

In this section we make the assumption that
\begin{equation}
p(\cdot) = r(\cdot) = 1 \, \text{ a.e.\ on $(a,b)$.}  
\end{equation}

Consider the special case
\begin{equation}
\Late(z) = \Lateq (z), \quad \te_a,\te_b \in [0, \pi), \quad z\in\bbC\backslash \sigma(\Hte),
\end{equation}
a generalization of the Dirichlet-to-Neumann map
\begin{equation}
\Lambda_{D,N}(z)=\Lazzqq( z) \equiv \Lazz(z), \quad
z\in\bbC\backslash \sigma(H_{0,0}).
\end{equation}

Introduce the Weyl--Titchmarsh $m$-functions with respect to the reference
point the left/right endpoint $a$, respectively, $b$, denoted by
$m_{+,\te_a}(z,\te_b)$, respectively,  $m_{-,\te_b}(z,\te_a)$.
Then $m_{+,\te_a}(\cdot,\te_b)$ and $- m_{-,\te_b}(\cdot,\te_a)$ are
Nevanlinna--Herglotz functions and asymptotically one verifies the relations,
\begin{align}
m_{+,\te_a}(z,\te_b) & \underset{z \to i \infty}{\longrightarrow} \cot(\te_a)
+ \oh(1),
\quad \te_a \in (0, \pi),  \\
m_{+,0}(z,\te_b) & \underset{z \to i \infty}{\longrightarrow} i z^{1/2}
+ \oh\big(z^{1/2}\big),    \\
m_{-,\te_b}(z,\te_a) & \underset{z \to i \infty}{\longrightarrow} - \cot(\te_b)
+ \oh(1),
\quad \te_b \in (0, \pi),  \\
m_{-,0}(z,\te_a) & \underset{z \to i \infty}{\longrightarrow} - i z^{1/2}
+ \oh\big(z^{1/2}\big).
\end{align}

\begin{theorem} \lb{t6.1}
Assume Hypothesis \ref{hA.1} with $p=r=1$ and let $\te_a,\te_b \in [0, \pi)$. Then each diagonal entry of $\Late(z)$
$($i.e., $\Lambda_{\te_a,\te_b}(z)_{1,1}$ or $\Lambda_{\te_a,\te_b}(z)_{2,2}$$)$ uniquely
determines $\Hte$, that is, it uniquely determines $q(\cdot)$ a.e.\ on $(a,b)$, and also $\te_a$
and $\te_b$.
\end{theorem}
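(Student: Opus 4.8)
The plan is to reduce the theorem to classical scalar inverse spectral theory by identifying the two diagonal entries of $\Late(z)$ with Weyl--Titchmarsh $m$-functions, and then combining the high-energy asymptotics recorded above with the Borg--Marchenko uniqueness theorem. Recall from Section \ref{s6} that $\Late(z) = \Lateq(z)$ is the boundary data map carrying $\gate$-data into $\gateq$-data. First I would compute the first column from the defining relation \eqref{4.17}: by Lemma \ref{l4.2} there is a unique solution $u$ of $(\tau - z)u = 0$ with $\gate(u) = \binom{1}{0}$, i.e.\ the solution obeying the $\te_b$-boundary condition at $b$ and normalized by $\cos(\te_a)u(a) + \sin(\te_a)u^{[1]}(a) = 1$; then $\Late(z)\binom{1}{0} = \gateq(u)$. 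Writing this $u$ in the standard Weyl form $u = c_{\te_a}(z,\cdot) + m_{+,\te_a}(z,\te_b)\,s_{\te_a}(z,\cdot)$ based at $a$ (with $c_{\te_a}(z,a)=\cos\te_a$, $c_{\te_a}^{[1]}(z,a)=\sin\te_a$ and $s_{\te_a}(z,a)=-\sin\te_a$, $s_{\te_a}^{[1]}(z,a)=\cos\te_a$), a one-line computation using $\cos^2+\sin^2=1$ gives $\Lambda_{\te_a,\te_b}(z)_{1,1} = m_{+,\te_a}(z,\te_b)$. The symmetric computation starting from the second column (the solution obeying the $\te_a$-condition at $a$, expanded about $b$) yields $\Lambda_{\te_a,\te_b}(z)_{2,2} = -\,m_{-,\te_b}(z,\te_a)$. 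Both identities hold on $\bbC\backslash\sigma(\Hte)$, where the $m$-functions and $\Late(z)$ are simultaneously analytic.

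Granting these identifications, consider $\Lambda_{\te_a,\te_b}(\cdot)_{1,1} = m_{+,\te_a}(\cdot,\te_b)$. The first step is to recover $\te_a$ from the behavior along $i\bbR_+$: by the asymptotic relations stated before the theorem, $m_{+,\te_a}(z,\te_b) \to \cot(\te_a)$ as $z\to i\infty$ when $\te_a\in(0,\pi)$, whereas $m_{+,0}(z,\te_b)$ grows like $iz^{1/2}$. Hence the diagonal entry stays bounded along the positive imaginary axis exactly when $\te_a\neq 0$, and in that case its limit is $\cot(\te_a)$, which fixes $\te_a$ uniquely because $\cot$ maps $(0,\pi)$ bijectively onto $\bbR$; otherwise $\te_a = 0$. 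Thus the boundary angle at $a$ (equivalently, the reference condition entering $m_{+,\te_a}$) is read off from the single diagonal entry.

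With $\te_a$ now determined, $\Lambda_{\te_a,\te_b}(\cdot)_{1,1} = m_{+,\te_a}(\cdot,\te_b)$ is a bona fide Weyl--Titchmarsh $m$-function for the regular Schr\"odinger operator $-u'' + qu$ on $(a,b)$ (here $p=r\equiv 1$), taken with the known $\te_a$-condition at the reference endpoint $a$ and the $\te_b$-condition at the far endpoint $b$. By the classical Borg--Marchenko uniqueness theorem for finite intervals, the full $m$-function determines both the potential $q$ a.e.\ on $(a,b)$ and the boundary condition at the opposite endpoint, i.e.\ $\te_b$. Combining the three steps, $\Lambda_{\te_a,\te_b}(\cdot)_{1,1}$ uniquely determines $q$, $\te_a$, and $\te_b$, hence $\Hte$. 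The assertion for $\Lambda_{\te_a,\te_b}(\cdot)_{2,2} = -\,m_{-,\te_b}(\cdot,\te_a)$ follows identically, now extracting $\te_b$ from the asymptotics (the entry tends to $\cot(\te_b)$, resp.\ grows when $\te_b=0$) and applying Borg--Marchenko to the $m$-function based at $b$.

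I expect the main obstacle to be the reduction in the first paragraph: one must verify, with the precise sign and normalization conventions of the paper, that the diagonal entries of $\Late(z)$ equal exactly $m_{+,\te_a}(z,\te_b)$ and $-\,m_{-,\te_b}(z,\te_a)$, and not some affine or reciprocal modification, so that the stated $z\to i\infty$ asymptotics and the standard $m$-function normalizations transfer verbatim. Once this matching is pinned down the remainder is a direct appeal to well-known scalar uniqueness results. (The off-diagonal entries, by contrast, couple the two endpoints and have a transfer-matrix, whole-interval character rather than a single half-line $m$-function interpretation, which is precisely why the theorem is confined to the diagonal.)
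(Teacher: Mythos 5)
Your proposal is correct and follows essentially the same route as the paper: the paper's proof likewise rests on the identification $\Lambda_{\te_a,\te_b}(z)_{1,1}=m_{+,\te_a}(z,\te_b)$ and $\Lambda_{\te_a,\te_b}(z)_{2,2}=-m_{-,\te_b}(z,\te_a)$, followed by an appeal to Marchenko's 1952 uniqueness theorem for $m$-functions, with the stated $z\to i\infty$ asymptotics implicitly pinning down the reference angle exactly as you describe. Your write-up merely makes explicit the Weyl-solution computation and the case distinction $\te_a=0$ versus $\te_a\in(0,\pi)$ that the paper leaves to the reader.
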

\begin{proof}
It suffices to note the identity
\begin{equation}
\Late (z) = \begin{pmatrix} m_{+,\te_a}(z,\te_b) & \Late (z)_{1,2}
\\[1mm]
\Late (z)_{2,1} & -m_{-,\te_b}(z,\te_a) \end{pmatrix}
\end{equation}
(where $\Late (z)_{1,2} = \Late (z)_{2,1}$), and then apply Marchenko's
fundamental 1952 uniqueness result \cite{Ma73} formulated in terms of $m$-functions.
\end{proof}

One notes that this is in stark contrast to the usual $2 \times 2$ matrix-valued
Weyl--Titchmarsh $M$-matrix. Theorem \ref{t6.1} has instant consequences for Borg--Levinson-type
uniqueness results (such as, two spectra uniquely determine $\Hte$, etc.).

It is natural to conjecture that the role $m_{+,\te_a}(\cdot,\te_b)$ (resp., $m_{-,\te_b}(\cdot,\te_a)$)
plays for uniqueness results in the case of separated boundary conditions in connection with $\Hte$,
in general, is played by the boundary data map
$\Lambda_{\AB}^{A'(A,B),B'(A,B)} (\cdot)$ (for a very particular choice
of $A', B'$ as a function of $A, B$) in the case of general boundary conditions in connection with
$H_\AB$. This will be studied in detail in forthcoming work.


\begin{appendix}

\section{Krein-Type Resolvent Formulas}    \lb{sB}

In this appendix we provide a brief survey of Krein resolvent formulas, closely following the discussion in \cite[\Sect . 84]{AG81} (with additional input taken from \cite{GMT98}).

First, we introduce some terminology.  Suppose $A$ is a densely defined symmetric operator in the Hilbert space $\cH$ with finite deficiency indices $(m,m)$.  Let $A_1$ and $A_2$ denote two self-adjoint extensions of $A$:
\begin{equation}\lb{B.1}
A\subseteq A_1, \quad A\subseteq A_2.
\end{equation}
Any operator $C$ that satisfies
\begin{equation}\lb{B.2}
C\subseteq A_1, \quad C\subseteq A_2,
\end{equation}
is called a \textit{common part} of the operators $A_1$ and $A_2$.  The operator $C'$ defined by
\begin{equation}\lb{B.3}
C'f=A_1f, \quad f\in \dom(C')=\{f\in \dom(A_1)\cap\dom(A_2) \, | \, A_1f=A_2f\}
\end{equation}
is called the \textit{maximal common part} of $A_1$ and $A_2$ since it satisfies \eqref{B.2} and is an extension of any common part of $A_1$ and $A_2$.  $C'$ is densely defined since $\dom(A)\subseteq \dom(C')$ and is either an extension of $A$ or coincides with $A$.  In the latter case, the extensions $A_1$ and $A_2$ are called \textit{relatively prime}.  Obviously, the two extensions $A_1$ and $A_2$ are relatively prime if and only if
\begin{equation}\lb{B.4}
\dom(A_1)\cap\dom(A_2)=\dom(A).
\end{equation}

We are interested in a formula that relates the resolvents of two different self-adjoint extensions of the symmetric operator $A$.  Thus, let $A_1$ be a fixed self-adjoint extension of $A$ (i.e., $A_1$ plays the role of a reference operator) and let $A_2$ be another self-adjoint extension of $A$, and suppose that $A_1$ and $A_2$ are relatively prime with respect to their maximal common part $A_0$ which has deficiency indices $(r,r)$ with $0\leq r\leq m$.

Since $A_1$ and $A_2$ are extensions of $A_0$,
\begin{align}\lb{B.5}
\begin{split}
[(A_1-z I_{\cH})^{-1}-(A_2-z I_{\cH})^{-1}](A_0-z I_{\cH})g=g-g=0,&  \\
g\in \dom(A_0), \ z\in \rho(A_1)\cap \rho(A_2).&
\end{split}
\end{align}
On the other hand,
\begin{align}
& \big([(A_1-z I_{\cH})^{-1}-(A_2-z I_{\cH})^{-1}]f,h\big)_{\cH}
=\big(f,[(A_1- \overline{z} I_{\cH})^{-1}
-(A_2-\overline{z} I_{\cH})^{-1}]h\big)_{\cH}    \no \\
& \quad =(f,0)_{\cH} =0, \quad  h\in \ran(A_0-\overline{z} I_{\cH}),
\quad z\in \rho(A_1)\cap\rho(A_2),\lb{B.6}
\end{align}
which makes use of the fact that $A_1$ and $A_2$ are extensions of $A_0$.  In summary,
\begin{align}\lb{B.7}
& [(A_1-z I_{\cH})^{-1}-(A_2-z I_{\cH})^{-1}]f \begin{cases}
=0, & f\in \ran(A_0-z I_{\cH}), \\
\in \ker(A_0^{\ast}-z I_{\cH}), & f\in \ker(A_0^{\ast}-\overline{z} I_{\cH}),
\end{cases}   \no \\
& \hspace*{8cm} z\in \rho(A_1)\cap \rho(A_2).
\end{align}
If one chooses $r$ linearly independent vectors (one recalls that $A_0$ has deficiency indices $(r,r)$)
\begin{equation}\lb{B.8}
g_1({z}),g_2({z}),\ldots,g_r({z})\in \ker(A_0^{\ast}-z I_{\cH}), \quad z\in \rho(A_1)\cap\rho(A_2),
\end{equation}
then it follows from \eqref{B.7} that
\begin{equation}\lb{B.9}
[(A_1-z I_{\cH})^{-1}-(A_2-z I_{\cH})^{-1}]f=\sum_{k=1}^rc_k(f;z)g_k(z), \quad f\in \cH, \; z\in \rho(A_1)\cap\rho(A_2),
\end{equation}
for suitable scalars $c_k(f;z)$, $k=1,\ldots,r$.  By \eqref{B.9}, each $c_k(\cdot;z)$ is a linear functional. Linearity follows from \eqref{B.9}; boundedness follows from boundedness of the resolvent difference in \eqref{B.9} and an application of
\cite[Lemma 2.4--1]{Kr78}. Thus, for each $z\in \rho(A_1)\cap\rho(A_2)$, there are vectors $\{h_k(z)\}_{k=1}^r$ such that
\begin{equation}\lb{B.10}
c_k(f;z)=(h_k(z),f)_{\cH}, \quad f\in \cH, \; z\in \rho(A_1)\cap \rho(A_2), \; k=1,\ldots,r.
\end{equation}
Moreover,
\begin{equation}\lb{B.11}
(h_k(z),f)_{\cH}=0, \quad f\in \ran(A_0-z I_{\cH}), \; z\in \rho(A_1)\cap\rho(A_2), \; k=1,\ldots,r,
\end{equation}
in light of \eqref{B.7}, \eqref{B.9}, and the fact that $\{g_k(z)\}_{k=1}^r$ are linearly independent.  By \eqref{B.11},
\begin{equation}\lb{B.12}
\{h_k(z)\}_{k=1}^r \subseteq
\ker(A_0^{\ast}-\overline{z} I_{\cH}), \quad z\in \rho(A_1)\cap\rho(A_2),
\end{equation}
so that each $h_k(z)$ may be represented as
\begin{equation}\lb{B.13}
h_j(z)= - \sum_{k=1}^r\overline{p_{j,k}(z)}g_k(\overline{z}), \quad z\in \rho(A_1)\cap \rho(A_2),
\; j=1,\ldots,r.
\end{equation}
Then \eqref{B.9} becomes
\begin{align}\lb{B.14}
\begin{split}
[(A_1-z I_{\cH})^{-1}-(A_2-z I_{\cH})^{-1}]f
= - \sum_{j,k=1}^rp_{k,j}(z)(g_j (\overline{z}),f)_{\cH} \, g_k(z),& \\
\quad f\in \cH, \; z\in \rho(A_1)\cap\rho(A_2).&
\end{split}
\end{align}
The $r\times r$ matrix $P(z)=\big(p_{j,k}(z)\big)_{1\leq j,k \leq r}$ turns out to be nonsingular for
all $z\in \rho(A_1)\cap \rho(A_2)$.  Indeed, if $P(z_0)$ were singular for some
$z_0\in \rho(A_1)\cap \rho(A_2)$, then by \eqref{B.13}, the vectors
$\{h_k(z_0)\}_{k=1}^r$ are linearly dependent, implying the existence of a nonzero
vector $h\in \ker(A_0^{\ast}-\overline{z_0} I_{\cH})$ such that $(h,h_k(z_0))=0$ for $k=1,\ldots,r$.  By \eqref{B.10} and \eqref{B.9},
\begin{equation}\lb{B.15}
[(A_1-z_0 I_{\cH})^{-1}-(A_2-z_0 I_{\cH})^{-1}]h=0,
\end{equation}
contradicting the assumption that $A_1$ and $A_2$ are relatively prime with respect to $A_0$.

One can rewrite \eqref{B.14} as the operator equation
\begin{equation}\lb{B.16}
(A_2-z I_{\cH})^{-1}=(A_1-z I_{\cH})^{-1}
- \sum_{j,k=1}^rp_{k,j}(z)(g_j (\overline{z}),\cdot)_{\cH} \, g_k(z),
\quad z\in \rho(A_1)\cap \rho(A_2).
\end{equation}

The choice of basis vectors \eqref{B.8} for $\ker(A_0^{\ast}-z I_{\cH})$ for each
$z\in \rho(A_1)\cap \rho(A_2)$ is completely arbitrary.  We now show how basis
vectors for $\ker(A_0^{\ast}-z I_{\cH})$, $z\in \rho(A_1)\cap \rho(A_2)$, can be specified
in a canonical manner by choosing a basis for $\ker(A_0^{\ast}-z_0 I_{\cH})$ for a single fixed
$z_0\in \rho(A_1)\cap \rho(A_2)$.

Let $z_0\in \rho(A_1)\cap \rho(A_2)$ be fixed.  The operator
\begin{equation}\lb{B.17}
U_{z,z_0}=(A_1-z_0 I_{\cH})(A_1-z I_{\cH})^{-1}=I_{\cH}+(z-z_0)(A_1-z I_{\cH})^{-1}, \quad z\in \rho(A_1)\cap \rho(A_2),
\end{equation}
defines an injection from $\cH$ to $\cH$.  In the case $z=\overline{z_0}$, the
operator $U_{z_0,z_0}$ is the unitary Cayley transform of $A_1$, and it
maps $\ker(A_0^{\ast}-z_0 I_{\cH})$ into $\ker(A_0^{\ast}-\overline{z_0} I_{\cH})$.  More generally, $U_{z,z_0}$ satisfies
\begin{equation}\lb{B.18}
U_{z,z_0}\big(\ker(A_0^{\ast}-z_0 I_{\cH}) \big)=\ker(A_0^{\ast}-z I_{\cH}), \quad z\in \rho(A_1)\cap \rho(A_2).
\end{equation}
In fact, if $g_1(z_0), \ldots, g_r(z_0)$ is a basis for
$\ker(A_0^{\ast}-z_0 I_{\cH})$,
then
\begin{align}
A_0^{\ast}U_{z,z_0}g_k(z_0)&=A_0^{\ast}\big(I_{\cH}
+ (z-z_0) (A_1-z I_{\cH})^{-1}\big)g_k(z_0)\no \\
&=z_0g_k(z_0)+(z-z_0)A_1(A_1-z I_{\cH})^{-1}g_k(z_0)\no \\
&=z_0g_k(z_0)+(z-z_0)\big(I_{\cH}+z(A_1-z I_{\cH})^{-1}\big)g_k(z_0)\no \\
&=z\big( I_{\cH}+(z-z_0)(A_1-z I_{\cH})^{-1}\big)g_k(z_0)\no \\
&=zU_{z,z_0}g_k(z_0), \quad z\in \rho(A_1), \; k=1,\ldots,r.  \lb{B.19}
\end{align}
Since $U_{z,z_0}$ is one-to-one, the vectors
$\{U_{z,z_0}g_k(z_0)\}_{k=1}^r\subset \ker(A_0^{\ast}-z I_{\cH})$ are linearly independent.  Thus, if we define
\begin{align}\lb{B.20}
\begin{split}
g_k(z)=U_{z,z_0}g_k(z_0)=g_k(z_0)+(z-z_0)(A_1-z I_{\cH})^{-1}g_k(z_0),&  \\
z\in \rho(A_1), \; k=1,\ldots,r,&
\end{split}
\end{align}
then $\{g_k(z)\}_{k=1}^r$ is a basis for $\ker(A_0^{\ast}-z I_{\cH})$ and \eqref{B.20} represents a systematic (canonical) way of choosing the bases in \eqref{B.8}, having first fixed a single basis $\{g_k(z_0)\}_{k=1}^r$ for $\ker(A_0^{\ast}-z_0 I_{\cH})$.  Moreover, each $g_k(z)$ is an analytic function of $z\in \rho(A_1)$, and the first resolvent equation for $A_1$ yields
\begin{equation}\lb{B.21}
g_k(z')=U_{z',z}g_k(z)=g_k(z)+(z'-z)(A_1-z' I_{\cH})^{-1}g_k(z),
\quad z,z'\in \rho(A_1).
\end{equation}

For $z\in \rho(A_1)\cap \rho(A_2)$, \eqref{B.20} fixes bases $\{g_k(z)\}_{k=1}^r$ and
$\{g_k(\overline{z})\}_{k=1}^r$ for $\ker(A_0^{\ast}-z I_{\cH})$ and
$\ker(A_0^{\ast}-\overline{z} I_{\cH})$, respectively.  There is a corresponding matrix $P(z)$ so that \eqref{B.16} holds.  The matrix $P(z)$ is completely determined by $P(z_0)$.  To see this, let $z\in \rho(A_1)\cap\rho(A_2)$ be fixed.  By \eqref{B.16},
\begin{align}
(A_2-z I_{\cH})^{-1}&=(A_1-z I_{\cH})^{-1} -
\sum_{j,k=1}^rp_{k,j}(z)(g_j(\overline{z}),\cdot)_{\cH} \, g_k(z),  \lb{B.22}\\
(A_2-z_0 I_{\cH})^{-1}&=(A_1-z_0 I_{\cH})^{-1}
- \sum_{j,k=1}^rp_{k,j}(z_0)(g_j(\overline{z_0}),\cdot)_{\cH} \, g_k(z_0).\lb{B.23}
\end{align}
Substituting both of \eqref{B.22} and \eqref{B.23} into the (first) resolvent equation for $A_2$,
\begin{equation}\lb{B.24}
(A_2-z I_{\cH})^{-1}=(A_2-z_0 I_{\cH})^{-1}
+ (z-z_0)(A_2-z I_{\cH})^{-1}(A_2-z_0 I_{\cH})^{-1},
\end{equation}
and using the first resolvent equation for $A_1$ yields
\begin{align}
& \sum_{j,k=1}^rp_{k,j}(z)(g_j(\overline{z},\cdot)_{\cH} \, g_k(z)
= \sum_{j,k=1}^rp_{k,j}(z_0)(g_j(\overline{z_0}),\cdot)_{\cH} \, g_k(z_0)\no \\
&\quad + (z-z_0)\sum_{j,k=1}^rp_{k,j}(z_0) (g_j(\overline{z_0}),\cdot)_{\cH} \,
(A_1-z I_{\cH})^{-1}g_k(z_0)\no \\
&\quad + (z-z_0)\sum_{j,k=1}^rp_{k,j}(z)(g_j(\overline{z}),
(A_1-z_0 I_{\cH})^{-1}\cdot)_{\cH} \, g_k(z)\no \\
&\quad - (z-z_0)\sum_{j,k,\ell,m=1}^rp_{k,j}(z) (g_j(\overline{z}),g_m(z_0))_{\cH} \,
p_{m,\ell}(z_0)(g_\ell(\overline{z_0}),\cdot)_{\cH} \, g_k(z).\lb{B.25}
\end{align}
Using \eqref{B.20}, the sum of the second and third summand on the right-hand
side of \eqref{B.25} can be rewritten as
\begin{equation}\lb{B.26}
- \sum_{j,k=1}^rp_{k,j}(z_0)(g_j(\overline{z_0}),\cdot)_{\cH} [g_k(z_0)-g_k(z)]
- \sum_{j,k=1}^rp_{k,j}(z)([g_j(\overline{z_0})-g_j(\overline{z})],\cdot)_{\cH} \, g_k(z).
\end{equation}
Substitution of \eqref{B.26} into \eqref{B.25} in place of the second and third term
on the right-hand side then yields a linear combination of $\{g_k(z)\}_{k=1}^r$:
\begin{align}
& \sum_{j,k=1}^rp_{k,j}(z_0) (g_j(\overline{z_0}),\cdot)_{\cH} \, g_k(z)
- \sum_{j,k=1}^rp_{k,j}(z)(g_j(\overline{z_0}),\cdot)_{\cH} \, g_k(z)\no \\
&\quad + (z-z_0)\sum_{j,k,\ell,m=1}^rp_{k,j}(z) (g_j(\overline{z}),g_m(z_0))_{\cH} \,
p_{m,\ell}(z_0)(g_\ell(\overline{z_0}),\cdot)_{\cH} \, g_k(z)=0.\lb{B.27}
\end{align}
Since $\{g_k(z)\}_{k=1}^r$ are linearly independent, it follows that
\begin{align}
& \sum_{j=1}^rp_{k,j}(z_0)(g_j(\overline{z_0}),\cdot)_{\cH}
- \sum_{j=1}^rp_{k,j}(z)(g_j(\overline{z_0}),\cdot)_{\cH}    \no \\
&\quad +(z-z_0)\sum_{\ell,m,n=1}^r p_{k,\ell}(z) (g_\ell(\overline{z}),g_m(z_0))_{\cH} \,
p_{m,n}(z_0)(g_n(z_0),\cdot)_{\cH} = 0,\lb{B.28}
\end{align}
and therefore,
\begin{equation}\lb{B.29}
p_{k,j}(z_0) - p_{k,j}(z)
+ (z-z_0) \sum_{\ell,m=1}^r p_{k,\ell}(z) (g_\ell(\overline{z}),g_m(z_0))_{\cH} \,
p_{m,j}(z_0)=0,
\end{equation}
since $\{g_k(\overline{z_0})\}_{k=1}^r$ are linearly independent.  As a matrix equation, \eqref{B.29} reads
\begin{equation}\lb{B.30}
P(z)-P(z_0) - (z-z_0)P(z)\big((g_j(\overline{z}),g_k(z_0))_{\cH}
\big)_{1\leq j,k\leq r}P(z_0)=0.
\end{equation}
Multiplying \eqref{B.30} on the left (resp., right) by $P(z)^{-1}$ (resp., $P(z_0)^{-1}$) yields
\begin{equation}\lb{B.31}
P(z)^{-1}=P(z_0)^{-1} - (z-z_0)\big((g_j(\overline{z}),g_k(z_0))_{\cH}
\big)_{1\leq j,k\leq r }.
\end{equation}
More generally, one can show that
\begin{equation}\lb{B.32}
- P(z)^{-1} = - P(z')^{-1} + (z-z')\big((g_j(\overline{z}),g_k(z'))_{\cH}
\big)_{1\leq j,k\leq r }, \quad z,z'\in \rho(A_1)\cap\rho(A_2).
\end{equation}

In summary, one has the following result:

\begin{theorem} \lb{tB.1}
Suppose that $A$ is a densely defined, symmetric operator in $\cH$ with finite deficiency indices $(m,m)$.  Let $A_1$ and $A_2$ denote two self-adjoint extensions of $A$, relatively prime with respect to their maximal common part $A_0$.  For a fixed
$z_0\in \rho(A_1)\cap \rho(A_2)$, let
\begin{equation}\lb{B.33}
\{g_k(z_0)\}_{k=1}^r
\end{equation}
be a fixed basis for $\ker(A_0^{\ast}-z_0 I_{\cH})$ $($$0\leq r\leq m$$)$, and define
\begin{equation}\lb{B.34}
U_{z,z_0}=(A_1-z_0 I_{\cH})(A_1-z I_{\cH})^{-1}, \quad z\in \rho(A_1).
\end{equation}
Then the following hold: \\
$(i)$ $\{g_k(z)\}_{k=1}^r$ defined by
\begin{align}\lb{B.35}
\begin{split}
g_k(z)=U_{z,z_0}g_k(z_0)=g_k(z_0)+(z-z_0)(A_1-z I_{\cH})^{-1}g_k(z_0),&  \\
z\in \rho(A_1), \; k=1,\ldots,r,&
\end{split}
\end{align}
forms a basis for $\ker(A_0^{\ast}-z I_{\cH})$. \\
$(ii)$ $\{g_k(z)\}_{k=1}^r$ and $\{g_k(z')\}_{k=1}^r$ for $z,z'\in \rho(A_1)$ are related by \eqref{B.21}. \\
$(iii)$ For each $z\in \rho(A_1)\cap\rho(A_2)$, there is a unique, nonsingular,
$r\times r$ Nevanlinna--Herglotz matrix
$P(\cdot)= \big(p_{j,k}(\cdot)\big)_{1\leq j,k \leq r}$, depending on the choice of basis \eqref{B.33}, such that
\begin{equation}\lb{B.36}
(A_2 -z I_{\cH})^{-1}=(A_1-z I_{\cH})^{-1}
- \sum_{j,k=1}^r p_{j,k}(z)(g_k(\overline{z}),\cdot)_{\cH} \, g_j(z).
\end{equation}
In particular, $P(\cdot)$ is analytic on the open complex half-plane, $\bbC_+$, and
\begin{equation}
\Im\big(- P(z)^{-1}\big) = \Im(z)
\big((g_j(\overline{z}),g_k(\ol{z}))_{\cH}\big)_{1 \leq j,k \leq r} > 0, \quad z \in \bbC_+.
\lb{B.36a}
\end{equation}
$(iv)$ $P(z)$ and $P(z')$ for $z,z'\in \rho(A_1)\cap \rho(A_2)$ are related by \eqref{B.32}. \\
$(v)$  If $\{\widehat{g}_k(z_0)\}_{k=1}^r$ is any other basis for
$\ker(A_0^{\ast}-z_0 I_{\cH})$ and $\widehat{P}(z)
=\big(\widehat{p}_{j,k}(z)\big)_{1\leq j,k \leq r}$ is the corresponding unique,
$r\times r$ matrix-valued function such that
\begin{equation}\lb{B.37}
(A_2 -z I_{\cH})^{-1}=(A_1-z I_{\cH})^{-1}
- \sum_{j,k=1}^r\widehat{p}_{j,k}(z) (\widehat{g}_k(\overline{z}),\cdot)_{\cH} \,
\widehat{g}_j(z),
\quad z\in \rho(A_1)\cap\rho(A_2),
\end{equation}
then
\begin{equation}\lb{B.38}
\widehat{P}(z)=\big(T^{-1}\big)^\top P(z)\big((T^{-1})^\top\big)^{\ast},
\end{equation}
where $T$ is the $r\times r$ transition matrix corresponding to the change of basis
from $\{g_k(z_0)\}_{k=1}^r$ to $\{\widehat{g}_k(z_0)\}_{k=1}^r$.
\end{theorem}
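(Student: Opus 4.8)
The plan is to observe first that items $(i)$, $(ii)$, $(iv)$, together with the existence, uniqueness, and nonsingularity assertions of $(iii)$, have essentially been established in the derivation preceding the theorem, so that only two genuinely new statements remain. Concretely, $(i)$ is the content of \eqref{B.18}--\eqref{B.20}: the computation \eqref{B.19} shows that $U_{z,z_0}$ maps $\ker(A_0^*-z_0 I_{\cH})$ into $\ker(A_0^*-z I_{\cH})$, and injectivity of $U_{z,z_0}$ promotes a basis to a basis; $(ii)$ is \eqref{B.21}, a consequence of the first resolvent equation for $A_1$; the representation \eqref{B.36} with a nonsingular $P(\cdot)$ is \eqref{B.16} together with the argument around \eqref{B.15}; and $(iv)$ is \eqref{B.32}. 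Uniqueness of $P(\cdot)$ relative to a fixed basis follows from the linear independence of $\{g_k(z)\}_{k=1}^r$ and $\{g_k(\overline{z})\}_{k=1}^r$. Hence the work reduces to proving the Nevanlinna--Herglotz property and the identity \eqref{B.36a} in $(iii)$, and the change-of-basis formula \eqref{B.38} in $(v)$.

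For the Herglotz property the first step is to record the symmetry $P(\overline{z}) = P(z)^*$. I would obtain this by taking Hilbert space adjoints of \eqref{B.36}, using $\big((A_j - z I_{\cH})^{-1}\big)^* = (A_j - \overline{z} I_{\cH})^{-1}$ and the antilinearity of $(\cdot,\cdot)_{\cH}$ in its first slot, then relabelling the summation indices and invoking uniqueness of $P(\cdot)$; comparing coefficients yields $p_{j,k}(\overline{z}) = \overline{p_{k,j}(z)}$. Next I would specialize \eqref{B.32} to $z' = \overline{z}$, so that $z - z' = 2i\,\Im(z)$ and $P(\overline{z})^{-1} = \big(P(z)^{-1}\big)^*$, and read off
\[
-P(z)^{-1} + \big(P(z)^{-1}\big)^* = 2i\,\Im(z)\big((g_j(\overline{z}),g_k(\overline{z}))_{\cH}\big)_{1\le j,k\le r},
\]
which is exactly \eqref{B.36a}. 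Positivity of the right-hand side for $z\in\bbC_+$ holds because $\big((g_j(\overline{z}),g_k(\overline{z}))_{\cH}\big)$ is the Gram matrix of the linearly independent vectors $\{g_k(\overline{z})\}_{k=1}^r$, hence positive definite. Analyticity of $P(\cdot)$ on $\rho(A_1)\cap\rho(A_2)$ comes from \eqref{B.32}: by \eqref{B.35} and $\big((A_1-\overline{z} I_{\cH})^{-1}\big)^* = (A_1 - z I_{\cH})^{-1}$, each entry $(g_j(\overline{z}),g_k(z'))_{\cH}$ is analytic in $z$, so $P(\cdot)^{-1}$ is analytic and invertible, whence $P(\cdot)$ is analytic. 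Since $-P(\cdot)^{-1}$ is then analytic on $\bbC_+$ with strictly positive imaginary part, $P(\cdot)$ is a Nevanlinna--Herglotz matrix.

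For item $(v)$ the key observation is that the canonical extension \eqref{B.35} is effected by the linear operator $U_{z,z_0}$, so a single transition matrix $T$ relating $\{g_k(z_0)\}_{k=1}^r$ and $\{\widehat g_k(z_0)\}_{k=1}^r$ relates $\{g_k(z)\}_{k=1}^r$ and $\{\widehat g_k(z)\}_{k=1}^r$ simultaneously for every $z\in\rho(A_1)$. I would substitute the resulting expressions for $\widehat g_j(z)$ and $\widehat g_k(\overline{z})$ into \eqref{B.37}, once more using antilinearity of $(\cdot,\cdot)_{\cH}$ in the first argument to move the conjugated entries of $T$ out front, and then compare term by term with \eqref{B.36}. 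Uniqueness of the representation forces $P(z) = T^\top \widehat P(z)\,(T^\top)^*$, which rearranges to \eqref{B.38}.

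I expect the main obstacle to be purely bookkeeping: keeping the index placement in \eqref{B.36}, the antilinearity of $(\cdot,\cdot)_{\cH}$ in its first slot, and the adopted convention for the transition matrix $T$ mutually consistent, so that the transposes and complex conjugations in $P(\overline{z}) = P(z)^*$ and in \eqref{B.38} land in precisely the stated positions. A secondary point requiring care is the standard but nontrivial fact that a matrix-valued function analytic on $\bbC_+$ whose negative inverse has positive imaginary part is itself Nevanlinna--Herglotz, which legitimizes passing from \eqref{B.36a} to the Herglotz assertion of $(iii)$.
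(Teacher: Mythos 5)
Your proposal is correct and follows essentially the same route as the paper: items $(i)$, $(ii)$, $(iv)$ and the existence/nonsingularity parts of $(iii)$ are read off from the derivation of \eqref{B.16}--\eqref{B.32}, the identity \eqref{B.36a} comes from setting $z'=\ol z$ in \eqref{B.32} together with positive definiteness of the Gramian, uniqueness follows from linear independence of $\{g_k(z)\}$ and $\{g_k(\ol z)\}$, and $(v)$ is obtained by propagating the transition matrix via $U_{z,z_0}$ and invoking uniqueness. Your explicit verification of $P(\ol z)=P(z)^*$ via adjoints of \eqref{B.36} is a welcome addition, since the paper uses this symmetry implicitly when passing from \eqref{B.32} to \eqref{B.36a}.
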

\begin{proof}
Choosing $z' = \ol z$, $z \in\bbC_+$, in \eqref{B.32} immediately proves the equality
part in \eqref{B.36a}. Since in general, $\big((g_j,g_k)_{\cH}\big)_{1 \leq j,k \leq N}$
represents the positive definite Gramian (cf., e.g., \cite[p.\ 109, 297]{PS76}) of the
system of linearly independent
elements $g_j \in \cH$, $1 \leq j \leq N$, for arbitrary $N \in \bbN$, this yields the
positive definiteness part in \eqref{B.36a}. In particular, $- P(\cdot)^{-1}$, and hence $P(\cdot)$, possesses the Nevanlinna--Herglotz property claimed in item $(iii)$.

To prove the uniqueness part of item $(iii)$, suppose that in addition to the
representation \eqref{B.37}, one has the representation
\begin{equation}\lb{B.39}
(A_2 -z I_{\cH})^{-1}=(A_1-z I_{\cH})^{-1} - \sum_{j,k=1}^r \widetilde{p}_{j,k}(z)
(g_k(\overline{z}),\cdot)_{\cH} \, g_j(z).
\end{equation}
Then it follows that
\begin{equation}\lb{B.40}
\sum_{j,k=1}^r\big(\widetilde{p}_{j,k}(z)-p_{j,k}(z) \big)(g_k(\overline{z}),f)_{\cH} \,
g_j(z)=0,  \quad f\in \cH,
\end{equation}
and since the vectors $\{g_j(z)\}_{j=1}^r$ are linearly independent,
\begin{equation}\lb{B.41}
\sum_{k=1}^r \big(\widetilde{p}_{j,k}(z)-p_{j,k}(z) \big)(g_k(\overline{z}),f)_{\cH}
=0, \quad f\in \cH, \; j=1,\ldots,r.
\end{equation}
Therefore,
\begin{equation}\lb{B.42}
\sum_{k=1}^r \overline{\big(\widetilde{p}_{j,k}(z)-p_{j,k}(z) \big)}g_k(\overline{z})=0, \quad j=1,\ldots,r,
\end{equation}
and linear independence of $\{g_j(\overline{z})\}_{j=1}^r$ yields
\begin{equation}\lb{B.43}
\widetilde{p}_{j,k}(z)-p_{j,k}(z)=0, \quad j,k=1,\ldots,r.
\end{equation}

Next we prove the uniqueness claim in item $(v)$:  Suppose that, in addition to $\{g_k(z_0)\}_{k=1}^r$, $\{\widehat{g}_k(z_0)\}_{k=1}^r$ is also a basis for $\ker(A_0^{\ast}-z_0 I_{\cH})$.  Then
\begin{align}
(A_2 -z I_{\cH})^{-1}&=(A_1-z I_{\cH})^{-1}
- \sum_{j,k=1}^rp_{j,k}(z)(g_k(\overline{z}),\cdot)_{\cH} \, g_j(z),    \lb{B.44}\\
(A_2-z I_{\cH})^{-1}&=(A_1-z I_{\cH})^{-1}
- \sum_{j,k=1}^r\widehat{p}_{j,k}(z)
(\widehat{g}_k(\overline{z}),\cdot)_{\cH} \, \widehat{g}_j(z),    \lb{B.45} \\
& \hspace*{4.4cm} z\in \rho(A_1)\cap\rho(A_2),    \no
\end{align}
with
\begin{equation}
g_k(z)=U_{z,z_0}g_k(z_0),    \quad
\widehat{g}_k(z) =U_{z,z_0}\widehat{g}_k(z_0), \quad  k=1,\ldots,r; \;
z\in \rho(A_1). \lb{B.46}
\end{equation}

  Let $T\in \mathbb{C}^{r\times r}$ denote the nonsingular transition matrix corresponding to the change of basis from $\{g_k(z_0)\}_{k=1}^r$ to $\{\widehat{g}_k(z_0)\}_{k=1}^r$ so that
\begin{equation}
  \widehat{g}_k(z_0)=\sum_{j=1}^rT_{k,j}g_j(z_0),   \quad
   g_k(z_0) = \sum_{j=1}^r(T^{-1})_{k,j}\widehat{g}_j(z_0),
  \quad k=1,\ldots,r.   \lb{B.49}
  \end{equation}
  From \eqref{B.46}--\eqref{B.49} one obtains the relations
\begin{equation}
  \widehat{g}_k(z)=\sum_{j=1}^rT_{k,j}g_j(z),  \quad
  g_k(z)=\sum_{j=1}^r(T^{-1})_{k,j}\widehat{g}_j(z), \quad
k=1,\ldots,r; \; z\in \rho(A_1).  \lb{B.51}
\end{equation}
One observes that by \eqref{B.51},
  \begin{align}
&  \sum_{j,k=1}^rp_{j,k}(g_k(\overline{z}),\cdot)_{\cH} \, g_k(z)
=\sum_{j,k=1}^rp_{j,k}(z) \bigg(\sum_{\ell=1}^r(T^{-1})_{k,\ell}\widehat{g}_{\ell}(\overline{z}),\cdot\bigg)_{\cH} \sum_{m=1}^r(T^{-1})_{j,m}\widehat{g}(z)    \no \\
  &\quad= \sum_{j,k=1}^r\sum_{\ell=1}^r \sum_{m=1}^rp_{j,k}(z)\overline{(T^{-1})_{k,\ell}} (T^{-1})_{j,m}(\widehat{g}_{\ell}(\overline{z}),\cdot)_{\cH} \,
  \widehat{g}_m(z), \quad
z\in \rho(A_1)\cap \rho(A_2).      \lb{B.52}
  \end{align}
  Using
  \begin{equation}\lb{B.53}
  \big(((T^{-1})^\top)^{\ast} \big)_{j,k}=\overline{(T^{-1})_{j,k}}\quad \text{and}
  \quad \big((T^{-1})^\top\big)_{j,k}=(T^{-1})_{k,j}, \quad 1\leq j,k \leq r,
  \end{equation}
  one has
\begin{align}
& \big((T^{-1})^\top P(z)((T^{-1})^\top)^{\ast} \big)_{m,\ell}
= \sum_{j,k=1}^r\big((T^{-1})^\top  \big)_{m,j}p_{j,k}(z)
\big(((T^{-1})^\top)^{\ast} \big)_{k,\ell}\no \\
 & \quad = \sum_{j,k=1}^r(T^{-1})_{j,m}p_{j,k}(z)\overline{(T^{-1})_{k,\ell}}, \quad
z\in \rho(A_1)\cap\rho(A_2).  \lb{B.54}
\end{align}
  By \eqref{B.52} and \eqref{B.54},
  \begin{align}
  \sum_{j,k=1}^rp_{j,k}(z)(g_k(\overline{z}),\cdot)_{\cH} \, g_k(z)
  &=\sum_{m,\ell=1}^r \big((T^{-1})^\top
  P(z)((T^{-1})^\top)^{\ast} \big)_{m,\ell}(\widehat{g}_{\ell}
  (\overline{z}),\cdot)_{\cH} \, \widehat{g}_m(z),\no \\
  &\hspace*{3.9cm} z\in \rho(A_1)\cap\rho(A_2).\lb{B.55}
  \end{align}
  Therefore, we have the following two representations:
  \begin{align}
  (A_2 -z I_{\cH})^{-1}&=(A_1-z I_{\cH})^{-1}
  - \sum_{j,k=1}^r\widehat{p}_{j,k}(z)
  (\widehat{g}_k(\overline{z}),\cdot)_{\cH} \, \widehat{g}_j(z),  \lb{B.56}\\
  (A_2 -z I_{\cH})^{-1}&=(A_1-z I_{\cH})^{-1} - \sum_{j,k=1}^r\big((T^{-1})^\top
  P(z)((T^{-1})^\top)^{\ast} \big)_{j,k}(\widehat{g}_k(\overline{z}),\cdot)_{\cH} \,
  \widehat{g}_j(z),
 \no \\
  & \hspace*{6cm}  z\in \rho(A_1)\cap\rho(A_2),    \lb{B.57}
  \end{align}
 and hence,
  \begin{equation}\lb{B.58}
  \widehat{P}(z)=(T^{-1})^\top P(z)((T^{-1})^\top)^{\ast}, \quad z\in \rho(A_1)\cap\rho(A_2).
  \end{equation}
\end{proof}

\end{appendix}

\medskip

{\bf Acknowledgments.} We are indebted to Malcolm Brown, Pavel Kurasov,
Annemarie Luger, and Sergey Naboko for very helpful discussions and constructive remarks. S.C. and F.G.\ wish to thank the organizers of the workshop ``Inverse Spectral Problems in One Dimension'' at the International Centre for Mathematical Sciences, Edinburgh, and the organizers of the six month meeting on ``Inverse Problems'' at the Isaac Newton Institute for Mathematical Sciences, Cambridge, for providing facilities and a most stimulating atmosphere that helped fostering work on this project.



\begin{thebibliography}{99}
%
\bi{AG81} N.\ I.\ Akhiezer and I.\ M.\ Glazman, {\it Theory of Linear
Operators in Hilbert Space, Volume II}, Pitman, Boston, 1981.
%
\bi{AS80} A.\ Alonso and B.\ Simon, {\it The Birman-Krein-Vishik theory
of selfadjoint extensions of semibounded operators}, J.\ Operator Th.\
{\bf 4}, 251--270 (1980); Addenda: {\bf 6}, 407 (1981).
%
\bi{AD56} N.\ Aronszajn and W.\ F.\ Donoghue, {\it On
exponential representations of analytic functions in the upper
half-plane with positive imaginary part}, J. Analyse Math. {\bf 5},
321--388 (1956--57).
%
\bi{AGMST10} M.\ S.\ Ashbaugh, F.\ Gesztesy, M.\ Mitrea, R.\ Shterenberg,
and G.\ Teschl, {\it The Krein--von Neumann extension and its connection
to an abstract buckling problem}, Math. Nachr. {\bf 283}, 165--179 (2010).
%
\bi{AGMT10} M.\ S.\ Ashbaugh, F.\ Gesztesy, M.\ Mitrea, and G.\ Teschl,
{\it Spectral theory for perturbed Krein Laplacians in nonsmooth domains},
Adv. Math. {\bf 223}, 1372--1467 (2010).
%
\bi{BL10} J.\ Behrndt and A.\ Luger, {\it On the number of negative eigenvalues
of the Laplacian on a metric graph}, J. Phys. A: Math. Theor. {\bf 43} (2010) 474006.
%
\bi{CGM10} S.\ Clark, F.\ Gesztesy, and M.\ Mitrea, {\it Boundary Data Maps for
Schr\"odinger Operators on a Compact Interval}, Math. Modelling Nat. Phenomena
{\bf 5}, No.\ 4, 73--121 (2010).
%
\bi{FN09} K.-H.\ F\"orster and M.\ M.\ Nafalska, {\it Nonnegative extensions via 
embeddings}, in {\it Recent Advances in Operator Theory in Hilbert and Krein 
spaces}, J.\ Behrndt, K.-H.\ F\"orster, and C. Trunk (eds.), Operator Theory: Advances 
and Applications, Vol.\ 198, Birkh\"auser, Basel, 2009, pp.\ 171--183.  
%
\bi{Fu80} M.\ Fukushima, {\it Dirichlet Forms and Markov Processes},
North-Holland, Amsterdam, Kodansha, Tokyo, 1980.
%
\bi{FOT94} M.\ Fukushima, Y.\ Oshima, and M.\ Takeda, {\it Dirichlet Forms and Symmetric Markov Processes}, de Gruyter, Berlin, 1994.
%
\bi{GMT98} F.\ Gesztesy, K.\ A.\ Makarov, and E.\ Tsekanovskii,
{\it An Addendum to Krein's formula}, J. Math. Anal. Appl. {\bf 222},
594--606 (1998).
%
\bi{GZ11} F.\ Gesztesy and M.\ Zinchenko, {\it Symmetrized perturbation
determinants and applications to boundary data maps and Krein-type resolvent
formulas}, Proc. London Math. Soc. (3) {\bf 104}, 577--612 (2012).
%
\bi{GK69} I.\ C.\ Gohberg and M.\ G.\ Krein, {\it Introduction to
the Theory of Linear Nonselfadjoint Operators}, Translations of
Mathematical Monographs, Vol.\ 18, Amer. Math. Soc., Providence,
RI, 1969.
%
\bi{Ha00a} M.\ Harmer, {\it Hermitian symplectic geometry and extension theory}, J. Phys.
A {\bf 33}, 9193--9203 (2000).
%
\bi{Ha00b} M.\ Harmer, {\it Hermitian symplectic geometry and the factorization of the
scattering matrix on graphs}, J. Phys. A {\bf 33}, 9015--9032 (2000).
%
\bi{Ha02} M.\ Harmer, {\it Inverse scattering for the matrix Schr\"odinger operator and
Schr\"odinger operator on graphs with general self-adjoint boundary conditions}, ANZIAM J.
{\bf 44}, 161--168 (2002).
%
\bi{Ha05} M.\ Harmer, {\it Inverse scattering on matrices with boundary conditions},
J. Phys. A {\bf 38}, 4875--4885 (2005).
%
\bi{Ka80} T.\ Kato, {\it Perturbation Theory for Linear Operators}, corr.\
printing of the 2nd ed., Springer, Berlin, 1980.
%
\bi{KS99} V.\ Kostrykin and R.\ Schrader, {\it Kirchoff's rule for quantum wires}, J. Phys. A {\bf 32},
595--630 (1999).
%
\bi{KS00} V.\ Kostrykin and R.\ Schrader, {\it Kirchoff's rule for quantum wires. II. The inverse
problem with possible applications to quantum computers}, Fortschr. Phys. {\bf 48}, 703--716
(2000).
%
\bibitem{KS74} M.\ G.\ Krein and Ju.\ L.\ Smul'jan, {\it On linear-fractional
transformations with operator coefficients}, Amer. Math. Soc. Transl. (2)
{\bf 103}, 125--152 (1974).
%
\bi{Kr78} E.\ Kreyszig, {\it Introductory Functional Analysis with Applications},
Wiley and Sons, New York, 1978.
%
\bi{Ku12} P.\ Kurasov, {\it Quantum Graphs: Spectral Theory and Inverse Problems},
monograph in preparation.
%
\bi{KN10} P.\ Kurasov and M.\ Nowaczyk, {\it Geometric properties of
quantum graphs and vertex scattering matrices}, Opuscula Math. {\bf 30},
No.\ 3, 295--309, (2010).
%
\bi{Ma73} V.\ A.\ Marchenko, {\it Some questions in the theory of one-dimensional linear differential operators of the second order, I}, Trudy Moskov. Mat. Ob\u{s}\u{c}. {\bf 1}, 327--420 (1952).
(Russian.) English transl. in Amer. Math. Soc. Transl., Ser. 2, {\bf 101}, 1--104 (1973).
%
\bi{Na68} M.\ A.\ Naimark, {\it Linear Differential Operators. Part II: Linear
Differential Operators in Hilbert Space}, Ungar, New York, 1968.
%
\bi{No07} M.\ Nowaczyk, {\it Inverse Problems for Graph Laplacians}, Ph.D. Thesis, Lund
University, 2007.
%
\bi{PS76} G.\ P{\' o}lya and G.\ Szeg{\H o}, {\it Problems and Theorems in Analysis II},
Springer, Berlin, 1976.
%
\bi{RBK05} F.\ Rofe-Beketov and A.\ Kohlkin, {\it Spectral Analysis of
Differential Operators: Interplay Between Spectral and Oscillatory
Properties}, World Scientific Monograph Series in Mathematics, Vol.\ 7, World
Scientific Publishing Co. Ptc. Ltd., Singapore, 2005.
%
\bi{Si05} B.\ Simon, {\it Trace Ideals and Their Applications},
Mathematical Surveys and Monographs, Vol.\ 120, 2nd ed., Amer. Math. Soc.,
Providence, RI, 2005.
%
\bi{Ne30} J.\ von Neumann, {\it Allgemeine Eigenwerttheorie Hermitescher
Funktionaloperatoren}. Math. Ann. {\bf 102}, No.\ 1, 49--131 (1930). (German.)
%
\bi{We80} J.\ Weidmann, {\it Linear Operators in Hilbert Spaces},
Graduate Texts in Mathematics, Vol.\ 68, Springer, New York, 1980.
%
\bi{We00} J.\ Weidmann, {\it Lineare Operatoren in Hilbertr\"aumen. Teil I}, Mathematische
Leitf\"aden. Teubner, Stuttgart, 2000.
%
\bi{We03} J.\ Weidmann, {\it Lineare Operatoren in Hilbertr\"aumen. Teil II}, Mathematische
Leitf\"aden. Teubner, Stuttgart, 2003.
%
\bi{Ya92} D.\ R.\ Yafaev, {\it Mathematical Scattering Theory. General Theory},
Transl. Math. Monographs, Vol.\ 105, Amer. Math. Soc., Providence, RI, 1992.
%
\bi{Ya07} D.\ R.\ Yafaev, {\it Perturbation determinants, the spectral shift function,
trace identities, and all that}, Funct. Anal. Appl. {41}, 217--236 (2007).
%
\bi{Ze05} A.\ Zettl, {\it Sturm--Liouville Theory}, Mathematical Surveys
and Monographs, Vol.\ 121, Amer. Math. Soc., Providence, RI, 2005.
%
\end{thebibliography}
\end{document}